\newtheorem{theorem}{Theorem}[section]
\newtheorem{proposition}[theorem]{Proposition}
\newtheorem{lemma}[theorem]{Lemma}
\newtheorem{corollary}[theorem]{Corollary}
\theoremstyle{definition}
\newtheorem{definition}[theorem]{Definition}
\theoremstyle{remark}
\newtheorem{remark}[theorem]{Remark}
\newtheorem{example}[theorem]{Example}
\newcommand{\PP}{\mathbb{P}}
\newcommand{\MT}{\mathcal{MT}}
\newcommand{\open}{\mathrm{open}}
\newcommand{\closed}{\mathrm{closed}}
\newcommand{\QQ}{\mathbb{Q}}
\newcommand{\RR}{\mathbb{R}}
\newcommand{\CC}{\mathbb{C}}
\newcommand{\ZZ}{\mathbb{Z}}
\newcommand{\Pe}{\mathcal{P}}
\newcommand{\mm}{\mathfrak{m}}
\newcommand{\Or}{\mathcal{O}}
\newcommand{\s}{\mathsf{s}}
\newcommand{\To}{\longrightarrow}
\newcommand{\A}{\mathcal{A}}
\newcommand{\B}{\mathrm{B}}
\newcommand{\dR}{\mathrm{dR}}
\renewcommand{\min}{\backslash}
\renewcommand{\c}{\mathrm{c}}
\DeclareMathOperator{\sv}{\mathsf{sv}}
\DeclareMathOperator{\ev}{ev}
\DeclareMathOperator{\id}{id}
\DeclareMathOperator{\ren}{ren}
\DeclareMathOperator{\lf}{lf}
\title{Single-valued integration and superstring amplitudes in genus zero}
\author{Francis Brown}
\address{All Souls College, Oxford, Oxford OX1 4AL, UK}
\email{francis.brown@all-souls.ox.ac.uk}
\author{Cl\'{e}ment Dupont}
\address{Institut Montpelli\'{e}rain Alexander Grothendieck, Université de Montpellier, CNRS, Montpellier, France}
\email{clement.dupont@umontpellier.fr}
\date{}
\begin{document}

\maketitle

\begin{abstract}   We study open and closed string amplitudes at tree-level in string perturbation theory  using the methods of  single-valued integration which were developed in the prequel to this paper \cite{BD1}. 
Using dihedral coordinates on the moduli spaces of curves of genus zero with marked points, we define a canonical regularisation of both open and closed string perturbation amplitudes at tree level, and deduce that they admit a Laurent expansion in Mandelstam variables whose coefficients are multiple zeta values (resp. single-valued multiple zeta values). Furthermore, we prove the existence of a motivic Laurent expansion whose image under the period map is the open string expansion, and whose image under the single-valued period map is the closed string expansion. This proves the recent conjecture of Stieberger that closed string amplitudes are the single-valued projections of (motivic lifts of) open string amplitudes.

Finally, applying a variant of the single-valued formalism for cohomology with coefficients yields the KLT formula expressing closed string amplitudes as quadratic expressions in open string amplitudes.

\end{abstract}

\section{Introduction}

\subsection{The beta function}
As motivation for our results, it is instructive to consider  the special  case of the Euler beta function (Veneziano amplitude \cite{Veneziano1968}) 
  \begin{equation} \label{introbetareal}
\beta(s,t) = \int_{0}^1  x^s (1-x)^t  \frac{dx}{x(1-x)} = \frac{\Gamma(s)\Gamma(t)}{\Gamma(s+t)}  \ \cdot 
\end{equation}
The integral converges for $\mathrm{Re}( s)>0$, $\mathrm{Re} ( t)  >  0 $.
Less familiar  is the  complex beta function   (Virasoro--Shapiro  amplitude \cite{virasoro, shapiro}), given by 
\begin{equation}  \label{introbetaC}
\beta_{\CC} (s,t) = -\frac{1}{2\pi i}\int_{\PP^1(\CC)}   |z|^{2s} |1-z|^{2t}  \frac{dz \wedge d \overline{z}}{|z|^2 |1-z|^2} = \frac{\Gamma(s)\Gamma(t)\Gamma(1-s-t)}{\Gamma(s+t) \Gamma(1-s)\Gamma(1-t)}\ \cdot 
\end{equation} 
The integral  converges  in the region $\mathrm{Re}( s) >0$, $\mathrm{Re} ( t)  >  0$ , $\mathrm{Re}( s +t )<1$.

The beta function  admits the following Laurent expansion
\begin{equation} \label{introbetastexpansion} \beta(s,t) =  \Big( \frac{1}{s}+\frac{1}{t} \Big)  \exp  \Big( \sum_{n \geq 2}  \frac{(-1)^{n-1} \zeta(n)}{n}  \big((s+t)^n -s^n -t^n\big)\Big)\ , 
\end{equation}
and  the complex beta function  has a very similar expansion
\begin{equation} \label{introcomplexbetaexpansion} \beta_{\CC}(s,t) =  \Big( \frac{1}{s}+\frac{1}{t} \Big)  \exp  \Big( \sum_{ \substack{ n \geq 2 \\ n \textrm{ odd}}}  \frac{(-1)^{n-1} 2 \, \zeta(n)}{n}  \big((s+t)^n -s^n -t^n\big)\Big)\ .
\end{equation}
It is important to note that these Laurent expansions are taken at the point $(s,t)=(0,0)$ which lies outside the domain of convergence of the respective integrals. 

The coefficients in \eqref{introcomplexbetaexpansion} can be expressed as  `single-valued' zeta values which satisfy: $$\zeta^{\sv}(2n) =0  \quad \hbox{  and } \quad  \zeta^{\sv}(2n+1) = 2 \, \zeta(2n+1) $$
for $n \geq 1.$ The Laurent expansion \eqref{introcomplexbetaexpansion} can thus be viewed as a `single-valued'  version of \eqref{introbetastexpansion}.  To make this precise, we
  define  a \emph{motivic beta function}
 $\beta^{\mm}(s,t)$
 which is a formal  Laurent expansion  in motivic zeta values:
  \begin{equation} \label{introbetamot} \beta^{\mm} (s,t) =  \Big( \frac{1}{s}+\frac{1}{t} \Big)  \exp  \Big( \sum_{n \geq 2}  \frac{(-1)^{n-1} \zeta^{\mm}(n)}{n}  \big((s+t)^n -s^n -t^n\big)\Big)\ ,
  \end{equation}
  whose coefficients $\zeta^{\mm}(n)$ are motivic periods of the  cohomology of  the  moduli spaces of curves $\overline{\mathcal{M}}_{0,n+3}$ relative to certain boundary divisors. It has a  de Rham version $\beta^{\mathfrak{m},\dR}(s,t)$, obtained from it by applying the de Rham projection term by term. 
 One has
$$\beta(s,t) = \mathrm{per} \, ( \beta^{\mm}(s,t) ) \qquad\hbox{ and } \qquad \beta_{\CC}(s,t) = \s \, (\beta^{\mathfrak{m},\dR}(s,t))$$
where $\s$ is the single-valued period map which is defined on de Rham motivic periods.
We can therefore conclude  that the Laurent expansions of $\beta(s,t)$ and $\beta_{\CC}(s,t)$ are deduced from a single object, namely, the motivic beta function \eqref{introbetamot}.  

The first objective of this paper is to generalise all of the above for general string perturbation amplitudes at tree-level.

\subsubsection*{Cohomology with coefficients} 
There is another sense in which \eqref{introbetareal}  is a single-valued version of \eqref{introbetaC} that does not involve expanding in $s, t$ and uses  cohomology with coefficients.

  For  generic  values of $s,t$ (i.e.,   $s ,t,s+t \notin \ZZ$),  it is known how to interpret  $\beta(s,t)$  as  a period of a canonical pairing between algebraic de Rham cohomology and locally finite Betti (singular) homology:
\begin{equation}\label{eq: cohomology groups coefficients intro}
H^1(X,   \nabla_{s,t} ) \qquad \hbox{ and } \qquad H_1^{\mathrm{lf}} ( X(\CC), \mathcal{L}_{-s,-t})\ ,
\end{equation}
where $X=\PP^1 \backslash \{0,1,\infty\}$, $\nabla_{s,t}$ is the  integrable connection 
$$\nabla_{s,t} = d + s \, d\log x  +  t \, d\log(1-x) $$ 
on the rank one algebraic vector bundle $\Or_X$, and $\mathcal{L}_{-s,-t}$ is the rank one local system generated by $x^s(1-x)^t$, which is a  flat section of $\nabla_{-s,-t}=\nabla_{s,t}^{\vee}$ (see Example \ref{example: M04withcoeffs}). An important feature of this situation is Poincar\'{e} duality which gives rise to de Rham and Betti pairings between \eqref{eq: cohomology groups coefficients intro} for $(s,t)$ and for $(-s,-t)$. Compatibility between these pairings amounts to the following functional equation for the beta function:
\begin{equation}\label{eq: functional equation beta intro}
2\pi i\left(\frac{1}{s}+\frac{1}{t}\right) = \beta(s,t)\beta(-s,-t)\left(\frac{2}{i}\frac{\sin(\pi s)\sin(\pi t)}{\sin(\pi(s+t))}\right)\ ,
\end{equation}
where the factor in brackets on the left-hand side is the de Rham pairing of $\frac{dx}{x(1-x)}$ with itself and the factor in brackets on the right-hand side is the inverse of the Betti pairing of $(0,1)\otimes x^{s}(1-x)^{t}$ with $(0,1)\otimes x^{-s}(1-x)^{-t}$.

As in the case of relative cohomology with constant coefficients studied in \cite{BD1}, there exists a single-valued formalism for cohomology with coefficients in this setting for which we give an integral formula (Theorem \ref{theoremsvforlogwithcoeffs}). This formula implies that $\beta_{\CC}(s,t)$ is a  single-valued period of \eqref{eq: cohomology groups coefficients intro}, which amounts to the equality
\begin{equation} \label{introbetaDC1} \beta_{\CC}(s,t) = - \Big(  \frac{1}{s} + \frac{1}{t} \Big) \beta(s,t) \beta(-s,-t)^{-1}
\end{equation}
and proves the second equality in \eqref{introbetaC}.

Applying the functional equation \eqref{eq: functional equation beta intro} we then get the following `double copy formula' expressing a single-valued period as a quadratic expression in ordinary periods:
\begin{equation}  \label{introbetaDC2} 
\beta_{\CC}(s,t) = -\frac{1}{2\pi i} \Big( \frac{2}{i} \frac{ \sin (\pi s) \sin (\pi t)}{\sin (\pi (s+t))} \Big) \beta(s,t)^2 \ . 
\end{equation}
This formula is an  instance of the  Kawai--Lewellen--Tye (KLT) relations \cite{klt}.\medskip

 In conclusion, there are \emph{three} different ways to deduce the complex beta function from the classical beta function: via \eqref{introbetaDC1} or the double copy formula \eqref{introbetaDC2}, or by applying the single valued period map term by term in its Laurent expansion.
 
 \subsection{General string amplitudes at tree level}
 The general $N$-point genus zero open string amplitude is formally  written as an integral  which generalises  \eqref{introbetareal}:
$$I^{\open}(\omega, \underline{s}) = \int_{0<t_1<\cdots< t_{N-3}<1}   \prod_{1\leq i<j\leq N-3} (t_j-t_i)^{s_{ij}}\,   \omega$$
where $\omega$ is a meromorphic form with certain logarithmic singularities (see \S\ref{sect: StringAmpSimplicial}), and 
$ \underline{s} =\{s_{ij}\}$ are Mandelstam variables satisfying momentum conservation equations \eqref{MC}.

It turns out that one can write the closed string amplitudes in the form 
$$I^{\closed}(\omega, \underline{s}) = (2\pi i)^{3-N} \int_{\CC^{N-3}}   \prod_{1\leq i<j\leq N-3} |z_j-z_i|^{2s_{ij}}   \,   \nu_S\wedge\overline{\omega}\ .$$
Later we shall rewrite the domain of integration as the complex points of the compactified moduli space of curves of genus $0$ with $N$ ordered marked points. Then,   the  form
$$\nu_{S} =  (-1)^{\frac{N(N-1)}{2}} \prod_{i=0}^{N-3} (t_{i+1} - t_{i})^{-1}  dt_1\wedge \cdots \wedge dt_{N-3} \hspace{1cm} (t_0=0,t_{N-2}=1) $$
is logarithmic and  has poles along the boundary of the domain of integration of the open string amplitude. It is in fact the image of the homology class of this domain  under the map $c_0^{\vee}$ defined in \cite{BD1}.

   The first task is to interpret the open and closed string amplitudes rigourously as integrals over the moduli space of curves $\mathcal{M}_{0,N}$. An immediate problem is that the poles of the integrand lie along divisors which do not cross normally. 
 Using a  cohomological interpretation    of  the   momentum conservation equations in \S\ref{sect: MC}, we show how to resolve the singularities of the integral by rewriting it  in terms of dihedral coordinates. These are certain cross-ratios  $u_c$ in the $t_i$,  indexed by chords $c$ in an $N$-gon, whose zero loci form a normal crossing divisor.
  Thus, for example, we write in \S\ref{sect: StringInDihedral}:
  $$I^{\open}(\omega, \underline{s}) = \int_{X^{\delta}} \left( \prod_c u_c^{s_c}  \right) \,  \omega$$
  where $X^{\delta}$ is the locus where all $0< u_c <1$ and the $s_c$ are linear combinations of the $s_{ij}$.  This rewriting of the amplitude evinces the divergences of the integrand and the potential poles in the Mandelstam variables. A similar expression holds for the closed string amplitude, in which $u_c^{s_c}$ is replaced by $|u_c|^{2s_c}$  and in which the domain of integration is replaced by the complex points of the Deligne--Mumford compactification  $\overline{\mathcal{M}}_{0,N}$.

  By an inclusion-exclusion procedure  close in spirit 
  to  renormalisation\footnote{Strictly speaking, this is not renormalisation in the physical sense since there are no ultraviolet divergences in the perturbative superstring amplitudes.} of algebraic integrals in perturbative quantum field theory \cite{brownkreimer}, we can explicitly remove all poles using  properties of dihedral coordinates and the combinatorics of chords. The renormalisation fundamentally hinges on  special properties of morphisms between moduli spaces which play the role of counter-terms and are described in  \S\ref{sect: Renorm}. 
  \begin{theorem}  There is a canonical `renormalisation' 
   $$I^{\mathrm{open}}(\omega,\underline{s})= \sum_{J}  \frac{1}{s_J}   \int_{X_J}   \Omega_J^{\mathrm{ren}} \qquad \hbox{ where }\quad s_J= \prod_{c\in J} s_c$$
 indexed by sets $J$  of non-crossing chords in an $N$-gon, where $\Omega_J^{\mathrm{ren}}$ is explicitly defined. The integrals on the right-hand side are convergent around $s_{ij}=0$. They are by definition products of convergent integrals over domains $X^\delta$ of various dimensions. 
     \end{theorem}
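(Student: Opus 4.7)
The plan is to work entirely in dihedral coordinates on $\overline{\mathcal{M}}_{0,N}$, where the integrand has been rewritten as $\bigl(\prod_c u_c^{s_c}\bigr)\omega$ on the simplex $X^{\delta}=\{0<u_c<1\}$. By construction the divisors $\{u_c=0\}$ cross normally, and a non-empty intersection $\bigcap_{c\in J}\{u_c=0\}$ is non-empty precisely when $J$ is a set of pairwise non-crossing chords in the $N$-gon; the stratum it cuts out is, by the product decomposition of boundary divisors of $\overline{\mathcal{M}}_{0,N}$, isomorphic to a product of smaller moduli spaces, and its intersection with the real locus $X^{\delta}$ is a product of domains of the same dihedral type. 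This is what will eventually identify the $X_J$ in the statement.

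Near each such stratum, because $\omega$ has only logarithmic poles along the boundary and $u_c$ is a local equation for $\{u_c=0\}$, the form $\bigl(\prod_c u_c^{s_c}\bigr)\omega$ develops a divergence of the type $\int_0 u_c^{s_c-1}\,du_c\sim 1/s_c$ for each $c\in J$, producing the pole $1/s_J=\prod_{c\in J}1/s_c$. I would make this precise by taking, for each chord $c$, the universal boundary morphism $\overline{\mathcal{M}}_{0,N}\to\overline{\mathcal{M}}_{0,n'}\times\overline{\mathcal{M}}_{0,n''}$ alluded to in the excerpt, and pulling $\omega$ back to the boundary stratum via this morphism. The counter-term attached to $J$ is then the restriction (iterated residue in the $u_c$, $c\in J$) of $\omega$ times $\prod_{c\notin J}u_c^{s_c}$, integrated against the corresponding product domain.

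The core of the argument is then a Bogoliubov-style inclusion-exclusion on the poset of non-crossing chord sets, ordered by inclusion, precisely paralleling the BPHZ recursion in \cite{brownkreimer} cited in the paper. Starting from the empty set $J=\varnothing$, at each step one subtracts from the partially renormalized integrand the pullback of its boundary value along each minimal new divisor, producing a form that vanishes along that divisor and is therefore integrable there; the terms subtracted regroup, by the product structure of boundary strata, into the lower-dimensional integrals $\int_{X_J}\Omega_J^{\ren}$. Writing out the recursion explicitly yields a closed formula for $\Omega_J^{\ren}$ as an alternating sum (Möbius-inverted) of restrictions of $\omega$ to the strata containing $D_J$; convergence of each $\int_{X_J}\Omega_J^{\ren}$ around $\underline{s}=0$ follows because, by construction, the integrand vanishes along every divisor not in $J$ and the weights $u_c^{s_c}$ along $c\notin J$ are regular at $\underline{s}=0$.

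The main obstacle is combinatorial rather than analytic: one must verify that the iterated restriction-and-subtraction procedure produces counter-terms indexed \emph{exactly} by non-crossing sets $J$, with no spurious contributions from crossing chord pairs. This is forced by the fact that only non-crossing sets correspond to non-empty boundary strata of $\overline{\mathcal{M}}_{0,N}$, so divergences attached to pairs of crossing chords simply do not appear. A secondary point to check carefully is that the recursion is well-defined — i.e.\ that the partial renormalization does not reintroduce poles at strata already treated — which is where the factorization properties of the morphisms of moduli spaces from \S\ref{sect: Renorm} are crucial: the restriction of $\Omega_J^{\ren}$ to a further boundary stratum $D_{J'}$ with $J\subsetneq J'$ is exactly cancelled by the counter-term built at the $J'$-th step, so each $\int_{X_J}\Omega_J^{\ren}$ is genuinely a product of convergent dihedral integrals of the same form as $I^{\open}$ in lower dimension.
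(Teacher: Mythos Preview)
Your approach is essentially the same as the paper's: an inclusion-exclusion over the poset of non-crossing chord sets, producing the decomposition $\Omega = \sum_J f_J^*(R_J\Omega)^{\ren}$ (Proposition~\ref{propOmegarenormalised}), which upon integration over $X^\delta$ factors through $f_J(X^\delta)\cong(0,1)^{|J|}\times X_J$ and yields the $1/s_J$ from $\prod_{c\in J}\int_0^1 u_c^{s_c-1}\,du_c$.

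The one place your sketch is imprecise, and where the paper invests genuine work, is what you call ``the universal boundary morphism $\overline{\mathcal{M}}_{0,N}\to\overline{\mathcal{M}}_{0,n'}\times\overline{\mathcal{M}}_{0,n''}$''. No such morphism exists in that direction; the boundary inclusion goes the other way, and a mere local tubular neighbourhood would not let you compare counter-terms globally or integrate them against $X^\delta$. What the paper constructs instead (Definition of $f_c^\gamma$, equation~\eqref{fDc}) is a global \emph{trivialisation} map $f_c^\gamma:\mathcal{M}_{0,S}^\delta\to\mathbb{A}^1\times\mathcal{M}_{0,S'}^{\delta'}\times\mathcal{M}_{0,S''}^{\delta''}$ built from forgetful morphisms to carefully chosen subsets $T',T''\subset S$ (not $S',S''$ themselves --- one adds the ``next edge'' in a fixed cyclic order $\gamma$, and the map depends on this choice). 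This map restricts to the canonical product decomposition on $D_c$, sends $X^\delta$ homeomorphically to $(0,1)\times X^{\delta'}\times X^{\delta''}$, and most importantly satisfies $f_{c_1}^\gamma f_{c_2}^\gamma=f_{c_2}^\gamma f_{c_1}^\gamma$ for non-crossing $c_1,c_2$ (Lemma~\ref{lemfccommute}) together with $R_c f_c^*=\mathrm{id}$ and $R_{c'}f_c^*=0$ for $c'$ crossing $c$ (Lemma~\ref{Rcfcproperties}). These are exactly the ``factorization properties'' you gesture at, and they are what makes the M\"obius inversion close up: without a globally defined $f_c^*$ landing back in $\mathcal{A}_S$, the counter-terms would not be forms on the original space and the subtraction $\Omega-f_c^*R_c\Omega$ would not make sense.
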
 
  This theorem provides an interpretation of the poles in the Mandelstam variables $s_{ij}$ in terms of the poles of $\omega$ (see for example \eqref{residueinMandel}). 
  A similar statement holds for the closed string amplitude (Theorem \ref{thmrenormclosed}). 
 Having extended the range of convergence of the integrals using the previous theorem, we are then in a position to take a Laurent expansion around $s_{ij}=0$. The   coefficients  in this  expansion, which are canonical,  are products of convergent integrals of the form:
 $$\int_{X^{\delta}}  \left(\prod_c \log^{n_c}(u_c)  \right)  \eta \ $$
 where the product ranges over  chords  $c$ in a  polygon and $n_c\in\mathbb{N}$. 
  We then  show how to interpret these integrals as periods of moduli spaces $\mathcal{M}_{0,N'}$ for   larger $N'$ by replacing the logarithms with integrals (non-canonically).  A key, and subtle point, is that  they are integrals over a domain $X^{\delta'}$  of  a   global regular form with   logarithmic singularities.  We can therefore interpret the previous integrals as motivic periods of universal moduli space motives, and hence define a motivic version of the string amplitude. 
  
  \begin{theorem} There is a motivic string amplitude:
  $$I^{\mm}(\omega, \underline{s}) \quad \in \quad \Pe^{\mm,+}_{\mathcal{MT}(\ZZ)}  ((s_{c}))$$
  which is  a Laurent expansion with coefficients in the ring of motivic multiple zeta values of homogeneous weight. Its period is the open string amplitude
  $$  \mathrm{per} \, I^{\mm}(\omega, \underline{s}) =  I^{\mathrm{open}}(\omega, \underline{s})\   .  $$
It follows that  the  coefficients of $I^{\mathrm{open}}(\omega, \underline{s})$ are multiple zeta values.
    \end{theorem}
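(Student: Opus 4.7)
The plan is to construct $I^{\mm}(\omega,\underline{s})$ term by term in the Laurent expansion produced by the previous renormalisation theorem, and then to identify each coefficient with a motivic period of a moduli space $\overline{\mathcal{M}}_{0,N'}$ relative to a suitable boundary divisor.

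First I would take the renormalised expression
\[
I^{\open}(\omega,\underline{s}) = \sum_{J} \frac{1}{s_J} \int_{X_J} \Omega_J^{\ren}
\]
and expand $\Omega_J^{\ren}$ in powers of the $s_{ij}$ around the origin. Since the integrals on the right-hand side converge at $s_{ij}=0$, each coefficient is a finite sum of absolutely convergent integrals of the shape
\[
\int_{X^{\delta}} \Bigl(\prod_{c} \log^{n_c}(u_c)\Bigr)\, \eta
\]
over products of the domains $X^\delta$, where $\eta$ is a global algebraic form on $\mathcal{M}_{0,N}$ with logarithmic singularities along the boundary. Collecting factors of $1/s_J$ gives a Laurent series in the $s_c$ whose coefficients are such convergent integrals. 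I would verify at this stage that the total weight (length of the chain of logarithms plus degree of $\eta$) is homogeneous in the exponent of $\underline{s}$, which accounts for the "$+$" (homogeneous weight) refinement in $\Pe^{\mm,+}_{\MT(\ZZ)}$.

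Next I would turn each such integral into the period of a motive of a moduli space. Following the strategy already outlined in the introduction, I would replace each factor $\log(u_c)$ by an iterated integral $\int_0^{u_c} \tfrac{du}{u}$, introducing a new marked point for every logarithm. This gives a realisation of the coefficient as an integral
\[
\int_{X^{\delta'}} \widetilde{\eta}\ ,
\]
on a higher moduli space $\mathcal{M}_{0,N'}$, where $\widetilde{\eta}$ is globally regular with logarithmic singularities along the boundary and $X^{\delta'}$ is the associated dihedral domain. Such an integral is by definition a period of the relative cohomology $H^{N'-3}(\overline{\mathcal{M}}_{0,N'}, \partial \overline{\mathcal{M}}_{0,N'})$, which by the theorem of Goncharov--Manin (and its refinements) lies in $\MT(\ZZ)$. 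I would then define the motivic coefficient as the motivic period represented by this same pair of cohomology classes, and set $I^{\mm}(\omega,\underline{s})$ to be the resulting Laurent series in $\Pe^{\mm,+}_{\MT(\ZZ)}((s_c))$. The period map, applied coefficient by coefficient, reproduces exactly the original convergent integrals and hence $I^{\open}(\omega,\underline{s})$.

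The main obstacle is \emph{well-definedness}: the lift from $\log(u_c)$ to an iterated integral is non-canonical, so different choices produce a priori different motivic coefficients. I would handle this by showing that two such lifts differ by a coboundary in the relative cohomology of the appropriate $\overline{\mathcal{M}}_{0,N'}$, so that the associated motivic period is unchanged. Equivalently, one argues that the cohomology class of $\widetilde{\eta}$, and of the relative cycle $X^{\delta'}$, depend only on the data $(\eta,\{n_c\})$ and not on the auxiliary marked points, using the compatibility of dihedral coordinates with the forgetful maps $\mathcal{M}_{0,N'}\to \mathcal{M}_{0,N}$. Once this independence is established, the homogeneity of weight and the identity $\mathrm{per}\, I^{\mm}(\omega,\underline{s}) = I^{\open}(\omega,\underline{s})$ follow term by term, and the final assertion about the coefficients of $I^{\open}$ being multiple zeta values is an immediate consequence of the fact that periods of $\MT(\ZZ)$-motives over $\overline{\mathcal{M}}_{0,N'}$ are $\QQ$-linear combinations of MZVs.
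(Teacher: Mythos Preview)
Your overall architecture matches the paper: renormalise, Taylor-expand the convergent pieces into integrals of products of $\log(u_c)$ against logarithmic forms, replace logarithms by integrals on a larger moduli space, and invoke the fact that periods of $(\mathcal{M}_{0,N'}^{\delta'},\partial\mathcal{M}_{0,N'}^{\delta'})$ are motivic MZVs. Two points need correction.

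First, a technical one: the formula $\log(u_c)=\int_0^{u_c}\tfrac{du}{u}$ is divergent, and even the corrected $\int_1^{u_c}\tfrac{du}{u}$ does not produce an integral over a dihedral cell. The paper's actual mechanism (Example~\ref{ExampleLogarithm} and Lemma~\ref{lemremovealllog}) is more delicate: one adds a new marked point to obtain a fibration $\mathcal{M}_{0,S_c}^{\delta_c}\to\mathcal{M}_{0,S}^{\delta}$ and realises $\log(v_{13})=\log(1-x)$ as $\int_0^1 d\log(1-xy)$ over the fibre. Crucially, the paper observes that $\log(v_{24})=\log(x)$ \emph{cannot} be written this way in the same dihedral structure, so one must set things up so that only the $\log(1-x)$-type coordinate is being integrated away at each step. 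One also has to verify that the pulled-back form remains a global logarithmic form on $\overline{\mathcal{M}}_{0,S_c}$ and that the ``convergent monomial'' condition (every pole of $\omega_0$ is tamed by a crossing $\log$) is preserved; this is the content of the proof of Lemma~\ref{lemremovealllog} and is not automatic from your description.

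Second, and more importantly: you propose to prove that the motivic lift is independent of the auxiliary choices by a coboundary argument. The paper does \emph{not} do this and explicitly flags the lift as non-canonical (Remark after Corollary~\ref{cormotivic}); independence is only expected to follow from the period conjecture. Showing that two different insertions of marked points yield the same class in relative cohomology of different $\overline{\mathcal{M}}_{0,N'}$'s is tantamount to proving nontrivial relations among motivic MZVs, which is beyond the scope here. Fortunately the theorem only asserts \emph{existence} of a motivic lift, so you should simply fix a choice and drop the well-definedness argument entirely.
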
 
  
  The first statement has been used implicitly in \cite{schlottererstiebergermotivic, schlottererschnetz} by assuming the period conjecture for multiple zeta values. The fact that the Laurent coefficients are multiple zeta values  is folklore. 
 A  subtlety  in the previous theorem is that the motivic lift $I^{\mm}(\omega, \underline{s})$ is a priori not unique, as there are many possible ways to express the logarithms $\log(u_c)$ as integrals. We believe that  one could fix these choices if one wished. In any case, the period conjecture suggests that the motivic amplitude $I^{\mm}(\omega, \underline{s})$ is independent of these choices.
  
   By applying  the general theorems on single-valued integration proved in the prequel to this paper \cite{BD1} we deduce that the closed string amplitude is the single-valued version of the motivic amplitude. 
  
  \begin{theorem} \label{thm: introsvandclosed} Let $\pi^{\mathfrak{m},\dR}$ denote the de Rham projection map from effective mixed Tate motivic periods to de Rham motivic periods (which maps $\zeta^{\mm}$ to $\zeta^{\mm,\dR}$), and $\s$ the single-valued period map (which maps $\zeta^{\mm,\dR}$ to $\zeta^{\sv}$).  Then 
  $$I^{\closed} (\omega, \underline{s})  =  \s\, \pi^{\mathfrak{m},\dR} \,  I^{\mm}(\omega, \underline{s}).$$
  It follows that the coefficients in the canonical Laurent expansion of the closed string amplitudes are single-valued multiple zeta values. 
  \end{theorem}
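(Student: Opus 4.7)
The plan is to match the two sides term-by-term in the Laurent expansion around $\underline{s}=0$. By the renormalisation theorem (Theorem \ref{thmrenormclosed} and its open counterpart), both $I^{\open}(\omega,\underline{s})$ and $I^{\closed}(\omega,\underline{s})$ extend to convergent Laurent expansions whose coefficients are explicit convergent integrals. Since the maps $\mathrm{per}$, $\pi^{\mm,\dR}$ and $\s$ act coefficient-wise on formal Laurent series in the $s_{ij}$, and since $\mathrm{per}(I^{\mm})=I^{\open}$ by construction, the theorem reduces to checking that every Laurent coefficient of $I^{\closed}$ coincides with $\s\,\pi^{\mm,\dR}$ applied to the corresponding motivic Laurent coefficient of $I^{\mm}$.

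First I would expand the open renormalisation
$$I^{\open}(\omega,\underline{s}) = \sum_{J} \frac{1}{s_J} \int_{X_J} \Omega_J^{\ren}$$
by writing each factor $u_c^{s_c}=\exp(s_c \log u_c)$ as a power series; the coefficients then become integrals over a domain $X^{\delta}$ of products $\prod_c \log^{n_c}(u_c)$ against regular logarithmic forms. As indicated immediately before the statement of the theorem, each such integral is lifted to a motivic period by replacing every $\log u_c$ with an auxiliary integral on a larger moduli space $\overline{\mathcal{M}}_{0,N'}$; the resulting period pairs a global algebraic de Rham class on $\overline{\mathcal{M}}_{0,N'}$ with the Betti class of a real domain $X^{\delta'}$, and these are precisely the motivic periods whose formal sum is $I^{\mm}(\omega,\underline{s})$.

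Next I would apply the single-valued integration theorem of the prequel \cite{BD1}, which states that the single-valued period of such a relative motivic period of $\overline{\mathcal{M}}_{0,N'}$ is an absolutely convergent complex integral over $\overline{\mathcal{M}}_{0,N'}(\CC)$ of the form $\int \nu_{S'}\wedge\overline{\eta}$, where $\nu_{S'}=c_0^{\vee}(X^{\delta'})$ is the logarithmic form assigned to the real domain by the map $c_0^{\vee}$ recalled in \S1.2. Applied coefficient-wise, this converts each motivic Laurent coefficient of $\s\,\pi^{\mm,\dR}\,I^{\mm}(\omega,\underline{s})$ into an explicit convergent integral over the complex moduli space.

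Finally I would reassemble these complex integrals and compare them with the closed renormalisation of Theorem \ref{thmrenormclosed}. The expansion $|u_c|^{2s_c}=\exp\bigl(s_c(\log u_c+\log\overline{u_c})\bigr)$ is arranged precisely so that the closed Laurent coefficients pair the same holomorphic logarithms appearing on the open side with their antiholomorphic conjugates, and the form $\nu_S$ featuring in the closed integrand equals $c_0^{\vee}$ of the open domain by the very formula displayed in \S1.2. The main obstacle is to track the renormalisation counter-terms through this identification: one must verify that the inclusion-exclusion procedure over non-crossing chord sets commutes with $\s\,\pi^{\mm,\dR}$, which reduces to a compatibility between the counter-term morphisms of \S\ref{sect: Renorm} and the functoriality of $c_0^{\vee}$ along the boundary strata of $\overline{\mathcal{M}}_{0,N}$.
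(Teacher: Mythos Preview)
Your overall strategy matches the paper's: renormalise both sides, expand in $\underline{s}$, lift the open Laurent coefficients to motivic periods on larger moduli spaces $\mathcal{M}_{0,S'}$, apply the single-valued integration theorem of \cite{BD1} there, and compare with the closed expansion. However, you misidentify where the genuine difficulty lies, and the step you do not address is the one that actually requires work.

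After applying \cite{BD1} on the enlarged moduli space $\mathcal{M}_{0,S'}$, you obtain a complex integral of the shape $(2\pi i)^{3-|S'|}\int_{\overline{\mathcal{M}}_{0,S'}(\CC)} \nu_{S'}\wedge\overline{\omega'}$. What you must then show is that this integral on the \emph{large} space equals the desired integral on the \emph{original} space $\overline{\mathcal{M}}_{0,S}(\CC)$ with factors $\log|u_c|^2$ inserted. This descent is not automatic: it requires (i) the compatibility $\nu_{S'}=\pm(f_c^\gamma)^*(\nu_c\otimes\nu_{S',\delta'}\otimes\nu_{S'',\delta''})$ of Lemma~\ref{lemfcnus}, so that the large $\nu_{S'}$ factors along the fibration used to remove the logarithm, and (ii) a single-valued analogue of the logarithm formula~\eqref{logv13}, namely the identity
\[
\log|1-x|^2 \;=\; \frac{1}{2\pi i}\int_{\PP^1(\CC)}\!\left(-\frac{dy}{y(1-y)}\right)\wedge d\log(1-\overline{x}\,\overline{y})
\]
(Lemma~\ref{lemmasvlog}), which converts the fibre integral back into a $\log|u_c|^2$. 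These two ingredients are the heart of the proof; without them there is no way to match the complex integral on $\mathcal{M}_{0,S'}$ to a closed-string integrand on $\mathcal{M}_{0,S}$.

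By contrast, the issue you flag as the ``main obstacle''---compatibility of the inclusion--exclusion counter-terms with $\s\,\pi^{\mm,\dR}$---is essentially free: the parallel renormalisation Theorems~\ref{thmrenormOpen} and~\ref{thmrenormclosed} give \emph{identical} combinatorial structures for the open and closed expansions, so once the per-coefficient identity (the theorem with formula~\eqref{svImmformula}) is established, the Laurent series reassemble automatically.
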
 
  This theorem, for periods (i.e., assuming the period conjecture) was conjectured in \cite{stiebergersvMZV, StiebergerTaylor} and proved independently by a very different method from our own in \cite{schlottererschnetz}. Since the first draft of this paper was written, yet another approach to computing the closed string amplitudes appeared in \cite{vanhovezerbini}. 
  An interesting consequence of Theorem \ref{thm: introsvandclosed} is that it suggests that  the space generated by closed string amplitudes might be  closed under the action of the de Rham motivic Galois group. It is important to note that the proof of the previous theorem, in contrast to the approach sketched in \cite{schlottererschnetz},  uses no prior knowledge of multiple zeta values or polylogarithms, and  merely involves  an application of our general results on single-valued integrals.

\subsubsection*{String amplitudes from the point of view of cohomology with coefficients and double copy formulae} 
In the final parts of this paper \S\S\ref{sect: CoeffBackground},\ref{sect: SectCohomcoeffs},  we  consider  the open string amplitude as  a period of the canonical pairing between algebraic de Rham cohomology with coefficients in a certain universal (Koba--Nielsen) algebraic vector bundle with connection, and locally finite homology with coefficients in its dual local system:
$$H^{N-3} ( \mathcal{M}_{0,N} , \nabla_{\underline{s}} ) \otimes  H^{\mathrm{lf}}_{N-3} ( \mathcal{M}_{0,N} , \mathcal{L}_{-\underline{s}} ) \To \CC \ . $$
As in the case of the beta function, Poincar\'{e} duality exchanges $\underline{s}$ and $-\underline{s}$ and leads to quadratic functional equations for open string amplitudes generalising \eqref{eq: functional equation beta intro}.

It is important to note that this interpretation of the open string amplitude, as a function of generic Mandelstam variables, is quite different from its interpretation as a Laurent series. After defining the single-valued period map, our main theorem 
 (Theorem \ref{theoremsvforlogwithcoeffs}) provides an interpretation of the closed string amplitudes  
 $I^{\closed}(\omega, \underline{s})$ as its
single-valued periods.   Theorem  \ref{theoremsvforlogwithcoeffs}   is in no way logically equivalent to the previous results since it is not obvious that the two notions of `single-valuedness', namely as a function of the $s_{ij}$, or term-by-term in their Laurent expansion, coincide.  The paper \cite{BDLauricella} provides yet  another connection between these two different cohomological points of view.

 As a consequence of Theorem  \ref{theoremsvforlogwithcoeffs}, we immediately  deduce an identity relating closed and open string amplitudes which involves the period matrix, its inverse, and the de Rham pairing. By the compatibility between the de Rham and Betti pairings, it in turn implies  a `double copy formula' which generalizes \eqref{introbetaDC2}. It expresses closed string amplitudes as quadratic expressions in open string amplitudes  but this time using the Betti intersection pairing (Corollary \ref{theorem2ndDC}).  Since Mizera has recently shown \cite{mizera} that the inverse transpose matrix of Betti intersection numbers coincides with the matrix of KLT coefficients, our formula  implies the  KLT relations. 

  Because our results for genus zero string amplitudes are in fact instances of a more general mathematical theory \cite{BD1}, valid for all algebraic varieties,  we expect that many of these results may carry through in some form to higher genera. It remains to be seen, in the light of  \cite{Witten}, if this has a chance of leading to a 
possible  double copy formalism for higher genus string amplitudes.

\subsection{Contents}  In \S \ref{sect: Dihed} we review the geometry of the moduli spaces $\mathcal{M}_{0,N}$, dihedral coordinates, and the forgetful maps which play a key role in the regularisation of singularities. In \S \ref{sect: StringAmp0} we recall the definitions of tree-level string amplitudes, their interpretation as moduli space integrals, and discuss their convergence. Section \S\ref{sect: Renorm} defines the `renormalisation' of string amplitudes via the subtraction of counter-terms  which uses the natural maps between moduli spaces. It  uses in an essential way the fact that the zeros of dihedral coordinates are normal crossing. Lastly, in \S\ref{sect: MotAmp} we construct the motivic amplitude and prove the main theorems using \cite{BD1}. 
The final sections \S\S\ref{sect: CoeffBackground},\ref{sect: SectCohomcoeffs} treat cohomology with coefficients as discussed above. In an appendix, we prove a folklore result that  the Parke--Taylor forms are a basis of cohomology with coefficients.

\subsection{Acknowledgements} 
This project has received funding from the European Research Council (ERC) under the European Union’s Horizon 2020 research and innovation programme (grant agreement no. 724638). Both authors  thank the IHES for hospitality. The second author was partially supported by ANR grant ANR-18-CE40-0017. This paper was initiated during the trimester ``Periods in number theory, algebraic geometry and physics'' which took place at the HIM Bonn in 2018,  to which both authors offer their thanks.  Many thanks to Andrey Levin, whose talk during this programme on the dilogarithm inspired this project, and also to Federico Zerbini for discussions. The second author also thanks Mike Falk for discussions on cohomology with coefficients and hyperplane arrangements.

\section{Dihedral coordinates and geometry of \texorpdfstring{$\mathcal{M}_{0,S}$}{M0,S}} \label{sect: Dihed}

Let $n \geq 0$  and let 
 $S$ be a set with $n+3$ elements, which we frequently identify with $\{1,\ldots,n+3\}$.
  Let $\mathcal{M}_{0,S}$ denote the moduli space of curves of genus zero with marked points labelled by $S$.  It is a smooth scheme over $\ZZ$ whose points correspond to  sets of $n+3$ distinct points $p_s \in \PP^1$,  for $s\in S$, modulo the action of $\mathrm{PGL}_2$.  Since this action is simply triply transitive, we can place $p_{n+1}=1, p_{n+2}=\infty, p_{n+3}=0$ and define the \emph{simplicial coordinates} $(t_1,\ldots,t_n)$ to be the remaining $n$ points.
In other words, they are defined for $1\leq i \leq n$ as the cross-ratios 
\begin{equation}\label{simplicial}
t_i=\frac{(p_i-p_{n+3})(p_{n+1}-p_{n+2})}{(p_{i}-p_{n+2})(p_{n+1}-p_{n+3})} \ \cdot
\end{equation}
Note that the indexing differs slightly from that in \cite{BrENS}. 
These coordinates identify $\mathcal{M}_{0,S}$ as the hyperplane complement $(\PP^1 \backslash \{0,1,\infty\})^n$ minus diagonals, and are widespread in the physics literature. We also use \emph{cubical coordinates}:
\begin{equation} \label{cubical} x_1 = t_1/t_2  \ , \ \ldots \ , \  x_{n-1} = t_{n-1}/t_n  \ , \  x_n = t_n \ . \end{equation}

\subsection{Dihedral extensions of moduli spaces} 
 A \emph{dihedral structure} $\delta$ for $S$ is an identification of $S$ with the edges of an $(n+3)$-gon (which we call $(S,\delta)$, or simply $S$ when $\delta$ is fixed) modulo dihedral symmetries. When we identify $S$ with $\{1,\ldots,n+3\}$ we take $\delta$ to be the `standard' dihedral structure that is compatible with the linear order on $S$. Let $\chi_{S,\delta}$ denote the set of chords of $(S,\delta)$. The dihedral extension  $\mathcal{M}_{0,S}^{\delta}$ of $\mathcal{M}_{0,S}$   is a smooth affine scheme over $\ZZ$ of dimension $n$ defined in \cite{BrENS}.   Its affine ring $\Or(\mathcal{M}_{0,S}^{\delta})$ is the ring over $\ZZ$ generated by  `dihedral coordinates' $u_c$, for each chord $c\in \chi_{S,\delta}$, modulo the ideal generated by the relations 
\begin{equation} \label{ucrelations}
\prod_{c\in A}  u_c  + \prod_{c\in B} u_c =1
\end{equation} 
for all sets of chords $A, B \subset \chi_{S,\delta}$ which cross completely (defined in \cite[\S 2.2]{BrENS}). We  frequently use the following special case: if $c$, $c'$ are crossing chords, then 
\begin{equation}  \label{ucprimecrossingc}
u_{c'} = 1 - x u_c  
\end{equation}
where $x$ is a product of dihedral coordinates which depends on $c,c'.$

 \begin{figure}[h]
{\includegraphics[height=6cm]{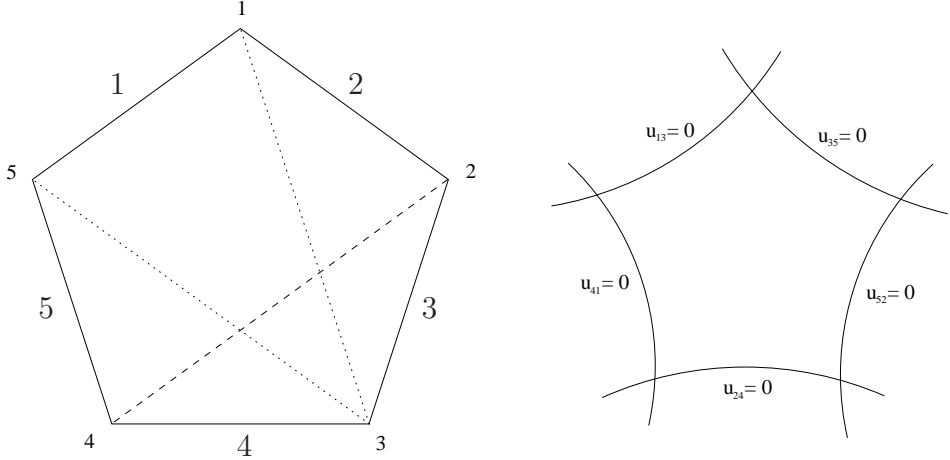}} 
\put(-318,135){\Large{$1$}}
\put(-345,50){\Large{$5$}}
\put(-270,-2){\Large{$4$}}
\put(-200,50){\Large{$3$}}
\put(-228,135){\Large{$2$}}
\caption{On the left: three out of the five chords in a pentagon, corresponding to the dihedral coordinates $u_{24}$ (dashed), $u_{35}$, $u_{13}$ (dotted). The figure illustrates the relation $u_{24}=1-u_{13}u_{35}$. On the right: the five divisors on $\mathcal{M}_{0,5}^{\delta}$ defined by $u_{ij}=0$ form a pentagon. Two divisors  intersect if and only if the corresponding chords do not cross.  }\label{figurepentagons}
\end{figure}

  The zero locus of $u_c$ is denoted $D_c \subset \mathcal{M}_{0,S}^{\delta}$.  We have 
$$\mathcal{M}_{0,S} = \mathcal{M}_{0,S}^{\delta} \backslash D \qquad \hbox{ where } \qquad D =  \bigcup_{c \in \chi_{S,\delta}}  D_c$$
and  $D$ (also denoted  by $\partial \mathcal{M}^{\delta}_{0,S}$) is a  simple normal crossing divisor.   Two components $D_c$, $D_{c'}$ intersect if and only if $c$, $c'$ do not cross.
In the case  $|S|=4$, $\mathcal{M}_{0,S}^{\delta} = \mathbb{A}^1$, and the divisor $D$ has two components, $0$ and $1$, corresponding to  the two chords in a square. The case $|S|=5$ is pictured in Figure \ref{figurepentagons}.

Let us write $\overline{\mathcal{M}}_{0,S}$ for the Deligne--Mumford compactification of $\mathcal{M}_{0,S}.$   The open subspace $\mathcal{M}_{0,S}^{\delta} \subset \overline{\mathcal{M}}_{0,S}$ can be obtained by removing all boundary divisors which are not compatible with the dihedral structure $\delta$. The set of $\mathcal{M}_{0,S}^{\delta}$ as $\delta$ ranges over all dihedral structures form an open affine cover of $\overline{\mathcal{M}}_{0,S}$. 

\subsection{Morphisms} \label{sectmorphismMon}
Given a subset $T \subset S$ with $|T|\geq 3$, let $\delta|_{T}$ denote the dihedral structure on $T$ induced by $\delta$. There is a partially defined map $f_T:\chi_{S,\delta}\rightarrow \chi_{T,\delta_{|T}}$ induced by contracting all edges in $(S,\delta)$  not in $T$. Since some chords map to the outer edges of the polygon $(T,\delta|_T)$ under this operation, it is only defined on the complementary set of such chords in $\chi_{S,\delta}$.   It gives rise to a `forgetful map'
 $$f_T : \mathcal{M}^{\delta}_{0,S} \To \mathcal{M}^{\delta|_T}_{0,T} $$
 whose associated  morphism of affine rings   $f^*_T: \Or( \mathcal{M}^{\delta_{|T}}_{0,T}) \rightarrow \Or(\mathcal{M}^\delta_{0,S})$ is
\begin{equation} \label{fTstar} f_T^* ( u_c ) = \prod_{f_T(c')=c} u_{c'}
\end{equation}
where $c\in \chi_{T,\delta|_T}$, and $c'$ ranges over its preimages in $\chi_{S,\delta}$. 
 The forgetful map restricts to a morphism $f_T : \mathcal{M}_{0,S} \rightarrow \mathcal{M}_{0,T}$ between the open moduli spaces.

A dihedral coordinate $u_c$ is such a morphism $f_{T_c}: \mathcal{M}_{0,S}^{\delta} \rightarrow \mathcal{M}_{0,T_c}^{\delta|_{T_c}} \cong \mathbb{A\!}^1$, where $T_c$ is the set of four edges which meet the endpoints of  $c$.

\subsection{Strata} Cutting along $c\in \chi_{S,\delta}$ breaks the polygon $S$ into two  smaller polygons,  $(S', \delta')$ and $(S'', \delta'')$, with $S=(S'\min\{c\})\sqcup (S''\min\{c\})$ (see e.g. \cite[Figure 3]{BrENS}). There is a  canonical isomorphism 
\begin{equation}  \label{Disproduct}
D_c  \cong \mathcal{M}_{0,S'}^{\delta'} \times \mathcal{M}_{0,S''}^{\delta''}\ .
\end{equation} 
In particular,  the restriction of a dihedral coordinate $u_{c'}$ to the divisor $D_{c}$, where $c'$ and $c$ do not cross, is  the dihedral coordinate $u_{c'}$ on either  $(S', \delta')$ or $(S'', \delta'')$, depending on  which component $c'$ lies in.

\begin{definition} \label{defNotationS/J}  Let  $J\subset \chi_{S,\delta}$ be a  set of $k$ non-crossing chords. Cutting $(S,\delta)$ along $J$ decomposes it into  polygons $(S_i,\delta_i)$, $0 \leq i \leq k$. Write
$$\mathcal{M}^{\delta/J}_{0,S/J} =  \mathcal{M}^{\delta_0}_{0,S_0} \times \cdots \times \mathcal{M}^{\delta_k}_{0,S_{k}}$$
and 
similarly, 
$\mathcal{M}_{0,S/J} =  \mathcal{M}_{0,S_0} \times \cdots \times \mathcal{M}_{0,S_{k}}.$
If $J= \{j_1,\ldots, j_k\}$ then set
$$ D_J =  D_{j_1} \cap \cdots \cap D_{j_k} \ . $$
There is a canonical isomorphism $D_J \cong \mathcal{M}^{\delta/J}_{0,S/J}$.
\end{definition}

\subsection{Trivialisation maps}
A crucial ingredient in our `renormalisation' of differential forms is to use  dihedral coordinates to define a canonical trivialisation of the normal bundles of the divisors $D_c$ in a compatible manner.
In order to define this, we  shall fix a cyclic order $\gamma$ on $S$, which is compatible with $\delta$. 
 Such a cyclic structure is simply   a choice of orientation of the polygon $(S,\delta)$.

\begin{definition}  Let $c$ be a chord as above. Let $T', T''$ be the subsets of $S$ consisting of the edges in  $S'\backslash \{c\}, S'' \backslash \{c\}$,  respectively, together with the next edge in $S$ with respect to the cyclic ordering $\gamma$.  Let $\delta', \delta''$ be the induced dihedral structures on $T', T''$. 
There are  natural bijections $S'\simeq T'$ and $S''\simeq T''$ where in each case we identify the chord $c$ with the next edge after $S'$ or $S''$ in the cyclic ordering. 
 Consider the map 
\begin{equation}  \label{fDc}
 f^{\gamma}_{c} : \mathcal{M}_{0,S}^{\delta}  \To   \mathbb{A}^1\times  \mathcal{M}_{0,S'}^{\delta'} \times  \mathcal{M}_{0,S''}^{\delta''} 
  \end{equation}
 induced by  $f^{\gamma}_{c} = f_{T_c}\times f_{T'} \times f_{T''} $, where  $\mathcal{M}^{\delta|_{T_c}}_{T_c}$ is identified with $\mathbb{A}^1$. An illustration of this map is given in Figure \ref{figuretrivialisation}.
 \end{definition}

 \begin{figure}[h]
    \begin{center}
    \fcolorbox{white}{white}{
    \begin{picture}(451,143) (101,-65)
    \SetWidth{1.5}
    \SetColor{Black}
    \Line(154.565,77.283)(198.727,77.283)
    \Line(198.727,77.283)(231.848,44.162)
    \Line(231.848,44.162)(231.848,0)
    \Line(154.565,77.283)(121.444,44.162)
    \Line(121.444,44.162)(121.444,0)
    \Line(121.444,0)(154.565,-33.121)
    \Line(154.565,-33.121)(198.727,-33.121)
    \Line(198.727,-33.121)(231.848,0)
    \Text(176,84)[lb]{{\Black{$1$}}}
    \Text(219,64)[lb]{{\Black{$2$}}}
    \Text(236,19)[lb]{{\Black{$3$}}}
      \Text(248,19)[lb]{\Large{\Black{$\overset{f_c^{\gamma}}{\to}$}}}
    \Text(220.808,-22)[lb]{{\Black{$4$}}}
    \Text(176.646,-44.162)[lb]{{\Black{$5$}}}
    \Text(124,-22)[lb]{{\Black{$6$}}}
    \Text(111,19)[lb]{{\Black{$7$}}}
    \Text(127,64)[lb]{{\Black{$8$}}}
      \Text(176,-60)[lb]{\Large{\Black{$S$}}}
    \Line[dash,dashsize=5.244](154.565,77.283)(154.565,-33.121)
    \Line(270.171,44.162)(314.333,44.162)
    \Line(314.333,44.162)(314.333,0)
    \Line(314.333,0)(270.171,0)
    \Line(270.171,0)(270.171,44.162)
      \Line[dash,dashsize=5.244](270.171,44.162)(314.333,01)
    \Line(327.5,44)(371.5,44)
    \Line(327.5,0)(371.5,0)
    \Line(327.5,44.162)(327.5,0)
    \Line(371.5,44.162)(371.5,0)
    \Line(407,55)(440,55)
    \Line(440,55)(462,22)
    \Line(407,55)(385,22)
    \Line(385,22)(407,-11)
    \Line(440,-11)(407,-11)
    \Line(440,-11)(462,22)
    \Text(290,48)[lb]{{\Black{$1$}}}
      \Text(290,-10)[lb]{{\Black{$6$}}}
       \Text(288,-40)[lb]{\Large{\Black{$T_c$}}}
       \Text(273,19)[lb]{{\Black{$8$}}}
          \Text(307,19)[lb]{{\Black{$5$}}}
            \Text(347,48)[lb]{{\Black{$8$}}}
      \Text(347,-10)[lb]{{\Black{$6$}}}
       \Text(345,-40)[lb]{\Large{\Black{$T'$}}}
       \Text(332,19)[lb]{{\Black{$7$}}}
          \Text(365,19)[lb]{{\Black{$1$}}}
            \Text(422,60)[lb]{{\Black{$6$}}}
               \Text(390,40)[lb]{{\Black{$5$}}}
                  \Text(390,-2)[lb]{{\Black{$4$}}}
                    \Text(422,-20)[lb]{{\Black{$3$}}}
                      \Text(420,-40)[lb]{\Large{\Black{$T''$}}}
                      \Text(455,-2)[lb]{{\Black{$2$}}}
                         \Text(455,40)[lb]{{\Black{$1$}}}
       \Text(145,0)[lb]{\Large{\Black{$c$}}}
        \SetWidth{0.3}
  \end{picture}
}
\end{center}
\vspace{0.1in}
\caption{An illustration of the trivialisation map $f_c^\gamma$ (the cyclic orientation $\gamma$ is clockwise, induced by the numbering).}\label{figuretrivialisation}
\end{figure}
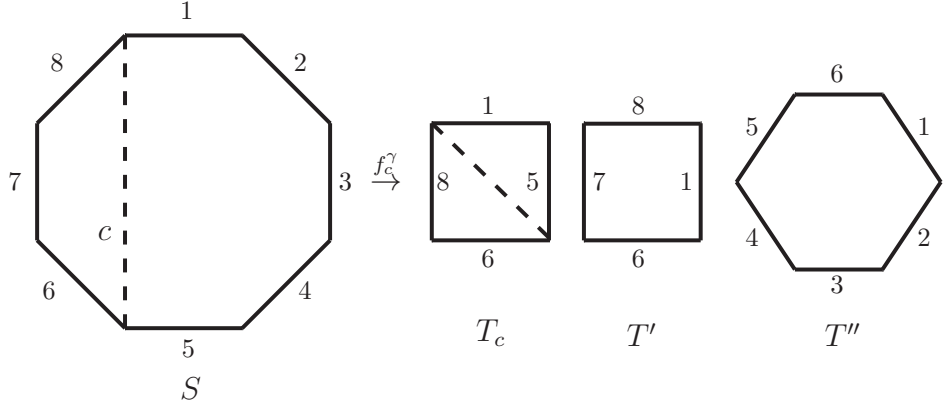
 
  The first component $f_{T_c}$ is simply the dihedral coordinate $u_c$, and hence the restriction of (\ref{fDc}) to $D_c$ 
    induces the isomorphism (\ref{Disproduct}). Note that (\ref{Disproduct}) is canonical, but $f^{\gamma}_{c}$ depends on the choice of cyclic structure $\gamma$.

  \begin{lemma} \label{lemfccommute}
If $c_1,c_2 \in \chi_{S,\delta}$ do not cross,  then $f^{\gamma}_{c_1} \circ f^{\gamma}_{{c_2}}= f^{\gamma}_{{c_2}} \circ f^{\gamma}_{{c_1}}$.
\end{lemma}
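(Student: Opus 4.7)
The plan is to rewrite both compositions as identical products of forgetful maps $f_T$, indexed by an unordered collection of subsets $T \subset S$, using only the non-crossing hypothesis and the functoriality of forgetful maps.

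First, since $c_1, c_2$ do not cross, cutting $(S,\delta)$ along $c_1$ yields two sub-polygons, and $c_2$ remains a chord of exactly one of them, say of the augmented polygon $(T'_1, \delta|_{T'_1})$. I would then interpret $f^{\gamma}_{c_2} \circ f^{\gamma}_{c_1}$ as the map that applies $f^{\gamma}_{c_2}$ (with respect to the inherited cyclic order on $T'_1$) to the factor $\mathcal{M}_{0,T'_1}^{\delta|_{T'_1}}$ of the target of $f^{\gamma}_{c_1}$, leaving the other two factors fixed; the composition $f^{\gamma}_{c_1} \circ f^{\gamma}_{c_2}$ is defined symmetrically. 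Unpacking each $f^{\gamma}_c$ as a product of three forgetful maps $f_T$, and using the functoriality that for $T_1 \subset T_2 \subset S$ the forgetful map $f_{T_1}\colon \mathcal{M}_{0,S}^{\delta} \to \mathcal{M}_{0,T_1}^{\delta|_{T_1}}$ factors through $f_{T_2}$, each side of the lemma becomes a product of five forgetful maps out of $\mathcal{M}_{0,S}^{\delta}$.

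The key observation will then be that the resulting collection of five subsets $T \subset S$ is symmetric in $c_1, c_2$. Cutting $(S,\delta)$ along the non-crossing pair $\{c_1, c_2\}$ produces three sub-polygons $P_0, P_1, P_2$, independently of the order of cuts. The five subsets of $S$ are: the two four-element sets $T_{c_1}, T_{c_2}$ (giving the $\mathbb{A}^1$-factors); and three sets obtained by augmenting each $P_i$ (viewed as a set of edges of $S$) by the cyclically-next edges determined by $\gamma$, where $P_i$ is augmented by one edge for each chord in $\{c_1, c_2\}$ bordering it. Both orderings of cuts yield the same five augmented subsets, so the two compositions coincide; this can also be verified on dihedral coordinates via (\ref{fTstar}).

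The main obstacle is the combinatorial bookkeeping for the augmenting edges. In particular, for the middle sub-polygon bordering both chords, one must check that the edge appended on the $c_1$-side (when $c_2$ is cut first) coincides with the edge appended on the $c_2$-side (when $c_1$ is cut first). A short case analysis, enumerating the relative positions of the cyclically-next edges with respect to $c_1, c_2$, settles this and uses the non-crossing hypothesis in an essential way.
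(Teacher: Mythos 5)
Your proposal takes the same route as the paper's proof: both unpack each composite, via functoriality of forgetful maps, into one and the same product of forgetful maps indexed by $T_{c_1}$, $T_{c_2}$ and the three sets obtained by augmenting the pieces of the simultaneous decomposition of $(S,\gamma)$ along $\{c_1,c_2\}$ by cyclically-next edges; and both reduce the symmetry to the observation that ``extending by the next edge in the cyclic order'' does not depend on the order in which one cuts.

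There is, however, a misstatement in the bookkeeping for the middle piece $S_{12}$. It has two corners, one abutting $c_1$ and one abutting $c_2$, and each corner acquires exactly one augmenting edge; the correct check is that \emph{each corner's} augmenting edge is the same in both cutting orders, which gives two separate equalities. The condition you state --- that the edge appended at the $c_1$-corner when $c_2$ is cut first coincides with the edge appended at the $c_2$-corner when $c_1$ is cut first --- would equate the cyclically-next edge of the arc of $S_1$ with the cyclically-next edge of the arc of $S_2$, which are generically distinct edges of $S$. The reason the \emph{correct} equalities hold (and this is the paper's one-line justification) is that the augmenting edge at a given corner is always the cyclically-next edge of $S$ at that vertex, and this edge lies in whichever intermediate sub-polygon $T'$ or $T''$ one happens to be working inside; so ``next edge'' computed in $T'$, in $T''$, or directly in $S$ agree. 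The same remark is needed to see that the outer pieces $T_1$, $T_2$ receive the same augmenting edges in both orders, which your write-up does not mention. With these fixes your argument coincides with the paper's.
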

\begin{proof} Cutting along $c_1,c_2$ decomposes the oriented polygon $(S,\gamma)$ into  three smaller polygons $(S_1,\gamma_1)$, $(S_{12}, \gamma_{12})$, $(S_2, \gamma_2)$, where $S_1$ has one edge labelled by $c_1$, $S_2$ has one edge labelled by $c_2$, and $S_{12}$ two edges labelled $c_1, c_2$.  The graphs $S_1\cap S$ and $S_2\cap S$  each have one connected component and $S_{12} \cap S$ has exactly two components (one of which may reduce to a single vertex). Extending each such component by the next edge in the cyclic order defines sets $T_1, T_{12}, T_2 \subset S$ where  $|T_1|=|S_1|+1$, $|T_2|=|S_2|+1$, and $|T_{12}|= |S_{12}|+2$. One checks from the definitions that
$$f^{\gamma}_{c_1} f^{\gamma}_{c_2} = f_{T_{c_1}} \times f_{T_{c_2}}\times f_{T_1} \times f_{T_{12}} \times f_{T_2} \ ,$$
which is symmetric in $c_1, c_2$.  The point is that the operation of `extending by adding the next edge in the cyclic order' does not depend on the order
in which one cuts along the chords $c_1,c_2$. 
\end{proof}

\begin{definition} 
 Let $J\subset\chi_{S,\delta}$ be a set of non-crossing chords, and define
$$f^{\gamma}_J:  \mathcal{M}_{0,S}^{\delta} \To \mathbb{A}^J\times \mathcal{M}_{0,S/J}^{\delta/J} $$
for the composite of the maps $f_c^\gamma$, for $c\in J$, in any order, where
 $\mathbb{A}^J = (\mathbb{A}^1)^J$.
Its restriction to $D_J$ gives the canonical isomorphism  $D_J \cong \mathcal{M}^{\delta/J}_{0,S/J}$.
\end{definition} 
 When the cyclic ordering is fixed, we shall drop the $\gamma$ from the notation. 

\subsection{Domains}
Let  $X^{\delta} \subset \mathcal{M}_{0,S}^{\delta}(\RR)$ be the subset defined by the positivity  of the dihedral coordinates $u_c > 0$, for all $c \in \chi_{S,\delta}$. In simplicial coordinates (\ref{simplicial}) it is the open simplex $\{0<t_1<\cdots <t_n<1\}$. In cubical coordinates (\ref{cubical}) it is the open hypercube $(0,1)^n$.
It serves as a domain of integration. On the domain $X^{\delta}$, every dihedral coordinate $u_c$ takes values in $(0,1)$ by \eqref{ucprimecrossingc}. Given a cyclic ordering  $\gamma$ on $S$, (\ref{fDc}) defines a homeomorphism
$$f^{\gamma}_{c}:   X^{\delta} \cong  (0,1)\times X^{\delta'} \times X^{\delta''}\ .$$
More generally, for any set $J$ of $k$ non-crossing chords  (Definition \ref{defNotationS/J})  set
\begin{equation} \label{XJisproduct} X_{J} =     X^{\delta_0} \times \cdots \times X^{\delta_k}\ .\end{equation} 
It follows by iterating the above that 
\begin{equation} \label{fDXdelta} 
f^{\gamma}_{J}  :   X^{\delta} \cong (0,1)^{k} \times X_{J}  \ .
\end{equation} 
The closure $\overline{X^{\delta}} \subset \mathcal{M}_{0,S}^{\delta}(\RR)$ for the analytic topology is a compact manifold with corners which has the structure of an associahedron. 
Note that the maps $f^{\gamma}_J$ do not extend to homeomorphisms of the closed polytopes $\overline{X^{\delta}} $.

\subsection{Logarithmic differential forms} 
We define $\Omega_S^{\bullet}$ to be the graded $\QQ$-subalgebra of regular forms on $\mathcal{M}_{0,S}$
generated by the $d \log u_c$, $c\in \chi_{S,\delta}$. 
These are functorial with respect to forgetful maps, i.e. 
\begin{equation}\label{eq:fTstar log forms}
f_T^*  : \Omega_T^{\bullet} \To \Omega_S^{\bullet}
\end{equation}
which follows from (\ref{fTstar}). 
One knows that all algebraic relations between the forms $d \log u_c$  are generated by quadratic relations and furthermore, by
 Arnol'd--Brieskorn,  that $\mathcal{M}_{0,S}$ is formal, i.e., the natural map
\begin{equation} \label{formality}  \Omega_S^\bullet \overset{\sim}{\To} H_{\dR}^\bullet(\mathcal{M}_{0,S}/\QQ)
\end{equation}
is an isomorphism of $\QQ$-algebras. Consequently,  one has 
\cite[\S 6.1]{BrENS} 
\begin{equation} 
\label{H1description} 
H^1_{\dR} (\mathcal{M}_{0,S}/\QQ) = \bigoplus_{c\in \chi_{S,\delta}} \QQ\Big[ \frac{du_c}{u_c} \Big]\ .
\end{equation} 
Finally, it follows from mixed Hodge theory \cite{delignehodge2} (see, e.g., \cite[\S4]{BD1}), that 
$$\Omega_S^r  =\Gamma \big(\overline{\mathcal{M}}_{0,S},   \Omega^r_{\overline{\mathcal{M}}_{0,S}/\QQ}(\log \partial \overline{\mathcal{M}}_{0,S}) \big)  $$
are the global sections of the sheaf of regular $r$-forms over $\QQ$, with logarithmic singularities along $\partial  \overline{\mathcal{M}}_{0,S}=  
\overline{\mathcal{M}}_{0,S}\backslash  \mathcal{M}_{0,S}$.

\subsection{Residues} Taking the residue of logarithmic differential forms defines a map
$$\mathrm{Res}_{D_c} :    \Omega^{|S|-3}_S  \To  \Omega^{|S'|-3}_{S'} \otimes  \Omega^{|S''|-3}_{S''} \ .$$
It can be represented graphically by cutting $S$ along the chord $c$  (see e.g. \cite[Proposition 4.4 and Remark 4.5]{dupontvallette} where $\mathrm{Res}_{D_c}$ is denoted by  $\Delta_{\{c\}}$ up to a sign). Residues are functorial with respect to forgetful maps: 

\begin{lemma} \label{lemResfunctorial} Let $T\subset S$ as in \S\ref{sectmorphismMon} and $c' \in \chi_{T,\delta|_T}$. Let $c$ be a chord in $\chi_{S,\delta}$ in the preimage of $c'$ with respect to $f_T:\chi_{S,\delta}\rightarrow \chi_{T,\delta|_T}$. Suppose that cutting along $c'$ breaks $(T,\delta|_T)$
into polygons $T', T''$,  and cutting along $c$ breaks $(S,\delta)$ into polygons $S'$, $S''$. Then  the following diagram commutes: 
$$\begin{array}{ccc}
\Omega^{|T|-3}_T    &  \overset{f_T^*}{\To}   &   \Omega^{|S|-3}_S  \\
 \downarrow_{\mathrm{Res}_{D_{c'}}}  &   &   
\downarrow_{\mathrm{Res}_{D_c}}  \\
 \Omega^{|T'|-3}_{T'} \otimes \Omega^{|T''|-3}_{T''}   &  \overset{f^*_{T'}\otimes f^*_{T''}}{\To}   &  \Omega^{|S'|-3}_{S'}\otimes \Omega^{|S''|-3}_{S''}  \end{array}
$$ 
\end{lemma}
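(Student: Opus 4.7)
The strategy is to compute both sides of the diagram locally at a generic point of $D_c$, using the description of residues as coefficients of $\frac{du_c}{u_c}$ restricted to $D_c$.

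First, I would apply (\ref{fTstar}) to compute the pullback of the generating logarithmic form:
$$f_T^*\!\left( \frac{du_{c'}}{u_{c'}} \right) \;=\; d\log \!\!\prod_{c'' \in f_T^{-1}(c')} \!\! u_{c''} \;=\; \frac{du_c}{u_c} \;+\! \sum_{\substack{c'' \in f_T^{-1}(c')\\ c''\neq c}} \frac{du_{c''}}{u_{c''}}\ .$$
The key local observation is that for every other $c'' \in f_T^{-1}(c')$, the function $u_{c''}$ is a unit near the generic point of $D_c$. Indeed, if $c''$ does not cross $c$, then the divisors $D_c$ and $D_{c''}$ intersect properly, so $u_{c''}$ is nonzero at the generic point of $D_c$. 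If $c''$ crosses $c$, the relation (\ref{ucprimecrossingc}) gives $u_{c''} = 1 - x\, u_c$ for some product $x$ of dihedral coordinates, which restricts to $1$ on $D_c$. Consequently, writing $g = \prod_{c''\neq c} u_{c''}$ (a unit near generic points of $D_c$), we have $f_T^* u_{c'} = u_c \cdot g$ and the sum above equals $\frac{du_c}{u_c} + \frac{dg}{g}$, the second term being regular along $D_c$.

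Next, for $\omega' \in \Omega^{|T|-3}_T$ I would use the standard Poincar\'e decomposition near a generic point of $D_{c'}$: write $\omega' = \frac{du_{c'}}{u_{c'}} \wedge \alpha' + \beta'$ with $\alpha',\beta'$ regular along $D_{c'}$, so that $\mathrm{Res}_{D_{c'}}(\omega') = \alpha'|_{D_{c'}}$. Pulling back,
$$f_T^*\omega' \;=\; \frac{du_c}{u_c}\wedge f_T^*\alpha' \;+\; \frac{dg}{g}\wedge f_T^*\alpha' \;+\; f_T^*\beta'\ ,$$
and since $f_T$ sends $D_c$ into $D_{c'}$, both $f_T^*\alpha'$ and $f_T^*\beta'$ are regular near the generic point of $D_c$. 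Reading off the coefficient of $\frac{du_c}{u_c}$ and restricting to $D_c$ gives
$$\mathrm{Res}_{D_c}(f_T^*\omega') \;=\; (f_T^*\alpha')\big|_{D_c} \;=\; (f_T|_{D_c})^*\mathrm{Res}_{D_{c'}}(\omega')\ .$$

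Finally, I need to identify $f_T|_{D_c}$ with $f_{T'} \times f_{T''}$ under the canonical product decompositions of (\ref{Disproduct}). This is a purely combinatorial verification: cutting $(S,\delta)$ along $c$ and then contracting the non-$T$ edges in each piece produces exactly the two polygons $(T',\delta|_{T'})$ and $(T'',\delta|_{T''})$ obtained by first contracting to $T$ and then cutting along $c'$ (using that the endpoints of $c$ map to those of $c'$). Once this identification is in place, the right-hand side of the previous display is precisely $(f_{T'}^*\otimes f_{T''}^*)\,\mathrm{Res}_{D_{c'}}(\omega')$, proving commutativity.

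The main obstacle is the compatibility in the last step between cutting and contracting, but this is a direct combinatorial check using the definitions in \S\ref{sectmorphismMon}; the analytic content is concentrated in the local regularity argument for the sum over $c''\neq c$.
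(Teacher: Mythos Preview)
Your proof is correct and follows essentially the same approach as the paper's: both compute $f_T^*(d\log u_{c'})$ via (\ref{fTstar}), observe that the terms $d\log u_{c''}$ for $c''\neq c$ are regular along $D_c$, and reduce to the identification $(f_T|_{D_c})^* = f_{T'}^*\otimes f_{T''}^*$ under (\ref{Disproduct}). You give more detail on the regularity step (treating the crossing and non-crossing cases for $c''$ separately), whereas the paper simply invokes functoriality of the residue and writes the key line $\mathrm{Res}_{D_c} f_T^*(d\log u_{c'}\wedge\omega) = f_T^*(\omega)|_{u_c=0}$ directly.
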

\begin{proof} This is simply the functoriality of the residue.
It can also be checked explicitly using (\ref{fTstar}) and (\ref{H1description}) which implies that
$$\mathrm{Res}_{D_c} f_T^* ( d\!\log u_{c'}\wedge\omega)  =\mathrm{Res}_{D_c}   \Big(\big( d\!\log u_c + \sum_x d\!\log u_x\big)\wedge f_T^*(\omega)\Big)  =  f_T^*(\omega)\big|_{u_c=0}\ ,$$
where $x$ ranges over chords in the preimage of $c'$ not equal to $c$.   Thus the statement reduces to the equation 
$(f_{T'}^*\otimes f_{T''}^*) (\omega|_{u_{c'}=0}) = f_T^*(\omega)|_{u_c=0}$, which is clear. 
For a form $\omega$ which does not have a pole along $D_{c'}$, we have $\mathrm{Res}_{D_{c'}} \omega=0$. One checks using (\ref{fTstar}) and the fact that  $f_T(c) = c'$
that $\mathrm{Res}_{D_{c}}f_T^*(\omega)=0$.  \end{proof}
 
In the opposite direction, a cyclic structure  $\gamma$ defines maps
\begin{equation}\label{eq:fcstar log forms}
(f^{\gamma}_c)^* : \QQ\textstyle\frac{dx}{x}\otimes\Omega^{|S'|-3}_{S'} \otimes   \Omega^{|S''|-3}_{S''}  \To \Omega^{|S|-3}_S 
\end{equation}
which send  $\frac{dx}{x}\otimes\omega' \otimes \omega''$ to $d\log(u_c)\wedge f_{T'}^*(\omega')\wedge f^*_{T''}(\omega'')$.

\begin{lemma} We have
\begin{equation} \label{Resonfcstar} \mathrm{Res}_{D_c} \left( (f^{\gamma}_c)^* (\textstyle\frac{dx}{x}\otimes\omega' \otimes \omega'' ) 
\right) = \omega' \otimes \omega''
\end{equation}
and 
\begin{equation} \label{Resonfcstarzero} \mathrm{Res}_{D_{c'}} \left( (f^{\gamma}_c)^* (\textstyle\frac{dx}{x}\otimes\omega' \otimes \omega'' ) 
\right) = 0
\end{equation}
if $c$ and $c'$ cross.
\end{lemma}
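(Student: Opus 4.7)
The starting point for both parts is the explicit formula
$$(f_c^\gamma)^*\bigl(\tfrac{dx}{x} \otimes \omega' \otimes \omega''\bigr) = d\log(u_c) \wedge f_{T'}^*(\omega') \wedge f_{T''}^*(\omega''),$$
which comes directly from the definition of $(f_c^\gamma)^*$.

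For \eqref{Resonfcstar}, my plan is first to check that $f_{T'}^*(\omega')$ and $f_{T''}^*(\omega'')$ are regular along $D_c$, and then to identify the resulting residue with a restriction. Regularity follows from the construction of $T'$: under the bijection $S' \simeq T'$, the chord $c \in \chi_{S,\delta}$ corresponds to the outer edge $c_{\text{next}}$ of $T'$, hence is not in the domain of the partially defined map $f_{T'} : \chi_{S,\delta} \to \chi_{T',\delta|_{T'}}$. By \eqref{fTstar}, no factor $u_c$ appears in $f_{T'}^*(u_a)$ for any chord $a$ of $T'$; since the pole locus of $\omega'$ is contained in $\bigcup_a D_a$, the pullback has no pole along $D_c$, and symmetrically for $f_{T''}^*(\omega'')$. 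Consequently
$$\mathrm{Res}_{D_c}\bigl((f_c^\gamma)^*(\tfrac{dx}{x} \otimes \omega' \otimes \omega'')\bigr) = \bigl(f_{T'}^*(\omega') \wedge f_{T''}^*(\omega'')\bigr)\big|_{D_c}.$$
To identify this restriction I will invoke the observation following \eqref{fDc} that the restriction of $f_c^\gamma$ to $D_c$ is precisely the canonical isomorphism \eqref{Disproduct}; in particular $f_{T'}|_{D_c}$ and $f_{T''}|_{D_c}$ become the two projections from $\mathcal{M}_{0,S'}^{\delta'} \times \mathcal{M}_{0,S''}^{\delta''}$, so the right-hand side equals $\omega' \otimes \omega''$ as required.

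For \eqref{Resonfcstarzero}, the entire proof reduces to the observation that $u_c|_{D_{c'}} = 1$ whenever $c, c'$ are crossing chords. I will obtain this by applying \eqref{ucprimecrossingc} with the roles of $c$ and $c'$ interchanged: the identity supplies a product $y$ of dihedral coordinates such that $u_c = 1 - y\, u_{c'}$, so restricting to $D_{c'} = \{u_{c'} = 0\}$ gives $u_c|_{D_{c'}} = 1$ and hence $d\log(u_c)|_{D_{c'}} = 0$. Since crossing chords have disjoint divisors, $d\log(u_c)$ is regular in a neighbourhood of $D_{c'}$; the standard formula for the residue of a wedge with a regular factor then yields
$$\mathrm{Res}_{D_{c'}}\bigl(d\log(u_c) \wedge f_{T'}^*(\omega') \wedge f_{T''}^*(\omega'')\bigr) = -\, d\log(u_c)\big|_{D_{c'}} \wedge \mathrm{Res}_{D_{c'}}\bigl(f_{T'}^*(\omega') \wedge f_{T''}^*(\omega'')\bigr) = 0.$$

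The only non-obvious step is the identity $u_c|_{D_{c'}} = 1$ for crossing chords: \eqref{ucprimecrossingc} as written expresses $u_{c'}$ in terms of $u_c$, but by the symmetry of the ``crossing'' relation one can apply it with the two chords swapped to obtain the form needed here. Once that is in hand, both parts of the lemma follow immediately.
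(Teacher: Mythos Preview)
Your proof is correct and follows essentially the same route as the paper's. The paper dispatches \eqref{Resonfcstar} with ``follows from the definition'' and proves \eqref{Resonfcstarzero} by the same key observation you make---that \eqref{ucprimecrossingc} (with $c,c'$ swapped) gives $u_c=1-xu_{c'}$, so $d\log(u_c)$ vanishes along $D_{c'}$---concluding directly that the full form has no pole there. Your version simply fills in the details: the regularity of $f_{T'}^*(\omega')$, $f_{T''}^*(\omega'')$ along $D_c$ and the identification via \eqref{Disproduct} for the first part, and the residue-of-a-wedge formula for the second.
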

\begin{proof} The first equality follows from the definition. Suppose that cutting $(S,\delta)$ along   $c$ decomposes it into  $S', S''$. Since $c'$ crosses $c$, (\ref{ucprimecrossingc}) implies that $d \log (u_c) = d \log (1-x u_{c'})$ vanishes along $u_{c'}=0$. Hence forms in
 $(f^{\gamma}_{c})^* (\QQ \frac{dx}{x}  \otimes\Omega^{|S'|-3}_{S'} \otimes  \Omega^{|S''|-3}_{S''} )$ have no poles  along $D_{c'}$, which proves the second equality.
\end{proof}

\subsection{Summary of structures} With a view to generalisations we briefly list the geometric ingredients in our renormalisation procedure. We have a simple normal crossing divisor $D \subset \mathcal{M}_{0,S}^{\delta}$ whose induced stratification defines an operad structure  (\ref{Disproduct}). More precisely, this is a dihedral operad in the sense of \cite{dupontvallette}. We have  spaces of global regular logarithmic forms equipped with 
\begin{itemize}
\item (Residues)  $$\mathrm{Res}_{D_c} : \Omega^{|S|-3}_S   \To     \Omega_{S'}^{|S'|-3} \otimes  \Omega_{S''}^{|S''|-3}\ .$$ 
\item (Trivialisations, depending on a choice of cyclic structure on $S$)   $$(f_c^{\gamma})^* :  \QQ\textstyle\frac{dx}{x}\otimes \Omega_{S'}^{|S'|-3} \otimes  \Omega_{S''}^{|S''|-3} \To  \Omega^{|S|-3}_S $$
\end{itemize}
satisfying a certain number of compatibilities. In this paper, we also use the property 
$u_c|_{D_{c'}}=1$ if $D_c$ and $D_{c'}$ do not intersect, but 
we plan to return to the  renormalisation of integrals in a more general context with a leaner set of axioms.

\subsection{Examples} \label{Examples4,5} 
Let $|S|=4$, and set $S=\{s_1,s_2,s_3,s_4\}$ with the natural dihedral structure $\delta$. The square $(S,\delta)$ has two chords, and  two dihedral coordinates
$$v_{24}= x \qquad \hbox{ and } \qquad v_{13} = 1-x \ .$$
Here, and in the next example, a subscript  $ij$  denotes the chord meeting the edges labelled
$\{s_i,s_{i+1}\}$ and $\{s_j, s_{j+1}\}$ \cite[Figure 2]{BrENS}. 
The scheme $\mathcal{M}_{0,4}$ is isomorphic, via the coordinate $x$,  to $\PP^1 \backslash \{0,1,\infty\}$ and its dihedral extension is 
$\mathcal{M}_{0,4}^{\delta} = \mathrm{Spec}\, \ZZ[v_{24}, v_{13}]/(v_{24}+v_{13}=1) \cong \mathbb{A}^1$. The domain
${X^{\delta}}  \subset \RR\backslash \{0,1\}$ is the open interval  $(0,1)$. Its closure
$\overline{X^{\delta}} \subset \mathbb{A}^1(\RR) = \RR$ is $[0,1]$.

Let $|S|=5$, and set $S=\{s_1,s_2,s_3,s_4,s_5\}$ with the natural dihedral structure $\delta$.  The five chords in the pentagon $(S, \delta)$ give rise to five dihedral coordinates which satisfy equations given in \cite[\S 2.2]{BrENS}.  These equations define the affine scheme $\mathcal{M}_{0,5}^{\delta}$. The pair $x=u_{24}, y=u_{25}$ are cubical coordinates (\ref{cubical}), and embed
$$(x,y) : \mathcal{M}_{0,5}  \To \mathcal{M}_{0,4} \times \mathcal{M}_{0,4}  \cong   \PP^1 \backslash \{0,1,\infty\} \times \PP^1 \backslash \{0,1,\infty\}$$
Its image is  the complement of the  hyperbola $xy=1$.
We can write all other dihedral coordinates using (\ref{ucrelations}) in terms of these two to give:
$$u_{13}= 1-xy \quad , \quad u_{24}= x \quad , \quad u_{35} = \frac{1-x}{1-xy} \quad , \quad u_{14} = \frac{1-y}{1-xy}   \quad , \quad u_{25} = y\ .$$
The domain 
$X^{\delta}$ maps to the open unit square $\{(x,y): 0<x,y<1\}$.
The first coordinate, $x$, is the  forgetful map which forgets the edge $s_5$: 
$$   (x,y) \mapsto x:     \mathcal{M}^{\delta}_{0,5} \overset{\pi}{ \To}  \mathcal{M}^{\delta}_{0,4} $$
The induced map on affine rings satisfies $\pi^*(v_{24}) = u_{24}$, $\pi^*(v_{13})= u_{13} u_{35}$.

\subsection{de Rham projection} 
We now fix a dihedral structure $\delta$ on $S$ and write $S$ for $(S,\delta)$. There is  a volume form $\alpha_{S,\delta}$ on  $\mathcal{M}_{0,S}^{\delta}$ which is canonical up to a sign \cite[\S 7.1]{BrENS}. A cyclic structure on $S$  defines an orientation on the cell $X^{\delta}$ and fixes the sign of $\alpha_{S,\delta}$  if we demand that its integral over  $X^{\delta}$ be positive.    In simplicial coordinates it is given by \cite[(7.1)]{BrENS}: 
\begin{equation} \label{omegavolformula} \alpha_{S,\delta} =    \prod_{i=0}^{n-1} (t_{i+2} - t_{i})^{-1} dt_1\wedge\cdots\wedge dt_n  \ ,
\end{equation} 
with the convention $t_0=0$, $t_{n+1}=1$.
 
\begin{definition} \label{definitionnuS} Writing $u_{S,\delta}= \prod_{c \in \chi_{S,\delta}} u_c$ we define
\begin{equation}  \nu_{S,\delta} =  (-1)^{\frac{n(n+1)}{2}}  u_{S,\delta}^{-1}\,  \alpha_{S,\delta} 
\end{equation}
\end{definition} 
Note that the sign is the same as in \cite[\S4.5]{BD1}. 
When the dihedral structure is clear from the context, we write $\nu_S$ for $\nu_{S,\delta}.$

\begin{lemma}  \label{lemnuSpoles} The form $\nu_{S,\delta}$   defines a meromorphic form  on $\overline{\mathcal{M}}_{0,S}$  with 
  logarithmic singularities, and has simple poles only along   those divisors which bound the cell $X^{\delta}$.  
  In simplicial coordinates (\ref{simplicial}), and using  the convention $t_0=0$, $t_{n+1}=1$,
 \begin{equation} \label{nusimplicial} \nu_{S,\delta} = (-1)^{\frac{n(n+1)}{2}} \prod_{i=0}^n (t_{i+1} - t_{i})^{-1} dt_1\wedge \cdots \wedge dt_n  \ .
 \end{equation} 
\end{lemma}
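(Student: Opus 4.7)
The plan is first to establish the explicit formula \eqref{nusimplicial} in simplicial coordinates by direct computation, and then to deduce the pole statement from it.

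By Definition \ref{definitionnuS} and \eqref{omegavolformula}, the formula \eqref{nusimplicial} is equivalent to the rational identity
\[
\prod_{c\in\chi_{S,\delta}} u_c \;=\; \frac{\prod_{i=0}^{n}(t_{i+1}-t_i)}{\prod_{i=0}^{n-1}(t_{i+2}-t_i)}
\]
on $\mathcal{M}_{0,S}$, with the conventions $t_0=0$, $t_{n+1}=1$. I would prove this by induction on $n$. The base case $n=1$ is the square, verified in \S\ref{Examples4,5}: $u_{13}u_{24}=(1-t_1)t_1$. For the inductive step, I would use the forgetful map $f_T : \mathcal{M}_{0,S}^{\delta} \to \mathcal{M}_{0,T}^{\delta|_T}$ removing one marked point, together with the pullback formula \eqref{fTstar}: each dihedral coordinate on the smaller polygon pulls back to a product of $u_c$'s on $(S,\delta)$. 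Comparing multiplicities over all chords $c'\in\chi_{T,\delta|_T}$ and invoking the inductive hypothesis on $T$, the product $u_{S,\delta}$ differs from $f_T^*(u_{T,\delta|_T})$ by the contribution of the chords incident to the forgotten edge, which by an explicit check produces the extra $(t_{i+1}-t_i)$ factor required on the right.

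Granting \eqref{nusimplicial}, we read off that $\nu_{S,\delta}$ has, in the simplicial chart, only simple poles, located precisely along the $n+1$ hyperplanes $H_i:\{t_{i+1}=t_i\}$ for $i=0,\ldots,n$. In the gauge $t_i=p_i$, each $H_i$ is the locus of collision of a pair $(p_i,p_{i+1})$ of marked points that are adjacent in the dihedral order $\delta$, and so corresponds to a divisor of the form $D_c$ with $c\in\chi_{S,\delta}$ the chord separating that pair from the rest. Such a divisor is a facet of the associahedron $\overline{X^{\delta}}$, so it bounds the cell $X^{\delta}$, as required.

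It remains to exclude spurious poles along divisors of $\overline{\mathcal{M}}_{0,S}$ not visible in simplicial coordinates, namely the non-dihedral divisors lying in $\overline{\mathcal{M}}_{0,S}\setminus\mathcal{M}_{0,S}^{\delta}$. I would treat these by covering $\overline{\mathcal{M}}_{0,S}$ with the dihedral charts $\{\mathcal{M}_{0,S}^{\delta'}\}_{\delta'}$ and, in each chart with $\delta'\neq\delta$, re-expressing $\nu_{S,\delta}$ in the coordinates $u^{\delta'}_c$ using \eqref{ucrelations} and \eqref{ucprimecrossingc}. This change-of-chart computation is the main technical obstacle: for any chord $c\in\chi_{S,\delta'}\setminus\chi_{S,\delta}$, one needs to verify that the zero of the transformed $\alpha_{S,\delta}$ along $D^{\delta'}_c$ exactly cancels the simple pole of $u_{S,\delta}^{-1}$ coming from the other $\delta$-dihedral factors, so that $\nu_{S,\delta}$ is regular along each non-dihedral divisor.
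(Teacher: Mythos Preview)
Your approach to the explicit formula \eqref{nusimplicial} is different from the paper's but plausible: the paper computes $u_{S,\delta}$ in one shot by squaring it, using the identity $1-u_c=\prod_{c'\text{ crosses }c}u_{c'}$ to write
\[
u_{S,\delta}^2=\prod_{i\bmod n+3}(1-u_{\{i,i+1,i+2,i+3\}}),
\]
evaluating the right-hand side via cross-ratios, and then taking the positive square root on $X^{\delta}$. Your inductive argument via the forgetful map and \eqref{fTstar} could be made to work, but as written the ``explicit check'' of the extra factor is not actually carried out.

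The real gap is in the pole analysis. You assert that the divisors of $\overline{\mathcal{M}}_{0,S}$ not visible in simplicial coordinates are precisely the non-dihedral ones. This is false for $n\geq 2$. The simplicial formula exhibits only $n+1$ polar hyperplanes $\{t_{i+1}=t_i\}$, whereas there are $\tfrac{n(n+3)}{2}$ chords in $\chi_{S,\delta}$ and hence that many dihedral divisors $D_c$ bounding $X^{\delta}$. The remaining dihedral divisors correspond to blocks of three or more consecutive marked points colliding and arise only after blowing up the simplicial chart; they are invisible in the coordinates $(t_1,\ldots,t_n)$. Concretely, for $n=2$ the divisor $D_{13}$ (where $u_{13}=1-xy$ vanishes) maps to the single point $(1,1)$ under the cubical coordinates, yet $\nu_S$ does have a simple pole there once one passes to genuine local coordinates such as $(u_{13},u_{35})$. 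Your argument therefore establishes neither that $\nu_{S,\delta}$ has poles along \emph{all} dihedral $D_c$, nor that the singularities there are logarithmic.

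The paper sidesteps this entirely: after deriving \eqref{nusimplicial} it observes that $\nu_{S,\delta}$ is, up to sign, the cellular differential form of Brown--Carr--Schneps associated to $(S,\delta)$, and then invokes \cite[Proposition~2.7]{browncarrschneps} for the full statement about logarithmic singularities and the precise polar locus on $\overline{\mathcal{M}}_{0,S}$. Your ``main technical obstacle'' (regularity along non-dihedral divisors) is exactly what that citation handles, together with the missing poles along the blown-up dihedral divisors; without it, or an equivalent direct computation in local coordinates around every stratum, the argument is incomplete.
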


\begin{proof} Using the equation $1-u_c= \prod_{c' \in A} u_{c'}$,   where $A$ is the set of chords which cross $c'$, which as an instance of  (\ref{ucrelations}), 
we deduce that
$$u_{S,\delta}^2 = \prod_{i \;\mathrm{mod}\; n+3} (1 - u_{\{i,i+1,i+2,i+3\}})\ . $$
Using the definition of dihedral coordinates as  cross-ratios \cite[(2.6) and \S 2.1]{BrENS}, 
$$ u_{S,\delta}^2=  \prod_{i \;\mathrm{mod}\; n+3} \frac{ (z_i-z_{i+1})(z_{i+2}-z_{i+3})    }{(z_i-z_{i+2})(z_{i+1}-z_{i+3}) }
 = \left( \prod_{i \;\mathrm{mod}\; n+3} \frac{ z_i-z_{i+1}   }{z_i-z_{i+2} }    \right)^2  \ .$$
Using the fact that $u_{S,\delta}$ is positive on $X^\delta$ one obtains
$$u_{S,\delta}=\prod_{i=0}^{n}(t_{i+1}-t_i)\, \prod_{i=0}^{n-1}(t_{i+2}-t_i)^{-1}\ $$
after passing to simplicial coordinates. Equation 
 \eqref{nusimplicial} then follows from \eqref{omegavolformula}.   From  \eqref{nusimplicial},   we see that $\nu_{S,\delta}$ is, up to a sign, the cellular differential form \cite[\S2]
 {browncarrschneps}  corresponding to $(S,\delta)$.  The rest follows from \cite[Proposition 2.7]{browncarrschneps}.
\end{proof}

After passing to cubical coordinates one obtains the more symmetric expression
\begin{equation} \label{nuscubical} \nu_{S,\delta}   = (-1)^{\frac{n(n+1)}{2}}\bigwedge_{i=1}^{n}   \frac{ dx_i }{x_i (1-x_i)} \ \cdot \end{equation} 
The following proposition follows from the computations in \cite[\S4]{BD1}.

\begin{proposition} \label{cornuSdual}
 Let $[\overline{X^{\delta}}]  \in  H_0^\B(\mathcal{M}^{\delta}_{0,S}, \partial \mathcal{M}_{0,S}^{\delta})$ denote the class of the closure of the domain $X^\delta$. Then, with $c_0^{\vee}$ as defined in \cite[\S4]{BD1},
  we have
  $$c_0^{\vee}\left( [\overline{X^{\delta}}]\right) = (2\pi i)^{-n}\nu_{S,\delta}\ .$$
\end{proposition}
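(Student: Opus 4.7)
The plan is to reduce the statement to the explicit computation carried out in \cite[\S4]{BD1}, where the map $c_0^\vee$ is defined. Recall that for a smooth variety with a simple normal crossing compactification, $c_0^\vee$ associates to a Betti homology class represented by a compact cell whose boundary lies in the normal crossing divisor a logarithmic form on the compactification, characterized by its residue behavior along the boundary facets. The normalisation factor $(2\pi i)^{-n}$ arises because $c_0^\vee$ lands in the de Rham side, where each residue carries a factor of $(2\pi i)^{-1}$ relative to the Betti orientation of the corresponding boundary stratum.

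First, I would invoke Lemma \ref{lemnuSpoles} to confirm that $\nu_{S,\delta}$ is a global logarithmic form on $\overline{\mathcal{M}}_{0,S}$ with simple poles exactly along the divisors $D_c$ that bound the cell $\overline{X^{\delta}}$. This matches the qualitative requirement on $c_0^\vee([\overline{X^\delta}])$: its singular support must coincide with the boundary facets of the cell.

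Second, I would verify the inductive compatibility of $\nu_{S,\delta}$ with residues. Using the factorisation $D_c \cong \mathcal{M}_{0,S'}^{\delta'}\times \mathcal{M}_{0,S''}^{\delta''}$ from (\ref{Disproduct}) and the trivialisation map $f_c^\gamma$, I would compute
\[
\mathrm{Res}_{D_c}(\nu_{S,\delta}) = \pm\, \nu_{S',\delta'}\wedge \nu_{S'',\delta''}\ ,
\]
most efficiently by working in the simplicial coordinate expression \eqref{nusimplicial}, where setting $t_j - t_{j-1}=0$ along $D_c$ factors the remaining product into the two simplicial volume forms on the smaller moduli spaces. This reflects the operadic decomposition of the associahedron $\overline{X^\delta}$ into products of lower-dimensional associahedra, which is precisely how the image under $c_0^\vee$ behaves.

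Third, I would conclude by induction on $n=|S|-3$. The base case $n=0$ is trivial (the cell is a point, and both sides equal $1$). For the inductive step, the residue computation above identifies $\mathrm{Res}_{D_c}(\nu_{S,\delta})$ with $(2\pi i)^{n-1}\cdot c_0^\vee([\overline{X^{\delta'}}]\times [\overline{X^{\delta''}}])$ via induction on the two smaller polytopes, and this uniquely characterises $c_0^\vee([\overline{X^\delta}])$ among logarithmic forms with the correct polar locus, up to the overall factor $(2\pi i)^{-n}$. The main obstacle will be the careful bookkeeping of signs: the factor $(-1)^{n(n+1)/2}$ in Definition \ref{definitionnuS} is precisely tuned (as already noted after that definition) to match the sign convention for $c_0^\vee$ in \cite[\S4.5]{BD1}, so the verification amounts to checking that these two sign conventions are consistent with the orientation of $X^\delta$ fixed by the cyclic structure $\gamma$.
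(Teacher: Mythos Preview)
The paper's proof is essentially a one-line citation: immediately before the proposition it records the cubical-coordinate formula \eqref{nuscubical}, and then invokes the explicit computations in \cite[\S4]{BD1}. In cubical coordinates the domain $X^\delta$ is the hypercube $(0,1)^n$ and $\nu_{S,\delta}$ is (up to the sign $(-1)^{n(n+1)/2}$) the $n$-fold product $\bigwedge_i \frac{dx_i}{x_i(1-x_i)}$, so the statement reduces immediately to the one-dimensional case computed in the prequel.

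Your inductive approach via residues is genuinely different. It can be made to work, but two points deserve attention. First, simplicial coordinates are not the right tool for step~2: only $n+1$ of the $n(n+3)/2$ facets $D_c$ appear as coordinate hyperplanes $t_{j}=t_{j-1}$; the remaining facets arise from blowups (already for $|S|=5$ the two corners $(0,0)$ and $(1,1)$ of the simplicial triangle must be blown up to obtain the pentagon). Computing $\mathrm{Res}_{D_c}\nu_{S,\delta}$ for a general chord is cleanest in a cubical chart adapted to that chord, which is exactly the content of Lemma~\ref{lemfcnus}. Second, your step~3 asserts that the collection of residues uniquely determines the logarithmic form; this needs the observation that the difference would extend to a global holomorphic $n$-form on the rational variety $\overline{\mathcal{M}}_{0,S}$, hence vanishes. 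Once these are supplied your argument goes through, but it is strictly longer than the paper's direct reduction to the product case.
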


Working in cubical coordinates and using \eqref{nuscubical} we get the following compatibility between the $\nu_{S,\delta}$ and the  maps  $f_c^\gamma:\mathcal{M}_{0,S}\rightarrow \mathcal{M}_{0,T_c}\times\mathcal{M}_{0,S'}\times\mathcal{M}_{0,S''}$. We set 
$$\nu_c=-\frac{du_c}{u_c(1-u_c)}=u_c^*(\nu_{T_c,\delta_{|T_c}})\ .$$

\begin{lemma}   \label{lemfcnus} We have $(f_c^\gamma)^* (\nu_c\otimes \nu_{S',\delta'} \otimes \nu_{S'',\delta''})  = (-1)^{(|S'|-1)(|S''|-1)} \nu_{S,\delta}.$
\end{lemma}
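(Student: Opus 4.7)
The plan is to verify the identity by direct computation in simplicial (equivalently, cubical) coordinates, using formula (\ref{nusimplicial}) for $\nu_{S,\delta}$. First I would unpack the left-hand side: since $f_c^\gamma=f_{T_c}\times f_{T'}\times f_{T''}$ and $u_c^*(\nu_{T_c,\delta_{|T_c}})=\nu_c$ (the definition recalled just above the lemma), the pullback splits as
\[
(f_c^\gamma)^*(\nu_c\otimes \nu_{S',\delta'}\otimes \nu_{S'',\delta''})= \nu_c\wedge f_{T'}^*(\nu_{S',\delta'})\wedge f_{T''}^*(\nu_{S'',\delta''})\ ,
\]
where on the left the factor $\nu_c$ is to be read as $\nu_{T_c,\delta_{|T_c}}$ on the $\mathbb{A}^1$-factor of the target.

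Next I would choose a labeling $S=\{s_1,\ldots,s_{n+3}\}$ compatible with $(\delta,\gamma)$ so that $S'\backslash\{c\}=\{s_1,\ldots,s_k\}$ and $S''\backslash\{c\}=\{s_{k+1},\ldots,s_{n+3}\}$ with $k=|S'|-1$. Under the natural identifications $S'\simeq T'=\{s_1,\ldots,s_{k+1}\}$ and $S''\simeq T''=\{s_{k+1},\ldots,s_{n+3},s_1\}$, the chord $c$ corresponds to $s_{k+1}$ in $T'$ and to $s_1$ in $T''$. Substituting simplicial coordinates on each side via (\ref{nusimplicial}), and using (\ref{fTstar}) to express the pullbacks $f_{T'}^*, f_{T''}^*$ as products of dihedral coordinates, the claim reduces to an algebraic identity in $(t_1,\ldots,t_n)$ supplemented by the explicit cross-ratio expression for $u_c$ from \cite{BrENS}.

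The main obstacle is the sign bookkeeping, which has four sources: (a) the explicit $-1$ in the definition of $\nu_c$; (b) the prefactors $(-1)^{n(n+1)/2}, (-1)^{n'(n'+1)/2}, (-1)^{n''(n''+1)/2}$ which, using $n=n'+n''+1$, satisfy $\tfrac{n(n+1)}{2} - \tfrac{n'(n'+1)}{2} - \tfrac{n''(n''+1)}{2} = (n'+1)(n''+1)$; (c) a reordering sign from permuting the wedge factors into the canonical simplicial order on $S$; and (d) Jacobian signs arising from the coordinate substitution. These must combine to produce $(-1)^{(|S'|-1)(|S''|-1)}$. As a more conceptual alternative, one may apply Proposition \ref{cornuSdual} to convert the statement into an orientation comparison for the homeomorphism $\overline{X^\delta}\cong [0,1]\times \overline{X^{\delta'}}\times \overline{X^{\delta''}}$ of (\ref{fDXdelta}): via the naturality and Künneth compatibility of $c_0^\vee$, the sign $(-1)^{(|S'|-1)(|S''|-1)}$ arises as the discrepancy between the product orientation and the $\gamma$-orientation of $\overline{X^\delta}$.
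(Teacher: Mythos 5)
Your approach is the one the paper intends: it states Lemma \ref{lemfcnus} immediately after recalling the cubical formula \eqref{nuscubical} for $\nu_{S,\delta}$ and the factorisation $f_c^\gamma=f_{T_c}\times f_{T'}\times f_{T''}$, and it leaves the actual verification to the reader. Your prefactor identity is correct: with $n=n'+n''+1$ one has $\tfrac{n(n+1)}{2}-\tfrac{n'(n'+1)}{2}-\tfrac{n''(n''+1)}{2}=(n'+1)(n''+1)$. Since the target exponent is $(|S'|-1)(|S''|-1)=(n'+2)(n''+2)$ and $(n'+2)(n''+2)+(n'+1)(n''+1)-1\equiv n'+n''\pmod 2$, after including the explicit minus sign in $\nu_c$ your sources (c) and (d) must still account for a residual $(-1)^{n'+n''}$. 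That residual \emph{is} the content of the lemma, and your proposal asserts rather than derives it: you have correctly catalogued the sign sources, but the wedge-reordering and Jacobian contributions are never computed. To close the gap the cubical presentation \eqref{nuscubical} is cleaner than the simplicial one \eqref{nusimplicial}, since it writes $\nu_{S,\delta}$ (modulo prefactor) as a monomial in independent factors $\tfrac{dx_i}{x_i(1-x_i)}$ and reduces the residual sign to a signature that one can compute explicitly. Your alternative route via Proposition \ref{cornuSdual} is sound in spirit but merely re-encodes the same $(-1)^{n'+n''}$ as an orientation comparison for the homeomorphism \eqref{fDXdelta}, so it does not bypass the bookkeeping.
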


The sign is compatible with the single-valued Fubini theorem discussed in \cite[\S5]{BD1}: for $\omega\in \Omega_{T_c}^1$, $\omega'\in \Omega_{S'}^{|S'|-3}$, $\omega''\in \Omega_{S''}^{|S''|-3}$ we have
	\begin{equation*}
		\begin{split}
		\int_{\overline{\mathcal{M}}_{0,S}(\CC)}&\nu_{S}\wedge \overline{(f_c^\gamma)^*(\omega\otimes\omega'\otimes\omega'')} \\
		& = \left(\int_{\overline{\mathcal{M}}_{0,4}(\CC)}\nu_c\wedge\overline{\omega}\right) \left(\int_{\overline{\mathcal{M}}_{0,S'}(\CC)}\nu_{S'}\wedge \overline{\omega'} \right)\left(\int_{\overline{\mathcal{M}}_{0,S''}(\CC)}\nu_{S''}\wedge \overline{\omega''}\right)\ .
		\end{split}
		\end{equation*}

 Let $J = \{j_1,\ldots, j_k\}$ be a set of $k$ non-crossing chords. With the notation of Definition \ref{defNotationS/J} we may define
 $$\nu_J = \nu_{j_1}\otimes \cdots \otimes \nu_{j_k} \quad \in\quad \Omega^1_{T_{j_1}}\otimes\cdots\otimes \Omega^1_{T_{j_k}}$$
 $$\nu_{S/J} = \nu_{S_0} \otimes \cdots \otimes \nu_{S_k} \quad \in \quad \Omega^{|S_0|-3}_{S_0} \otimes \cdots \otimes \Omega^{|S_k|-3}_{S_k} \ .$$
We have the compatibility
\begin{equation}\label{eq:fstar nu}
(f_J^\gamma)^*(\nu_J\otimes \nu_{S/J}) = \pm\nu_{S}\ ,
\end{equation} 
with a sign that is compatible with the single-valued Fubini theorem.

\section{String amplitudes in genus \texorpdfstring{$0$}{0}} \label{sect: StringAmp0}
We give a self-contained account of open and closed string amplitudes in genus $0$, recast them  in terms of dihedral coordinates,  and discuss their convergence. The results in this section are standard in the  physics literature, which is very extensive. The seminal references are \cite{GreenSchwarzWitten}, \cite{klt}.  More recent work,  including \cite{schlottererstiebergermotivic}, \cite{stiebergersvMZV}, \cite{StiebergerTaylor}, \cite{BroedelSchlottererStiebergerTerasoma}, served as the main inspiration for the work below.

\subsection{Momentum conservation} \label{sect: MC}

Let $N=n+3\geq 3$ and let $s_{ij}\in\CC$ for $1\leq i,j\leq N$ satisfying $s_{ij}=s_{ji}$ and $s_{ii}=0$.
Let $(x_i:y_i)$ denote homogeneous coordinates on $\PP^1$ for $1\leq i \leq N.$
Consider the  functions 
$$f_{\underline{s}}=\prod_{1\leq i<j\leq N} (x_jy_i-x_iy_j)^{s_{ij}} \qquad \hbox{and}  \qquad g_{\underline{s}}=\prod_{1\leq i<j\leq N} |x_jy_i-x_iy_j|^{2s_{ij}}$$
on $((\CC\times \CC)\backslash \{0,0\})^N$.  The former is multi-valued, the latter is single-valued.

\begin{lemma} The functions $f_{\underline{s}}$,  $g_{\underline{s}}$ define (multi-valued, in the  case of $f_{\underline{s}}$) functions on the configuration  space of distinct points $p_1,\ldots,p_N\in\PP^1(\CC)$ if and only if 
\begin{equation} \label{MC} \sum_{1\leq j \leq N} s_{ij}=0  \qquad \hbox{ for all }  \quad  1\leq i \leq N\ .
\end{equation}
In this case, they are automatically $\mathrm{PGL}_2$-invariant and define (multi-valued, in the case of $f_{\underline{s}}$) functions on the moduli space
$\mathcal{M}_{0,N}(\CC)$. 
\end{lemma}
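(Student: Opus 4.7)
My plan is to reduce everything to a transparent computation of weights under the two commuting group actions: the fiberwise $(\CC^*)^N$ action that passes from $N$-tuples of nonzero vectors to points in $(\PP^1)^N$, and the $\mathrm{PGL}_2$ action on $\PP^1$. The key elementary computation is that the Plücker bracket $[ij] := x_j y_i - x_i y_j$ is bilinear, hence transforms as $[ij]\mapsto \lambda_i\lambda_j [ij]$ under the rescaling $(x_k,y_k)\mapsto (\lambda_k x_k,\lambda_k y_k)$, and transforms as $[ij]\mapsto (ad-bc)\,[ij]$ under $\gamma=\bigl(\begin{smallmatrix}a&b\\c&d\end{smallmatrix}\bigr)\in\mathrm{GL}_2$ acting diagonally.

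First, I would work on the affine chart $\{y_i\neq 0\text{ for all }i\}$ with local coordinates $t_i=x_i/y_i$, so that $[ij]=y_iy_j(t_j-t_i)$ and
\[
f_{\underline{s}} = \Bigl(\prod_{k=1}^{N} y_k^{a_k}\Bigr)\cdot\!\!\prod_{1\leq i<j\leq N}\!(t_j-t_i)^{s_{ij}}, \qquad a_k := \sum_{j\neq k} s_{kj},
\]
using the symmetry $s_{ij}=s_{ji}$ to collect the exponents of each $y_k$; likewise $g_{\underline{s}}$ picks up a factor $\prod_k |y_k|^{2a_k}$. Since $t_1,\ldots,t_N$ are coordinates on an open subset of the configuration space of $N$ points in $\PP^1$, the function $f_{\underline{s}}$ (resp.\ $g_{\underline{s}}$) descends to a (multi-valued, resp.\ single-valued) function on configuration space if and only if the $y_k$ (resp.\ $|y_k|$) drop out, i.e.\ $a_k=0$ for every $k$. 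This is precisely the momentum conservation condition \eqref{MC}. For the ``if'' direction one checks that the remaining factor $\prod_{i<j}(t_j-t_i)^{s_{ij}}$ is manifestly well defined on the configuration space (as a multi-valued function), and similarly for $g_{\underline{s}}$.

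Assuming \eqref{MC}, it remains to verify $\mathrm{PGL}_2$-invariance. Since $\mathrm{PGL}_2=\mathrm{GL}_2/\CC^*$ and invariance under the diagonal $\CC^*\subset(\CC^*)^N$ is already established, it suffices to check $\mathrm{GL}_2$-invariance. Under $\gamma\in\mathrm{GL}_2$ one has $[ij]\mapsto (\det\gamma)\,[ij]$, so
\[
f_{\underline{s}} \,\longmapsto\, (\det\gamma)^{\sum_{i<j} s_{ij}}\, f_{\underline{s}},
\]
and the exponent equals $\tfrac{1}{2}\sum_{i}\sum_{j}s_{ij}=\tfrac{1}{2}\sum_i a_i = 0$, using $s_{ii}=0$ and momentum conservation. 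The same argument gives $g_{\underline{s}}\mapsto g_{\underline{s}}$.

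The only thing that requires care, rather than being a completely routine weight count, is the interpretation of ``multi-valued function on configuration space'' for $f_{\underline{s}}$: one must phrase it as a section of a rank one local system on $\mathcal{M}_{0,N}(\CC)$ whose pullback to the appropriate $(\CC^*)^N$-bundle is trivialised by $f_{\underline{s}}$. The affine-chart argument above makes this unambiguous, since once $a_k=0$ the formula $\prod_{i<j}(t_j-t_i)^{s_{ij}}$ visibly defines such a section locally, and the local definitions glue on overlaps by the transformation law of the Plücker brackets. This bookkeeping is the only mild obstacle; everything else is a direct computation.
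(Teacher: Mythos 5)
Your proof is correct and follows essentially the same strategy as the paper's: the heart of both is the observation that the Plücker brackets $[ij]=x_jy_i-x_iy_j$ transform by $\lambda_i\lambda_j$ under fiberwise rescaling and by $\det\gamma$ under the diagonal $\mathrm{GL}_2$ action, so the descent condition becomes vanishing of the row sums $a_k=\sum_{j}s_{kj}$, and $\mathrm{PGL}_2$-invariance then comes for free from the scalar transformation law of the determinant. The only cosmetic difference is that you carry out the rescaling computation concretely on the affine chart $y_i\neq 0$ and collect exponents of each $y_k$, whereas the paper phrases it directly as invariance of $f_{\underline{s}}$, $g_{\underline{s}}$ under $(x_i,y_i)\mapsto(\lambda_i x_i,\lambda_i y_i)$ on $((\CC^2\setminus 0))^N$; these are the same calculation.
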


\begin{proof} The functions $f_{\underline{s}}$, $g_{\underline{s}}$ are invariant under scalar transformations $(x_i, y_i) \mapsto (\lambda_i x_i, \lambda_i y_i)$ if and only if (\ref{MC}) holds.  The first part of the statement follows. For the second, observe that $\mathrm{GL}_2$  acts by left matrix multiplication on 
$$\begin{pmatrix}
x_1 & x_2 & \ldots & x_N \\ 
y_1 & y_2 & \ldots & y_N
\end{pmatrix}
$$
Since each term $x_jy_i-x_iy_j$ is minus the determinant of the matrix formed from the columns $i,j$, $\mathrm{GL}_2$ acts via scalar multiplication. We have already established that scalar invariance is  equivalent to (\ref{MC}), and hence proves the second part.   The last part follows since  the moduli space $\mathcal{M}_{0,N}$ is the quotient of  the configuration space of $N$ distinct points in $\PP^1$ modulo the action of $\mathrm{PGL}_2$. 
\end{proof}

We call (\ref{MC}), together with $s_{ij}=s_{ji}$ and $s_{ii}=0$ the \emph{momentum conservation} equations. The solutions to these equations form a vector space (scheme) $V_N$. 

When they hold, denote the above functions simply     by 
$$f_{\underline{s}}=\prod_{1\leq i<j\leq N} (p_j-p_i)^{s_{ij}} \qquad  \hbox{and}  \qquad 
g_{\underline{s}}=\prod_{1\leq i<j\leq N} |p_j-p_i|^{2s_{ij}} \ ,$$ where $p_i=x_i/y_i$.
 The former has a canonical branch on the locus where the points $p_i$ are located on the circle $\PP^1(\RR)$ in the natural order, which corresponds to the domain $X^\delta\subset \mathcal{M}_{0,N}(\RR)$.
When $(\ref{MC})$ holds,  the  differential $1$-form 
\begin{equation} \label{omegasdef} \omega_{\underline{s}} = \frac{df_{\underline{s}}}{f_{\underline{s}}} = \sum_{1\leq i< j \leq N} s_{ij} \frac{d p_j - dp_i}{p_j-p_i}
\end{equation}
defines a logarithmic 1-form on $(\overline{\mathcal{M}}_{0,N},\partial\overline{\mathcal{M}}_{0,N})$. 
We therefore obtain a linear map 
\begin{equation} \label{VNKtoH1} V_N (\mathbb{K}) \To  \Gamma(\overline{\mathcal{M}}_{0,N},\Omega^1_{\overline{\mathcal{M}}_{0,N}/\mathbb{K}}(\log\partial \overline{\mathcal{M}}_{0,N})) \cong H^1_{\dR}(\mathcal{M}_{0,N}/\mathbb{K})
\end{equation}
for any field $\mathbb{K}$ of characteristic zero.

\begin{lemma} \label{lem: VNtoH1isom} The  map \eqref{VNKtoH1} is an isomorphism.
\end{lemma}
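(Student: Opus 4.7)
The plan is a dimension count plus explicit surjectivity. First, I would check that $\dim_{\mathbb{K}} V_N(\mathbb{K}) = \frac{N(N-3)}{2}$: the ambient space of symmetric zero-diagonal matrices has dimension $\binom{N}{2}$, and the $N$ momentum-conservation forms $L_i = \sum_j s_{ij}$ are linearly independent for $N\geq 3$, since a relation $\sum_i a_i L_i = 0$ unfolds to $\sum_{i<j}(a_i+a_j)s_{ij}=0$, forcing $a_i+a_j=0$ for all $i\neq j$ and hence $a_i=0$ throughout. By \eqref{H1description} the target has the same dimension, since the number of chords of an $N$-gon equals $\binom{N}{2}-N$.

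Next, for surjectivity I would show that every basis vector $\frac{du_c}{u_c}$ of \eqref{H1description} is of the form $\omega_{\underline{s}_c}$ for some $\underline{s}_c\in V_N(\QQ)$. From \S\ref{sectmorphismMon}, $u_c$ is the pullback under $f_{T_c}$ of the coordinate on $\mathcal{M}_{0,T_c}\cong\mathbb{A}^1$, hence it is a cross-ratio of the four points $p_\alpha$, $\alpha\in T_c=\{i,i+1,j,j+1\}$, adjacent in the dihedral order to the endpoints of $c$. Taking the logarithmic derivative of such a cross-ratio yields a sum of four terms $d\!\log(p_\alpha-p_\beta)$, with two coefficients equal to $+1$ and two equal to $-1$, arranged so that each of the four rows $i, i+1, j, j+1$ of the resulting Mandelstam matrix $\underline{s}_c$ contains exactly one $+1$ and one $-1$ entry. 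Thus $\frac{du_c}{u_c}=\omega_{\underline{s}_c}$ and every row of $\underline{s}_c$ sums to zero, so $\underline{s}_c\in V_N(\QQ)$ as required.

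Combined with the dimension count, this proves surjectivity, and hence that \eqref{VNKtoH1} is an isomorphism over $\QQ$; the statement over a general characteristic-zero field $\mathbb{K}$ follows by base change. The only substantive step is the identification of $u_c$ as a cross-ratio of the four points of $T_c$, which is built into the definition of dihedral coordinates in \cite[\S2]{BrENS} and is illustrated in Example \ref{Examples4,5}, so I do not foresee any real obstacle.
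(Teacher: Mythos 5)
Your proposal is correct. Both proofs hinge on the dimension count $\dim V_N = N(N-3)/2 = \dim H^1_{\dR}$ (via \eqref{H1description}), but they differ in the complementary step and in how the count itself is done. The paper proves \emph{injectivity} by observing that the residue of $\omega_{\underline{s}}$ along $\{p_i = p_j\}$ in configuration space is $s_{ij}$, then computes $\dim V_N$ inductively from $V_3 = 0$ using the filtration $V_{N-1} \subset V_N$; the base case $V_3=0$ itself comes from injectivity, so the two steps are intertwined. You instead prove \emph{surjectivity}, by observing that $u_c$ is a cross-ratio of the four points of $T_c$, so $d\log u_c$ is visibly of the form $\omega_{\underline{s}_c}$ with $\underline{s}_c$ a $\{0,\pm 1\}$-valued matrix whose four nonzero rows each have one $+1$ and one $-1$, hence zero row sums; and you compute $\dim V_N$ directly by checking that the $N$ constraint forms $L_i$ are linearly independent on the space of symmetric zero-diagonal matrices. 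Both routes are valid. Yours has the virtue of a self-contained, non-inductive dimension count and of exhibiting an explicit preimage of each basis element of \eqref{H1description}; the paper's residue argument is a bit more intrinsic, needing only the configuration-space description of $\omega_{\underline{s}}$ and no structural input about dihedral coordinates beyond the dimension formula.
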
 
\begin{proof} It is  injective: if  $\omega_{\underline{s}}$ were to vanish then its residue  along $p_i=p_j$,  viewed as a divisor in the configuration space of $N$ distinct points on the projective line, would vanish. Hence all $s_{ij}=0$. 
Next observe that $V_{N-1} \subset V_N$, and that $V_N/V_{N-1}$ is  generated by $s_{iN}=s_{Ni}$, for $1\leq i \leq N-1$, subject to the single relation 
$$s_{1N} + \cdots + s_{N-1N}=0\ .$$
Therefore $\dim V_N = \dim V_{N-1} + N-2$.  By injectivity, $V_3=0$ since $\mathcal{M}_{0,3}$ is a point.  Hence $\dim V_N = N(N-3)/2$, which equals  $\dim H_{\dR}^1(\mathcal{M}_{0,N})$ by (\ref{H1description}), and so \eqref{VNKtoH1} is an isomorphism. 
\end{proof}

\subsection{String amplitudes in simplicial coordinates} \label{sect: StringAmpSimplicial}
 It is customary in the physics literature to write the open and closed string amplitudes in simplicial coordinates (\ref{simplicial}). We use the coordinate system on $V_N$ consisting of the $s_{ij}$ for $1\leq i<j\leq n$ along with the $s_{i,n+1}$ and $s_{i,n+3}$ for $1\leq i\leq n$. We use the notation $s_{0,i}=s_{i,n+3}$.
Let $|S|=N=n+3$ and let  $\omega \in \Omega^{n}_S$ be a global logarithmic form. Let $s_{ij}\in \CC$ be a solution to the momentum conservation equations (\ref{MC}).  The associated open string amplitude is formally written as  the integral  
\begin{equation} \label{Iopensimplicial} \int_{0<t_1<\ldots< t_{n}<1}  \left( \prod_{0\leq i<j \leq n+1} (t_j-t_i)^{s_{ij}} \right)\,   \omega
\end{equation}
with the convention $t_0=0$, $t_{n+1}=1$. In the literature (see Theorem \ref{thm: Parke Taylor} below), $\omega$ is typically of the form
 \begin{equation}\label{eq: shape of omega}
 \frac{dt_1\wedge\cdots \wedge dt_n}{\prod_{i=0}^{n}(t_{\sigma(i+1)}-t_{\sigma(i)})}
 \end{equation}
 for some permutation $\sigma$ of $\{0,\ldots,n+1\}$.

Closed string amplitudes are written in the  form  
\begin{equation} \label{Iclosedsimplicial} (2\pi i)^{-n} \int_{\CC^{n}}  \left( \prod_{0\leq i<j \leq n+1} |z_j-z_i|^{2s_{ij}}\right) \,  \nu_{S}\wedge \overline{\omega} \end{equation}
where  $\nu_{S}$ was given in Definition \ref{definitionnuS}.
 For $\omega$ of the form \eqref{eq: shape of omega}, we can rewrite \eqref{Iclosedsimplicial} as
 $$\pi^{-n}\,\int_{\CC^n}  \frac{\prod_{0\leq i<j\leq n+1}|z_j-z_i|^{2s_{ij}}}{\prod_{i=0}^n(z_{i+1}-z_i)\prod_{i=0}^{n}(\overline{z}_{\sigma(i+1)}-\overline{z}_{\sigma(i)}) }\, d^2z_1\wedge\cdots\wedge d^2z_n\ ,$$
 with the notation $d^2z=d\mathrm{Re}(z)\wedge d\mathrm{Im}(z)$. Note that the  apparently complicated sign in the definition of $\nu_S$ is such that all signs cancel in the previous formula, in agreement with the conventions in the physics literature.
 
Convergence of these integrals is discussed below. 
 As we shall see, a huge amount is gained  by first rewriting them  in  dihedral coordinates.

\subsection{String amplitudes in dihedral coordinates} \label{sect: StringInDihedral} Let $S=(S,\delta)$ be a set of cardinality $N=n+3\geq 3$ and fix a dihedral structure. Suppose that $s_{ij}$ are solutions to the momentum-conservation equations. It follows from Lemma \ref{lem: VNtoH1isom} and (\ref{H1description}) that we can uniquely write 
$$\omega_{\underline{s}}  = \sum_{c\in \chi_{S} } s_c \frac{d u_c}{u_c}\ , $$ 
where the $s_c$ are linear combinations of the $s_{ij}$ indexed by each chord in $(S,\delta)$. Thus
the $s_c$ form a natural system of coordinates for the space $V_N$. More precisely:

\begin{lemma} Denoting a chord  by a set of edges $c=\{a,a+1,b,b+1\}$, we have
\begin{eqnarray} s_{ij}  &= & s_{\{i,i+1,j-1,j\}} + s_{\{i-1,i,j,j+1\}} - s_{\{i-1,i,j-1,j\}}- s_{\{i,i+1,j,j+1\}} \label{sijassc} \\
 s_{\{i,i+1, j , j+1\}} &  = &  \sum_{i <a < b \leq j} s_{ab} \ .\nonumber
 \end{eqnarray} 
\end{lemma}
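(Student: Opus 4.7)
The plan is to prove the second (inverse) formula first via a residue computation along the boundary divisor $D_c$ for the chord $c = \{i,i+1,j,j+1\}$, and then deduce the first formula from it by inclusion-exclusion. Throughout, I will exploit the two expressions for the same logarithmic $1$-form
$$\omega_{\underline{s}} = \sum_{1\leq k<l\leq N} s_{kl}\,\frac{dp_l - dp_k}{p_l - p_k} = \sum_{c'\in \chi_S} s_{c'}\,\frac{du_{c'}}{u_{c'}}\ .$$

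First I would compute $\mathrm{Res}_{D_c}(\omega_{\underline{s}})$ from the dihedral expression. The residue picks out $s_c$ alone: for $c'\neq c$ not crossing $c$, the coordinate $u_{c'}$ restricts to a nonvanishing dihedral coordinate on a factor of $D_c$ by \eqref{Disproduct}, hence $d\log u_{c'}$ is regular along $D_c$; while if $c'$ crosses $c$, then by \eqref{ucprimecrossingc} one has $u_{c'}|_{D_c}=1$, so again $d\log u_{c'}$ has no pole along $D_c$. Together with $\mathrm{Res}_{D_c}(du_c/u_c)=1$ this yields $\mathrm{Res}_{D_c}(\omega_{\underline{s}}) = s_c$.

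Next I would compute the same residue from the simplicial expression. The stratum $D_c$ is the Deligne--Mumford boundary locus where the marked points split according to the two sides of the chord $c$; concretely, near $D_c$ I choose a local transversal parameter $t$ and write $p_k = p_0 + t\,q_k$ for $k\in\{i+1,\ldots,j\}$ (one side of the chord), while leaving the remaining $p_k$ fixed at generic values. Then for $k,l$ both in $\{i+1,\ldots,j\}$, $p_l - p_k = t(q_l - q_k)$ contributes $dt/t$ to $d\log(p_l-p_k)$, whereas all other differences $p_l-p_k$ stay nonzero at $t=0$. Summing residues then gives
$$\mathrm{Res}_{D_c}(\omega_{\underline{s}}) = \sum_{i < a < b \leq j} s_{ab}\ ,$$
which equated with the previous computation proves the second formula.

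Finally, the first formula follows by a short inclusion-exclusion. Writing $T_{p,q} = \{(a,b) : p \leq a < b \leq q\}$, the second formula reads $s_{\{a,a+1,b,b+1\}} = \sum_{(a',b')\in T_{a+1,b}} s_{a'b'}$. Applied to the right-hand side of the first formula, one collects $T_{i+1,j-1} + T_{i,j} - T_{i,j-1} - T_{i+1,j}$; using $T_{i,j}\setminus T_{i+1,j} = \{(i,b):i<b\leq j\}$ and $T_{i,j-1}\setminus T_{i+1,j-1} = \{(i,b):i<b\leq j-1\}$, all terms cancel except $s_{ij}$, as claimed.

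The main obstacle is justifying the local model at $D_c$ and identifying the transversal parameter $t$ with $u_c$ (up to a unit). This is essentially the content of the simple normal crossing property of $\partial\mathcal{M}_{0,S}^{\delta}$ built into the definition of the dihedral extension together with the standard stable-curve picture of the Deligne--Mumford boundary stratum $D_c$; once this is granted, both residue computations are straightforward.
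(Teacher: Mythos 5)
Your proof is correct, and it genuinely fills a gap: the paper's own ``proof'' of this lemma is a citation to \cite[(6.14) and (6.17)]{BrENS}, so no argument is given in the text itself. Your strategy---establish the expansion formula $s_{\{i,i+1,j,j+1\}} = \sum_{i<a<b\leq j} s_{ab}$ by comparing residues of $\omega_{\underline{s}}$ along $D_c$ in both the dihedral and simplicial presentations, then recover $s_{ij}$ by an elementary inclusion-exclusion on the rectangles $T_{p,q}$---is the natural one, and it is essentially the argument underlying the cited formulas in \cite{BrENS}.

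Two small remarks. First, when you say $u_{c'}$ ``restricts to a nonvanishing dihedral coordinate'' on a factor of $D_c$ for $c'$ not crossing $c$: it restricts to a dihedral coordinate that is \emph{not identically zero} on $D_c$ (it can vanish on the lower-dimensional stratum $D_c \cap D_{c'}$). That is what matters for $\mathrm{Res}_{D_c}(d\log u_{c'})=0$, and the rest of your argument is unaffected. Second, the identification of $u_c$ with a transversal parameter to $D_c$ (up to a unit), which you flag as the ``main obstacle,'' is immediate from the fact that $D_c = \{u_c = 0\}$ is a smooth component of the simple normal crossing divisor $\partial\mathcal{M}^\delta_{0,S}$ and $u_c$ has a simple zero there; there is no further issue. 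Your inclusion-exclusion step is also correct and can be seen in one line: as indicator functions on ordered pairs $a<b$,
\[
\mathbb{1}_{T_{i+1,j-1}} + \mathbb{1}_{T_{i,j}} - \mathbb{1}_{T_{i,j-1}} - \mathbb{1}_{T_{i+1,j}}
= \bigl(\mathbb{1}[a\geq i+1]-\mathbb{1}[a\geq i]\bigr)\bigl(\mathbb{1}[b\leq j-1]-\mathbb{1}[b\leq j]\bigr)
= \mathbb{1}[a=i]\,\mathbb{1}[b=j]\ ,
\]
which handles the degenerate boundary cases (where a ``chord'' has adjacent endpoints and contributes $0$) automatically.
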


\begin{proof}  See \cite[(6.14) and (6.17)]{BrENS}.
\end{proof}
The coordinates $s_c$ are better suited than the $s_{ij}$ for studying (\ref{Iopensimplicial})
and (\ref{Iclosedsimplicial}). By \eqref{sijassc}, we have on appropriate branches (e.g., on $X^{\delta}$) the equation:
$$\prod_{1\leq i<j\leq N}(p_j-p_i)^{s_{ij}} = \prod_{c\in\chi_{S}} u_c^{s_c}\ .$$

\begin{definition} For a tuple of complex numbers $\underline{s}=(s_c)_{c\in\chi_{S}}$ and a logarithmic form $\omega\in\Omega^{|S|-3}_S$, define the open string amplitude, when it converges, to be:
\begin{equation} \label{openamp}  I^{\open}(\omega,\underline{s}) = \int_{X^{\delta}} \left( \prod_{c\in \chi_{S}} u_c^{s_c}  \right) \, \omega\ .
\end{equation} 
Define the closed string amplitude, when it converges,  to be:
\begin{equation} \label{closedamp} I^{\closed}(\omega,\underline{s}) = (2\pi i)^{-n} \int_{\overline{\mathcal{M}}_{0,S}(\CC)}     \left( \prod_{c\in \chi_{S}} |u_c|^{2s_c} \right)  \, \nu_{S}\wedge\overline{\omega} \ .
\end{equation}
\end{definition} 
 These definitions  are  equivalent to     (\ref{Iopensimplicial})
and (\ref{Iclosedsimplicial}), respectively,  after passing to simplicial coordinates. For the closed string case, one can change its domain using the fact that  $  \overline{\mathcal{M}}_{0,S}(\CC) \supset \mathcal{M}_{0,S}(\CC) \subset \CC^{n}$  differ by sets of Lebesgue measure zero. \medskip

In the physics literature, one usually wants to expand string amplitudes in the Mandelstam variables $\underline{s}$. However, the integrals \eqref{openamp} and \eqref{closedamp} generally do not converge if $\underline{s}$ is close to zero, as the following propositions show.

 \subsection{Convergence of the open and closed string amplitudes}
\begin{proposition} \label{lemconv} The integral 
$I^{\open}(\omega, \underline{s})$ of (\ref{openamp})
converges absolutely for $s_c\in\CC$ satisfying
\begin{equation} \label{OpenConvergence} \mathrm{Re}(s_c) > \begin{cases}  0 \qquad \hbox{ if } \qquad \mathrm{Res}_{D_c} \omega \neq 0\ ;\nonumber \\
-1 \quad \,\, \hbox{ if } \qquad \mathrm{Res}_{D_c} \omega = 0 \ .\nonumber \\
\end{cases}
\end{equation}
\end{proposition}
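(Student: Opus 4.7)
My strategy is to prove absolute integrability by localising on the compact polytope $\overline{X^\delta}$. On the open interior $X^\delta$ the integrand is continuous, so via a partition of unity it suffices to bound the integral in a neighbourhood of each boundary point $p\in \overline{X^\delta}\setminus X^\delta$. Setting $J_0=\{c\in \chi_{S,\delta}: u_c(p)=0\}$, one observes from \eqref{ucprimecrossingc} that $J_0$ is a set of pairwise non-crossing chords, so that by \eqref{fDXdelta} the trivialisation $f_{J_0}^{\gamma}$ supplies local coordinates $(u_c)_{c\in J_0}\in[0,\epsilon)^{J_0}$ together with coordinates on $X_{J_0}$ along which all remaining dihedral coordinates stay bounded away from $0$. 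The factor $\prod_{c\notin J_0}u_c^{s_c}$ is therefore bounded in absolute value on this chart.

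Next I would expand $\omega$ in these coordinates. Since $\omega\in\Omega_S^n$ has at worst simple logarithmic poles along the normal crossing divisor $\bigcup_{c\in J_0}D_c$, it admits a local decomposition
$$\omega \;=\; \sum_{I\subseteq J_0}\Big(\bigwedge_{c\in I}\frac{du_c}{u_c}\Big)\wedge \omega_I,$$
where each $\omega_I$ is regular in $(u_c)_{c\in J_0}$ and involves no $du_c/u_c$ with $c\in I$. After multiplying by $\prod_{c\in J_0}u_c^{s_c}$, taking absolute values, and applying Fubini on the product chart, absolute integrability reduces to the elementary one-variable integrals $\int_0^\epsilon u^{\mathrm{Re}(s_c)-1}\,du$ for $c\in I$ and $\int_0^\epsilon u^{\mathrm{Re}(s_c)}\,du$ for $c\in J_0\setminus I$, which converge under $\mathrm{Re}(s_c)>0$ and $\mathrm{Re}(s_c)>-1$ respectively.

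The hard part will be justifying the weaker bound $\mathrm{Re}(s_c)>-1$ precisely when $\mathrm{Res}_{D_c}\omega=0$. Here I would use that the coefficient of $du_c/u_c$ in the decomposition above is regular across $\{u_c=0\}$ and its restriction to $D_c$ is the globally defined form $\mathrm{Res}_{D_c}\omega$. If this vanishes identically on $D_c$, then the local coefficient is divisible by $u_c$ and the apparent pole cancels; iterating via the functoriality of residues (Lemma \ref{lemResfunctorial}) shows that every $\omega_I$ with $c\in I$ is divisible by $u_c$, so only terms contributing the weaker factor $u_c^{\mathrm{Re}(s_c)}\,du_c$ survive. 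Extracting a finite subcover of $\overline{X^\delta}$ and summing the local bounds then yields the stated range of convergence.
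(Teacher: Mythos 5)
Your proof is essentially the same as the paper's: you localise on the compact polytope $\overline{X^\delta}$, use the normal crossing property (via \eqref{ucprimecrossingc}) to produce local coordinates from a non-crossing set of dihedral coordinates, factor the pole structure out of the integrand, and reduce convergence to the elementary estimate $\int_0^\epsilon u^{\mathrm{Re}(s)-p}\,du<\infty$ for $\mathrm{Re}(s)>p-1$. The paper's version of this is slightly leaner: it covers $\overline{X^\delta}$ by the explicit sets $S^J_\varepsilon$ indexed by non-crossing subsets $J$ (extended to a maximal non-crossing $J'$ supplying all $n$ coordinates), and, exploiting that $\omega$ is of top degree, writes the integrand as a \emph{single} term $\bigl(\prod_{c\notin J}u_c^{s_c}\bigr)\,\omega_0\wedge\bigl(\bigwedge_{c\in J}u_c^{s_c}\,du_c/u_c^{p_c}\bigr)$ with $p_c=-\mathrm{ord}_{D_c}\omega\in\{0,1\}$, so the case distinction $\mathrm{Res}_{D_c}\omega=0$ versus $\neq 0$ is absorbed directly into the pole order $p_c$ with no separate argument needed. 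Your sum decomposition $\omega=\sum_{I\subseteq J_0}\bigl(\bigwedge_{c\in I}du_c/u_c\bigr)\wedge\omega_I$ is not wrong, but it forces you into the extra ``hard part'' about divisibility by $u_c$, which is really just the elementary statement that for a top-degree logarithmic form, vanishing residue along $D_c$ means the local coefficient function vanishes on $\{u_c=0\}$ and hence the apparent pole along $D_c$ cancels; your appeal to Lemma \ref{lemResfunctorial} for this is misplaced (that lemma is about compatibility of residues with forgetful maps, which is not what is needed here), though the conclusion you want is correct. If you instead factor out the denominator $\prod_{c\in J_0}u_c^{p_c}$ from the coefficient function right away, as the paper does, both the decomposition and the treatment of the zero-residue case become immediate.
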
 
\begin{proof} Let $J\subset \chi_{S}$ be a set of non-crossing chords. The set $J$ can be extended to a maximal set $J\subset J' \subset \chi_{S}$ of non-crossing chords.
The $u_{j}$ for $j \in J'$ form a system of local  coordinates on $\mathcal{M}^{\delta}_{0,S}$ \cite[\S2.4]{BrENS}.   For any $\varepsilon>0$, consider the   set 
$$ S^J_{\varepsilon}=  \{  0\leq u_{j} < \varepsilon \quad   \hbox{ for}  \quad j \in J \ , \  u_{j'}\geq  \varepsilon \quad \hbox{ for } \quad  j'\in J' \backslash J\} \quad \subset \quad  \overline{X^{\delta}}\ .$$
The sets $S^J_{\varepsilon}$, for varying $J$, cover   $\overline{X^{\delta}}$ for sufficiently small $\varepsilon$. This follows because the latter is defined by the domain $u_c\geq 0$ for all $c\in \chi_{S}$. Since $u_c$ and $u_{c'}$ can only  vanish simultaneously if $c$ and $c'$ do not cross by (\ref{ucprimecrossingc}), it follows that 
$$\overline{X^{\delta}} = \bigcup_{\varepsilon>0}  \bigcup_{J}  S^J_{\varepsilon}\ .$$
This implies the covering property by   compactness of $\overline{X^{\delta}}$. 
It  suffices to show that the integrand is absolutely convergent on each $S^J_{\varepsilon}$. In the local coordinates $u_j$,  the normal crossing property means that we can write the integrand of (\ref{openamp}) as
$$    \left( \prod_{c\in \chi_{S}} u_c^{s_c}  \right) \omega = \left(\prod_{c\notin J} u_c^{s_c} \right)  \omega_0 \wedge   \left( \bigwedge_{c\in J} u_c^{s_c}  \frac{d u_c}{u_c^{p_c}}   \right) $$
where $p_c = - \mathrm{ord}_{D_c} \omega$ is the order of the pole  of $\omega$ along $u_c=0$, and $\omega_0$ has no poles on $S^J_{\varepsilon}$. Since $x^{\alpha} dx$ is integrable on $[0,\varepsilon)$ for $\mathrm{Re}\, \alpha >-1$,   the condition 
$\mathrm{Re}\, (s_c - p_c) >-1$ for all $c\in J$
guarantees absolute convergence over $S^J_{\varepsilon}$. 
\end{proof}
Note that the region of convergence does not permit a Taylor expansion at $s_c=0$.

\begin{proposition}  \label{propIclosedconverges} Let $N=|S|$. The integral $I^{\closed}(\omega,\underline{s})$ of (\ref{closedamp})  
converges absolutely for $s_c\in \CC$ satisfying 
\begin{equation} \label{ClosedConvergence} \frac{1}{N^2}>  \mathrm{Re}(s_c) > \begin{cases}  0 & \hbox{ if } \qquad \mathrm{Res}_{D_c} \omega \neq 0\ ; \\
- \frac{1}{2} & \hbox{ if } \qquad \mathrm{Res}_{D_c} \omega = 0 \ . \\
\end{cases}
\end{equation}
\end{proposition}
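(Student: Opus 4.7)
The strategy parallels the open-string Proposition \ref{lemconv}, but two new features must be handled. First, the domain of integration $\overline{\mathcal{M}}_{0,S}(\CC)$ is compact, so we must work in several coordinate charts rather than on a single cell. Second, local integrability of $|z|^{2\alpha}\, dz \wedge d\overline{z}$ near $z=0$ requires $\mathrm{Re}(\alpha) > -1$, which accounts for the factor of $\tfrac12$ in the lower bound; the upper bound $\mathrm{Re}(s_c) < 1/N^2$ is needed to control the contributions of $|u_c|^{2 s_c}$ in those charts where $u_c$ is not a local coordinate and may blow up.

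First, cover the compact manifold $\overline{\mathcal{M}}_{0,S}(\CC)$ by the finite open affine cover $\{\mathcal{M}_{0,S}^{\delta'}(\CC)\}_{\delta'}$ indexed by the dihedral structures $\delta'$ on $S$. Using a smooth partition of unity subordinate to this cover, reduce to proving absolute convergence of the integrand on each chart. Fix such a chart $\mathcal{M}_{0,S}^{\delta'}$ and, in complete analogy with Proposition \ref{lemconv}, cover its closure by the sets $S^{J}_{\varepsilon} = \{|u_{j}| < \varepsilon \text{ for } j\in J,\ |u_{j'}| \geq \varepsilon \text{ for } j'\in J'\setminus J\}$, where $J \subset J'$ and $J'\subset \chi_{S,\delta'}$ is a maximal set of non-crossing chords; the $u_{j'}$ for $j'\in J'$ then form a system of local complex coordinates.

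Next, analyze the integrand in these local coordinates. Every chord $c\in \chi_{S,\delta}$ gives a rational function $u_c$ on $\mathcal{M}_{0,S}^{\delta'}$, and near $S^{J}_{\varepsilon}$ one may write
\[
|u_c|^{2 s_c} = \Bigl(\prod_{j'\in J'} |u_{j'}|^{2 m_{c,j'} s_c}\Bigr)\cdot h_c,\qquad m_{c,j'} = \mathrm{ord}_{D_{j'}} u_c \in \ZZ,
\]
with $h_c$ smooth, bounded and bounded away from zero on $S^{J}_{\varepsilon}$. Meanwhile the top form $\nu_S \wedge \overline{\omega}$ has logarithmic singularities: along each $D_{j'}$ it contributes a pole of order $p_{j'}\geq 0$ determined by Lemma \ref{lemnuSpoles} (a contribution $1$ if $D_{j'}$ bounds $X^{\delta}$) and by the pole order of $\omega$. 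Thus the integrand is, up to a bounded factor, $\prod_{j'\in J'} |u_{j'}|^{2\alpha_{j'}}\, du_{j'}\wedge d\overline{u}_{j'}$ with
\[
\alpha_{j'} = \sum_{c\in\chi_{S,\delta}} m_{c,j'}\, s_c \ -\ p_{j'},
\]
and absolute convergence on $S^{J}_{\varepsilon}$ is equivalent to $\mathrm{Re}(\alpha_{j'}) > -1$ for every $j'\in J$.

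Finally, verify the inequality $\mathrm{Re}(\alpha_{j'}) > -1$ in two cases. If $D_{j'}$ coincides with some $D_c$ for $c\in \chi_{S,\delta}$, then by Lemma \ref{lemfcnus} and the functoriality of residues (Lemma \ref{lemResfunctorial}) one has $m_{c,j'}=1$, $m_{c',j'}=0$ for $c'\neq c$, and $p_{j'}$ equals $1$ plus the pole order of $\omega$ along $D_c$; the required inequality is then exactly the lower bound in \eqref{ClosedConvergence}. Otherwise $D_{j'}$ is a boundary divisor not among the $D_c$, hence $p_{j'}$ equals the pole order of $\omega$ there (at most some combinatorial constant depending only on $N$) and the integers $m_{c,j'}$ are bounded in absolute value by a constant depending only on $N$. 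Since there are fewer than $N^2$ terms in the sum, the upper bound $\mathrm{Re}(s_c) < 1/N^2$ ensures that the sum has absolute value much less than $1$, so $\mathrm{Re}(\alpha_{j'}) > -1$ holds with room to spare.

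The main obstacle is the third paragraph: explicitly controlling the integers $m_{c,j'}$ and the pole orders $p_{j'}$ across all charts $\delta'\neq \delta$. This rests on the description of dihedral coordinates as cross-ratios and the fact, implicit in the proof of Lemma \ref{lemnuSpoles}, that every $u_c$ factors on $\mathcal{M}_{0,S}^{\delta'}$ as a monomial in local coordinates times a unit, with uniformly bounded integer exponents. Once this combinatorial input is in hand, the bound $1/N^2$ is a crude but sufficient uniform estimate.
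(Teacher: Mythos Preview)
Your overall strategy---cover $\overline{\mathcal{M}}_{0,S}(\CC)$ by dihedral charts, reduce to local analysis near each boundary stratum, and distinguish divisors in $\partial\mathcal{M}_{0,S}^{\delta}$ from those at ``infinite distance''---is the same as the paper's. The gap is in your local model for the singularity of $\nu_S\wedge\overline{\omega}$.

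You write the integrand as $\prod_{j'}|u_{j'}|^{2\alpha_{j'}}\,du_{j'}\wedge d\overline{u}_{j'}$ with $\alpha_{j'}=\sum_c m_{c,j'}s_c - p_{j'}$ and then assert that $p_{j'}$ equals $1$ plus the pole order of $\omega$ along $D_c$. But $\nu_S$ contributes a \emph{holomorphic} pole $z^{-1}$ while $\overline{\omega}$ contributes an \emph{antiholomorphic} pole $\overline{z}^{-q}$, and these need not be symmetric; the product is $z^{-1}\overline{z}^{-q}$, not $|z|^{-2(1+q)}$. Concretely, at a divisor $D_c$ with $\mathrm{Res}_{D_c}\omega=0$ the integrand is $|z|^{2s_c}\,z^{-1}\,dz\wedge d\overline{z}$, whose radial part is $\rho^{2s_c}\,d\rho$, integrable for $\mathrm{Re}(s_c)>-\tfrac12$. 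Your formula with $p_{j'}=1$ would instead give $\alpha_{j'}=s_c-1$ and the condition $\mathrm{Re}(s_c)>0$; with $\mathrm{Res}_{D_c}\omega\neq 0$ it would give $\mathrm{Re}(s_c)>1$. Neither matches \eqref{ClosedConvergence}. The paper avoids this by not forcing the integrand into the symmetric form $|z|^{2\alpha}$: it records the local shape $|z|^{2s_c}\frac{dz\wedge d\overline{z}}{z\overline{z}}$ or $|z|^{2s_c}\frac{dz\wedge d\overline{z}}{z}$ and passes directly to polar coordinates.

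For the upper bound you are on the right track but too vague. The paper uses three precise facts: (i) $\nu_S$ has \emph{no} pole along any divisor at infinite distance (Lemma~\ref{lemnuSpoles}), so the worst local shape there is $|z|^{2p}\frac{dz\wedge d\overline{z}}{\overline{z}}$ with integrability condition $2\,\mathrm{Re}(p)>-1$; (ii) every $u_c$, being a cross-ratio, has order in $\{0,\pm1\}$ along any boundary divisor, so $p=\sum_c a_c s_c$ with $a_c\in\{0,\pm1\}$; (iii) $|\chi_{S}|=N(N-3)/2$. Under \eqref{ClosedConvergence} one then has $2\,\mathrm{Re}(p)>-2|\chi_S|/N^2=-(N-3)/N>-1$. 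Your ``at most some combinatorial constant'' does not pin down (i) or (ii), and without (i) the argument would fail since an extra pole from $\nu_S$ would push the threshold.
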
 

\begin{proof} Let $\Omega$ denote the integrand of (\ref{closedamp}).    Let $D \subset \overline{\mathcal{M}}_{0,S}$ be an irreducible boundary divisor.  Supose first that $D$ is a component of $\partial \mathcal{M}_{0,S}^{\delta}$  and is therefore defined by $u_c=0$ for some  $c\in \chi_{S}$.  By Lemma \ref{lemnuSpoles}, $\nu_S$ has a simple pole along $D$. In the local coordinate $z=u_c$,  $\Omega$ has  at worst poles  of the form:
$$|z|^{2s_c}  \frac{dz \wedge d\overline{z}}{z \overline z} \; \hbox{ if } \;  \mathrm{Res}_{D_c} \omega \neq 0  \qquad , \qquad  |z|^{2s_c}  \frac{dz \wedge d\overline{z}}{z } \; \hbox{ if } \;  \mathrm{Res}_{D_c} \omega = 0  \ .$$
In polar coordinates $z=\rho\, e^{i \theta}$,  the left-hand term is  proportional to $ \rho^{2s_c-1} d\rho\, d\theta$ and hence integrable for $\mathrm{Re}(s_c)>0$, the right-hand term to $\rho^{2s_c} d\rho\, d\theta$ and hence integrable for $\mathrm{Re}(s_c)>-1/2$.
Now consider a boundary divisor $D$ which is a component of $\overline{\mathcal{M}}_{0,S} \backslash \mathcal{M}_{0,S}$ but which is not a component of $\partial \mathcal{M}_{0,S}^{\delta}$ (at `infinite distance').  It is  defined by a local coordinate $z=0$ (which is a dihedral coordinate with respect to some other dihedral structure on $S$).  By Lemma \ref{lemnuSpoles}, $\nu_S$ has no pole along $z=0$. Since $\omega$ has logarithmic singularities,  $\Omega$ is locally at worst of the form
$$  |z|^{2 p}  \frac{dz \wedge d\overline{z}}{z }$$
where $p$ is a linear form in the $s_c$. Since any cross-ratio $u_c$ has at most a simple zero or pole along $D$, it follows that $p= \sum_{c\in\chi_{S}} a_c s_c$ where $a_c\in \{0,\pm 1\}$ (an explicit formula for   $p$ in terms of $s_c$ is given in \cite[\S7.3]{BrENS}). By passing to polar coordinates one sees that the integrability condition reads $2\,\mathrm{Re}(p)>-1$. Assuming \eqref{ClosedConvergence} one gets the inequality
$$2\,\mathrm{Re}(p)>-2\frac{|\chi_{S}|}{N^2}=-\frac{N(N-3)}{N^2}>-1\ ,$$
and we are done.
\end{proof}

Put differently, for any $s_c$ satisfying  the assumptions \eqref{ClosedConvergence},  the integrand of (\ref{closedamp}) is polar-smooth on $(\overline{\mathcal{M}}_{0,S}, \partial \overline{\mathcal{M}}_{0,S})$ in the sense of Definition 3.7 of \cite{BD1}.

\subsection{Example}

 Let $|S|=4$,  and set $\omega = \frac{dx}{x(1-x)}$. We have for $s,t\in\CC$,
 $$I^{\open}(\omega, (s,t))=  \int_{0}^1  x^{s}(1-x)^t  \frac{dx}{x(1-x)}=\int_0^1x^{s-1}(1-x)^{t-1}dx\ .$$
 This is the classical beta function $\beta(s,t)$, which converges for $\mathrm{Re}(s)>0$, $\mathrm{Re}(t)>0$. 
 For the closed string amplitude we get
 \begin{align*}
 I^{\closed}(\omega, (s,t))&= \frac{1}{2\pi i} \int_{\PP^1(\CC)}  |z|^{2s}|1-z|^{2t} \frac{dz}{z(1-z)} \wedge \left(-\frac{d \overline{z}}{\overline{z}(1-\overline{z})}\right)\\
 & = \frac{1}{\pi}\, \int_{\CC}|z|^{2(s-1)}|1-z|^{2(t-1)}\,d^2z \ ,
 \end{align*}
 where  $d^2z=d\mathrm{Re}(z)\wedge d\mathrm{Im}(z)$. This is the complex beta function $\beta_\CC(s,t)$, which converges for $\mathrm{Re}(s)>0$, $\mathrm{Re}(t)>0$, $\mathrm{Re}(s+t)<1$.

\section{`Renormalisation' of string amplitudes} \label{sect: Renorm}

\subsection{Formal moduli space integrands}  Let us fix $S=(S,\delta)$ as above. 
We shall interpret the integrands of string amplitudes as formal symbols in dihedral coordinates, with a view to either  taking a Taylor expansion 
in the  variables $s_c$, or specialising to complex numbers in the case when the integrals are convergent. This will furthermore enable us to treat the open and closed string integrands simultaneously. To this end, consider  a fixed  commutative monoid $(M, +)$ which is free with finitely many generators. The main example will be
$M_S = \bigoplus_{c\in\chi_{S}}\mathbb{N}\,s_c$, the monoid of non-negative integer linear combinations of  the symbols $s_c$.

\begin{definition}
Denote by $F_S(M)$ the $\QQ$-algebra generated by  formal symbols $u_c^m$, for $c\in \chi_{S}$ and $m \in M$, modulo the relations  $u^0_c=1$ and
$$u_c^{m+m'} = u_c^m u_c^{m'} \qquad \hbox{ for all}  \quad m,m' \in M \ . $$ 
Similarly, if $c$ is a chord, let $F_c(M)$ be the $\QQ$-algebra generated by $u_c^m$ for $m\in M$ modulo the above relations. Let us write
$$\mathcal{A}_S (M)=   F_S(M)\otimes   \Omega^{|S|-3}_S $$
and set 
$$\mathcal{A}_c(M) = F_c(M) \otimes  \QQ \,d\!\log(u_c) \ .$$
\end{definition} 

We write the elements of $\A_S(M)$ without the tensor product, as linear combinations of $\left(\prod_{c\in \chi_{S}} u_c^{m_c}\right) \omega$. Similarly, an element of $\A_c(M)$ is denoted $u_c^{m_c}\,d\!\log(u_c)$.

\begin{definition}\label{defi:ASJ} Let  $J\subset \chi_{S}$ be any subset  of non-crossing chords as in Definition \ref{defNotationS/J}.  Write $J= \{j_1,\ldots, j_k\}$.  Let us define
\begin{eqnarray*} 
\mathcal{A}_J(M) & = &   \mathcal{A}_{j_1}(M) \otimes \cdots \otimes \mathcal{A}_{j_k}(M) \\
\mathcal{A}_{S/J}(M)  &=  &  \mathcal{A}_{S_0}(M) \otimes \cdots \otimes \mathcal{A}_{S_k}(M) \ .
\end{eqnarray*} 
\end{definition}

\begin{remark}\label{rem:koszul sign rule}
There is no preferred linear order on $J$ or on the set of polygons that are cut out by $J$. The  tensor products in Definition \ref{defi:ASJ} are therefore to be understood in the tensor category of graded vector spaces with the Koszul sign rule, where $\A_S(M)$ has degree $|S|-3$, and  $\A_c(M)$ has degree $4-3=1$. 
\end{remark}

\begin{remark} We can think of  the formal function $u_c^{m_c}$ as a  horizontal section of the formal connection $\nabla = d - {m_c}\, d\!\log (u_c)$  on the 
trivial rank $1$ bundle on the punctured (total space of the) normal bundle
to $D_c$. 
\end{remark} 

A forgetful map $f_T: \mathcal{M}_{0,S} \rightarrow \mathcal{M}_{0,T}$ defines a morphism $ f_T^*  :  F_T(M) \rightarrow F_S(M)$
via formula (\ref{fTstar}). By combining it with \eqref{eq:fTstar log forms} we get a morphism
\begin{equation}\label{eq:fTstarformal}
f_T^*:\A_T(M)\rightarrow \A_S(M)\ .
\end{equation}

We can realise the formal moduli space integrands as differential forms as follows.

\begin{definition}  \label{defnalphreal}
Given an additive map $\alpha: M \rightarrow \CC$, define a $\QQ$-linear  map
\begin{align*} \rho^{\open}_{\alpha}   & :\A_S(M)  \To  \hbox{differential forms on } X^{\delta}  \\
{}&\left(\prod_c u_c^{m_c}\right) \omega    \mapsto   \left(\prod_c u_c^{\alpha(m_c)}\right) \omega   \ .
\end{align*}
It is single-valued since $u_c^\alpha=\exp(\alpha\log(u_c))$ and $\log(u_c)$ has a canonical branch on $X^{\delta}$, which is the region $0<u_c<1$.  In a similar manner, we can define
\begin{align*} \rho^{\closed}_{\alpha}   & :\A_S(M)  \To  \hbox{differential forms on } \mathcal{M}_{0,S}(\CC)  \\
{}&\left(\prod_c u_c^{m_c}\right) \omega    \mapsto  (2\pi i)^{-n} \left(\prod_c |u_c|^{2\alpha(m_c)}\right) \nu_{S}\wedge\overline{\omega}   \ .
\end{align*}

\end{definition}

\subsection{Infinitesimal behaviour}

We define a kind of  residue of formal differential forms along boundary divisors which  encodes the infinitesimal behaviour of functions in the neighbourhood of the  divisor. We first define the evaluation map
$$\mathrm{ev}_c:F_S(M)\rightarrow F_c(M)\otimes F_{S'}(M)\otimes F_S(M)$$
as the morphism sending a formal symbol $u_{c'}$ to $1$ if $c'$ crosses $c$, and all other symbols to identically named symbols.

\begin{definition} For any $c\in \chi_{S}$ we define the map
$$R_{c}\quad  :  \quad   \mathcal{A}_S(M)  \To \mathcal{A}_{c}(M) \otimes \mathcal{A}_{S'}(M) \otimes \mathcal{A}_{S''}(M)  $$
to be the tensor product of the evaluation map $\mathrm{ev}_c$ and the map of logarithmic differential forms  $\omega\mapsto d\log(u_c)\otimes \mathrm{Res}_{D_c}(\omega)$.

\end{definition} 

\begin{lemma}
If $c_1, c_2\in\chi_{S}$ do not cross, then $R_{c_1}\circ R_{c_2} = R_{c_2}\circ R_{c_1}$.
\end{lemma}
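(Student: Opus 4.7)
The proof splits along the tensor decomposition $\mathcal{A}_S(M) = F_S(M) \otimes \Omega^{|S|-3}_S$, since $R_c$ is, by definition, the tensor product of $\mathrm{ev}_c$ with the form-level operation $\omega \mapsto d\log(u_c) \otimes \mathrm{Res}_{D_c}(\omega)$. It therefore suffices to verify the identity separately on the formal algebra and on logarithmic forms, viewing both sides as elements of the common codomain $\mathcal{A}_{c_1}(M) \otimes \mathcal{A}_{c_2}(M) \otimes \mathcal{A}_{S_0}(M) \otimes \mathcal{A}_{S_1}(M) \otimes \mathcal{A}_{S_2}(M)$, where $S_0, S_1, S_2$ are the three polygons obtained by cutting $S$ along both $c_1$ and $c_2$ (as in the proof of Lemma \ref{lemfccommute}).

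On the formal side, $\mathrm{ev}_c$ sends a generator $u_{c'}^m$ to $1$ if $c'$ crosses $c$, and otherwise to the corresponding generator on whichever of the two subpolygons contains $c'$. Because ``crossing'' is an intrinsic symmetric property of pairs of chords, and the resulting partition of the non-crossing chords of $S$ into the polygons $S_0, S_1, S_2$ is manifestly independent of the order in which one cuts along $c_1, c_2$, the compositions $\mathrm{ev}_{c_1} \mathrm{ev}_{c_2}$ and $\mathrm{ev}_{c_2} \mathrm{ev}_{c_1}$ agree on each generator.

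On the form side, the two compositions produce $d\log(u_{c_1}) \otimes d\log(u_{c_2}) \otimes \mathrm{Res}_{D_{c_1}} \mathrm{Res}_{D_{c_2}} \omega$ and $d\log(u_{c_2}) \otimes d\log(u_{c_1}) \otimes \mathrm{Res}_{D_{c_2}} \mathrm{Res}_{D_{c_1}} \omega$ respectively. Since $c_1, c_2$ do not cross, $u_{c_1}, u_{c_2}$ extend to a local coordinate system on $\mathcal{M}^\delta_{0,S}$ by \cite[\S 2.4]{BrENS}, and a direct local calculation (writing $\omega = d\log u_{c_1} \wedge d\log u_{c_2} \wedge \eta + \cdots$) shows that iterated residues along two transverse logarithmic divisors anti-commute. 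Simultaneously, swapping the two degree-one factors $d\log(u_{c_i})$ contributes a sign $-1$ under the Koszul convention of Remark \ref{rem:koszul sign rule}. These two signs cancel, yielding the equality.

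The main obstacle is the sign bookkeeping: one must identify the codomains of $R_{c_1} \circ R_{c_2}$ and $R_{c_2} \circ R_{c_1}$ by a canonical permutation of tensor factors, at which point the antisymmetry of iterated residues and the Koszul swap-sign are exactly matched. All other steps are combinatorial and reduce to the symmetric role of $c_1$ and $c_2$ in the cutting of the polygon $S$.
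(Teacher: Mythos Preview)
Your proof is correct and follows essentially the same approach as the paper: split along the tensor decomposition, observe that the evaluation maps commute, and note that the anticommutation of residues is exactly cancelled by the Koszul sign from swapping the degree-one factors $\mathcal{A}_{c_1}(M)$ and $\mathcal{A}_{c_2}(M)$. The paper's proof is more terse (it simply asserts that the evaluation-map commutativity is clear and that $\mathrm{Res}_{D_{c_1}}\circ\mathrm{Res}_{D_{c_2}} = -\mathrm{Res}_{D_{c_2}}\circ\mathrm{Res}_{D_{c_1}}$ follows from $d\log u_{c_1}\wedge d\log u_{c_2} = -d\log u_{c_2}\wedge d\log u_{c_1}$), whereas you supply the local-coordinate justification and spell out the combinatorics of the evaluation maps; but the substance is identical.
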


\begin{proof}
The commutativity for the evaluation maps is clear. Since $d\log(u_{c_1})\wedge d\log(u_{c_2})=-d\log(u_{c_2})\wedge d\log(u_{c_1})$, the residues anticommute: $\mathrm{Res}_{D_{c_1}}\circ\mathrm{Res}_{D_{c_2}} = - \mathrm{Res}_{D_{c_2}}\circ\mathrm{Res}_{D_{c_1}}$. This sign is compensated by the Koszul sign rule (see Remark \ref{rem:koszul sign rule}) for the tensor product $\A_{c_1}(M)\otimes \A_{c_2}(M)\simeq \A_{c_2}(M)\otimes \A_{c_1}(M)$.
\end{proof}

Let $J=\{j_1,\ldots,j_k\}\subset \chi_{S}$ be a subset of non-crossing chords as in Definition \ref{defNotationS/J}. By the previous lemma one can compute the iterated residue 
$R_J = R_{j_1} \circ \cdots \circ R_{j_k}$ in any order, which provides   a linear map
$$R_J \quad  : \quad   \mathcal{A}_S(M)  \To  \mathcal{A}_J(M) \otimes \mathcal{A}_{S/J}(M)\ .$$

\subsection{Trivialisation maps}

Fix a cyclic ordering $\gamma$ on $S$ which is compatible with $\delta$. 
Using the morphisms \eqref{eq:fTstarformal}  define  for each chord $c\in\chi_{S}$ a trivialisation map
\begin{equation}\label{eq:fcstar formal F}
f_c^*:F_c(M)\otimes F_{S'}(M)\otimes F_{S''}(M)\rightarrow F_S(M)
\end{equation}
by  $f_c^*(u_c^{m}\otimes U'\otimes U'')=u_c^{m}f_{T'}^*(U')f_{T''}^*(U'')$ (compare  \eqref{eq:fcstar log forms}).   One checks that
\begin{equation}\label{eq:ev and fstar}
\mathrm{ev}_c\circ f_c^*=\mathrm{id} \ .
\end{equation}

\begin{lemma}\label{lemdiff formal F}
For $U\in F_S(M)$, the difference $U-f_c^*(\mathrm{ev}_c(U))$ lies in the ideal of $F_S(M)$ generated by elements
 $u^{m'}_{c'}-1$
for all chords $c'$ crossing $c$, and $m'\in M.$
\end{lemma}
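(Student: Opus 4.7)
The plan is to identify the ideal $I$ generated by the elements $u_{c'}^{m'} - 1$ (for chords $c'$ crossing $c$ and $m' \in M$) with the kernel of $\mathrm{ev}_c$, and then to conclude by exploiting the section property $\mathrm{ev}_c \circ f_c^* = \mathrm{id}$ recorded in \eqref{eq:ev and fstar}.

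I would first note that $F_S(M)$ has a completely transparent structure: since the defining relations $u_c^0 = 1$ and $u_c^{m+m'} = u_c^m u_c^{m'}$ involve only one chord at a time, one has
\[
F_S(M) \;\cong\; \bigotimes_{c' \in \chi_{S}} \QQ[M]
\]
as $\QQ$-algebras, i.e.\ $F_S(M)$ is the monoid algebra of $\bigoplus_{c' \in \chi_{S}} M$. Partition $\chi_{S} = C_0 \sqcup C_1$, where $C_0$ is the set of chords crossing $c$ and $C_1$ is its complement (which contains $c$ together with all chords of $S'$ and $S''$). By the very definition of the evaluation map, $\mathrm{ev}_c$ acts on this tensor product as the augmentation $u_{c'}^{m'} \mapsto 1$ on each factor indexed by $C_0$ and as the identity on each factor indexed by $C_1$. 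Since the augmentation ideal of $\QQ[M]$ is generated as an ideal by $u^{m'} - 1$ for $m' \in M$, this yields the equality $\ker(\mathrm{ev}_c) = I$.

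Let $\pi \colon F_S(M) \to F_S(M)/I$ denote the quotient map. The previous step implies that $\mathrm{ev}_c$ descends to an isomorphism $\overline{\mathrm{ev}}_c \colon F_S(M)/I \xrightarrow{\sim} F_c(M) \otimes F_{S'}(M) \otimes F_{S''}(M)$. Composing the identity $\mathrm{ev}_c \circ f_c^* = \mathrm{id}$ with $\pi$ and using $\mathrm{ev}_c = \overline{\mathrm{ev}}_c \circ \pi$ gives $\pi \circ f_c^* = \overline{\mathrm{ev}}_c^{-1}$. Hence for any $U \in F_S(M)$,
\[
\pi\bigl(f_c^*(\mathrm{ev}_c(U))\bigr) \;=\; \overline{\mathrm{ev}}_c^{-1}\bigl(\overline{\mathrm{ev}}_c(\pi(U))\bigr) \;=\; \pi(U),
\]
so $U - f_c^*(\mathrm{ev}_c(U))$ lies in $I$, as claimed. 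There is no real obstacle; the only point requiring care is recognising the tensor-product decomposition of $F_S(M)$, which makes $\mathrm{ev}_c$ manifestly equal to (augmentation) $\otimes$ (identity) and thereby reduces the identification of $\ker(\mathrm{ev}_c)$ to the standard description of an augmentation ideal.
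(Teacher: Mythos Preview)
Your proof is correct and is precisely the unpacking of what the paper means by ``follows from the definitions'': the key points are that $\ker(\mathrm{ev}_c)$ equals the ideal $I$ (which the paper itself later spells out in the proof of Lemma~\ref{lemmaDecompForms}) and that $\mathrm{ev}_c \circ f_c^* = \mathrm{id}$. Your second paragraph could be shortened to the one-line observation $\mathrm{ev}_c\bigl(U - f_c^*(\mathrm{ev}_c(U))\bigr) = \mathrm{ev}_c(U) - \mathrm{ev}_c(U) = 0$, but the quotient argument is of course equivalent.
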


\begin{proof}
Follows from the definitions.
\end{proof}

By tensoring \eqref{eq:fcstar formal F} with the map of logarithmic forms \eqref{eq:fcstar log forms} one gets a map
\begin{equation}\label{eq:fcstar formal}
f_{c}^*  \quad  : \quad     \mathcal{A}_{c}(M) \otimes \mathcal{A}_{S'}(M) \otimes \mathcal{A}_{S''}(M)   \To \mathcal{A}_S(M)\ .
\end{equation}

\begin{lemma}
If $c_1, c_2\in\chi_{S}$ do not cross, then $f_{c_1}^*\circ f_{c_2}^* = f_{c_2}^*\circ f_{c_1}^*$.
\end{lemma}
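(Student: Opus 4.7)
\medskip
\noindent\textbf{Proof plan.} The plan is to reduce the statement to Lemma \ref{lemfccommute} via the tensor-product structure of the trivialisation maps. Since the map in \eqref{eq:fcstar formal} is by construction the tensor product of the formal-symbol map \eqref{eq:fcstar formal F} with the logarithmic-forms map \eqref{eq:fcstar log forms}, the commutativity can be checked separately on each tensor factor; because $f_c^*$ has cohomological degree $1$ on each factor, no Koszul sign arises from switching the order.

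First I would unpack the source and target. When $c_1, c_2 \in \chi_{S}$ do not cross, cutting $(S,\delta)$ along both chords decomposes the polygon into three smaller polygons $S_1, S_{12}, S_2$, following exactly the notation introduced in the proof of Lemma \ref{lemfccommute}. Each of the two iterated maps $f_{c_1}^*\circ (f_{c_2}^*\otimes \mathrm{id})$ and $f_{c_2}^*\circ (f_{c_1}^*\otimes \mathrm{id})$, after reassociating, has source
$$\A_{c_1}(M)\otimes \A_{c_2}(M)\otimes \A_{S_1}(M)\otimes \A_{S_{12}}(M)\otimes \A_{S_2}(M)$$
and target $\A_S(M)$, so equality makes sense once the source is rewritten in this form. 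On the logarithmic-forms factor, a tensor $d\!\log(u_{c_1})\otimes d\!\log(u_{c_2})\otimes \omega_1\otimes \omega_{12}\otimes \omega_2$ is sent by either composition to
$$d\!\log(u_{c_1})\wedge d\!\log(u_{c_2})\wedge f_{T_1}^*(\omega_1)\wedge f_{T_{12}}^*(\omega_{12})\wedge f_{T_2}^*(\omega_2)\ ,$$
with $T_1, T_{12}, T_2$ defined as in the proof of Lemma \ref{lemfccommute}. The key point is that the sets $T_1, T_{12}, T_2$ depend only on the unordered pair $\{c_1,c_2\}$: the operation of extending each component by the next edge in the cyclic order $\gamma$ is independent of the order in which one cuts along $c_1$ and $c_2$, which is precisely the content of Lemma \ref{lemfccommute}. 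The identity for logarithmic forms therefore follows directly from that lemma applied to $(f_{T_1}\times f_{T_{12}}\times f_{T_2})^*$.

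For the formal-symbol factor, the map $f_c^*$ of \eqref{eq:fcstar formal F} acts on generators $u_{c'}^m$ by the formula $f_{T_{c'}}^*(u_{c'})^m=\prod_{f_{T_{c'}}(c'')=c'} u_{c''}^m$, following \eqref{fTstar}. Iterating, either of the two compositions sends a generator $u_{c'}^m$ from one of the factors $\A_{S_i}(M)$ or $\A_{S_{12}}(M)$ to the product of $u_x^m$ over the preimages of $c'$ under the combined forgetful operation $f_{T_1}\times f_{T_{12}}\times f_{T_2}$. By the same argument from Lemma \ref{lemfccommute}, this preimage set is symmetric in $c_1, c_2$, which yields the desired equality on formal symbols.

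The main obstacle is bookkeeping rather than substance: one needs to verify that in the situation where $c_1$ (respectively $c_2$) becomes one of the outer edges of the polygon containing the other after the first cut, the resulting reassociation of the source is consistent with the one obtained by cutting in the opposite order. This is precisely the same combinatorial fact -- stability of the cut-and-extend operation under reordering -- that was checked in Lemma \ref{lemfccommute}, so no new input is required.
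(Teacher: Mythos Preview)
Your proposal is correct and follows essentially the same approach as the paper: split the map $f_c^*$ into its formal-symbol component and its logarithmic-forms component, observe that commutativity on formal symbols is clear from the symmetry of the forgetful-map formula \eqref{fTstar}, and reduce the logarithmic-forms component to Lemma~\ref{lemfccommute}. The paper's proof is simply terser, dismissing the formal-symbol part as ``clear'' and citing Lemma~\ref{lemfccommute} for the rest; your version spells out both components in more detail and adds the Koszul-sign remark, but there is no genuine difference in strategy.
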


\begin{proof}
The commutativity for the maps on the components of the tensor products  involving formal symbols is clear. The claim is thus a consequence of Lemma \ref{lemfccommute}, which treats the components which are logarithmic forms. 
\end{proof}

Let $J=\{j_1,\ldots,j_k\}\subset \chi_{S}$ be a subset of non-crossing chords as in Definition \ref{defNotationS/J}. By the previous lemma one can compute the iterated trivialisation map
$f_J^* = f_{j_1}^* \circ \cdots \circ f_{j_k}^*$ in any order, which provides   a map
$$f_J^* \quad  : \quad   \mathcal{A}_J(M) \otimes \mathcal{A}_{S/J}(M) \To \mathcal{A}_S(M) \ .$$

\subsection{Compatibilities}

	The maps $R_c$ and $f_c^*$ satisfy the following compatibilities.

	\begin{lemma} \label{Rcfcproperties}
	\begin{enumerate} 
	\item For every chord $c$ we have $R_{c} \circ f_{c}^*  =   \mathrm{id} $.
	\item For two crossing chords $c, c'$ we have $R_{c'} \circ f_{c}^*  =   0$.
	\end{enumerate}
	\end{lemma}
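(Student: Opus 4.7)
The plan is to decompose both $R_c$ and $f_c^*$ into their two tensor factors — one acting on formal symbols in $F_\bullet(M)$, the other on logarithmic differential forms — and verify each identity separately on each factor.

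For part (1), I would write $R_c = \mathrm{ev}_c \otimes \mathrm{res}_c$ and $f_c^* = f_{c,\mathrm{formal}}^* \otimes (f_c^\gamma)^*_{\mathrm{forms}}$, where $\mathrm{res}_c(\omega) := d\!\log(u_c)\otimes \mathrm{Res}_{D_c}(\omega)$ is the map used to define $R_c$ on the differential-form component, and $(f_c^\gamma)^*_{\mathrm{forms}}$ is the map \eqref{eq:fcstar log forms}. On the formal side, the identity $\mathrm{ev}_c\circ f_c^* = \mathrm{id}$ is exactly \eqref{eq:ev and fstar}. On the differential-form side, given $\frac{dx}{x}\otimes \omega'\otimes\omega''$, one computes
\[
\mathrm{res}_c\!\left((f_c^\gamma)^*\!\left(\tfrac{dx}{x}\otimes\omega'\otimes\omega''\right)\right) = d\!\log(u_c)\otimes\mathrm{Res}_{D_c}\!\left((f_c^\gamma)^*\!\left(\tfrac{dx}{x}\otimes\omega'\otimes\omega''\right)\right) = d\!\log(u_c)\otimes\omega'\otimes\omega''
\]
by \eqref{Resonfcstar}. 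Taking the tensor product of the two identities gives $R_c\circ f_c^* = \mathrm{id}$.

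For part (2), the same decomposition applies, but now the differential-form component vanishes: by \eqref{Resonfcstarzero}, whenever $c'$ crosses $c$ one has $\mathrm{Res}_{D_{c'}}((f_c^\gamma)^*(\frac{dx}{x}\otimes\omega'\otimes\omega'')) = 0$, so $\mathrm{res}_{c'}\circ (f_c^\gamma)^*_{\mathrm{forms}} = 0$. Since $R_{c'}\circ f_c^*$ factors as a tensor product with this vanishing map, the composite is zero.

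I don't anticipate a genuine obstacle — everything has been set up precisely so that the formal identity \eqref{eq:ev and fstar} and the two residue identities \eqref{Resonfcstar}, \eqref{Resonfcstarzero} do all the work. The only point to be slightly careful about is the bookkeeping of the Koszul signs coming from Remark \ref{rem:koszul sign rule} in grouping the $F$-factors with the form-factors when writing $R_c$ and $f_c^*$ as tensor products of two maps, but since each half of the decomposition is an identity (or zero) in its own right, the signs are irrelevant to the final conclusion.
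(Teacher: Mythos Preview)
Your proposal is correct and takes essentially the same approach as the paper: the paper's proof simply cites \eqref{eq:ev and fstar} and \eqref{Resonfcstar} for part (1), and \eqref{Resonfcstarzero} for part (2). You have spelled out the tensor-factor decomposition more explicitly than the paper does, but the underlying argument is identical.
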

	
	\begin{proof}
	(1) follows from \eqref{Resonfcstar} and \eqref{eq:ev and fstar}, and (2) follows from \eqref{Resonfcstarzero}.
\end{proof}
	
	\begin{lemma}
Let $c_1, c_2$ be two chords in $\chi_{S}$ which do not cross. Cutting along the chord $c_i$ produces polygons $S'_i, S''_i$, for $i=1,2$ with the induced cyclic or dihedral structures. 
Without loss of generality, suppose that $c_1$ lies in $S'_2$. Then 
\begin{equation}  \label{Rcfcommute}
R_{c_1}  \circ f_{c_2}^* = (\id \otimes  f_{c_2}^* \otimes \id) \circ (\id \otimes R_{c_1} \otimes \id )
\end{equation} 
as a map from $\mathcal{A}_{c_2} \otimes \mathcal{A}_{S'_2} \otimes \mathcal{A}_{S''_2}     \rightarrow \mathcal{A}_{c_1}\otimes  \mathcal{A}_{S'_1} \otimes \mathcal{A}_{S''_1}   $.

\end{lemma}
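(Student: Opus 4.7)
The plan is to decompose both sides of \eqref{Rcfcommute} according to the two kinds of data entering $\A_S(M)$: the formal symbol part in $F_S(M)$, and the logarithmic form part in $\Omega_S^{\bullet}$. Since $R_{c_1}$ and $f^*_{c_2}$ are defined as tensor products of operations on these two pieces, it suffices to verify the identity on each factor separately, then recombine with the appropriate Koszul signs. The geometric setup is as follows: cutting $S$ along $c_1$ produces polygons $S'_1$ and $S''_1$, and by hypothesis $c_2 \in S''_1$; cutting $S''_1$ along $c_2$ produces a polygon $Q$ (adjacent to $c_1$) and the polygon $S''_2$. Equivalently, cutting $S'_2$ along $c_1$ produces $S'_1$ and $Q$. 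So on both sides of \eqref{Rcfcommute} the output naturally lives in a space involving $\A_{c_1} \otimes \A_{c_2} \otimes \A_{S'_1} \otimes \A_{Q} \otimes \A_{S''_2}$.

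First, I would check the statement for the formal symbol components. Given a generator $u_{c_2}^{m} \otimes U' \otimes U'' \in F_{c_2}(M)\otimes F_{S'_2}(M)\otimes F_{S''_2}(M)$, the map $f^*_{c_2}$ sends this to $u_{c_2}^{m}\cdot f_{T'_2}^*(U')\cdot f_{T''_2}^*(U'')$. By Lemma \ref{lemdiff formal F} (applied to the relevant forgetful maps) and the definition of $\mathrm{ev}_{c_1}$, applying $\mathrm{ev}_{c_1}$ to this product amounts to replacing $u_{c'}^{m'}$ by $1$ whenever $c'$ crosses $c_1$. Since $c_1 \subset S'_2$, such chords $c'$ come only from $S'_2$, so $\mathrm{ev}_{c_1}$ acts trivially on $u_{c_2}^{m}$ and on $f_{T''_2}^*(U'')$. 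Using \eqref{fTstar} and the explicit formula \eqref{eq:fcstar formal F}, one verifies directly that $\mathrm{ev}_{c_1} \circ f_{c_2}^* = (f_{c_2}^* \otimes \id)\circ(\mathrm{ev}_{c_1}\otimes \id)$ on $F_{S'_2}(M)$ — i.e., evaluation commutes with the trivialisation on the unaffected components.

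Second, I would check the identity for the logarithmic form components. Here $f^*_{c_2}$ is the map \eqref{eq:fcstar log forms} sending $\tfrac{dx}{x}\otimes \omega' \otimes \omega''$ to $d\log u_{c_2} \wedge f_{T'_2}^*(\omega') \wedge f_{T''_2}^*(\omega'')$. The residue along $D_{c_1}$ passes through the forms $d\log u_{c_2}$ and $f_{T''_2}^*(\omega'')$, since neither has a pole along $D_{c_1}$ (the former because $c_1 \subset S'_2$, not crossing $c_2$; the latter by Lemma \ref{lemResfunctorial} with $f_T(c_1)$ being an outer edge of $T''_2$, hence residue zero). It then acts on $f_{T'_2}^*(\omega')$, and Lemma \ref{lemResfunctorial} gives $\mathrm{Res}_{D_{c_1}} f_{T'_2}^* = (f_{T'}^*\otimes f_{T''}^*)\circ \mathrm{Res}_{D_{c_1|S'_2}}$ for the appropriate subdivision of $S'_2$. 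Plugging this in and applying $f^*_{c_2}$ to reassemble the $S''_1$ factor yields precisely the right-hand side of \eqref{Rcfcommute}.

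The main obstacle — and essentially the only non-routine point — is bookkeeping the Koszul signs from Remark \ref{rem:koszul sign rule}. The factors $\A_{c_1}$ and $\A_{c_2}$ both carry degree $1$, and the left-hand side produces them in the order $(c_1; c_2)$ (residue first, then trivialisation split) while the right-hand side produces them in the order $(c_2; c_1)$ before the $\id\otimes\,\cdot\,\otimes\id$ permutes them back. One must confirm that the sign incurred by moving $d\log u_{c_1}$ past $d\log u_{c_2}$ in the wedge product on $\Omega_S^{\bullet}$ matches exactly the Koszul sign imposed by the tensor category conventions on $\A_J(M)\otimes \A_{S/J}(M)$. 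Provided the sign conventions in the definitions of $R_c$ and $f_c^*$ are consistent (as is built into the formalism), the two signs cancel and the identity \eqref{Rcfcommute} holds.
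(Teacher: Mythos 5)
Your overall strategy — decomposing into formal symbols and logarithmic forms, and invoking Lemma \ref{lemResfunctorial} for the residue of the $f^*_{T'_2}(\omega')$ factor — is the same as the paper's. But the specific justification you give for the $S''_2$-side factors is wrong. You claim that, since $c_1\subset S'_2$, chords crossing $c_1$ ``come only from $S'_2$,'' so that $\mathrm{ev}_{c_1}$ acts trivially on $f^*_{T''_2}(U'')$ and $\mathrm{Res}_{D_{c_1}}$ simply passes through $f^*_{T''_2}(\omega'')$. This fails: the preimage under $f_{T''_2}:\chi_{S,\delta}\to\chi_{T''_2}$ of a chord of $T''_2$ can contain chords of $S$ which cross $c_1$. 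Such a chord necessarily also crosses $c_2$, which is precisely why it is not a chord of $S'_2$ — yet it survives the contraction of the edges outside $T''_2$. Concretely, take $S$ an octagon with edges $1,\dots,8$, let $c_1$ separate $\{2,3\}$ from the rest and $c_2$ separate $\{1,2,3,4\}$ from $\{5,6,7,8\}$ (so $c_1\subset S'_2$). Then the chord $c'$ separating $\{3,4,5,6\}$ from $\{1,2,7,8\}$ crosses both $c_1$ and $c_2$, while $f_{T''_2}(c')$ is the chord of $T''_2=\{1,5,6,7,8\}$ separating $\{5,6\}$ from $\{1,7,8\}$. So $\mathrm{ev}_{c_1}$ does kill a factor $u_{c'}^{m''}$ inside $f^*_{T''_2}(U'')$, and the restriction $f^*_{T''_2}(\omega'')|_{D_{c_1}}$ is not ``unchanged.''

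What actually makes the diagram commute on these factors is a more refined compatibility of forgetful maps: $\mathrm{ev}_{c_1}\circ f^*_{T''_2}$, viewed as landing in $F_{S''_1}(M)$ under the canonical isomorphism $D_{c_1}\cong\mathcal{M}_{0,S'_1}\times\mathcal{M}_{0,S''_1}$, equals $f^*_{T''}$ taken relative to the smaller polygon $S''_1$ (not $S$). Equivalently, the preimages in $\chi_{S,\delta}$ of a chord of $T''_2$ that do \emph{not} cross $c_1$ biject with its preimages in $\chi_{S''_1}$ under the corresponding forgetful map out of $S''_1$; the ``extra'' preimages that do cross $c_1$ are exactly the ones set to $1$ by $\mathrm{ev}_{c_1}$ (and, in the differential-form picture, by \eqref{ucprimecrossingc}). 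This identification, rather than triviality of $\mathrm{ev}_{c_1}$ on the $S''_2$-side, is the content of the paper's remark that the formal-symbols commutativity is ``clear.'' Once you replace your incorrect claim with this bijection, the rest of your argument, including the Koszul-sign bookkeeping, goes through.
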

\begin{proof}
Use the notations of lemma \ref{lemfccommute}. We wish to show the following diagram commutes, where the horizontal maps are induced by  forgetful morphisms:
$$
\begin{array}{ccc}
\A_{c_2} \otimes \A_{T_1 \cup T_{12}}  \otimes \A_{ T_{2}}   &  \To  & \mathcal{A}_S   \\
  \downarrow_{\id \otimes R_{c_1}  \otimes \id} &   &  \downarrow_{R_{c_1}}  \\
 \A_{c_2}  \otimes \A_{c_1}  \otimes \A_{T_1}  \otimes  \A_{T_{12}} \otimes  \A_{T_{2}}  &  \To   &   \A_{c_1} \otimes  \A_{T_1}  \otimes  \A_{ T_{12} \cup T_{2}}  
\end{array}
$$ 
The commutativity of this diagram on the level of formal symbols is clear, and the commutativity on the level of logarithmic forms is a consequence of Lemma \ref{lemResfunctorial}.
\end{proof} 
Note that \eqref{Rcfcommute} has to be understood via the Koszul sign rule. 

We can simply write it in the unambiguous form $R_{c_2}  \circ f_{c_1}^* =  f_{c_1}^*  \circ R_{c_2}$ since the source and target of a map  $f_{c}^*$ or $R_{c}$ is uniquely determined by the data of $c$.\medskip 

The following lemma will not be needed in our renormalisation procedure, but will play a role in the analysis of the convergence of string amplitudes.

\begin{lemma}\label{lemdiff formal}
For $\Omega\in\A_S(M)$ and a chord $c\in\chi_{S}$, the difference $\Omega - f_c^*R_c\Omega$ is a linear combination of elements:
\begin{enumerate}[(i)]
\item $U\omega$ with $U\in F_S(M)$ and $\omega\in \Omega_S^{|S|-3}$ such that $\mathrm{Res}_{D_c}\omega=0$;
\item $(u^{m'}_{c'}-1)U\omega,$ with $U\in F_S(M)$ and $\omega\in \Omega_S^{|S|-3}$, for some chord $c'$ crossing $c$ and some $m'\in M$.

\end{enumerate}
\end{lemma}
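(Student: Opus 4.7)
By $\QQ$-linearity it suffices to treat an elementary tensor $\Omega = U\omega$ with $U\in F_S(M)$ and $\omega\in\Omega_S^{|S|-3}$. The plan is a two-step splitting: first on the form $\omega$, using that $f_c^*$ is a section of the residue, and then on the formal symbol $U$, using Lemma \ref{lemdiff formal F}.

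Set
\[
\omega_c \;:=\; (f_c^\gamma)^*\!\bigl(\tfrac{dx}{x}\otimes \mathrm{Res}_{D_c}(\omega)\bigr)\ \in\ \Omega_S^{|S|-3}\ ,
\]
where here I only use the map on logarithmic forms \eqref{eq:fcstar log forms}. By \eqref{Resonfcstar} we have $\mathrm{Res}_{D_c}(\omega_c) = \mathrm{Res}_{D_c}(\omega)$, so $\mathrm{Res}_{D_c}(\omega-\omega_c)=0$. Consequently the element $U(\omega-\omega_c)$ is of type (i).

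Next, unwinding the definitions of $R_c$ and of the combined trivialisation \eqref{eq:fcstar formal}, one has
\[
f_c^*R_c(U\omega)\;=\;f_c^*\bigl(\mathrm{ev}_c(U)\otimes \tfrac{dx}{x}\otimes \mathrm{Res}_{D_c}(\omega)\bigr)\;=\;\bigl(f_c^*\mathrm{ev}_c(U)\bigr)\cdot \omega_c\ ,
\]
because $f_c^*$ acts on formal symbols and on forms in separate tensor factors. Subtracting this from $U\omega$ yields the decomposition
\[
U\omega-f_c^*R_c(U\omega)\;=\;U(\omega-\omega_c)\;+\;\bigl(U-f_c^*\mathrm{ev}_c(U)\bigr)\,\omega_c\ .
\]
The first summand is of type (i) by construction. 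The second summand is of type (ii): Lemma \ref{lemdiff formal F} expresses $U-f_c^*\mathrm{ev}_c(U)$ as an element of the ideal of $F_S(M)$ generated by the $u_{c'}^{m'}-1$ with $c'$ crossing $c$ and $m'\in M$.

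There is no real obstacle here beyond bookkeeping; the only care needed is to verify that $f_c^*$ genuinely factors across the $F_S(M)$- and $\Omega_S^{\bullet}$-components so that the identity $f_c^*R_c(U\omega)=(f_c^*\mathrm{ev}_c(U))\cdot\omega_c$ is exact (no Koszul signs enter because the formal symbols are in degree $0$), and to note that Lemma \ref{lemdiff formal F} is precisely the input required to control the second summand.
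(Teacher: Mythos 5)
Your proof is correct and is exactly the unpacking of the paper's own (one-line) argument: the paper simply cites Lemma \ref{lemdiff formal F}, and your two-step splitting $U\omega - f_c^*R_c(U\omega) = U(\omega-\omega_c) + (U - f_c^*\mathrm{ev}_c(U))\omega_c$, with \eqref{Resonfcstar} handling the form component and Lemma \ref{lemdiff formal F} the symbol component, is the intended route.
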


\begin{proof}
This is a consequence of
Lemma \ref{lemdiff formal F}.
\end{proof}

\subsection{Integrability and residues}

	\begin{proposition}\label{prop:integrability}
	Let $\Omega\in \A_S(M)$ such that $R_c\Omega=0$ for every chord $c$. There exists an $\varepsilon>0$ such that:
	\begin{enumerate}[(1)]
	\item  $ \rho_{\alpha}^{\mathrm{open}} ( \Omega)$ is absolutely integrable on $\overline{X^{\delta}}$ for any realisation $\alpha:M\rightarrow\CC$ such that $\mathrm{Re}\,\alpha(m)>-\varepsilon$ for every generator $m$ of $M$;
	\item $ \rho^{\mathrm{closed}}_{\alpha} \,( \Omega)$ is absolutely integrable on $\overline{\mathcal{M}}_{0,S}(\CC)$ for any realisation $\alpha:M\rightarrow \CC$ such that $-\varepsilon<\mathrm{Re} \, \alpha(m) < \varepsilon$ for every generator $m$ of $M$.
	\end{enumerate}
	\end{proposition}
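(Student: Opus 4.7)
The plan is to reduce the statement to a local integrability analysis near each boundary stratum, using the compactness of $\overline{X^\delta}$ and $\overline{\mathcal{M}}_{0,S}(\CC)$. The required $\varepsilon > 0$ will emerge as a uniform bound derived from the finitely many monomial exponents appearing in the finite sum defining $\Omega$, combined with the finitely many boundary components that need to be handled.

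I would first fix a maximal set $J' \subset \chi_S$ of non-crossing chords; the dihedral coordinates $(u_{j'})_{j' \in J'}$ form a system of local coordinates near the zero-dimensional stratum $D_{J'}$, and $\overline{X^\delta}$ (respectively $\partial \mathcal{M}^\delta_{0,S}$ inside $\overline{\mathcal{M}}_{0,S}(\CC)$) is covered by finitely many such coordinate patches, as in the proof of Proposition~\ref{lemconv}. On each patch, integrability in the variable $u_{j'}$ is controlled by the order of the pole of $\omega$ along $D_{j'}$. The key step is then to exploit the hypothesis $R_c\Omega = 0$ via Lemma~\ref{lemdiff formal}: since $\Omega = \Omega - f_c^* R_c\Omega$, $\Omega$ is a sum of type (i) terms $U\omega$ with $\mathrm{Res}_{D_c}\omega = 0$ and type (ii) terms $(u_{c'}^{m'}-1)U\omega$ with $c'$ crossing $c$. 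Type (i) terms have no pole along $D_c$, so integrability requires only $\mathrm{Re}\,\alpha(m_c) > -1$ in the open case (respectively $>-1/2$ in the closed case). For type (ii) terms, the relation~\eqref{ucprimecrossingc} gives $u_{c'} = 1-xu_c$ locally near $D_c$, so under $\rho_\alpha^{\open}$ the factor $(1-xu_c)^{\alpha(m')}-1$ is $O(u_c)$, exactly absorbing one simple pole of $\omega$ along $D_c$; analogously, $|1-xu_c|^{2\alpha(m')}-1 = O(|u_c|)$ under $\rho_\alpha^{\closed}$. In both cases the potentially worst pole along $D_c$ is removed.

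Applying this local decomposition simultaneously for each $c = j' \in J'$, integrability near $D_{J'}$ reduces to requiring $\mathrm{Re}\,\alpha(m_{j'}) > -\varepsilon$ (open) or $|\mathrm{Re}\,\alpha(m_{j'})| < \varepsilon$ (closed), with $\varepsilon$ small enough in terms of the finitely many exponents $m'$ appearing in the type (ii) decomposition. For the closed case, the upper bound also emerges from the boundary divisors of $\overline{\mathcal{M}}_{0,S}$ at \emph{infinite distance}: there $\nu_S$ is regular, and the analysis at the end of the proof of Proposition~\ref{propIclosedconverges} applies directly, yielding an upper bound $\mathrm{Re}\,\alpha(m) < \varepsilon$ from a finite linear inequality. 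The main obstacle I expect is the careful bookkeeping when the decomposition is applied sequentially to several chords of $J'$: one must verify that the correction terms introduced at one step do not reintroduce residues that were killed at a previous step. This should be controlled by the commutation relation $R_{c_2} \circ f_{c_1}^* = f_{c_1}^* \circ R_{c_2}$ from~\eqref{Rcfcommute} together with Lemma~\ref{Rcfcproperties}, so that iterating Lemma~\ref{lemdiff formal} along the chords of $J'$ produces a decomposition in which every generator coordinate $u_{j'}$ carries either the type (i) or type (ii) improvement, and a Fubini argument in the local coordinates concludes the proof.
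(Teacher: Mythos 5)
Your approach is essentially the paper's: cover $\overline{X^{\delta}}$ (resp.\ the boundary of $\overline{\mathcal{M}}_{0,S}(\CC)$) by coordinate patches indexed by maximal non-crossing sets $J'$, invoke Lemma~\ref{lemdiff formal} to split the integrand into type (i) and type (ii) pieces, use \eqref{ucprimecrossingc} to see that the factor $(1-xu_c)^{\alpha(m')}-1$ (resp.\ $|1-xu_c|^{2\alpha(m')}-1$) kills the simple pole along $D_c$, and handle the infinite-distance divisors in the closed case exactly as in Proposition~\ref{propIclosedconverges}.

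You are right to flag, as the ``main obstacle'', that applying Lemma~\ref{lemdiff formal} for one chord at a time does not directly control joint integrability at a corner stratum $D_{J'}$: the type (i)/(ii) decomposition of $\Omega$ depends on the chord $c$ chosen, and a term of type (i) or (ii) for $c=j'$ has no a priori improvement along $D_{j''}$ for another $j''\in J'$. (The paper's written proof also glosses over this point.) However, your proposed fix is slightly off target: since $R_c\Omega=0$ by hypothesis, the ``correction term'' $f_c^*R_c\Omega$ is identically zero, so there are no correction terms to track and the commutation $R_{c_2}\circ f_{c_1}^* = f_{c_1}^*\circ R_{c_2}$ is not what resolves the difficulty. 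The issue is rather to build a single decomposition of $\Omega$ that simultaneously has the good structure in every coordinate $u_{j'}$. The clean tool is the simultaneous decomposition of Lemma~\ref{lemmaDecompForms}, constructed from the commuting projectors $i_c\ev_c$: taking $\chi=J'$ there writes $\Omega=\sum_{I\subset J'}\Omega_{J'}^{(I)}$ with $\Omega_{J'}^{(I)}\in V_I\otimes\Omega_S^{|S|-3}(J'\setminus I)$, so that each summand, for every $j'\in J'$, either has vanishing residue along $D_{j'}$ or carries a factor in $V_{j'}$ vanishing along $D_{j'}$. This gives a uniform monomial bound $\prod_{j'\in J'}u_{j'}^{\mathrm{Re}\,\alpha(m_{j'})}$ near $D_{J'}$, and Fubini then finishes the proof; the same decomposition handles the infinite-distance divisors for the closed case.
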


	\begin{proof}
	\begin{enumerate}[(1)]
	\item  
It  suffices to show that the integrand is absolutely convergent on each set $S^J_{\bullet}$, defined in the proof of Proposition \ref{lemconv}. The normal crossing property implies that we only need to treat  divergences in every local coordinate $t=u_c$ for $c$ a chord. By Lemma \ref{lemdiff formal} it is enough to consider integrands of the form \emph{(i)} and \emph{(ii)}. Since $t^\alpha dt$ is integrable around $0$ if $\mathrm{Re}\,\alpha>-1$, it suffices to check in each case  that  $\rho^{\mathrm{open}}_\alpha(\Omega)$ is a linear combination of  $t^{\alpha(m)}\omega_0$ for $m\in M$ and $\omega_0$ a smooth form with no poles along $t=0$. The case \emph{(i)} is clear. In the case \emph{(ii)} we use \eqref{ucprimecrossingc} and write $1-u_{c'}=t\psi_0$ where $\psi_0$ has no pole along $t=0$, to deduce that $u_{c'}^{\alpha(m)}-1= (1-t\psi_0)^{\alpha(m)}-1$.   Since  forms in $\Omega_S^{|S|-3}$ have at most logarithmic poles at $t=0$, the claim follows.

	\item We need to prove that $\rho^{\mathrm{closed}}_\alpha(\Omega)$ is integrable in the neighbourhood of every irreducible component $D$ of $\partial\overline{\mathcal{M}}_{0,S}=\overline{\mathcal{M}}_{0,S}\min \mathcal{M}_{0,S}$.  Supose first that $D=D_c$ is a component of $\partial \mathcal{M}_{0,S}^{\delta}$. By Lemma \ref{lemnuSpoles}, $\nu_{S}$ has a logarithmic pole along $D_c$. By Lemma \ref{lemdiff formal} it is enough to treat the case of integrands \emph{(i)} and \emph{(ii)}. We work with a local coordinate $z=u_c$. In case \emph{(i)} we see that the singularities of $\rho^{\mathrm{closed}}_\alpha(\Omega)$ are of the type $|z|^{2\alpha(m)} \frac{dz\wedge d\overline{z}}{z}$ for some $m\in M$. Rewriting in polar coordinates $z=\rho\, e^{i \theta}$, this is proportional to $\rho^{2\alpha(m)}d\rho$, which is integrable for $\mathrm{Re}\,\alpha(m)>-\frac{1}{2}$. In case \emph{(ii)} we use \eqref{ucprimecrossingc} to write
	$$ |u_{c'}|^{2\alpha(m')} -1 =|1-x z |^{2\alpha(m')} - 1 = z\psi_0 + \overline{z}\xi_0$$
	where $\psi_0$ and $\xi_0$ have no pole along $z=0$. The singularities of $\rho^{\mathrm{closed}}_\alpha(\Omega)$ are thus at worst of the type $|z|^{2\alpha(m)}\frac{dz\wedge d\overline{z}}{z}$ or $|z|^{2\alpha(m)}\frac{dz\wedge d\overline{z}}{\overline{z}}$, and the claim follows as in case \emph{(i)}. Now consider an irreducible component $D$ of $\partial\overline{\mathcal{M}}_{0,S}$ which is not a component of $\partial \mathcal{M}_{0,S}^{\delta}$ (at `infinite distance').  It is defined by a local coordinate $z=0$. 
	By Lemma \ref{lemnuSpoles}, $\nu_S$ has no pole along $z=0$ and  the singularities of $\rho^{\mathrm{closed}}_\alpha(\Omega)$ are at worst of the type $|z|^{2\alpha(\widetilde{m})}\frac{dz\wedge d\overline{z}}{\overline{z}}$ for some $\widetilde{m}$ in the abelian group generated by $M$,
	by the same argument as in  the proof of Proposition \ref{propIclosedconverges}.
	This is integrable around $z=0$ for $\mathrm{Re}\,\alpha(\widetilde{m})>-\frac{1}{2}$.
Since there are finitely many such divisors, the latter condition is implied by the hypotheses (2) for sufficiently small $\varepsilon.$
	\end{enumerate}
	\end{proof}
	
	\begin{remark}
	In the case of closed string amplitudes, an integrand $\rho_\alpha^{\mathrm{closed}}(\Omega)$ satisfying the assumptions of Proposition \ref{prop:integrability} (2) is polar-smooth on $(\overline{\mathcal{M}}_{0,S}, \partial \overline{\mathcal{M}}_{0,S})$ in the sense of Definition 3.7 of \cite{BD1}.
	\end{remark}

\subsection{Renormalisation of formal moduli space integrands}

\begin{definition}
Define a renormalisation map
\begin{eqnarray}
\mathcal{A}_S(M) & \To &  \mathcal{A}_S(M)   \\ 
 \Omega & \mapsto & \Omega^{\ren}   =  \sum_{J \subset \chi_{S}}   (-1)^{|J|}    f_{J}^* R_J   \, \Omega \nonumber \ ,
\end{eqnarray} 
where  $J$ ranges over all sets of non-crossing chords in  $\chi_{S}$.  \end{definition}
The reason for calling this map the renormalisation map, even though it does not agree with the notion of renormalisation in the strict physical sense, is that it is mathematically very close to the renormalisation procedure given in \cite{brownkreimer}.

\begin{proposition}   \label{propOmegaRennopoles}  For all $c\in \chi_{S}$,  $ R_c  \, \Omega^{\ren} = 0 \ .$
\end{proposition}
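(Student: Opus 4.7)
The plan is to apply $R_c$ term-by-term to the defining sum
$$\Omega^{\mathrm{ren}} = \sum_{J} (-1)^{|J|} f_J^* R_J\, \Omega,$$
and to partition the index set according to the relationship between the chord $c$ and the set $J$ of non-crossing chords. There are three mutually exclusive cases:
\begin{enumerate}
\item[(i)] $c$ crosses some chord of $J$;
\item[(ii)] $c \in J$;
\item[(iii)] $c \notin J$ and $c$ does not cross any chord of $J$.
\end{enumerate}

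The first step is to eliminate case (i). If $c'\in J$ crosses $c$, then using the commutativity of the $f_j^*$ for non-crossing chords, I can factor $f_J^* = \tilde f \circ f_{c'}^*$ where $\tilde f$ is the composition of the remaining $f_j^*$'s. Then $R_c f_J^*$ contains $R_c f_{c'}^*$ as its outermost composition, which vanishes by Lemma \ref{Rcfcproperties}(2). Hence all terms in case (i) contribute zero.

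The key step is to establish a cancellation between cases (ii) and (iii). The map $J_0 \mapsto J_0 \cup\{c\}$ is a bijection between sets in case (iii) and sets in case (ii), with $|J_0\cup\{c\}| = |J_0|+1$, so the two corresponding signs are opposite. It therefore suffices to show
$$R_c\, f_{J_0}^* R_{J_0}\Omega \;=\; R_c\, f_{J_0\cup\{c\}}^* R_{J_0\cup\{c\}}\Omega\qquad\text{for every }J_0\text{ in case (iii)}.$$
For the right-hand side, factor $f_{J_0\cup\{c\}}^* = f_{J_0}^*\circ f_c^*$ and $R_{J_0\cup\{c\}} = R_c\circ R_{J_0}$ (using commutativity of the $f_j^*$ and of the $R_j$ among mutually non-crossing chords). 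Lemma \ref{Rcfcproperties}(1) gives $R_c f_c^* = \mathrm{id}$, so the right-hand side collapses to $f_{J_0}^* R_{J_0} R_c\Omega$. For the left-hand side, because every chord of $J_0$ is non-crossing with $c$, the commutation relation \eqref{Rcfcommute} (iterated along $J_0$) together with the analogous commutation of $R_c$ past $R_{J_0}$ yields $R_c\, f_{J_0}^* R_{J_0} = f_{J_0}^* R_{J_0}\, R_c$, and the two sides match.

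The main (but purely combinatorial) obstacle is keeping track of signs from the Koszul sign rule in Remark \ref{rem:koszul sign rule} while commuting $R_c$ past $R_{J_0}$ and $f_{J_0}^*$; these signs appear symmetrically on both sides of the claimed equality and cancel, which is the reason the clean commutation \eqref{Rcfcommute} is available. Collecting the vanishing of case (i) with the pairwise cancellation between cases (ii) and (iii), one obtains $R_c\,\Omega^{\mathrm{ren}} = 0$.
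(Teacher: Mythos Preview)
Your argument is essentially the paper's own proof: kill the terms where $J$ contains a chord crossing $c$ via Lemma~\ref{Rcfcproperties}(2), then pair $J_0$ with $J_0\cup\{c\}$ and cancel using $R_c f_c^*=\mathrm{id}$ together with the commutation \eqref{Rcfcommute}. One small slip: in case~(i) you factor $f_J^* = \tilde f \circ f_{c'}^*$ with $f_{c'}^*$ on the \emph{inside}, but then claim $R_c f_{c'}^*$ appears as the outermost composition---you should instead write $f_J^* = f_{c'}^* \circ f_{J\setminus\{c'\}}^*$ (valid since the $f_j^*$ for $j\in J$ commute among themselves) so that $R_c f_J^* = (R_c f_{c'}^*)\,f_{J\setminus\{c'\}}^* = 0$ is immediate; similarly, in your treatment of the right-hand side it is cleaner to put $f_c^*$ on the outside so that $R_c f_c^*=\mathrm{id}$ applies directly without first invoking \eqref{Rcfcommute}.
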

\begin{proof}  By the second part of Lemma \ref{Rcfcproperties}, $R_c f_J^*= R_c f^*_{c'} f_{J\backslash c'}^* =0$ if $J$ contains a chord $c'$ which crosses $c$. Let us denote by $S_c$ the set of subsets $K\subset \chi_{S}$ consisting of non-crossing chords $c'\neq c$ that do not cross $c$. Then, in $R_c\Omega^{\mathrm{ren}}$, only the summands indexed by $J=K$ and $J=K\cup\{c\}$, for $K\in S_c$, contribute. Therefore
$$R_{c} \Omega^{\ren}  = \sum_{K \in S_c}  \Big((-1)^{|K|}   R_c   f_{K}^* R_K   +   (-1)^{|K\cup \{c\}|}   R_c   f_{K\cup \{c\}}^* R_{K \cup \{c\}} \Big)\, \Omega\ .$$
Each  summand is of the form 
$$(-1)^{|K|}    \Big( R_c f_K^* R_K   -    R_c  f_c^*   f_K^*  R_c R_K \Big) \Omega\ .$$
By the first part of Lemma \ref{Rcfcproperties},  $(R_c  f_c^* )  f_K^*  R_c R_K= f_K^* R_c     R_K$. By the commutation relation (\ref{Rcfcommute}), this is $R_c f_K^* R_K$,  and therefore the previous expression vanishes.
\end{proof} 

We extend the renormalisation map  to tensor products of  forms by defining it be the identity on every $\A_c(M)$. For $|J|=k$ it acts upon 
$$    \A_J(M)\otimes \A_{S/J}(M) $$
via 
$\mathrm{id}^{\otimes k}\otimes \ren^{\otimes k+1} $, and is denoted also by $\ren$. 

\begin{proposition}  \label{propOmegarenormalised} Any form $\Omega$ admits a canonical  decomposition (depending only on the choice of cyclic ordering of $S$ involved in the definition of $f_J^*$):
\begin{equation} \label{OmegaintermsofRen} \Omega  =  \sum_{J\subset \chi_{S}}    f_{J}^* (R_J   \, \Omega)^{\ren}
\end{equation}
where the sum is over non-crossing sets of chords in $\chi_{S}$. 
\end{proposition}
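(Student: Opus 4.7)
The plan is a direct Möbius-style cancellation argument. I will expand $(R_J\Omega)^{\ren}$ using the definition of the renormalisation map extended to tensor products, use the compatibilities between $R_c$ and $f_c^*$ to recombine the iterated maps into single operations indexed by larger non-crossing chord sets, then reindex the resulting double sum and observe that the inner alternating sum vanishes in all but one case.

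First I would unfold the right-hand side. For a fixed non-crossing set $J=\{j_1,\dots,j_k\}$, the renormalisation extended to $\mathcal{A}_J(M)\otimes \mathcal{A}_{S/J}(M)$ acts as the identity on the $\mathcal{A}_{j_i}(M)$ factors and as $\ren$ on each $\mathcal{A}_{S_i}(M)$ factor. A non-crossing chord set in $\chi_S$ that is disjoint from $J$ and does not cross any chord of $J$ is exactly the same datum as a tuple $(K_0,\dots,K_k)$ of non-crossing chord sets inside the sub-polygons $S_0,\dots,S_k$ (call this collection $\chi_{S,J}$). Hence
\[
(R_J\Omega)^{\ren}=\sum_{K\subset\chi_{S,J}}(-1)^{|K|}\,f_K^*R_K(R_J\Omega)\ ,
\]
where $f_K^*$ and $R_K$ are understood as acting only on the $\mathcal{A}_{S/J}(M)$ factor.

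Next I would promote this to an identity in $\mathcal{A}_S(M)$ using the fact that the $f_c^*$'s (resp.\ the $R_c$'s) associated to distinct non-crossing chords commute, together with the commutation relation \eqref{Rcfcommute} between $R_c$ and $f_{c'}^*$ for non-crossing $c,c'$. These combine into the bookkeeping identities
\[
R_K\circ R_J=R_{J\cup K}\qquad\text{and}\qquad f_J^*\circ f_K^*=f_{J\cup K}^*\ ,
\]
valid whenever $J$ and $K$ are disjoint and $J\cup K$ is non-crossing in $\chi_S$. Substituting, one obtains
\[
f_J^*(R_J\Omega)^{\ren}=\sum_{K\subset\chi_{S,J}}(-1)^{|K|}\,f_{J\cup K}^*R_{J\cup K}\Omega\ .
\]

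Finally, summing over $J$ and setting $L=J\sqcup K$, the right-hand side of \eqref{OmegaintermsofRen} becomes
\[
\sum_{L\subset\chi_S}\Bigl(\sum_{J\subset L}(-1)^{|L\setminus J|}\Bigr)f_L^*R_L\Omega\ ,
\]
where $L$ ranges over non-crossing sets of chords in $\chi_S$. The inner alternating sum equals $(1-1)^{|L|}$, which vanishes unless $L=\emptyset$, in which case only the term $J=K=\emptyset$ survives and contributes $f_\emptyset^*R_\emptyset\Omega=\Omega$. This proves the identity.

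The main obstacle is the bookkeeping of signs: one must check that the compatibilities $R_K\circ R_J=R_{J\cup K}$ and $f_J^*\circ f_K^*=f_{J\cup K}^*$ hold with the correct signs dictated by the Koszul convention of Remark \ref{rem:koszul sign rule}, so that no unwanted sign enters the final cancellation. This follows from tracing through Lemma \ref{Rcfcproperties}, \eqref{Rcfcommute}, and the commutativities of $f_c^*$'s and $R_c$'s proved earlier, together with the fact that any reordering of an iterated residue is absorbed into the Koszul sign on the corresponding tensor product $\mathcal{A}_J(M)$.
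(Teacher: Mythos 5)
Your proof is correct, and it is a genuinely different argument from the paper's. The paper proves \eqref{OmegaintermsofRen} by induction on $|S|$: it applies the inductive hypothesis to the components of $R_K\Omega$ (which live on sub-polygons of strictly smaller cardinality), obtaining $R_K\Omega = \sum_{K\subset J} f_{J\setminus K}^*(R_J\Omega)^{\ren}$, substitutes this into the definition of $\Omega^{\ren}$, and collapses the double sum via the binomial identity. You instead run the Möbius inversion \emph{forwards}: you expand $(R_J\Omega)^{\ren}$ using only the \emph{definition} of the extended renormalisation map on the sub-polygon factors (which is legitimate because $\ren$ is defined componentwise on $\A_{S/J}(M)$, not by appeal to the statement being proved), reindex the resulting double sum by $L = J\sqcup K$, and cancel via the same binomial identity. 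In effect, the paper's intermediate relation $R_K\Omega = \sum_{K\subset L}f_{L\setminus K}^*(R_L\Omega)^{\ren}$ and yours, $(R_J\Omega)^{\ren} = \sum_{J\subset L}(-1)^{|L\setminus J|}f_{L\setminus J}^*R_L\Omega$, are Möbius inverses of each other; the paper obtains the first from the inductive hypothesis, whereas you obtain the second from the definition, so you avoid induction altogether. The two approaches need the same supporting facts — that non-crossing subsets of $\chi_{S,J}$ are the same as tuples of non-crossing subsets of the $\chi_{S_i}$, and that the iterated $R$'s and iterated $f^*$'s compose multiplicatively with the correct Koszul signs — and you correctly flag these as the bookkeeping points to be checked. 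Your argument is slightly more self-contained for not requiring a base case; the paper's is slightly shorter to write out. One small correction of emphasis: the mixed compatibility \eqref{Rcfcommute} is not actually needed for the identities $R_K\circ R_J = R_{J\cup K}$ and $f_J^*\circ f_K^* = f_{J\cup K}^*$; those follow from the internal commutativities of the $R_c$'s and of the $f_c^*$'s separately, together with functoriality of the forgetful maps. \eqref{Rcfcommute} is the ingredient in the proof of Proposition \ref{propOmegaRennopoles}, not here.
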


\begin{proof}
We prove formula (\ref{OmegaintermsofRen}) by induction on $|S|$. Suppose it is true for all sets $S$ with $<N$ elements, and let $|S|=N$.
Then applying the formula (\ref{OmegaintermsofRen}) to each component of $R_K \Omega$, for $K\neq\emptyset$, we obtain
$$ R_K \Omega   \ =  \  \sum_{K\subset J} f_{J\backslash K}^*  ( R_{J\backslash K} R_K \Omega)^{\ren} 
  \  = \    \sum_{K\subset J} f_{J\backslash K}^*  ( R_{J} \Omega)^{\ren} \ .$$
Now, substituting into the definition of $\Omega^{\ren}$, we obtain
$$
\Omega^{\ren}   \ =  \   \Omega + \sum_{\emptyset \neq K} (-1)^{|K|} f_K^* (R_K \Omega)  
  \ =  \ \Omega +  \sum_{\emptyset \neq K} (-1)^{|K|} \sum_{K\subset J} f_K^*  f_{J\backslash K}^*  ( R_{J} \Omega)^{\ren} $$
  $$ =  \Omega +  \sum_{\emptyset \neq J}  \Big(\sum_{\emptyset\neq K \subset J} (-1)^{|K|}\Big) f_J^*  ( R_{J} \Omega)^{\ren}  $$
Via the binomial formula, 
$$\sum_{\emptyset\neq K \subset J} (-1)^{|K|} = \sum_{k\geq 1} (-1)^k \binom{|J|}{k} = -1$$
and therefore
$$\Omega^{\ren} = \Omega - \sum_{\emptyset \neq J}  f_J^*  ( R_{J} \Omega)^{\ren} \ .$$
Rearranging gives (\ref{OmegaintermsofRen}) and completes the induction step.  The initial case with $|S|=3$ is trivial, since $\Omega = \Omega^{\ren}$. 
\end{proof}

\begin{example} \label{exampleOmegaren4}
Let $|S|=4$. Let  $M=\mathbb{N}\, s \oplus \mathbb{N}\, t$ and consider
$$\Omega = x^{s} (1-x)^t \Big( \frac{dx}{x} + \frac{dx}{{1-x}}\Big)\ .$$
Then $R_0 \Omega = x^s \frac{dx}{x}$ and $R_1 \Omega = (1-x)^t \frac{dx}{1-x}$.
We have 
\begin{align*}
\Omega^{\ren} & =   \big(x^s (1-x)^t - x^s\big) \frac{dx}{x}  +   \big(  x^s(1-x)^t -(1-x)^t \big) \frac{dx}{1-x}\\
& = x^{s-1}((1-x)^t-1)\, dx + (1-x)^{t-1}(x^s-1)\, dx\ ,
\end{align*}
and  formula (\ref{OmegaintermsofRen}) is the statement:
$$\Omega = \Omega^{\ren}  + x^s \frac{dx}{x} + (1-x)^t \frac{dx}{1-x} \ \cdot$$
If we identify $s,t$ and their images by a realisation $\alpha: M \rightarrow \CC$ then the renormalised open string integrand $\rho^{\mathrm{open}}_{\alpha}(\Omega^{\ren})$ is integrable on $[0,1]$ if $\mathrm{Re}(s) , \mathrm{Re}(t)>-1$. 

On the other hand, the renormalised closed string integrand  $\rho^{\mathrm{closed}}_{\alpha}(\Omega^{\ren}) $ is, up to the factor $-(2\pi i)^{-1}$:
\begin{align*} 
 & \big(|z|^{2s} |1-z|^{2t} - |z|^{2s}\big) \frac{dz\wedge d\overline{z}}{z\overline{z}(1-z)} +   \big(  |z|^{2s} |1-z|^{2t} -|1-z|^{2t} \big) \frac{dz\wedge d\overline{z}}{z(1-z)(1-\overline{z})}\\
 & = |z|^{2(s-1)}(|1-z|^{2t}-1)\frac{dz\wedge d\overline{z}}{1-z} + |1-z|^{2(t-1)}(|z|^{2s}-1)\frac{dz\wedge d\overline{z}}{z}\ \cdot
 \end{align*}
It is integrable on $\PP^1(\CC)$ for $\mathrm{Re}(s),\mathrm{Re}(t)>-\frac{1}{2}$ and $\mathrm{Re}(s+t)<1$.

\end{example}

\subsection{Laurent expansion of  open string integrals}
 Let 
 \begin{equation}\label{eq:Omega for Laurent expansion}
 \Omega =  \left(\prod_{c\in \chi_{S}}  u_c^{s_c} \right) \, \omega  
 \end{equation}
be the integrand of (\ref{openamp}), viewed inside $\A_S(M_S),$ where $M_S = \bigoplus_{c \in \chi_{S}} \mathbb{N} s_c$.

\begin{definition} Let  $J\subset \chi_{S}$ be a set of non-crossing chords.  Set
\begin{equation} \label{OmegaJdef} \Omega_J =  \Big(\prod_{c \in \chi_J}  u_c^{s_c} \Big)   \,\mathrm{Res}_{D_J}(\omega) \quad  \in  \quad   \A_{S/J}(M_S)
\end{equation}
where $\mathrm{Res}_{D_J}$ denotes the iterated residue along irreducible components of $D_J$ and $\chi_J$ denotes the set of chords in $\chi_{S} \backslash J$ which do not cross any element of $J$.    Let
\begin{equation} \label{sJdefn} s_J = \prod_{c\in J} s_c\ .
\end{equation} 
\end{definition}

 The integral of $\Omega$ over $X^{\delta}$ can be canonically renormalised as follows.

\begin{theorem} \label{thmrenormOpen}  For   all  $s_c \in \CC$ satisfying the assumptions of Proposition \ref{prop:integrability} (1),
\begin{equation} \label{renormintOmegaopen} 
\int_{X^{\delta}} \Omega    \quad =  \quad \sum_{J \subset \chi_{S}}  \frac{1}{s_J}  \int_{X_J}   \Omega_J^{\ren} \ , 
\end{equation} 
where the sum in the right-hand side is over all subsets of non-crossing chords (including the empty set).  
The integrals on the right-hand side converge for 
$$\mathrm{Re}(s_c) > -\varepsilon \qquad \hbox {for some  } \varepsilon>0\ .$$ 
  \end{theorem}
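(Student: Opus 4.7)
The plan is to apply the canonical decomposition of Proposition~\ref{propOmegarenormalised} to the integrand $\Omega$ of \eqref{eq:Omega for Laurent expansion}, realise each summand as a differential form via $\rho^{\mathrm{open}}_\alpha$ (Definition~\ref{defnalphreal}), and then integrate over $X^\delta$ using Fubini with respect to the product decomposition \eqref{fDXdelta}.

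First, I would compute $R_J\Omega$ explicitly for $J=\{j_1,\ldots,j_k\}$ a set of non-crossing chords. The iterated evaluation maps kill the formal symbols $u_{c'}^{s_{c'}}$ precisely when $c'$ crosses some $j\in J$, and distribute the surviving symbols among the tensor factors according to which polygon $S_i$ or chord $j$ the symbol belongs to. Combined with the iterated residue on the form side, this yields, modulo the Koszul signs of Remark~\ref{rem:koszul sign rule},
\begin{equation*}
R_J\Omega \;=\; \Bigl(\bigotimes_{j\in J}u_j^{s_j}\,d\!\log u_j\Bigr)\otimes \Omega_J,
\end{equation*}
with $\Omega_J$ as in \eqref{OmegaJdef}. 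Since the extended renormalisation acts as the identity on every factor $\A_j(M)$, we obtain $(R_J\Omega)^{\ren}=\bigl(\bigotimes_j u_j^{s_j}d\!\log u_j\bigr)\otimes \Omega_J^{\ren}$.

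Second, I would realise $f_J^*(R_J\Omega)^{\ren}$ via $\rho^{\mathrm{open}}_\alpha$: by the very construction of $f_J^*$ as pullback along the trivialisation maps, the resulting differential form on $X^\delta$ is a product form in the coordinates of the homeomorphism $f_J^\gamma:X^\delta\cong (0,1)^J\times X_J$. Fubini then factors the integral as
\begin{equation*}
\int_{X^\delta}\rho^{\mathrm{open}}_\alpha\bigl(f_J^*(R_J\Omega)^{\ren}\bigr) \;=\; \prod_{j\in J}\int_0^1 u_j^{s_j-1}\,du_j \;\cdot\; \int_{X_J}\rho^{\mathrm{open}}_\alpha(\Omega_J^{\ren}) \;=\; \frac{1}{s_J}\int_{X_J}\Omega_J^{\ren},
\end{equation*}
the one-dimensional factors evaluating to $1/s_j$ for $\mathrm{Re}(s_j)>0$ and multiplying to $1/s_J$. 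Summing over $J$ and invoking \eqref{OmegaintermsofRen} yields the theorem.

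Third, I would verify the convergence statement. By Proposition~\ref{propOmegaRennopoles}, $\Omega_J^{\ren}$ has vanishing residues along all boundary divisors of $\mathcal{M}_{0,S/J}^{\delta/J}$; since $\Omega_J$ is a tensor product of forms on the factors $\mathcal{M}_{0,S_i}^{\delta_i}$ and the renormalisation is compatible with this decomposition, applying Proposition~\ref{prop:integrability}(1) factor-wise gives absolute convergence of $\int_{X_J}\Omega_J^{\ren}$ for $\mathrm{Re}(s_c)>-\varepsilon$. The one-dimensional factors $1/s_j$ require $\mathrm{Re}(s_j)>0$, but a term $J$ contributes to the sum only if $\Omega_J\ne 0$, i.e.\ if $\mathrm{Res}_{D_j}\omega\ne 0$ for every $j\in J$; and in that regime Proposition~\ref{lemconv} already forces $\mathrm{Re}(s_j)>0$ to ensure convergence of the left-hand side. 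Hence in the common region of absolute convergence the identity holds term by term.

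The main obstacle will be the careful bookkeeping of Koszul signs — both those of Remark~\ref{rem:koszul sign rule} and those arising from the orientation conventions defining $f_J^\gamma$ — to confirm that the product of the one-dimensional $1/s_j$ factors emerges with the correct overall sign to match $1/s_J$ as defined in \eqref{sJdefn}. A secondary subtlety is that term-by-term integration of \eqref{OmegaintermsofRen} is justified only in the common region of absolute convergence; beyond this region the identity should be read as a meromorphic statement in the $s_c$, with the right-hand side providing the natural meromorphic extension of the left-hand side to the full region $\mathrm{Re}(s_c)>-\varepsilon$ of Proposition~\ref{prop:integrability}(1).
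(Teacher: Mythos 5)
Your proposal is correct and follows essentially the same route as the paper's own proof: apply Proposition~\ref{propOmegarenormalised} to decompose $\Omega$, change variables along $f_J^\gamma$ using \eqref{fDXdelta} to factor each summand into $\prod_{j\in J}\int_0^1 u_j^{s_j-1}du_j$ times $\int_{X_J}\Omega_J^{\ren}$, and cite Propositions~\ref{propOmegaRennopoles} and~\ref{prop:integrability}(1) for convergence on the right-hand side. The two extra points you flag — that Koszul signs from the trivialisation conventions need to be tracked, and that the identity between the two sides is first established in the common region of convergence (terms with $\Omega_J\neq 0$ forcing $\mathrm{Re}(s_j)>0$) after which the right-hand side furnishes the extension to $\mathrm{Re}(s_c)>-\varepsilon$ — are left implicit in the paper's proof, so your account is slightly more careful on these details without changing the underlying argument.
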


\begin{proof}
For any subset $J$ of non-crossing chords,  we have
$$R_J\Omega = \Big(\prod_{c\in J}  u_c^{s_c} \frac{du_c}{u_c} \Big)\, \Omega_J \qquad \hbox{ hence } \qquad (R_J  \Omega)^{\ren} = \Big(\prod_{c\in J}  u_c^{s_c} \frac{du_c}{u_c} \Big) \, \Omega^{\ren}_J \  ,$$
where tensors are omitted for simplicity. 
By Proposition \ref{propOmegarenormalised}, we have 
$$\int_{X^{\delta}} \Omega = \sum_J \int_{X^{\delta}}  f_J^* (R_J \Omega)^{\ren} = \sum_J \int_{f_J  (X^{\delta}) } (R_J \Omega)^{\ren}\ .$$
By \eqref{fDXdelta} we have  $f_J (X^{\delta}) = (0,1)^{J} \times X_J$. Each summand in the last term equals
$$
 \int_{(0,1)^{J}\times X_J} \Big(\prod_{c\in J}  u_c^{s_c} \frac{du_c}{u_c} \Big)  \,  \Omega^{\ren}_J = \Big( \prod_{c\in J} \frac{1}{s_c}\Big) \int_{X_J} \Omega^{\ren}_J$$
 which proves (\ref{renormintOmegaopen}). Absolute convergence of every integral for $\mathrm{Re}(s_c) > -\varepsilon$ is  guaranteed by Proposition \ref{prop:integrability} (1) and Proposition \ref{propOmegaRennopoles}.
\end{proof}

The upshot is that   each integral on the right-hand side of (\ref{renormintOmegaopen})  now admits a Taylor expansion around $s_c=0$ which lies in the region of convergence:
$$ \int_{X_J}   \Omega_J^{\ren}   \quad \in \quad \CC[[ s_c \ : \ c\in \chi_{S}]]\ .$$
Note that this integral is a linear combination of a product of integrals over  $X^{\delta'}$, for various $\delta'$, by (\ref{XJisproduct}).
\begin{corollary}The open string amplitude has a canonical Laurent expansion 
$$ I^{\mathrm{open}}(\omega,\underline{s})  \quad \in \quad \CC[\textstyle\frac{1}{s_c} \ :  \ \mathrm{ord}_{D_c} \omega =-1]  [[ s_c \ : \ c\in \chi_{S}]]\ .$$ 
\end{corollary}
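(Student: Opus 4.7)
The plan is to read off the Laurent expansion directly from the renormalisation formula of Theorem~\ref{thmrenormOpen}. Starting from
$$ I^{\mathrm{open}}(\omega,\underline{s}) \;=\; \sum_{J \subset \chi_{S}} \frac{1}{s_J} \int_{X_J} \Omega_J^{\ren}, $$
I would first show that only subsets $J$ consisting entirely of chords $c$ with $\mathrm{ord}_{D_c}\omega = -1$ contribute. Indeed, the factor $\Omega_J$ defined in \eqref{OmegaJdef} contains the iterated residue $\mathrm{Res}_{D_J}(\omega)$, and by the very definition of the residue along an iterated normal-crossing stratum, this vanishes unless $\omega$ has a simple pole along every component $D_c$ for $c \in J$. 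Hence the non-vanishing summands are indexed by subsets $J$ of $\{c \in \chi_{S} : \mathrm{ord}_{D_c}\omega = -1\}$, and the prefactor $1/s_J = \prod_{c\in J} s_c^{-1}$ only involves the variables $s_c$ with $\mathrm{ord}_{D_c}\omega = -1$. The renormalisation map $\Omega \mapsto \Omega^{\ren}$ is compatible with residues in the sense that it preserves the property of residues being zero on the relevant strata, so $\Omega_J^{\ren}$ is still governed by the same condition.

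Next I would handle the Taylor-expansion of each non-vanishing integral. By Theorem~\ref{thmrenormOpen}, each $\int_{X_J} \Omega_J^{\ren}$ converges absolutely on the polydisc $\{\mathrm{Re}(s_c) > -\varepsilon\}$ for some $\varepsilon>0$, which is an open neighbourhood of the origin in the space of Mandelstam variables. Standard arguments (differentiation under the integral sign, using that the integrand depends analytically on the $s_c$ and that the dominating functions in the convergence estimates from Proposition~\ref{prop:integrability} are uniform on compacta) show that the integral is a holomorphic function of the $s_c$ on this neighbourhood. Its Taylor expansion at $s_c=0$ therefore yields a well-defined element of $\CC[[s_c : c\in \chi_{S}]]$.

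Combining these two observations, each term $\frac{1}{s_J}\int_{X_J}\Omega_J^{\ren}$ lies in
$$\CC\bigl[\tfrac{1}{s_c} : c\in J\bigr]\bigl[[s_c : c\in \chi_{S}]]$$
with $J \subset \{c : \mathrm{ord}_{D_c}\omega=-1\}$, and summing finitely many such terms gives the claimed Laurent expansion in $\CC[\frac{1}{s_c} : \mathrm{ord}_{D_c}\omega = -1][[s_c : c\in \chi_{S}]]$. Canonicity follows from the canonicity of the decomposition \eqref{OmegaintermsofRen}, which depends only on the fixed cyclic structure $\gamma$ used to define the trivialisation maps $f_J^*$.

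The main subtlety, and the only place where care is really required, is the first step: checking that the denominators $s_J$ are harmless precisely when $\mathrm{ord}_{D_c}\omega > -1$, i.e.\ that no hidden poles can be produced by $\Omega_J^{\ren}$ to cancel the vanishing of the residue. This follows from the explicit form of the iterated residue operator and the fact that $\ren$ is built from pull-backs and residues, so it respects vanishing of residues componentwise; a short inductive verification on $|J|$ makes this precise.
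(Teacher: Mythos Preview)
Your proof is correct and follows the paper's approach closely. Both you and the paper deduce the Laurent expansion directly from Theorem~\ref{thmrenormOpen}: each renormalised integral $\int_{X_J}\Omega_J^{\ren}$ is holomorphic near the origin and hence admits a Taylor series in $\CC[[s_c:c\in\chi_S]]$, and dividing by $s_J$ produces the Laurent part.

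The one place where you diverge from the paper is in identifying which $s_c$ can appear as poles. You argue directly that $\Omega_J$ vanishes whenever $J$ contains a chord $c$ with $\mathrm{Res}_{D_c}\omega = 0$, since the iterated residue $\mathrm{Res}_{D_J}$ factors (up to sign) through $\mathrm{Res}_{D_c}$; this is correct and clean. The paper instead invokes Proposition~\ref{lemconv}: if $\mathrm{Res}_{D_c}\omega = 0$ then the original integral already converges for $\mathrm{Re}(s_c) > -1$, so its analytic continuation (which is what the renormalisation formula provides) cannot acquire a pole at $s_c = 0$. Both arguments are valid; yours is more algebraic and self-contained, the paper's is more analytic and ties the pole structure back to the original convergence estimate.

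Your final paragraph about ``hidden poles produced by $\Omega_J^{\ren}$'' is an over-complication. Once you know $\Omega_J = 0$, then $\Omega_J^{\ren} = 0$ trivially because $\ren$ is a $\QQ$-linear map on $\mathcal{A}_{S/J}(M_S)$; no inductive verification or compatibility statement is needed. The term $\frac{1}{s_J}\int_{X_J}\Omega_J^{\ren}$ is identically zero, not a pole cancelled by a zero.
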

By   proposition \ref{lemconv},   it only has simple poles in the $s_c$ corresponding to chords $c$ such that $\mathrm{Res}_{D_c} \, \omega \neq 0$.  
 More precisely, 

\begin{corollary}  The residue at $s_c=0$ of $ I^{\mathrm{open}}(\omega,\underline{s}) $ is
\begin{equation} \label{residueinMandel} \mathrm{Res}_{s_c} \int_{X^{\delta}}  \Omega = \int_{X^{\delta} \cap D_c}   \Omega_c \ .\end{equation}
\end{corollary}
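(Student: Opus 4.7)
The plan is to deduce the residue formula directly from Theorem \ref{thmrenormOpen}, which gives
$$\int_{X^{\delta}} \Omega = \sum_{J \subset \chi_S} \frac{1}{s_J} \int_{X_J} \Omega_J^{\ren}\ ,$$
where each integral on the right is convergent, and hence holomorphic, in a neighbourhood of $s_c=0$. The first observation is that $\Omega_J^{\ren}$ depends only on the variables $s_{c'}$ with $c' \in \chi_J$ (chords not in $J$ that cross no element of $J$); in particular, when $c \in J$, the variable $s_c$ does not appear in $\Omega_J^{\ren}$. Therefore the only summands that can contribute to $\mathrm{Res}_{s_c=0}$ are those with $c \in J$, and for such $J$ the contribution is
$$\frac{1}{s_{J\backslash\{c\}}}\int_{X_J}\Omega_J^{\ren}\Big|_{s_c=0}\ .$$

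Next I would perform a combinatorial reindexing. Writing $J=\{c\}\cup J'$, the set $J'$ is exactly a set of non-crossing chords in the two polygons $(S',\delta')$, $(S'',\delta'')$ obtained by cutting $S$ along $c$. Under this identification, one checks directly from (\ref{OmegaJdef}) and Definition \ref{defNotationS/J} that $\chi_J$ (as chords of $S$) agrees with the set of chords of $S'\sqcup S''$ not in $J'$ and not crossing $J'$, and that the iterated residue factors as $\mathrm{Res}_{D_J}=\mathrm{Res}_{D_{J'}}\circ \mathrm{Res}_{D_c}$. Consequently
$$\Omega_J=(\Omega_c)_{J'}\ ,\qquad X_J=X_{J'}\ ,$$
where on the right-hand sides the operations are carried out in the cut polygons. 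The compatibility of the trivialisations $f_\bullet^*$ and the evaluation maps $R_\bullet$ with cutting (Lemmas \ref{lemfccommute} and \ref{lemResfunctorial}) further implies $\Omega_J^{\ren}=(\Omega_c)_{J'}^{\ren}$.

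Combining these identifications yields
$$\mathrm{Res}_{s_c=0} \int_{X^{\delta}}\Omega = \sum_{J'} \frac{1}{s_{J'}} \int_{X_{J'}} (\Omega_c)_{J'}^{\ren}\ ,$$
where the sum runs over non-crossing sets of chords in $(S',\delta')\sqcup(S'',\delta'')$. The right-hand side is exactly the expansion provided by Theorem \ref{thmrenormOpen}, applied separately on each of the two cut polygons and then multiplied, for the form $\Omega_c\in \A_{S/\{c\}}(M_S)$. Under the canonical isomorphism $D_c\cong \mathcal{M}^{\delta'}_{0,S'}\times \mathcal{M}^{\delta''}_{0,S''}$ of (\ref{Disproduct}), the product of domains $X_{\{c\}}=X^{\delta'}\times X^{\delta''}$ agrees up to a measure-zero set with $X^{\delta}\cap D_c$. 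Hence the right-hand side equals $\int_{X^{\delta}\cap D_c} \Omega_c$, which is (\ref{residueinMandel}). If $\mathrm{Res}_{D_c}\omega=0$, then $\Omega_c=0$ and both sides vanish, consistent with the absence of a pole at $s_c=0$ in this case by Proposition \ref{lemconv}.

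The main bookkeeping task is verifying that the identifications $\Omega_J=(\Omega_c)_{J'}$ and $\Omega_J^{\ren}=(\Omega_c)_{J'}^{\ren}$ are genuinely compatible (including signs coming from the Koszul convention on iterated residues and trivialisations); this is the only nontrivial step, and it follows cleanly from the functoriality lemmas already established.
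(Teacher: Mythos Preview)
Your proposal is correct and follows essentially the same approach as the paper: both proofs single out the terms with $c\in J$ in the renormalised expansion of Theorem~\ref{thmrenormOpen} and identify this partial sum with the corresponding renormalised expansion of $\Omega_c$ on the cut polygons $S'\sqcup S''$. The paper arrives at the latter by rewriting $\frac{1}{s_c}\int_{X^\delta\cap D_c}\Omega_c=\int_{X^\delta}f_c^*R_c\Omega$ and then applying Proposition~\ref{propOmegarenormalised} componentwise to $R_c\Omega$, whereas you invoke Theorem~\ref{thmrenormOpen} directly on each factor; these are equivalent organisations of the same argument, and your explicit bookkeeping identity $\Omega_J^{\ren}=(\Omega_c)_{J'}^{\ren}$ is exactly what the paper establishes implicitly.
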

\begin{proof} 
It follows from the formula (\ref{renormintOmegaopen}) that:
$$\frac{1}{s_c} \mathrm{Res}_{s_c} \int_{X^{\delta}} \Omega= \frac{1}{s_c} \mathrm{Res}_{s_c} \sum_{J} \frac{1}{s_J} \int_{X_J} \Omega_J^{\ren}=     \sum_{c\in J} \frac{1}{s_J} \int_{X_J} \Omega_J^{\ren}\ .$$
By similar arguments to those in the proof of theorem \ref{thmrenormOpen}, we have
$$\frac{1}{s_c} \int_{X^{\delta} \cap D_c} \Omega_c = \int_{f_c(X^{\delta})} u_c^{s_c} \frac{du_c}{u_c} \, \Omega_c = \int_{X^{\delta}} f_c^* R_c \Omega\ .$$
Proposition \ref{propOmegarenormalised} is stated for a form $\Omega \in \A_S(M)$ but holds more generally for a tensor products of forms in $\A_{S'}(M)\otimes \A_{S''}(M)$, where $S', S''$ are the polygons obtained by cutting $S$ along $c$. This is because the maps $f^*, R$ and  $\ren$ are all compatible with tensor products. Therefore writing $ R_c \Omega=\sum \omega'\otimes \omega''$ (Sweedler's notation) with $\omega' \in \A_{S'}(M)$, $\omega'' \in \A_{S''}(M)$, we deduce that $R_c \Omega$ equals 
$$\sum \left(\sum_{J' \subset \chi_{S'}} f^*_{J'} (R_{J'} \omega')^{\ren} \right)\otimes \left( \sum_{J''  \subset \chi_{S''}} f^*_{J''} (R_{J''} \omega'')^{\ren} \right)=  \sum_{c\notin J} f_J^* (R_J R_c \Omega)^{\ren}$$
We therefore deduce  that
$$\int_{X^{\delta}} f_c^* R_c \Omega_ =   \sum_{c\notin J} \int_{X^{\delta}} f_c^* f_J^* (R_J R_c \Omega)^{\ren}  = \sum_{c\in  J} \int_{X^{\delta}} f_J^* (R_J  \Omega)^{\ren}= \sum_{c\in J} \frac{1}{s_J} \int_{X_J} \Omega_J^{\ren}\    ,  $$
where the last equality follows from the same arguments as in the proof of theorem \ref{thmrenormOpen}. We have therefore shown  that both sides of (\ref{residueinMandel}) coincide for  all values of $s_c$ such that the integrals converge. Note that since the left-hand side admits a Laurent expansion, the same is true of the right-hand side. 
\end{proof}

\subsection{Laurent expansion of closed string amplitudes} 
The following lemma is the single-valued version of the formula $\frac{1}{s} = \int_0^1 x^{s-1} dx$. 

\begin{lemma} \label{lemClosed1/s} For all $0<\mathrm{Re}(s)<\frac{1}{2}$ the following Lebesgue integral equals
$$\frac{1}{2\pi i} \int_{\PP^1(\CC)} |z|^{2s}\left(- \frac{dz}{z(1-z)}\right)\wedge \frac{d\overline{z}}{\overline{z}}  = \frac{1}{s} \ .$$
\end{lemma}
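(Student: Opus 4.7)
The plan is to compute the integral directly by passing to polar coordinates and evaluating the angular integral by the residue theorem; the lemma has the flavour of ``single-valued version of $\int_0^1 x^{s-1}dx = 1/s$'' but it is quickest to prove it by hand.

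First I would check integrability. Near $z=0$ the integrand behaves like $|z|^{2(s-1)}$ times a bounded factor, which is locally $L^1$ for $\mathrm{Re}(s)>0$; near $z=\infty$, writing $w=1/z$, the integrand behaves like $|z|^{2s-3}|dz\wedge d\overline z|\sim |w|^{-2s-1}|dw\wedge d\overline w|$, which is locally $L^1$ for $\mathrm{Re}(s)<\tfrac12$; and the pole at $z=1$ is of order $1$ hence locally $L^1$. So in the given strip the Lebesgue integral converges absolutely, and the answer is an honest holomorphic function of $s$.

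Next, I would change variables to polar coordinates $z=re^{i\theta}$, using
$$dz\wedge d\overline z = -2ir\, dr\wedge d\theta,\qquad |z|^{2s}=r^{2s},\qquad \frac{1}{z\overline z}=\frac{1}{r^2}.$$
After simplification, the integral reduces to
$$\frac{1}{\pi}\int_0^{\infty} r^{2s-1}\left(\int_0^{2\pi}\frac{d\theta}{1-re^{i\theta}}\right)dr.$$
For the inner integral, setting $w=e^{i\theta}$ one recognises
$$\int_0^{2\pi}\frac{d\theta}{1-re^{i\theta}}=\frac{1}{i}\oint_{|w|=1}\frac{dw}{w(1-rw)},$$
and the residue theorem gives $2\pi$ for $r<1$ (only the pole at $w=0$ lies inside the unit circle) and $0$ for $r>1$ (the residues at $w=0$ and $w=1/r$ cancel).

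Substituting this back, the integral collapses to
$$\frac{1}{\pi}\cdot 2\pi\int_0^1 r^{2s-1}\,dr = 2\cdot\frac{1}{2s}=\frac{1}{s},$$
which proves the lemma. There is no serious obstacle: the only mildly delicate point is making sure the factors of $2\pi i$, the sign in $dz\wedge d\overline z=-2ir\,dr\wedge d\theta$, and the sign in front of $\frac{dz}{z(1-z)}$ combine to give the $\frac{1}{\pi}$ in front of the polar integral. Fubini's theorem is justified by the absolute convergence verified in the first step.
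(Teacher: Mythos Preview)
Your proof is correct. The integrability checks are accurate, the polar-coordinate reduction to $\frac{1}{\pi}\int_0^\infty r^{2s-1}\left(\int_0^{2\pi}\frac{d\theta}{1-re^{i\theta}}\right)dr$ is right, and the residue evaluation of the angular integral (giving $2\pi$ for $r<1$ and $0$ for $r>1$) is clean.

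The paper takes a different route: it observes that the integrand is exact up to a factor of $1/s$, since $d|z|^{2s}=s|z|^{2s}\bigl(\tfrac{dz}{z}+\tfrac{d\overline z}{\overline z}\bigr)$ implies
\[
|z|^{2s}\left(-\frac{dz}{z(1-z)}\right)\wedge\frac{d\overline z}{\overline z}=\frac{1}{s}\, d\!\left(|z|^{2s}\frac{dz}{z(1-z)}\right).
\]
Stokes on $\PP^1(\CC)$ minus small discs around $0,1,\infty$ then reduces everything to three circle integrals, of which only the one around $z=1$ survives in the limit, yielding $1/s$ by Cauchy. Your approach trades this for a direct Fubini computation, effectively doing Cauchy in the angular variable rather than around the punctures. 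Both arguments are short; the paper's has the slight advantage of making transparent why only the pole at $z=1$ matters and fits the paper's general philosophy of Stokes-based single-valued integration, while yours is perhaps more self-contained and does not require spotting the primitive.
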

\begin{proof} Since $ d |z|^{2s} = s |z|^{2s} \left( \frac{dz}{z} +  \frac{d\overline{z}}{\overline{z}}\right)$, the integrand equals
$$  \frac{1}{s} \,d \left( |z|^{2s} \frac{dz}{z(1-z)}  \right) \ .$$
For $\varepsilon>0$ small enough, let $U_{\varepsilon}$ be the open subset of $\PP^1(\CC)$ given by the complement of three open discs of radius $\varepsilon$ around  $0,1,\infty$ (in the local coordinates $z, 1-z, z^{-1}$). By Stokes' theorem, 
$$\frac{1}{2\pi i}\int_{U_{\varepsilon}}  |z|^{2s} \left(- \frac{dz}{z(1-z)}\right)\wedge \frac{d\overline{z}}{\overline{z}} = \frac{1}{2\pi i}\frac{1}{s}  \int_{\partial U_{\varepsilon}}  |z|^{2s}  \frac{dz}{z(1-z)}$$
where the boundary $\partial U_{\epsilon}$ is a union of three negatively oriented circles around $0,1,\infty$. By using Cauchy's theorem, we see that all integrals in the right-hand side are bounded as $\varepsilon\rightarrow 0$, and that the only one which is non-vanishing in the limit as $\varepsilon\rightarrow 0$ is around the point $1$,  giving 
$$\lim_{\varepsilon \rightarrow 0} \,\frac{1}{2\pi i}\frac{1}{s}  \int_{\partial U_{\varepsilon}}  |z|^{2s} \dfrac{dz}{z(1-z)}  = \frac{1}{s}\ .$$
\end{proof}

Let $\Omega$ be as in \eqref{eq:Omega for Laurent expansion}. We set 
$$\Omega^{\mathrm{closed}} = (2\pi i)^{-n} \left(\prod_{c\in\chi_S}|   u_c|^{2s_c}\right)\nu_{S}\wedge\overline{\omega}\ .$$
 
The closed string amplitudes can be canonically renormalised as follows.

\begin{theorem} \label{thmrenormclosed}  For   all  $s_c \in \CC$ satisfying the assumptions of proposition \ref{prop:integrability} (2):
\begin{equation} \label{renormintOmegaClosed} 
\int_{\overline{\mathcal{M}}_{0,S}(\CC)} \Omega^{\mathrm{closed}} \quad =  \quad \sum_{J \subset \chi_S}  \frac{1}{s_J}  \int_{\overline{\mathcal{M}}_{0,S/J}(\CC)}  (\Omega_J^{\mathrm{ren}})^{\mathrm{closed}} \ , 
\end{equation} 
where the sum in the right-hand side is over all subsets of non-crossing chords (including the empty set).  The integrals on the right-hand side 
converge if $$-\varepsilon < \mathrm{Re}\, s_c <\varepsilon\quad \hbox{ for some } \varepsilon>0\ .$$
  \end{theorem}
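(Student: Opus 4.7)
The strategy mirrors that of Theorem \ref{thmrenormOpen}, with the open-string ingredients replaced by their closed-string analogues. First, apply the canonical decomposition of Proposition \ref{propOmegarenormalised} to the integrand $\Omega$ of \eqref{eq:Omega for Laurent expansion}, so that
\[
\Omega = \sum_{J\subset\chi_{S}} f_J^*\,(R_J\,\Omega)^{\ren}\ ,
\]
the sum being over non-crossing subsets of chords. As in the proof of Theorem \ref{thmrenormOpen}, one checks that $(R_J\Omega)^{\ren} = \bigl(\prod_{c\in J} u_c^{s_c}\,d\!\log u_c\bigr)\otimes \Omega_J^{\ren}$, where the renormalisation map acts componentwise on $\A_J(M_S)\otimes \A_{S/J}(M_S)$ by the identity on each $\A_c(M_S)$ and by $\ren$ on each factor of $\A_{S/J}(M_S)$.

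Next, apply the closed-string realisation $\rho^{\closed}_{\underline{s}}$ of Definition \ref{defnalphreal} term by term to obtain a decomposition of $\Omega^{\closed}$. Using the compatibility \eqref{eq:fstar nu} between $\nu_{S}$ and the iterated trivialisation map $f_J^{\gamma}$, the summand indexed by $J$ equals, up to the appropriate sign, the pullback of a tensor product of closed-string forms along the factorisation supported by $f_J^\gamma$. The single-valued Fubini theorem of \cite[\S5]{BD1}, whose sign conventions are exactly those appearing in \eqref{eq:fstar nu}, then allows one to compute
\[
\int_{\overline{\mathcal{M}}_{0,S}(\CC)} f_J^*\bigl((R_J\Omega)^{\ren}\bigr)^{\closed}
= \Biggl(\prod_{c\in J}\frac{1}{2\pi i}\int_{\overline{\mathcal{M}}_{0,4}(\CC)}|u_c|^{2s_c}\,\nu_c\wedge\overline{d\!\log u_c}\Biggr)\int_{\overline{\mathcal{M}}_{0,S/J}(\CC)}(\Omega_J^{\ren})^{\closed}\ .
\]
Each chord factor evaluates to $\tfrac{1}{s_c}$ by Lemma \ref{lemClosed1/s}, and the product over $c\in J$ gives $\tfrac{1}{s_J}$, yielding the right-hand side of \eqref{renormintOmegaClosed}.

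For convergence, combine Proposition \ref{propOmegaRennopoles} (which gives $R_c\Omega_J^{\ren} = 0$ for every chord $c$ in each polygon $S_i$ appearing in $S/J$) with Proposition \ref{prop:integrability}(2) applied to $(\Omega_J^{\ren})^{\closed}$ on $\overline{\mathcal{M}}_{0,S/J}(\CC)$. Since only finitely many subsets $J$ appear, one may take a uniform $\varepsilon>0$ small enough so that all the integrals on the right-hand side converge absolutely in the range $-\varepsilon<\mathrm{Re}\,s_c<\varepsilon$. The chord factor $\frac{1}{2\pi i}\int_{\overline{\mathcal{M}}_{0,4}(\CC)}|u_c|^{2s_c}\nu_c\wedge\overline{d\!\log u_c}$ requires $0<\mathrm{Re}\,s_c<\tfrac{1}{2}$ by Lemma \ref{lemClosed1/s}; the identity \eqref{renormintOmegaClosed} is therefore first established in this region, and extended by meromorphic continuation (in the sense of the Laurent expansion around $\underline{s}=0$) to the full range of Proposition \ref{prop:integrability}(2) via uniqueness of meromorphic extension.

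\textbf{Main obstacle.} The principal technical point is the rigorous application of the single-valued Fubini theorem of \cite{BD1} on the compactified spaces $\overline{\mathcal{M}}_{0,S}(\CC)$ and $\overline{\mathcal{M}}_{0,S/J}(\CC)$, rather than just on the open moduli spaces where $f_J^{\gamma}$ is defined as a morphism. This requires checking that the integrands $(f_J^*(R_J\Omega)^{\ren})^{\closed}$ extend to polar-smooth forms on $(\overline{\mathcal{M}}_{0,S},\partial\overline{\mathcal{M}}_{0,S})$, as the strata at `infinite distance' from the cell $X^\delta$ need to be handled exactly as in the proof of Proposition \ref{propIclosedconverges}. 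Once polar-smoothness is verified, the sign bookkeeping between \eqref{eq:fstar nu} and the Fubini formula for $\rho^{\closed}$ is automatic by construction.
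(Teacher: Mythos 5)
Your proposal follows essentially the same approach as the paper: decompose via Proposition \ref{propOmegarenormalised}, apply $\rho^{\closed}$, change variables along $f_J$, invoke Lemma \ref{lemClosed1/s} for the chord factors, and cite Propositions \ref{propOmegaRennopoles} and \ref{prop:integrability}(2) for convergence of the renormalised integrals. Two small differences are worth flagging. First, where you invoke the single-valued Fubini theorem of \cite{BD1} and worry in your "main obstacle" about polar-smoothness of the integrand on the compactification, the paper sidesteps this by the simpler observation that $f_J$ extends to a rational map $\overline{\mathcal{M}}_{0,S}(\CC) \to (\overline{\mathcal{M}}_{0,4}(\CC))^J\times \overline{\mathcal{M}}_{0,S/J}(\CC)$ which is an isomorphism outside a set of Lebesgue measure zero, so that the factorisation is an honest change of variables plus classical Fubini for absolutely convergent Lebesgue integrals. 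Second, your explicit remark that the intermediate steps (the chord factors from Lemma \ref{lemClosed1/s}, the convergence of the left-hand side from Proposition \ref{propIclosedconverges}) pin down the region $0<\mathrm{Re}(s_c)<\varepsilon$ as the locus where both sides converge simultaneously, with the larger region $-\varepsilon<\mathrm{Re}(s_c)<\varepsilon$ reached by analytic continuation through the right-hand side, is a legitimate point which the paper leaves implicit; it is a useful clarification rather than a deviation, and does not change the substance of the argument.
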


\begin{proof}
As in the proof of Theorem \ref{thmrenormOpen} we have
$$\int_{\overline{\mathcal{M}}_{0,S}(\CC)}  \Omega^{\closed} = \sum_J \int_{\overline{\mathcal{M}}_{0,S}(\CC)}  (f_J^* (R_{J} \Omega)^{\ren})^{\mathrm{closed}}$$
We note that
$$f_J : \overline{\mathcal{M}}_{0,S}(\CC) \To  (\overline{\mathcal{M}}_{0,4}(\CC))^J\times \overline{\mathcal{M}}_{0,S/J} (\CC)$$
is an isomorphism outside a set of Lebesgue measure zero.
Using the compatibility \eqref{eq:fstar nu} between $f_J^*$ and the forms $\nu_{S}$, and making a  change of variables via $f_J$, we can write each summand in the above expression as
$$
 \prod_{c\in J}  \left( \frac{1}{2\pi i} \int_{\overline{\mathcal{M}}_{0,4}(\CC)}           |u_c|^{2s_c} \nu_c\wedge \frac{d \overline{u_c} }{\overline{u_c} }\right)   \times \int_{\overline{\mathcal{M}}_{0,S/J}(\CC)}  (\Omega^{\ren}_J )^{\mathrm{closed}}\ .$$
 By applying Lemma \ref{lemClosed1/s} we deduce
 (\ref{renormintOmegaClosed}).
\end{proof}

\begin{corollary}The closed string amplitude has a canonical Laurent expansion 
$$ I^{\closed}(\omega,\underline{s})  \quad \in \quad \CC[\textstyle\frac{1}{s_c} \ :  \ \mathrm{ord}_{D_c} \omega =-1]  [[ s_c \ : \ c\in \chi_S]]\ .$$
\end{corollary}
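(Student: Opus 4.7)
The proof strategy parallels exactly the argument for the open string amplitude that was carried out immediately after Theorem \ref{thmrenormOpen}, with Theorem \ref{thmrenormclosed} substituted for Theorem \ref{thmrenormOpen}.

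First I would invoke Theorem \ref{thmrenormclosed} to rewrite
$$I^{\closed}(\omega,\underline{s}) \;=\; \sum_{J\subset \chi_S} \frac{1}{s_J} \int_{\overline{\mathcal{M}}_{0,S/J}(\CC)} (\Omega_J^{\ren})^{\closed}$$
where $J$ ranges over sets of non-crossing chords and $s_J = \prod_{c\in J} s_c$. The key point established in that theorem is that each integral on the right-hand side converges absolutely on a full neighbourhood $\{-\varepsilon < \mathrm{Re}(s_c) < \varepsilon\}$ of the origin. A standard holomorphy-under-the-integral argument, using that the integrand is entire in $\underline{s}$ and that the absolute convergence in Proposition \ref{prop:integrability} (2) is uniform on compact subsets of this domain, shows that each such integral is holomorphic in $\underline{s}$ near the origin and thus admits a Taylor expansion in $\CC[[s_c : c\in\chi_S]]$. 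The prefactors $1/s_J$ then place the whole expression in $\CC[\frac{1}{s_c} : c\in\chi_S][[s_c : c\in\chi_S]]$.

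It remains to show that the poles $1/s_c$ only occur when $\mathrm{ord}_{D_c}\omega = -1$, i.e.\ $\mathrm{Res}_{D_c}\omega \neq 0$. For this, recall from \eqref{OmegaJdef} that
$$\Omega_J = \Bigl(\prod_{c\in\chi_J} u_c^{s_c}\Bigr)\,\mathrm{Res}_{D_J}(\omega)\ ,$$
where $\mathrm{Res}_{D_J}$ is the iterated residue along the components of $D_J$. Since residues along distinct smooth divisors anticommute, we may compute $\mathrm{Res}_{D_J}$ in any order. Hence if $J$ contains some chord $c$ with $\mathrm{Res}_{D_c}\omega = 0$, we can perform $\mathrm{Res}_{D_c}$ first and conclude $\Omega_J = 0$, so $\Omega_J^{\ren}=0$ and the corresponding summand vanishes. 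Consequently the only $J$ that contribute are those consisting entirely of chords $c$ with $\mathrm{ord}_{D_c}\omega = -1$, which yields the claimed containment in $\CC[\frac{1}{s_c} : \mathrm{ord}_{D_c}\omega = -1][[s_c : c\in\chi_S]]$.

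The main (minor) technical obstacle is the holomorphy-in-parameters step: one needs that the bounds producing absolute integrability in Proposition \ref{prop:integrability} (2) are in fact locally uniform in complex $\underline{s}$. This follows because the proof of that proposition controls the integrand divisor by divisor, and the dominating functions constructed there (e.g.\ $\rho^{2\mathrm{Re}\,\alpha(m)}$ in polar coordinates) depend only on $\mathrm{Re}\,\alpha(m)$ and hence dominate uniformly on any polydisc in $\underline{s}$ contained in the convergence region. The canonicity of the Laurent expansion follows because Theorem \ref{thmrenormclosed} itself is canonical (the $f_J^{\gamma}$ depend on the cyclic structure $\gamma$ but the integrals over $\overline{\mathcal{M}}_{0,S/J}(\CC)$ do not depend on this choice).
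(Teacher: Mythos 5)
Your proof is correct and takes essentially the same route as the paper: invoke Theorem \ref{thmrenormclosed} to split $I^{\closed}$ into convergent integrals, Taylor-expand each near $\underline{s}=0$, and read off the Laurent expansion. The only divergence is in how the restriction to $\mathrm{ord}_{D_c}\omega=-1$ is justified: the paper simply cites Proposition \ref{propIclosedconverges} (analyticity of the original integral in a region containing $\mathrm{Re}(s_c)>-\tfrac12$ whenever $\mathrm{Res}_{D_c}\omega=0$ precludes a pole at $s_c=0$), whereas you observe that $\Omega_J^{\ren}=0$ whenever $J$ contains a chord $c$ with $\mathrm{Res}_{D_c}\omega=0$, because the iterated residue $\mathrm{Res}_{D_J}\omega$ then vanishes. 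Your structural argument is arguably cleaner, since it identifies exactly which terms of the renormalised sum survive and avoids a delicate Laurent-uniqueness argument in several variables; but both are valid and lead to the same conclusion, so this is a minor variation rather than a genuinely different route.
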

By Proposition \ref{propIclosedconverges}, it only has simple poles in the $s_c$ corresponding to chords $c$ such that $\mathrm{Res}_{D_c} \, \omega \neq 0$. 
 More precisely, 

\begin{corollary} The residue at $s_c=0$ of $ I^{\closed}(\omega,\underline{s}) $ is
\begin{equation} \label{residueinMandelclosed} \mathrm{Res}_{s_c}\, \int_{\overline{\mathcal{M}}_{0,S}(\CC)}  \Omega^{\closed}  = \int_{ \overline{\mathcal{M}}_{0,S'}(\CC) \times \overline{\mathcal{M}}_{0,S''}(\CC)}   (\Omega_c)^{\closed} \ ,\end{equation}
where cutting along $c$ decomposes $S$ into $S',S''$. 
\end{corollary}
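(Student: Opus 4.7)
The plan is to mirror exactly the argument used in the open-string case (formula~\eqref{residueinMandel}), substituting the closed-string renormalisation Theorem~\ref{thmrenormclosed} for Theorem~\ref{thmrenormOpen} and Lemma~\ref{lemClosed1/s} for the elementary identity $\int_0^1 x^{s-1}\,dx = \frac{1}{s}$.

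First I would start from \eqref{renormintOmegaClosed} and extract the residue at $s_c=0$: since $\Omega_J^{\ren}$ is a convergent Laurent coefficient in a neighbourhood of the origin by Proposition~\ref{prop:integrability}(2) and Proposition~\ref{propOmegaRennopoles}, the only summands that contribute a simple pole in $s_c$ are those indexed by $J$ containing $c$. Dividing \eqref{renormintOmegaClosed} by $s_c$ and taking the residue yields
\[
\frac{1}{s_c}\mathrm{Res}_{s_c}\!\int_{\overline{\mathcal{M}}_{0,S}(\CC)}\Omega^{\closed} \;=\; \sum_{c\in J}\frac{1}{s_J}\int_{\overline{\mathcal{M}}_{0,S/J}(\CC)}(\Omega_J^{\ren})^{\closed}\ .
\]

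Next, I would show that the right-hand side equals $\frac{1}{s_c}\int_{\overline{\mathcal{M}}_{0,S'}(\CC)\times\overline{\mathcal{M}}_{0,S''}(\CC)}(\Omega_c)^{\closed}$. Write $R_c\Omega=\sum \omega'\otimes\omega''\in\A_{S'}(M)\otimes\A_{S''}(M)$ (Sweedler notation). The key point, exactly as in the open-string proof, is that the decomposition of Proposition~\ref{propOmegarenormalised} extends to tensor products because $f_J^*$, $R_J$ and $\ren$ are all compatible with the factorisation $\chi_{S\backslash c}=\chi_{S'}\sqcup\chi_{S''}$. Hence
\[
R_c\Omega \;=\; \sum_{c\notin J} f_J^*(R_J R_c\Omega)^{\ren}\ ,
\]
and therefore, applying Theorem~\ref{thmrenormclosed} separately on $\overline{\mathcal{M}}_{0,S'}(\CC)$ and $\overline{\mathcal{M}}_{0,S''}(\CC)$ to the factors of $R_c\Omega$ gives
\[
\int_{\overline{\mathcal{M}}_{0,S'}(\CC)\times\overline{\mathcal{M}}_{0,S''}(\CC)}(\Omega_c)^{\closed} \;=\; \sum_{c\notin K}\frac{1}{s_K}\int_{\overline{\mathcal{M}}_{0,(S/c)/K}(\CC)}((R_K\Omega_c)^{\ren})^{\closed}\ ,
\]
where $K$ ranges over non-crossing sets in $\chi_{S\backslash c}$. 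Reindexing via $J=K\cup\{c\}$ and using $s_J=s_c\,s_K$, this is precisely $s_c$ times the sum displayed above.

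The main obstacle is the bookkeeping: one must check that the change of variables $f_c:\overline{\mathcal{M}}_{0,S}(\CC)\to\overline{\mathcal{M}}_{0,4}(\CC)\times\overline{\mathcal{M}}_{0,S'}(\CC)\times\overline{\mathcal{M}}_{0,S''}(\CC)$ (an isomorphism off a measure-zero set), together with the compatibility \eqref{eq:fstar nu} of $\nu_S$ and Lemma~\ref{lemClosed1/s}, correctly produces the factor $\tfrac{1}{s_c}$ with the right sign when splitting each integral over $\overline{\mathcal{M}}_{0,S/J}(\CC)$ into a product of integrals over $\overline{\mathcal{M}}_{0,4}(\CC)$ and $\overline{\mathcal{M}}_{0,(S/c)/K}(\CC)$. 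This is exactly the manipulation already carried out in the proof of Theorem~\ref{thmrenormclosed}, so no new integration is required. Equality of both sides of \eqref{residueinMandelclosed} holds wherever the integrals converge, and since the left-hand side admits a Laurent expansion in $\underline{s}$, so does the right-hand side, completing the proof.
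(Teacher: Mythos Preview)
Your proposal is correct and follows essentially the same route the paper indicates: the paper simply says the proof is similar to that of \eqref{residueinMandel}, and you have faithfully transposed that argument to the closed-string setting, replacing Theorem~\ref{thmrenormOpen} by Theorem~\ref{thmrenormclosed}, the identity $\int_0^1 x^{s-1}\,dx=\tfrac{1}{s}$ by Lemma~\ref{lemClosed1/s}, and invoking the compatibility \eqref{eq:fstar nu} in place of \eqref{fDXdelta}. One cosmetic point: your notation $\chi_{S\backslash c}$ is not used in the paper; it would be cleaner to say that $K$ ranges over non-crossing subsets of $\chi_{S'}\sqcup\chi_{S''}$ (equivalently, non-crossing subsets of $\chi_S\setminus\{c\}$ consisting of chords not crossing $c$), which is exactly what makes the reindexing $J=K\cup\{c\}$ bijective onto non-crossing sets containing~$c$.
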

The proof is similar to the proof of (\ref{residueinMandel}).

\section{Motivic string perturbation amplitudes} \label{sect: MotAmp} 

Having performed a Laurent expansion of string amplitudes, we now turn to their interpretation as periods of mixed Tate motives. 

\subsection{Decomposition of convergent forms}
Let $V_c \subset F_S(M)$ denote the ideal generated by $u^m_{c'}-1$ for any  $m \in M$, where $c'$ is a chord which crosses $c$. More generally, for any set of chords $I\subset \chi_{S}$ set  $V_{\emptyset} = F_S(M)$ and 
$$V_I = \bigcap_{c\in I} V_c\ . $$ 
Let $\Omega_{S}^{|S|-3}(I)\subset \Omega_{S}^{|S|-3}$ denote the subspace of forms whose residue vanishes along $D_c$ for all $c\in I.$ We have the following refinement of Lemma \ref{lemdiff formal}.

\begin{lemma} \label{lemmaDecompForms} Let  $\chi\subset \chi_{S}$ be a subset of chords, and let $\Omega \in \mathcal{A}_S(M)$  such that $R_c \Omega =0$ for all chords $c \in \chi$.  Then $\Omega$ has a  canonical decomposition 
\begin{equation} \label{Omegadecompasconv} \Omega = \sum_{I \subset \chi}  \Omega_{\chi}^{(I)}
\end{equation} 
where $I$ ranges over all subsets of $\chi$, and 
$$\Omega_{\chi}^{(I)} \quad  \in \quad   V_I \otimes  \Omega^{|S|-3}_S(\chi \backslash I)\ .$$
 \end{lemma}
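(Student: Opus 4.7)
The plan is to induct on $|\chi|$, with the base case $\chi=\emptyset$ trivial (take $\Omega^{(\emptyset)}_\emptyset=\Omega$, since $V_\emptyset = F_S(M)$ and $\Omega^{|S|-3}_S(\emptyset)=\Omega^{|S|-3}_S$). The inductive core rests on the one-chord case $\chi=\{c\}$, which I would derive from two direct sum decompositions. On the formal side, the section identity $\mathrm{ev}_c\circ f_c^*=\mathrm{id}$ from \eqref{eq:ev and fstar} gives $F_S(M)=V_c\oplus W_c$ with $W_c=\mathrm{Image}(f_c^*)$, via the idempotents $q_c=f_c^*\mathrm{ev}_c$ and $p_c=\mathrm{id}-q_c$. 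On the form side, \eqref{Resonfcstar} gives $\Omega^{|S|-3}_S=\Omega^{|S|-3}_S(\{c\})\oplus Y_c$ with $Y_c=\mathrm{Image}((f_c^\gamma)^*(d\log u_c\otimes -))$, via idempotents $Q_c=(f_c^\gamma)^*(d\log u_c\otimes \mathrm{Res}_{D_c})$ and $P_c=\mathrm{id}-Q_c$. The resulting four-fold direct sum of $\mathcal{A}_S(M)$ has the property that $R_c$ vanishes on three summands and restricts to an isomorphism on $W_c\otimes Y_c$. Hence $R_c\Omega=0$ forces the $W_c\otimes Y_c$ component to vanish, and regrouping yields $\Omega=\Omega_{\{c\}}^{(\{c\})}+\Omega_{\{c\}}^{(\emptyset)}$ with $\Omega_{\{c\}}^{(\{c\})}=(p_c\otimes\mathrm{id})\Omega\in V_c\otimes \Omega^{|S|-3}_S$ and $\Omega_{\{c\}}^{(\emptyset)}=(q_c\otimes P_c)\Omega\in F_S(M)\otimes\Omega^{|S|-3}_S(\{c\})$.

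For the inductive step with $|\chi|\geq 2$, fix $c\in\chi$, let $\chi'=\chi\setminus\{c\}$, and apply the one-chord splitting to obtain $\Omega=\Omega^A+\Omega^B$ with $\Omega^A\in V_c\otimes \Omega^{|S|-3}_S$ and $\Omega^B\in F_S(M)\otimes\Omega^{|S|-3}_S(\{c\})$. The key intermediate claim is that each of $\Omega^A,\Omega^B$ still satisfies $R_{c'}=0$ for every $c'\in\chi'$, and that $\Omega^A$ stays inside $V_c\otimes-$ while $\Omega^B$ stays inside $-\otimes\Omega^{|S|-3}_S(\{c\})$ after further projections with respect to $\chi'$. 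Granting this, apply the inductive hypothesis to decompose $\Omega^A=\sum_{I'\subset\chi'}(\Omega^A)^{(I')}_{\chi'}$ and $\Omega^B=\sum_{I'\subset\chi'}(\Omega^B)^{(I')}_{\chi'}$; then, using the identity $V_c\cap V_{I'}=V_{I'\cup\{c\}}$ and the analogous intersection on the form side, declare $\Omega^{(I'\cup\{c\})}_\chi=(\Omega^A)^{(I')}_{\chi'}$ and $\Omega^{(I')}_\chi=(\Omega^B)^{(I')}_{\chi'}$. These pieces exhaust all $I\subset\chi$ and satisfy the required containments.

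The main obstacle is verifying the preservation properties just invoked: first, that the one-chord projectors send $\ker(R_{c'})$ into itself; and second, that the subsequent projectors for chords $c'\in\chi'$ respect the ambient subspaces $V_c$ (for $\Omega^A$) and $\Omega^{|S|-3}_S(\{c\})$ (for $\Omega^B$). Both points reduce to the compatibility properties already in the paper. When $c$ and $c'$ cross, Lemma \ref{Rcfcproperties}(2) gives $R_{c'}\circ f_c^*=0$ and \eqref{Resonfcstarzero} gives the dual vanishing on forms, so the relevant components automatically drop out. When $c$ and $c'$ do not cross, the commutation identity \eqref{Rcfcommute}, together with its formal analogue (both resting on Lemma \ref{lemfccommute}) and the functoriality of residues (Lemma \ref{lemResfunctorial}), lets $R_{c'}$ be moved past $p_c,q_c,P_c,Q_c$ in a controlled manner. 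Carrying out this case analysis and bookkeeping completes the induction. The resulting decomposition is canonical because each $\Omega^{(I)}_\chi$ is the image of $\Omega$ under an intrinsically defined composition of projectors, depending only on $S$, $\delta$, and the cyclic orientation $\gamma$.
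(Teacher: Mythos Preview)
Your overall induction is on the right track, but the setup is more elaborate than necessary and the ``main obstacle'' you identify is a genuine gap that the paper sidesteps entirely.

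The paper does \emph{not} use $f_c^*$ as the section of $\ev_c$, nor does it introduce any projector on the form side. Instead it uses the naive inclusion $i_c:F_c(M)\otimes F_{S'}(M)\otimes F_{S''}(M)\hookrightarrow F_S(M)$ (sending each generator to the identically named one); this also satisfies $\ev_c\circ i_c=\id$, and the resulting projector $i_c\ev_c$ simply sets $u_{c'}\mapsto 1$ for $c'$ crossing $c$ and fixes every other generator. The induction then reads
\[
\Omega_{\chi\cup\{c\}}^{(I)}=(i_c\ev_c\otimes\id)\,\Omega_\chi^{(I)},\qquad \Omega_{\chi\cup\{c\}}^{(I\cup\{c\})}=\Omega_\chi^{(I)}-\Omega_{\chi\cup\{c\}}^{(I)},
\]
so only formal-side projectors are ever applied, and these commute on the nose for different $c$. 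Consequently each $\Omega_\chi^{(I)}$ is a commuting product of these projectors applied to the original $\Omega$; the containment in $V_I\otimes\Omega_S^{|S|-3}$ is immediate, and the form-side condition $\mathrm{Res}_{D_{c''}}=0$ for $c''\in\chi\setminus I$ follows because the factor $i_{c''}\ev_{c''}$ is present and $(i_{c''}\ev_{c''}\otimes\mathrm{Res}_{D_{c''}})\Omega=(i_{c''}\otimes\id)R_{c''}\Omega=0$ \emph{directly from the hypothesis on $\Omega$}. No preservation of $\ker R_{c'}$ under intermediate pieces is ever needed.

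By contrast, your $q_c=f_c^*\ev_c$ sends $u_{c''}$ (for $c''$ not crossing $c$) to a nontrivial product $\prod u_{c'''}$ via the forgetful maps, so the $q_c$ do not obviously commute and do not obviously preserve $V_{c'}$. The lemmas you cite (Lemma~\ref{Rcfcproperties}(2), equation~\eqref{Rcfcommute}) concern the full maps $f_c^*$ and $R_{c'}$ on $\mathcal{A}$-spaces, where formal and form components are coupled; they do not give the required identities for your separate formal-only projector $q_c\otimes\id$ and form-only projector $\id\otimes P_c$. Verifying, for instance, that $q_c$ preserves $V_{c'}$ when $c,c'$ do not cross would require showing that every $f_{T''}$-preimage of a chord crossing $c'$ again crosses $c'$, which you do not address. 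Switching to the inclusion $i_c$ removes the obstacle.
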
 
 \begin{proof} 
 
First observe that the case where $\chi=\{c\}$
 is a single chord follows from Lemma \ref{lemdiff formal}, since $R_c \Omega =0$ implies that $\Omega=\Omega-f_c^* R_c \Omega$, and therefore 
 $$\Omega\quad \in\quad  V_c \otimes \Omega^{|S|-3}_{S}  \ + \ F_S(M) \otimes \Omega^{|S|-3}_S(\{c\}) \ . $$
 Although the sum is not direct, the decomposition into two parts can be made canonical. For this,  consider the natural  inclusion 
 $$i_c: F_c(M)\otimes F_{S'}(M) \otimes F_{S''}(M) \To F_S(M)$$
 corresponding to the inclusions $\chi_{S'}, \chi_{S''} \subset \chi_S. $
 This map satisfies $\ev_c i_c =\id$.  Observe that $V_c \subset F_S(M)$ is the kernel of the map $\ev_c$. 
 The map $i_c \ev_c$ simply sends $u_{c'}$ to $1$ for all $c'$ crossing $c$, and preserves $u_{c'}$ for all other chords. 
 Now write
$$\Omega = (1 - i_c \ev_c \otimes \id )\Omega   +  (i_c\ev_c \otimes \id) \Omega $$
The first term  is annihilated by $\ev_c\otimes \id$, and so lies in $V_c \otimes \Omega^{|S|-3}_{S} $.
The second term satisfies $(\id \otimes d\log(u_c) \mathrm{Res}_{D_c} )  (i_c\ev_c \otimes \id) \Omega  = (i_c \otimes \id) R_c \Omega =0$ by definition of $R_c$, and hence lies in $F_S(M) \otimes \Omega^{|S|-3}_S(\{c\})$.
This gives a canonical decomposition of the form (\ref{Omegadecompasconv}) when $\chi = \{c\}$.

In the general case, proceed by induction on the size of $\chi$ by setting:
 $$ \Omega_{\chi \cup c }^{(I)} =    (i_c \ev_c  \otimes \id) \Omega^{(I)}_{\chi}     
 \qquad \hbox{ and } \qquad 
 \Omega_{\chi \cup c }^{(I \cup c)} =  \Omega^{(I)}_{\chi} -  \Omega_{\chi \cup c }^{(I)} 
 \ . $$
  Since the maps $\ev_c$ commute for different $c$, the definition does not depend on the order in which the chords in $\chi \cup c$ are taken, and the decomposition is canonical.
 By an identical argument to the one  above, we check by induction that indeed
 $$ \Omega_{\chi \cup c }^{(I \cup c)}  \in    V_{I \cup c} \otimes  \Omega^{|S|-3}_S(\chi \backslash I ) \qquad \hbox{ and }   \qquad    \Omega_{\chi \cup c }^{(I)} \in       V_{I} \otimes  \Omega^{|S|-3}_S(\chi \cup c \backslash I )     $$
 since $(\ev_c \otimes \id)\Omega_{\chi \cup c }^{(I \cup c)}=0 $
and $(\id \otimes \mathrm{Res}_{D_c} ) \, \Omega_{\chi \cup c }^{(I)} =0. $ 
  \end{proof}

 Note that the sum of the spaces $V_I \otimes \Omega_S^{|S|-3}(\chi \backslash I)$ is not direct.

\subsection{Logarithmic expansions}
For each chord $c$, let  $\ell_c $ be a formal symbol which we think of as corresponding to  a logarithm of $u_c$.

There is a continuous homomorphism of algebras defined on generators by
\begin{eqnarray}\label{formallogexpansion} F_S(M) & \To &  \QQ[\{\ell_c\}_{c\in \chi_S}][[M]]   \\
 u_c^m & \mapsto & \sum_{n\geq 0 } \frac{m^n}{n!} \ell^n_c  \ .\nonumber
\end{eqnarray} 
This extends to a map 
\begin{equation}\label{eq:formal log expansion forms}
\A_S(M)\To \Omega_S^{|S|-3}[\{\ell_c\}_{c\in \chi_S}][[M]]\ .
\end{equation}
For an additive map $\alpha:M\rightarrow \CC$ we have the realisation maps $\rho_\alpha^{\mathrm{open}}$ and $\rho_{\alpha}^{\mathrm{closed}}$ from Definition \ref{defnalphreal}. A form $\rho_\alpha^{\mathrm{open}}(\Omega)$ (resp. $\rho_\alpha^{\mathrm{closed}}(\Omega)$) has a series expansion given by composing \eqref{eq:formal log expansion forms} with $\alpha$ and by interpreting the formal symbols $\ell_c$ as:  
$$\ell_c  \mapsto \log(u_c) \qquad  \hbox{(resp. } \ell_c \mapsto \log |u_c|^2 \hbox{)}\ . $$

\begin{definition}
 A \emph{convergent}  monomial is one of the form
\begin{equation}\label{eq:convergent monomial}
\Big(\prod_{c\in \chi_{S}} \ell_c^{k_c}\Big)  \, \omega  \qquad   \in \qquad  \Omega^{|S|-3}_S[\{\ell_c\}_{c \in \chi_S}] 
\end{equation}
   where for every $c\in \chi_{S}$ such that $\mathrm{Res}_{D_c} \, \omega \neq 0$, there exists another chord $c'\in \chi_{S}$ which crosses $c$ such that $k_{c'}\geq 1$.
\end{definition}

\begin{lemma}
For a convergent monomial \eqref{eq:convergent monomial}, the corresponding integrals
$$\int_{X^\delta}\Big(\prod_{c\in \chi_{S}} \log^{k_c}(u_c)\Big)  \, \omega \quad \mbox{and} \quad \int_{\overline{\mathcal{M}}_{0,S}(\CC)}\Big(\prod_{c\in \chi_{S}} \log^{k_c}|u_c|^2\Big)\,\nu_S\wedge\overline{\omega}$$
are convergent.
\end{lemma}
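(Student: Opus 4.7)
The proof is a local analysis near each boundary divisor, using the normal crossing structure of $\partial\overline{\mathcal{M}}_{0,S}$ together with the combinatorial relation \eqref{ucprimecrossingc}. For the open integral I would cover $\overline{X^{\delta}}$ by the patches $S^J_\varepsilon$ from the proof of Proposition \ref{lemconv}, on each of which some set of non-crossing chords provides local coordinates. For the closed integral I would cover $\overline{\mathcal{M}}_{0,S}(\CC)$ by polydiscs, inside each of which $\partial\overline{\mathcal{M}}_{0,S}$ is normal crossing. In either setting, it suffices to check absolute integrability in a neighbourhood of each irreducible boundary component $D_c$, in the local coordinate $u_c$.

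The key estimate is the following: if $c'$ crosses $c$, then \eqref{ucprimecrossingc} gives $u_{c'} = 1 - x u_c$ for some product $x$ of dihedral coordinates which is bounded in a neighbourhood of $D_c$. Expanding the logarithm yields $\log(u_{c'}) = O(u_c)$ and $\log|u_{c'}|^{2} = O(|u_c|)$ as $u_c \to 0$. In particular, any factor $\log^{k_{c'}}(u_{c'})$ (resp. $\log^{k_{c'}}|u_{c'}|^{2}$) with $k_{c'}\geq 1$ vanishes to order at least $k_{c'}$ along $D_c$, with bounded coefficient.

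For the open integral, write $t = u_c$. If $\mathrm{Res}_{D_c}\omega = 0$, then $\omega$ has no pole along $D_c$ and the integrand is dominated by $|\log t|^{K}$ times a smooth form, hence integrable. Otherwise $\omega$ contributes a simple pole $dt/t$, and the convergence hypothesis supplies a chord $c'$ crossing $c$ with $k_{c'}\geq 1$; by the key estimate the integrand is then dominated by $t^{k_{c'}-1}|\log t|^{K}\,dt$, which is integrable near $t=0$. For the closed integral, write $u_c = \rho\, e^{i\theta}$. Near a component of $\partial\mathcal{M}^{\delta}_{0,S}$, $\nu_S$ has a simple pole by Lemma \ref{lemnuSpoles}, so the worst singularity of $\nu_S\wedge\overline{\omega}$ is proportional to $dz\wedge d\overline{z}/(z\overline{z}) = -2i\,d\rho\,d\theta/\rho$; in this case, if $\mathrm{Res}_{D_c}\omega \neq 0$ the crossing-chord estimate again absorbs the $1/\rho$ factor. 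Near an infinite-distance divisor, $\nu_S$ is regular (Lemma \ref{lemnuSpoles}), so the worst singularity of $\nu_S\wedge\overline{\omega}$ is of type $dz\wedge d\overline{z}/\overline{z} = -2i\,e^{i\theta}\,d\rho\,d\theta$, which is bounded and integrable against $|\log\rho|^{K}$.

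The main obstacle is merely the bookkeeping of how these single-divisor estimates patch together over the compact domain of integration; this is taken care of by the normal crossing property together with the combinatorial fact that $D_c\cap D_{c'}\neq\emptyset$ forces $c$ and $c'$ not to cross. Consequently the key estimate is invoked precisely for the chord $c'$ furnished by the convergence hypothesis, exactly where it is needed to cancel the simple pole of $\omega$ along $D_c$.
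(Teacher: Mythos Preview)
Your proposal is correct and follows essentially the same approach as the paper: both use the relation \eqref{ucprimecrossingc} to show that $\log(u_{c'})$ (resp.\ $\log|u_{c'}|^2$) vanishes to first order along $D_c$ when $c'$ crosses $c$, thereby cancelling the simple pole of $\omega$ along $D_c$, and then observe that the residual logarithmic singularities do not spoil integrability. The paper packages this by rewriting the integrand with a pole-free form $\omega'$ and invoking a slight modification of Propositions~\ref{lemconv} and~\ref{propIclosedconverges}, whereas you spell out the local estimates in polar coordinates more explicitly; the content is the same.
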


\begin{proof}
For any chord $c'$ which crosses $c\in\chi_{S}$, equation (\ref{ucprimecrossingc}) implies that
$$\log(u_{c'}) = \alpha  u_c \qquad \hbox{ and } \qquad \log |u_{c'}|^2 = \beta  u_c  +  \gamma \overline{u_c} $$
for some $\alpha, \beta, \gamma$. By applying this to every chord $c$ for which $\mathrm{Res}_{D_c} \omega \neq 0$, we can rewrite the above integrals as linear combinations of 
$$\int_{X^{\delta}} F(u_c) \, \omega' \quad \hbox{ respectively } \quad \int_{\overline{\mathcal{M}}_{0,S}(\CC)} G(u_c) \,\nu_S \wedge \left( \prod_{c\in \chi} \frac{u_c}{\overline{u_c}}\right)\,  \overline{\omega'}
 $$
where  $F,G$ have at most logarithmic singularties near boundary divisors, 
 $\chi \subset \chi_S$ is a subset of chords, and  $\omega'$ has no poles along $D_c$, for all $c\in \chi_{S}$. 
Convergence in both cases follows from a very small modification of propositions \ref{lemconv}, \ref{propIclosedconverges} to allow for possible logarithmic divergences. The latter do not affect the  convergence since 
$|\log(z)|^k z^s$ tends to zero as $z\rightarrow 0$ for any $\mathrm{Re} \, s>0.$
\end{proof}

\begin{proposition} Let $\Omega \in \mathcal{A}_{S}(M)$ such that $R_c \Omega = 0 $ for all chords $c$. Then $\Omega$ 
 admits a canonical expansion  which only involves convergent monomials (\ref{eq:convergent monomial}).  \end{proposition}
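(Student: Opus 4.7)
The plan is to combine Lemma \ref{lemmaDecompForms} with the formal logarithmic expansion \eqref{eq:formal log expansion forms}, turning the algebraic condition ``belongs to $V_I$'' into the combinatorial condition defining convergent monomials.

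First, since $R_c\Omega = 0$ for every chord $c$, I would apply Lemma \ref{lemmaDecompForms} with $\chi = \chi_{S}$ to obtain the canonical decomposition
$$\Omega = \sum_{I \subset \chi_{S}} \Omega^{(I)} \qquad \text{where} \qquad \Omega^{(I)} \in V_I \otimes \Omega^{|S|-3}_S(\chi_S \setminus I)\ .$$

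Second, I would compose with \eqref{eq:formal log expansion forms}. The key observation is that \eqref{formallogexpansion} sends
$$u_{c'}^m - 1 \;\longmapsto\; \sum_{n\geq 1}\frac{m^n}{n!}\,\ell_{c'}^n \ \in \ (\ell_{c'}) \ \subset \ \QQ[\{\ell_{c''}\}_{c''\in\chi_S}][[M]]\ ,$$
so the image of $V_c$ lies in the ideal generated by $\{\ell_{c'} : c' \text{ crosses } c\}$. Consequently the image of $V_I = \bigcap_{c\in I} V_c$ lies in the intersection of these ideals as $c$ ranges over $I$. Concretely, every monomial $\prod_{c''} \ell_{c''}^{k_{c''}}$ appearing in the log expansion of an element of $V_I$ has the property that for each $c \in I$ there is at least one chord $c'$ crossing $c$ with $k_{c'} \geq 1$.

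Third, I would combine these two steps. Each term in the log expansion of $\Omega^{(I)}$ is a $\QQ[[M]]$-linear combination of monomials of the form $\bigl(\prod_c \ell_c^{k_c}\bigr)\,\omega$ with $\omega \in \Omega_S^{|S|-3}(\chi_S\setminus I)$. Since $\mathrm{Res}_{D_c}\omega = 0$ for $c \notin I$, any chord $c$ with $\mathrm{Res}_{D_c}\omega \neq 0$ must lie in $I$; and by the previous step, such a $c$ is then crossed by some $c'$ with $k_{c'} \geq 1$. This is exactly the defining condition of a convergent monomial \eqref{eq:convergent monomial}. Summing over $I$ yields the desired canonical expansion of $\Omega$ into convergent monomials; canonicity follows from that of Lemma \ref{lemmaDecompForms} together with the well-definedness of \eqref{formallogexpansion}.

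The main non-obstacle here is purely bookkeeping through the log expansion, driven by the trivial observation that $u_{c'}^m-1$ has no constant term in $\ell_{c'}$. The genuine structural content is supplied by Lemma \ref{lemmaDecompForms}: it arranges $\Omega$ so that the $V_I$-factor delivers logarithms along chords crossing $I$ exactly at the indices $c \in I$ where the differential form $\omega$ is allowed to have residues, so that the two pieces of the convergence criterion are matched index-by-index.
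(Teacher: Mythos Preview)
Your proof is correct and follows exactly the paper's approach: apply Lemma \ref{lemmaDecompForms} with $\chi=\chi_S$ and then the log expansion \eqref{formallogexpansion} termwise. You have simply spelled out the details (that $V_I$ lands in the appropriate intersection of ideals and that this matches the convergence condition) which the paper's one-line proof leaves to the reader.
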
 
 \begin{proof}
 Apply   (\ref{formallogexpansion}) to each term in $\Omega^{(I)}_{\chi}$ in the decomposition  (\ref{Omegadecompasconv}).
 \end{proof}
 
 \begin{corollary}  \label{corexpansionasconv}  Renormalised amplitudes, where they converge,  can be canonically written as infinite sums of integrals of convergent monomials in logarithms:
 \begin{eqnarray} \int_{X^{\delta}} \rho_{\alpha}^{\mathrm{open}}\Omega^{\ren} &  = & \sum_{K=(k_c)}  a_K    \int_{X^{\delta}} \Big(\prod_{c\in \chi_{S}} \log^{k_c}(u_c)\Big)  \, \omega_K  \label{logexpansionopen} \\ 
 \int_{\overline{\mathcal{M}}_{0,S}(\CC) } \rho^{\mathrm{closed}}_{\alpha}\Omega^{\ren}   &=  & (2\pi i)^{-n} \sum_{K=(k_c)}  a'_K   \int_{\overline{\mathcal{M}}_{0,S}(\CC) } \Big(\prod_{c\in \chi_{S}} \log^{k_c}(|u_c|^2)\Big)  \,\nu_S\wedge \overline{\omega'_K} \ , 
 \nonumber
 \end{eqnarray}
 where $a_K, a'_K$ lie in the $\QQ$-subalgebra of $\CC$ generated by $\alpha(M)$. Each integral on the right-hand side converges. 
 \end{corollary}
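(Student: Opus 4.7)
The plan is to combine the preceding proposition with the explicit form of the realisation maps $\rho_\alpha^{\mathrm{open}}$ and $\rho_\alpha^{\mathrm{closed}}$, and then to justify integrating the resulting series term by term. Since $\Omega^{\mathrm{ren}}$ satisfies $R_c\,\Omega^{\mathrm{ren}}=0$ for every chord $c$ by Proposition \ref{propOmegaRennopoles}, the hypothesis of the proposition is met, so $\Omega^{\mathrm{ren}}$ admits a canonical formal expansion in which every monomial is convergent in the sense of \eqref{eq:convergent monomial}.

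I would then translate this formal expansion through the realisations. On the open domain $X^\delta$, where $u_c\in(0,1)$, the series $u_c^{\alpha(m)}=\sum_{n\geq 0}\frac{\alpha(m)^n}{n!}\log^n(u_c)$ converges absolutely, and $\rho_\alpha^{\mathrm{open}}$ precisely implements the substitution $\ell_c\mapsto\log(u_c)$ in the formal logarithmic expansion \eqref{formallogexpansion}, with coefficients that are polynomial in the numbers $\alpha(m)$. Grouping the resulting monomials by multi-index $K=(k_c)$ yields a series whose coefficients $a_K$ lie in $\QQ[\alpha(M)]$ and whose generic term is $a_K\int_{X^\delta}\prod_c\log^{k_c}(u_c)\,\omega_K$. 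The analogous reasoning applied to $\rho_\alpha^{\mathrm{closed}}$, which substitutes $\ell_c\mapsto\log|u_c|^2$ and pairs $\omega$ with $(2\pi i)^{-n}\nu_S\wedge\overline{\,\cdot\,}$, produces the second expansion. Convergence of each individual integral on the right-hand side is guaranteed by the lemma stated just before the proposition, since by construction only convergent monomials occur.

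The main obstacle is the legitimacy of exchanging the integration with the infinite summation over $K$. I would handle this by a dominated-convergence argument: the partial sums of the logarithmic expansion converge pointwise on the interior of the integration domain to $\rho_\alpha^{\mathrm{open}}(\Omega^{\mathrm{ren}})$, respectively $\rho_\alpha^{\mathrm{closed}}(\Omega^{\mathrm{ren}})$, while Proposition \ref{prop:integrability} supplies an integrable majorant for the full integrand. The boundary behaviour of a finite logarithmic partial sum is dominated by that of the full integrand thanks to the elementary estimate $|\log z|^k\,|z|^s\to 0$ as $z\to 0$ for $\mathrm{Re}\,s>0$, already invoked in the proof of the convergence lemma. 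Regrouping the resulting absolutely convergent sum by multi-index $K$ then yields the two stated identities \eqref{logexpansionopen}.
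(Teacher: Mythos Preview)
Your approach is essentially the one intended by the paper, which treats this corollary as immediate from the preceding proposition and gives no separate proof: apply Proposition~\ref{propOmegaRennopoles} to ensure $R_c\Omega^{\ren}=0$, invoke the proposition to get a canonical expansion in convergent monomials, and push this through the realisations $\rho_\alpha^{\mathrm{open}}$ and $\rho_\alpha^{\mathrm{closed}}$. The convergence of each individual integral is indeed the content of the preceding lemma.

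Where you go beyond the paper is in attempting an analytic justification of the interchange of sum and integral via dominated convergence. This part has a gap: knowing that the \emph{limit} $\rho_\alpha(\Omega^{\ren})$ is integrable (Proposition~\ref{prop:integrability}) does not by itself give a majorant for the \emph{partial sums} of its Taylor expansion; your claim that ``the boundary behaviour of a finite logarithmic partial sum is dominated by that of the full integrand'' is not established and is not obviously true pointwise. The paper sidesteps this entirely: the sentence immediately following the corollary (``Equivalently, if we treat the elements of $M$ as formal variables\ldots'') indicates that the identity is really meant as an equality of formal power series in $\CC[[M]]$, coefficient by coefficient, in which case no interchange of limits is needed. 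If you want the analytic version you should either argue via absolute summability of the double series (bounding $\sum_K |a_K|\int |\log^{k_c}u_c|\,|\omega_K|$ directly), or simply adopt the formal interpretation as the paper does.
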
 
 
 Equivalently, if we treat the elements of $M$ as formal variables then the open and closed string amplitudes admit expansions in $\CC[[M]]$ whose coefficients are canonically expressible as  $\QQ$-linear combinations of integrals of convergent monomials as above.

\begin{example} We apply the above recipe  to Example  \ref{exampleOmegaren4}. 
By abuse of notation we identify $s$, $t$ with their images under a realisation $\alpha:\mathbb{N}s\oplus\mathbb{N}t\rightarrow \CC$, and write $(\Omega^{\mathrm{ren}})^{\mathrm{open}}= \rho_\alpha^{\mathrm{open}}(\Omega^{\mathrm{ren}})$, $(\Omega^{\mathrm{ren}})^{\mathrm{closed}}= \rho_\alpha^{\mathrm{closed}}(\Omega^{\mathrm{ren}})$. In the open case we get 
\begin{multline}\label{eq:expansion beta function} \int_{X^{\delta}} (\Omega^{\ren})^{\mathrm{open}} = \sum_{m\geq0, n\geq 1} \frac{s^m}{m!}\frac{t^n}{n!} \int_0^1  \log^m(x) \log^n(1-x) \frac{dx}{x} \\
+ \sum_{m\geq1 , n\geq 0} \frac{s^m}{m!}\frac{t^n}{n!} \int_0^1  \log^m(x) \log^n(1-x) \frac{dx}{1-x} \ \cdot
\end{multline} 
Note that $\log(x)$ vanishes at $x=1$, and   $\log(1-x)$ at $x=0$, so the integrals on the right-hand side are convergent. In the closed case we get
\begin{multline} \int_{\PP^1(\CC)} (\Omega^{\ren})^{\mathrm{closed}} = \frac{1}{\pi} \sum_{m\geq0, n\geq 1} \frac{s^m}{m!}\frac{t^n}{n!} \int_{\PP^1(\CC)}  \log^m|z|^2 \log^n|1-z|^2 \frac{d^2z}{|z|^2(1-z)} \\
+ \frac{1}{\pi}\sum_{m\geq1 , n\geq 0} \frac{s^m}{m!}\frac{t^n}{n!} \int_{\PP^1(\CC)}  \log^m|z|^2 \log^n|1-z|^2 \frac{d^2z}{z|1-z|^2} \ \cdot
\end{multline} 
Again, the integrals on the right-hand side are convergent.
\end{example} 

\subsection{Removing a   logarithm}
We can now replace each logarithm with an integral one by one. It suffices to do this once and for all for $|S|=5$. 
\begin{example} \label{ExampleLogarithm}  (The logarithm) 
  Consider the forgetful map $x: \mathcal{M}_{0,5} \rightarrow \mathcal{M}_{0,4}$ of example \ref{Examples4,5}. It is a fibration whose fibers are isomorphic to 
the projective line minus 4 points. More precisely, 
the forgetful maps $u_{24}=x$ and $u_{25}=y$ embed $\mathcal{M}_{0,5}$ as the complement of $1-xy=0$ in the product $\mathcal{M}_{0,4}\times\mathcal{M}_{0,4}$. The projection onto the first factor gives a commutative diagram
$$
\begin{array}{ccc}
  \mathcal{M}_{0,5} &   \subset   &   \mathcal{M}_{0,4} \times \mathcal{M}_{0,4} \\
  \downarrow_{x}  &    & \downarrow    \\
  \mathcal{M}_{0,4} &   =  &    \mathcal{M}_{0,4}
\end{array}
$$
The projection restricts to the real domains $X_5\rightarrow X_4$ whose fibers are identified with $(0,1)$ with respect to the coordinate $y$. Then 
$$ \int_{0< y <1 } \frac{du_{13}}{u_{13}} = \int_{0< y <1 }  d\log(1-xy) =  \log(1-x) \  . $$
The function $1-x$ is the  dihedral coordinate $v_{13}$ on $\mathcal{M}_{0,4}$ and so 
\begin{equation} \label{logv13} \log v_{13} =  \int_{0< y <1 } \frac{du_{13}}{u_{13}} \ . 
\end{equation}
In this manner we shall  inductively replace  all logarithms of dihedral coordinates with algebraic integrals. 
Note that it is \emph{not} possible to express the logarithmic dihedral coordinate  $\log v_{24} = \log x$ as an integral  of another logarithmic dihedral coordinate over the fiber in $y$ with respect to the same dihedral structure. It is precisely this subtlety that complicates the following arguments. 
\end{example}

From now on, we fix  a dihedral structure $(S,\delta)$, and consider  a differential  form of degree $|S|-3$ of the following type:
\begin{equation}\label{omegaPinlogs} \Omega= \Big(\prod_{c\in \chi_{S,\delta}} \log^{n_c}(u_c)\Big)\, \omega_0
\end{equation}
where   $\omega_0 \in \Omega^{|S|-3}(\mathcal{M}_{0,S})$. Suppose that it is convergent, i.e., for every chord $c$ such that $\mathrm{Res}\,_{D_c} \omega_0\neq 0$, 
 there exists a $c' \in I$ which crosses $c$ with $n_{c'}\geq 1$.  Define 
 $$ \mathrm{weight} (\Omega) =   |S|-3 + \sum_{c\in \chi_{S,\delta}} n_c\ .$$ 
 We can remove one logarithm at a time as follows.

 \begin{lemma}   \label{lemremovealllog}  Pick any chord $c$ such that $n_c\geq 1$, and write 
 $$\Omega = \log(u_c) \, \Omega'\ .$$
 Then there exists an enlargement  $(S_c, \delta_c)$ of $(S,\delta)$, i.e.,  $S\subset S_c$ and $|S_c \backslash S|=1$, where the restriction of $\delta_c$ to $S$ induces $\delta$, and a differential  form $\Omega'' $  of  degree $|S_c|-3$  which is a sum of  convergent forms (\ref{omegaPinlogs})   such that 
\begin{equation} \label{removelog} \int_{X^{\delta}}  \log (u_c)  \,\Omega'  = \int_{X^{\delta_c}} \Omega'' \ . 
\end{equation}
Furthermore,  each monomial in $\Omega''$  has weight equal to that of $\Omega$.
\end{lemma}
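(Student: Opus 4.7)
The plan is to realise $\log(u_c)$ as a fiber integral on a one-point enlargement of the moduli space $\mathcal{M}_{0,S}$, following the recipe of Example \ref{ExampleLogarithm}. Choose an endpoint $v$ of the chord $c$ on the polygon $(S,\delta)$, and insert a new edge $\star$ adjacent to $v$ on a chosen side of $c$; this produces a polygon $(S_c,\delta_c)$ with $|S_c|=|S|+1$ and a forgetful morphism
$$\pi \;:\; \mathcal{M}^{\delta_c}_{0,S_c} \longrightarrow \mathcal{M}^{\delta}_{0,S}\ ,$$
whose restriction to $X^{\delta_c}$ is a trivial fibration over $X^\delta$ with fiber an open interval. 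The insertion is designed so that there is a chord $\tilde c \in \chi_{S_c,\delta_c}$ extending $c$ and a crossing chord $c_0 \in \chi_{S_c,\delta_c}$ whose coordinate $y := u_{c_0}$ parametrises the fiber of $\pi$, with $y\in(0,1)$ on $X^{\delta_c}$.

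The essential identity, which generalizes \eqref{logv13}, is that on each fiber the coordinate $u_{\tilde c}$ takes the value $1$ at $y=0$ and the value $\pi^*(u_c)$ at $y=1$; this follows from applying \eqref{ucprimecrossingc} to the four-point contraction of $(S_c,\delta_c)$ determined by $\tilde c$ and $c_0$, exactly as in the pentagon example. Hence
$$\pi^*\!\log(u_c) \;=\; \int_0^1 d\!\log(u_{\tilde c})\ .$$
Setting $\Omega'' := d\!\log(u_{\tilde c}) \wedge \pi^*(\Omega')$, a differential form of degree $|S_c|-3$ on $\mathcal{M}_{0,S_c}$, Fubini for the trivial fibration $X^{\delta_c}\to X^\delta$ yields \eqref{removelog}.

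To check the shape of $\Omega''$, use \eqref{fTstar}: each $\pi^*(u_{c'})$ is a product of $u_{c''}$ over preimage chords $c''\in\chi_{S_c,\delta_c}$, so $\pi^*(\log u_{c'}) = \sum_{c''}\log(u_{c''})$. Expanding each $\pi^*(\log^{n_{c'}}u_{c'})$ via the multinomial theorem writes $\pi^*(\Omega')$ as a $\QQ$-linear combination of monomials of the shape \eqref{omegaPinlogs} on $\mathcal{M}_{0,S_c}$, each of total log-degree $\sum_c n_c - 1$. Wedging with the extra factor $d\!\log(u_{\tilde c})$ raises the form degree from $|S|-3$ to $|S_c|-3$ without adding any new $\log$ factors, so each monomial in $\Omega''$ has weight
$$(|S_c|-3) + \Bigl(\textstyle\sum_c n_c - 1\Bigr) \;=\; (|S|-3) + \sum_c n_c \;=\; \mathrm{weight}(\Omega)\ ,$$
as required.

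The main obstacle is verifying that each monomial of $\Omega''$ is convergent, i.e.\ that for every chord $c_1 \in \chi_{S_c,\delta_c}$ along which the regular form part has a nonzero residue, some crossing chord carries a log factor with exponent $\geq 1$. For chords $c_1$ that lie in the preimage of chords in $\chi_{S,\delta}$, this is inherited from the convergence hypothesis on $\Omega$ via functoriality of residues (Lemma \ref{lemResfunctorial}) and the fact that the preimage map preserves the relevant crossings. The handful of remaining chords — namely $\tilde c$, $c_0$, and the chords of $(S_c,\delta_c)$ created by the insertion of $\star$ — requires a local case analysis combining \eqref{ucprimecrossingc} with the pullback formula \eqref{fTstar}; this localised combinatorial bookkeeping near $\star$ is where the real geometric content of the lemma resides.
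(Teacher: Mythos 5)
Your construction is essentially the paper's: you insert one extra marked point, use the forgetful map $\pi = f_S$ together with the pentagon identity $\log v_{13} = \int_{0<y<1} d\log u_{13}$ to trade $\log(u_c)$ for a fiber integral of $d\log(u_{\tilde c})$, set $\Omega'' = d\log(u_{\tilde c})\wedge \pi^*(\Omega')$, and conclude by Fubini. The multinomial expansion for the shape and the weight count $(|S_c|-3)+(\sum n_c - 1) = \mathrm{weight}(\Omega)$ are both correct and match the paper.

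The genuine gap is the convergence verification, which you flag but do not carry out. Saying that ``the handful of remaining chords \dots requires a local case analysis \dots; this localised combinatorial bookkeeping near $\star$ is where the real geometric content of the lemma resides'' is an accurate diagnosis but is not a proof of that step. It is also where a careless argument would break: since $\Omega' = \Omega / \log(u_c)$ has log-exponent $n_c - 1$ at $c$ (possibly zero), one might worry that poles of $\omega_0$ along a chord $a$ which was only covered by $\log(u_c)$ in $\Omega$ become uncovered after pullback. The reason this does not happen is that the new $1$-form $d\log(u_{\tilde c})$ vanishes along $D_{a'}$ for every preimage $a'$ of such an $a$ (by \eqref{ucprimecrossingc}, since $a$ crosses $c$ forces $a'$ to cross $\tilde c$), so the residue of $\Omega''$ there vanishes identically. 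The paper makes this precise via the two explicit computations
$$f_S^*(du_c)\wedge f_{T'}^*(\beta) = du_{c_1}\wedge du_{c_2}\ , \qquad f_S^*\!\left(\log(u_{c'})\tfrac{du_c}{u_c}\right)\wedge f_{T'}^*(\beta) = \Bigl(\sum_{c''}\log u_{c''}\Bigr)\tfrac{du_{c_1}}{u_{c_1}}\wedge\tfrac{du_{c_2}}{u_{c_2}}$$
with each $c''$ crossing both $c_1$ and $c_2$, combined with the observation that crossing pairs pull back to crossing pairs under $f_S^*$. Your ``inherited'' paragraph is also slightly too quick for the same reason: the residue of $\Omega''$ along $D_{a'}$ is not simply a pullback of $\mathrm{Res}_{D_a}\omega_0$ because of the extra factor $d\log(u_{\tilde c})$, and the above vanishing is exactly what makes the inheritance work. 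Filling in these computations would complete the proof.

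A very minor point: the fiber coordinate on $X^{\delta_c}$ need not be a single dihedral coordinate $u_{c_0}$ of $\mathcal{M}^{\delta_c}_{0,S_c}$; it is the pullback along $f_{T'}$ of the coordinate $y$ on $\mathcal{M}_{0,5}$, which by \eqref{fTstar} may be a product of several $u_{c''}$. This does not affect the argument, since \eqref{removelog} only uses the trivial fibration $X^{\delta_c}\cong X^\delta\times(0,1)$.
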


\begin{proof} The chord $c$ on $S$ is determined by four edges $T= \{t_1,\ldots, t_4\} \subset S$, where 
$t_1,t_2$ and $t_3,t_4$ are consecutive with respect to $\delta$.  This identifies $\mathcal{M}_{0,T} \cong \mathcal{M}_{0,4}$ and  $u_c$ with the dihedral coordinate $v_{13}$ in $\mathcal{M}_{0,4}$. 
Consider the set $S_c = S \cup \{t_5\}$, equipped with the dihedral structure $\delta_c$ obtained  by inserting a new edge $t_5$ next to $t_1$ and in between $t_1$ and $t_4$.
Let $T'$ be the set of five edges $T \cup \{t_5\}$ with the  dihedral structure inherited from $\delta_c$. Then $\mathcal{M}_{0,T'} \cong \mathcal{M}_{0,5}$ (see example \ref{ExampleLogarithm}).

Consider the diagram
  $$
\begin{array}{ccc}
\mathcal{M}^{\delta_c}_{0,S_c}    & \overset{f_{T'}}{\To}  & \mathcal{M}^{{\delta_c}|_{T'}}_{0,5}    \\
\downarrow_{f_S}  &   &  \downarrow_{x}  \\
 \mathcal{M}^{\delta}_{0,S}  &  \overset{u_c}{\To}  &   \mathcal{M}^{\delta}_{0,4}
\end{array}\ .
$$
It commutes  since forgetful maps are functorial.  Let $\beta \in \Omega^1(\mathcal{M}_{0,5}) $ denote the form $d\log u_{13}$ of  
 example \ref{ExampleLogarithm}  whose integral in the fiber yields $\log (v_{13})$ and set 
$$\Omega'' = f_S^*(\Omega') \wedge f_{T'}^*(\beta)\ .$$
 The product $f= f_S \times f_{T'}$ induces  a morphism
$$ f: \mathcal{M}_{0,S_c} \To  \mathcal{M}_{0,S} \times_{\mathcal{M}_{0,4}} \mathcal{M}_{0,5}$$
and an isomorphism
$f: X^{\delta_c}  \cong X^{\delta} \times (0,1)$. 
Since $\Omega''= f^*(\Omega' \wedge \beta)$, we find by   changing  variables  along the map $f$  that 
$$\int_{X^{\delta_c}} \Omega'' =    \int_{X^{\delta} \times (0,1)} \Omega' \wedge \beta =    \int_{X^{\delta}} \log(u_c) \,\Omega' \ .$$
The second integral takes place on the fiber product  $\mathcal{M}_{0,S} \times_{\mathcal{M}_{0,4}} \mathcal{M}_{0,5}$ and is computed using  (\ref{logv13}).
This proves equation (\ref{removelog}).

We now check that $\Omega''$ is of the required shape (\ref{omegaPinlogs}) with respect to  $\mathcal{M}_{0,S_c}^{\delta_c}$ and convergent. 
First of all,  observe that  for any forgetful morphism $f:S' \rightarrow S$ and any chords  $a, b$ in $S$ which cross, we have by  (\ref{fTstar})
$$f^* \left(\log(u_{a}) \,  \frac{du_b}{u_b} \right) =  \sum_{a',b'}  \log( u_{a'}  )\frac{du_{b'}}{u_{b'}}   $$
where $a', b'$  range over  chords in $S'$ in the preimage of $a$ and $b$ respectively. Every pair  $a',b'$  crosses. 
It remains  to check the convergence condition along the poles of the $1$-form $\beta.$ For this, 
denote the two chords in $S_c$ lying above the chord $c$ by $c_1, c_2$. The chord $c_1$ corresponds to  edges $\{t_5,t_1; t_3,t_4\}$ and $c_2$ to $\{t_1,t_2; t_3,t_4\}$. By (\ref{fTstar}), we have
$f_S^* (u_c) =   u_{c_1} u_{c_2}$, and 
 by example \ref{ExampleLogarithm}  $f_{T'}^*\beta = \frac{d u_{c_2}}{u_{c_2}}$, ($\beta$  corresponds to $ d\log \, u_{13}$ in example \ref{Examples4,5}). We therefore check that 
 $$ f_S^*(d u_c) \wedge f_{T'}^*(\beta) =   d (u_{c_1} u_{c_2}) \wedge \frac{d u_{c_2}}{u_{c_2}} =   d u_{c_1} \wedge d u_{c_2} $$
 $$ f_S^*  \left(\log\,(  u_{c'}) \frac{d  u_c}{u_c}\right) \wedge f_{T'}^*(\beta) =  \left( \sum_{c''}  \log u_{c''} \right) \frac{du_{c_1} }{u_{c_1}} \wedge \frac{d u_{c_2}}{u_{c_2}} $$
where $c'$ crosses $c$. In the sum,  $c''$ ranges over the preimages of $c'$ under $f_S$, and necessarily crosses both $c_1$ and $c_2$. 
It follows that $\Omega''$ is a sum of convergent monomials in logarithms. The statement about the weights is clear. 
\end{proof}

\begin{remark}
Note that $S_c$ depends on the  choice of where to insert the new edge we called $t_5$.
 Similarly, the computation in example \ref{ExampleLogarithm} also involves a choice: we could instead have used 
$$- \log(1-x) = \int_{0<y < 1}  \frac{d u_{35}}{u_{35}}  = \int_{0<y<1} d \log \Big( \frac{1-x}{1-xy}\Big) \ .$$  
Thus there are two different ways in which we can remove each logarithm. One can presumably  make these choices in a canonical way.
\end{remark} 

\begin{corollary}\label{corbeforecormotivic}
Let $\Omega $ be of the form  \eqref{omegaPinlogs} and convergent.  Then 
the integral $I$ of $\Omega$ over $X^{\delta}$ is an absolutely convergent integral
\begin{equation} \label{removedalllogs} \int_{X^{\delta'}}   \omega 
\end{equation}
where  $S' \supset S$ is a set  with dihedral  structure $\delta'$ compatible with $\delta$,  and $\omega \in \Omega_{S'}$ a logarithmic algebraic  differential form with no poles along the boundary of $\mathcal{M}_{0,S'}^{\delta'}$. Furthermore, $|S'|= \mathrm{weight} (\Omega) +3$. 
\end{corollary}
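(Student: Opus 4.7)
The plan is to induct on the total log degree $N(\Omega):=\sum_{c\in\chi_{S,\delta}}n_c$, using Lemma \ref{lemremovealllog} to strip off one logarithm at a time while enlarging $S$ by one element. The weight identity $\mathrm{weight}(\Omega)=|S|-3+N(\Omega)$ means that the invariant preserved by each reduction is exactly $|S|+N(\Omega)-\mathrm{weight}(\Omega)$, a tautology; so after $N(\Omega)$ steps we will have arrived at a polygon of size $|S|+N(\Omega)=\mathrm{weight}(\Omega)+3$ with no logarithms left.

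For the base case, suppose $N(\Omega)=0$, so $\Omega=\omega_0$. The convergence hypothesis demanded that for every chord $c$ with $\mathrm{Res}_{D_c}\omega_0\neq 0$ there exist a crossing chord $c'$ with $n_{c'}\geq 1$; no such $c'$ can exist, so $\mathrm{Res}_{D_c}\omega_0=0$ for every $c\in\chi_{S,\delta}$. Hence $\omega_0$ is a global logarithmic form with vanishing residues along every boundary divisor of $\mathcal{M}_{0,S}^\delta$, and we take $(S',\delta',\omega)=(S,\delta,\omega_0)$. Absolute convergence over the compact $\overline{X^\delta}$ is then automatic.

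For the inductive step, choose some chord $c$ with $n_c\geq 1$ and apply Lemma \ref{lemremovealllog} with that $c$ (and a fixed choice of where to insert the new edge). This produces $(S_c,\delta_c)$ with $|S_c|=|S|+1$ and an equality $\int_{X^\delta}\Omega=\int_{X^{\delta_c}}\Omega''$, where $\Omega''=\sum_i\Omega''_i$ is a finite sum of convergent monomials of the shape \eqref{omegaPinlogs}, each of the same weight as $\Omega$ and therefore of log degree $N(\Omega)-1$. The induction hypothesis applies separately to each $\Omega''_i$, producing $(S'_i,\delta'_i)$ with $|S'_i|=\mathrm{weight}(\Omega)+3$ and a boundary-residue-free logarithmic form $\omega_i\in\Omega_{S'_i}$ with $\int_{X^{\delta_c}}\Omega''_i=\int_{X^{\delta'_i}}\omega_i$.

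The only real obstacle is to arrange that all these finitely many enlargements coincide in a single $(S',\delta')$ so that we can assemble a single $\omega=\sum_i\omega_i\in\Omega_{S'}$. This is a matter of bookkeeping rather than mathematics: the sum $\Omega''$ arises because $f_S^*(\log u_{c'})=\sum_{c''}\log u_{c''}$ distributes over powers, but the ambient moduli space $\mathcal{M}_{0,S_c}^{\delta_c}$ is common to every summand. Iterating the procedure, and at each subsequent stage choosing the chord to remove and the edge insertion uniformly for every summand (which is always possible since all summands have the same total log degree and live on the same polygon), the sequence of enlargements is the same for every term, so the terminal $(S',\delta')$ is common to all of them. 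Summing the resulting $\omega_i$'s yields a single logarithmic form on $\mathcal{M}_{0,S'}^{\delta'}$ with vanishing residues along every component of $\partial\mathcal{M}_{0,S'}^{\delta'}$ (because each $\omega_i$ does), completing the induction.
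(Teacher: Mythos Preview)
Your overall inductive strategy is correct and is exactly what the paper does (its proof is the one-sentence ``apply the previous lemma inductively, then add the integrands''). The base case and the weight bookkeeping are fine.

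The gap is in your uniformity claim. After one application of Lemma~\ref{lemremovealllog}, the summands $\Omega''_i$ need not share a common chord $d$ with $n_d\geq 1$, so you cannot in general ``choose the chord to remove uniformly for every summand''. Indeed, the log part of $\Omega''$ is $f_S^*\big(\prod_{c'}\log^{m_{c'}}(u_{c'})\big)=\prod_{c'}\big(\sum_j\log u_{c'_j}\big)^{m_{c'}}$, and upon expansion different summands pick up logarithms of \emph{different} preimages $c'_j$. Concretely, for $|S|=4$ with the two chords $a,b$ and $\Omega=\log(u_a)\log(u_b)\,\omega_0$, removing $\log(u_a)$ yields $\Omega''=(\log u_{b_1}+\log u_{b_2})\cdot(\ldots)$ on the pentagon, where $b_1,b_2$ are the two preimages of $b$; the two summands have no common log factor. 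Since the enlargement $(S_c,\delta_c)$ in Lemma~\ref{lemremovealllog} depends on the chosen chord, the next step cannot be carried out on a common polygon. Having ``the same total log degree and living on the same polygon'' is not enough.

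The fix is simpler than enforcing uniformity. Run the induction summand by summand with arbitrary choices; you terminate with a finite $\ZZ$-linear combination $\sum_i\int_{X^{\delta'_i}}\omega_i$ where every $(S'_i,\delta'_i)$ is a polygon of the \emph{same} size $N=\mathrm{weight}(\Omega)+3$, each containing $(S,\delta)$. Fix $(S',\delta'):=(S'_1,\delta'_1)$ and choose any dihedral bijection $\phi_i:(S',\delta')\to(S'_i,\delta'_i)$. The induced isomorphism of dihedral moduli spaces carries $X^{\delta'}$ onto $X^{\delta'_i}$ and preserves the property of being a global logarithmic form with no poles along the boundary; hence $\int_{X^{\delta'_i}}\omega_i=\int_{X^{\delta'}}\phi_i^*\omega_i$, and $\omega:=\sum_i\phi_i^*\omega_i\in\Omega_{S'}$ has the required properties. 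This is precisely what the paper's terse ``add the integrands together'' is invoking.
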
 
\begin{proof} Apply the previous lemma inductively to remove the logarithms $\log(u_c)$ one at a time. At each stage the total degree of the logarithms  decreases by one. One obtains a $\ZZ$-linear combination of convergent integrals of the form \eqref{removedalllogs}. Add the integrands together to obtain a single integral of the required form.
\end{proof} 

\subsection{Motivic versions of open string amplitude coefficients}
    Let $\MT(\ZZ)$ denote the tannakian category of mixed Tate motives over $\ZZ$ with rational coefficients \cite{delignegoncharov}. An object $H\in\MT(\ZZ)$ has two underlying $\QQ$-vector spaces $H_\dR$ (the de Rham realisation) and $H_\B$ (the Betti realisation) together with a comparison isomorphism $\mathrm{comp}:H_\dR\otimes_\QQ\CC\stackrel{\sim}{\to}H_\B\otimes_\QQ\CC$. For every integer $N\geq 3$ we have an object $H=H^{N-3}(\mathcal{M}^\delta_{0,N},\partial\mathcal{M}^\delta_{0,N})$ in $\MT(\ZZ)$ whose de Rham and Betti realisations are the usual relative de Rham and Betti cohomology groups of the pair $(\mathcal{M}^\delta_{0,N},\partial\mathcal{M}^\delta_{0,N})$ \cite{goncharovmanin, BrENS}.
    
    Let us recall \cite{brownICM, brownnotesmot} the algebra $\Pe^\mm_{\MT(\ZZ)}$ of motivic periods of the category $\MT(\ZZ)$. Its elements can be represented as equivalence classes of triples $[H,[\sigma],[\omega]]^\mm$ with $H\in \MT(\ZZ)$, $[\sigma]\in H_\B^\vee$ and $[\omega]\in H_\dR$. It is equipped with a period map 
    $$\mathrm{per}:\Pe^\mm_{\MT(\ZZ)}\to \CC$$ 
    defined by $\mathrm{per}\,[H,[\sigma],[\omega]]^\mm = \langle[\sigma],\mathrm{comp}\,[\omega]\rangle$. Let us also recall the subalgebra $\Pe^{\mm,+}_{\MT(\ZZ)}$ of effective motivic periods of $\MT(\ZZ)$. 

\begin{corollary} \label{cormotivic} 
Let $\Omega $ be of the form  \eqref{omegaPinlogs} and convergent. Then the integral 
$$I=\int_{X^{\delta}} \Omega $$
is a period of a universal moduli space motive
$H^{N-3}(\mathcal{M}_{0,S'}^{\delta'},  \partial \mathcal{M}_{0,S'}^{\delta'})\ ,$
where $S' \supset S$ is a set with dihedral structure $\delta'$ compatible with $\delta$, and $|S'|=N=3+\mathrm{weight}(\Omega)$.
More precisely, we can write $I = \mathrm{per}\, I^{\mm}$ where
$$I^{\mm} = [ H^{N-3}(\mathcal{M}_{0,S'}^{\delta'},  \partial \mathcal{M}_{0,S'}^{\delta'}), X^{\delta'}, [\omega]]^{\mm} \quad\in\quad\Pe^{\mm,+}_{\mathcal{MT}(\ZZ)}$$
is an effective motivic period  of weight $N-3$ and $\omega \in \Gamma(\overline{\mathcal{M}}_{0,N},\Omega^{N-3}_{\overline{\mathcal{M}}_{0,S'}}(\log\partial\overline{\mathcal{M}}_{0,S'}))$ is a logarithmic differential form.
\end{corollary}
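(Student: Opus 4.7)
The plan is to combine Corollary \ref{corbeforecormotivic} with the Goncharov--Manin theorem \cite{goncharovmanin} that the relative cohomology of the pair $(\mathcal{M}_{0,N}^\delta,\partial\mathcal{M}_{0,N}^\delta)$ lies in $\mathcal{MT}(\ZZ)$. First I would apply Corollary \ref{corbeforecormotivic} to rewrite
$$I=\int_{X^\delta}\Omega \;=\;\int_{X^{\delta'}}\omega\ ,$$
where $(S',\delta')\supset(S,\delta)$ with $|S'|=N=3+\mathrm{weight}(\Omega)$, and where $\omega\in\Omega_{S'}^{N-3}$ is a global logarithmic form on $\overline{\mathcal{M}}_{0,S'}$ with no poles along the boundary components $D_c\subset \partial\mathcal{M}_{0,S'}^{\delta'}$. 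The integral on the right converges absolutely.

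Next, I would interpret the right-hand side as a motivic period in the sense of \cite{brownICM,brownnotesmot}. The closure $\overline{X^{\delta'}}\subset \mathcal{M}_{0,S'}^{\delta'}(\RR)$ is a compact manifold with corners, its (topological) boundary contained in $\partial\mathcal{M}_{0,S'}^{\delta'}(\RR)$, and so it defines a class
$$[\overline{X^{\delta'}}]\;\in\;H^{\B}_{N-3}\bigl(\mathcal{M}_{0,S'}^{\delta'},\partial\mathcal{M}_{0,S'}^{\delta'}\bigr)\ .$$
On the other hand, because $\omega$ is a top-degree global logarithmic form on $\overline{\mathcal{M}}_{0,S'}$ whose polar divisor is disjoint from $\partial\mathcal{M}_{0,S'}^{\delta'}$, it represents a class
$$[\omega]\;\in\;H_{\dR}^{N-3}\bigl(\mathcal{M}_{0,S'}^{\delta'},\partial\mathcal{M}_{0,S'}^{\delta'}\bigr)$$
defined over $\QQ$: indeed, any restriction of $\omega$ to a boundary stratum vanishes for degree reasons (these strata have dimension $<N-3$), and $\omega$ is regular in the neighbourhood of $\partial\mathcal{M}_{0,S'}^{\delta'}$. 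By Goncharov--Manin the motive $H=H^{N-3}(\mathcal{M}_{0,S'}^{\delta'},\partial\mathcal{M}_{0,S'}^{\delta'})$ lies in $\mathcal{MT}(\ZZ)$, so the triple
$$I^{\mm}\;=\;\bigl[H,\,[\overline{X^{\delta'}}],\,[\omega]\bigr]^{\mm}$$
defines an element of $\Pe^{\mm}_{\mathcal{MT}(\ZZ)}$, and the definition of the period map gives $\mathrm{per}(I^{\mm})=\int_{X^{\delta'}}\omega=I$.

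It remains to observe effectivity and the weight. Effectivity is automatic from the geometric construction: $[\overline{X^{\delta'}}]$ is a genuine relative Betti cycle and $[\omega]$ a genuine de Rham class, so $I^{\mm}\in\Pe^{\mm,+}_{\mathcal{MT}(\ZZ)}$. For the weight, the key point is that $\omega$ is a logarithmic top-degree form on a smooth variety of dimension $N-3$, so its class lies in the lowest weight-graded piece $\mathrm{gr}^W_{N-3}H_{\dR}$ of the relative cohomology; this graded piece is pure Tate of weight $N-3$, so the motivic period $I^{\mm}$ is homogeneous of weight $N-3$. The main subtlety to verify carefully is precisely this last weight statement, which relies on the description of $H^{\bullet}$ via logarithmic forms (as in \eqref{formality}) together with the Hodge-theoretic identification of the logarithmic subcomplex with the lowest weight piece; once this is in hand the conclusion follows immediately.
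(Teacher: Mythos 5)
Your overall structure matches the paper's: apply Corollary~\ref{corbeforecormotivic} to rewrite $I=\int_{X^{\delta'}}\omega$, observe that $H=H^{N-3}(\mathcal{M}_{0,S'}^{\delta'},\partial\mathcal{M}_{0,S'}^{\delta'})$ is a mixed Tate motive over $\ZZ$, form the triple $I^{\mm}=[H,[\overline{X^{\delta'}}],[\omega]]^{\mm}$, and check effectivity and weight. This is exactly the paper's route.

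The one genuine error is in the weight argument, which you yourself flag as the subtle point. You assert that the class of $\omega$ lies in ``the lowest weight-graded piece $\mathrm{gr}^W_{N-3}H_\dR$'' — this is backwards, and moreover $\mathrm{gr}^W_{N-3}$ is not even a valid graded piece since mixed Tate weights are even. A global logarithmic top-degree form on a variety of dimension $N-3$ gives a class lying in $F^{N-3}H$, the \emph{top} step of the Hodge filtration, which via the canonical Deligne--Goncharov splitting $H=W_{2(N-4)}H\oplus F^{N-3}H$ is the \emph{highest} weight-graded piece, of motivic weight $2(N-3)$; this yields a motivic period of weight $N-3$ in the conventional normalisation. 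The paper establishes the inclusion $[\omega]\in F^{N-3}H$ by invoking Corollary~4.13 of \cite{BD1} (after extension of scalars to $\CC$), which you should cite rather than treat as self-evident, since it is precisely the point where the no-poles-along-$\partial\mathcal{M}_{0,S'}^{\delta'}$ condition enters.
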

\begin{proof} Let $(S',\delta')$ and $\omega$ be as in Corollary \ref{corbeforecormotivic}, set $H=H^{N-3}(\mathcal{M}_{0,S'}^{\delta'},\partial\mathcal{M}^{\delta'}_{0,S'})$ and define $I^\mm$ as in the statement. 
We have
$$I=\int_{X^{\delta'}}\omega = \mathrm{per}\,I^\mm$$
by definition of the comparison isomorphism for $H$. The statement about the weight follows from the fact that $\omega$ is logarithmic in the following way. As for every mixed Tate motive, the weight filtration on $H_\dR$ is canonically split by the Hodge filtration \cite[2.9]{delignegoncharov} and we have a weight grading on $H$;
this splitting implies  in particular that  $H=W_{2(N-4)}H \oplus F^{N-3}H$.
The statement about the weight says that the class of $\omega$ is a homogeneous element of weight $2(N-3)$ with respect to this grading, i.e., that $[\omega]\in F^{N-3}H$. This can be checked after extending the scalars to $\CC$ and thus follows from Corollary 4.13 in \cite{BD1}  (compare with \cite[Proposition 3.12]{dupontodd}).
\end{proof} 

\begin{remark}Note that the motivic lift $I^{\mm}$ of $I$ depends on some choices which go into Lemma \ref{lemremovealllog}.  One expects, from the period conjecture, that it is independent of these choices. One can possibly make the lift canonical by fixing choices in the application of Lemma \ref{lemremovealllog}.
\end{remark} 

We deduce a number of consequences:
\begin{theorem} \label{thm: opencoeffsareMZV}
The coefficients in the Laurent expansion of open string amplitudes with $N$ particles are multiple zeta values.  More precisely, 
$$I^{\open}(\omega, \underline{s})= \sum_{\underline{n}=(n_c)_{c \in \chi_S} }  \zeta_{\underline{n}}  \, \underline{s}^{\underline{n}} \qquad \hbox{ where } \qquad    \underline{s}^{\underline{n}}= \prod_{c\in \chi_{S}} s_{c}^{n_{c}} $$ 
and each $n_c\geq -1$. Here,   $\zeta_{\underline{n}}$  is a $\QQ$-linear combination of multiple zeta values  of weight    $N+|\underline{n}| -3$, where $|\underline{n}|= \sum_{c\in \chi_S} n_c. $
\end{theorem}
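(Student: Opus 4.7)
The plan is to combine the renormalised Laurent expansion of Theorem \ref{thmrenormOpen} with the motivic interpretation provided by Corollary \ref{cormotivic}, and then invoke the known fact that effective motivic periods of moduli spaces $\mathcal{M}_{0,N}$ are $\QQ$-linear combinations of motivic multiple zeta values.

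First, I would fix a multi-index $\underline{n}=(n_c)$ with $n_c\geq -1$ and extract the coefficient of $\underline{s}^{\underline{n}}$ from Theorem \ref{thmrenormOpen}. The poles arise only from the prefactors $1/s_J$, so $\zeta_{\underline{n}}$ is a finite $\QQ$-linear combination, indexed by non-crossing chord sets $J\supseteq \{c: n_c=-1\}$, of Taylor coefficients
\[
\left[\underline{s}^{\underline{n}+\mathbf{1}_J}\right]\int_{X_J}\Omega_J^{\ren}\ ,
\]
where $\mathbf{1}_J$ is the indicator vector. Each such Taylor coefficient exists because the integral is absolutely convergent on a neighbourhood of $\underline{s}=0$.

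Next, I would apply Corollary \ref{corexpansionasconv} to expand each factor of $\Omega_J^{\ren}$ (which, by Proposition \ref{propOmegaRennopoles}, has vanishing residue along every $D_c$) as a formal series whose Taylor coefficients are $\QQ$-linear combinations of integrals of convergent logarithmic monomials $\prod_c\log^{k_c}(u_c)\,\omega_0$ over the product domain $X_J=X^{\delta_0}\times\cdots\times X^{\delta_{|J|}}$. Since the integrand factorises, this integral is a product of integrals of the same shape on the individual factors $X^{\delta_i}$, each of which falls under the scope of Corollary \ref{cormotivic}. That corollary identifies each such factor with an effective motivic period in $\Pe^{\mm,+}_{\MT(\ZZ)}$, of weight equal to $|S_i|-3+\sum_c k_c^{(i)}$. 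Taking products gives an effective motivic period lying in $\Pe^{\mm,+}_{\MT(\ZZ)}$, of weight equal to the sum of weights over all factors.

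Finally, I would invoke Brown's theorem that every element of $\Pe^{\mm,+}_{\MT(\ZZ)}$ is a $\QQ$-linear combination of motivic multiple zeta values of the same weight, so that passing to periods shows $\zeta_{\underline{n}}$ is a $\QQ$-linear combination of multiple zeta values, all of the same weight. The weight bookkeeping is then immediate: cutting $S$ along the $k=|J|$ non-crossing chords of $J$ yields polygons $S_0,\ldots,S_k$ with $\sum_i(|S_i|-3)=N-3-k$, while Corollary \ref{corexpansionasconv} contributes logarithmic monomials whose total degree equals $\sum_{c\notin J}(n_c+\mathrm{ind}_J(c))=\sum_{c\notin J} n_c$. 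Since $|\underline{n}|=\sum_{c\notin J}n_c-k$, the total weight is $(N-3-k)+(|\underline{n}|+k)=N+|\underline{n}|-3$, as claimed.

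The main obstacle is the weight accounting: one must check that the product structure of Corollary \ref{corexpansionasconv} (integrals over $X_J$ factor as products over the $X^{\delta_i}$) is compatible with the weight grading of Corollary \ref{cormotivic}, and that the identification of effective mixed Tate motivic periods with motivic multiple zeta values preserves weights. Both are standard but worth stating carefully; once they are in place, the rest of the argument is a straightforward combination of the earlier results.
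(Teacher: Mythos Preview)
Your proposal is correct and follows essentially the same route as the paper: reduce to Corollary~\ref{cormotivic} via the renormalised expansion and the convergent-monomial decomposition, then use that periods of universal moduli space motives are multiple zeta values (the paper cites \cite{BrENS} directly, while you phrase it via effective periods of $\MT(\ZZ)$). One small slip in your bookkeeping: the indexing should be $J\subseteq\{c:n_c=-1\}$ rather than $\supseteq$ --- in fact only $J=\{c:n_c=-1\}$ contributes, since $1/s_J$ forces $n_c=-1$ for $c\in J$ while the Taylor expansion of $\int_{X_J}\Omega_J^{\ren}$ gives only non-negative powers elsewhere; your weight identity $|\underline{n}|=\sum_{c\notin J}n_c-k$ already tacitly uses this, so the conclusion is unaffected.
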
 

\begin{proof} Use the fact that the periods of universal moduli space motives are multiple zeta values \cite{BrENS}. One can obtain the statement about the weights either by modifying the argument of \emph{loc. cit} or as a corollary of the next theorem (using the fact that a real motivic period of weight $n$ of an effective  mixed Tate motive over $\ZZ$ is a $\QQ$-linear combination of motivic multiple zeta values of weight $n$.)
\end{proof}

Theorem \ref{thm: opencoeffsareMZV} is well-known in this field using results scattered throughout the literature,  but until now lacked a completely rigorous proof from start to finish.
\begin{theorem} \label{thm: motivicliftIopen}
The above expansion admits a (non-canonical) motivic lift
\begin{equation} \label{Amotivic}  I^{\mm}(\omega, \underline{s})= \sum_{\underline{n}=(n_c)_{c \in \chi_S} }  \zeta^{\mm}_{\underline{n}}  \, \underline{s}^{\underline{n}}  
\end{equation}
where    $\zeta^{\mm}_{\underline{n}}$   is a $\QQ$-linear combination of motivic multiple zeta values of weight $N+|\underline{n}| -3$, whose period is 
$\zeta_{\underline{n}}. $
\end{theorem}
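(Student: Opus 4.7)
The strategy is to combine the renormalisation of Theorem \ref{thmrenormOpen} with the motivic interpretation provided by Corollary \ref{cormotivic}. First, by Theorem \ref{thmrenormOpen},
$$I^{\open}(\omega,\underline{s}) \;=\; \sum_{J}\frac{1}{s_J}\int_{X_J}\Omega_J^{\ren},$$
where $J$ ranges over sets of non-crossing chords and each integral on the right converges in a neighbourhood of $\underline{s}=0$, hence admits a convergent Taylor expansion in the variables $s_c$. Using the product decomposition $X_J = X^{\delta_0}\times\cdots\times X^{\delta_k}$ and applying Corollary \ref{corexpansionasconv} to each factor, I would express each Taylor coefficient of $\int_{X_J}\Omega_J^{\ren}$ at a given multi-index $\underline{m}=(m_c)$ as a $\QQ$-linear combination of products of convergent integrals of the shape \eqref{omegaPinlogs}, namely $\int_{X^{\delta_i}}(\prod_c \log^{k_c}(u_c))\,\omega_i$ with $\sum k_c$ equal to the total log-degree of the monomial.

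Next, I would invoke Corollary \ref{cormotivic} to lift each such convergent integral to an effective motivic period in $\Pe^{\mm,+}_{\MT(\ZZ)}$, using the non-canonical recipe of Lemma \ref{lemremovealllog} to replace each $\log(u_c)$ by an integral of a regular logarithmic form on a larger moduli space $\mathcal{M}^{\delta'}_{0,S'}$. By Theorem \ref{thm: opencoeffsareMZV} (equivalently, the main result of \cite{BrENS}), these motivic periods of universal moduli space motives lie in the $\QQ$-subring generated by motivic multiple zeta values, homogeneously of the expected weight. Taking products of the motivic lifts of the factors and assembling term-by-term produces, for each $J$ and each Taylor index $\underline{m}$, a motivic lift $\widetilde{\zeta}^{\mm}_{J,\underline{m}}$ of the corresponding real coefficient, which one then multiplies by $s_J^{-1}\underline{s}^{\underline{m}}$ and sums over $J$ and $\underline{m}$ to obtain the formal Laurent series $I^{\mm}(\omega,\underline{s})$. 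By construction, $\mathrm{per}\,I^{\mm}(\omega,\underline{s}) = I^{\open}(\omega,\underline{s})$ coefficient-by-coefficient.

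Finally, I would verify homogeneity by a weight count. Since cutting the polygon $S$ along the $k=|J|$ chords of $J$ produces $k+1$ polygons with total edge count $|S|+2k$, one has $\dim X_J = N-k-3$. The Taylor coefficient at $\underline{m}$ of $\int_{X_J}\Omega_J^{\ren}$ is therefore a period of a moduli space motive of weight $(N-k-3)+|\underline{m}|$ by Corollary \ref{cormotivic}. The prefactor $s_J^{-1}$ shifts the multi-index from $\underline{m}$ to $\underline{n}=\underline{m}-\mathbf{1}_J$, so $|\underline{n}|=|\underline{m}|-k$, and the coefficient of $\underline{s}^{\underline{n}}$ in $I^{\mm}(\omega,\underline{s})$ has homogeneous weight $(N-k-3)+|\underline{n}|+k = N+|\underline{n}|-3$, as required.

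The main obstacle is not conceptual but bookkeeping: one must track the weight through the Taylor expansion, the inclusion–exclusion defining the renormalisation, and the non-canonical procedure of Lemma \ref{lemremovealllog}, and check that the various constructions assemble into a single well-defined formal series in $\Pe^{\mm,+}_{\MT(\ZZ)}((s_c))$. The non-canonicity of the lift (depending on the choices in Lemma \ref{lemremovealllog}) is unavoidable but does not affect the statement, since only existence of some motivic lift whose coefficients are motivic MZVs of the prescribed weight is claimed.
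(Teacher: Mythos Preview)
Your proposal is correct and follows essentially the same approach as the paper's own proof, which simply says: apply the Laurent expansion of Corollary \ref{corexpansionasconv} to each renormalised integrand in \eqref{renormintOmegaopen} and invoke Corollary \ref{cormotivic}. Your version is in fact more detailed, particularly in spelling out the weight count via $\dim X_J = N-k-3$ and the shift $|\underline{n}| = |\underline{m}|-k$; the paper leaves this implicit. One small remark: the passage from ``motivic period of a universal moduli space motive of weight $w$'' to ``$\QQ$-linear combination of motivic multiple zeta values of weight $w$'' is not Theorem \ref{thm: opencoeffsareMZV} (which concerns ordinary periods) but the general structural fact about effective motivic periods of $\MT(\ZZ)$ that the paper invokes in the proof of that theorem.
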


\begin{proof}  Apply the  Laurent expansions \eqref{logexpansionopen}  to each renormalised integrand \eqref{renormintOmegaopen} and  invoke  Corollary \ref{cormotivic}.  This expresses the terms in the Laurent expansion  as linear combinations of products of motivic periods of the required type. 
\end{proof}

\begin{remark} The existence of a motivic lift is a pre-requisite for the computations of Schlotterer and Stieberger \cite{schlottererstiebergermotivic}, in which the motivic periods are decomposed into an `$f$-alphabet' (rephrased in a different language, this paper and related literature studies the action of the motivic Galois group  on $I^{\mm}(\omega, \underline{s})$). In \cite{schlottererschnetz}, this is achieved by assuming the period conjecture. The computation of universal moduli space periods in terms of multiple zeta values can be carried out algorithmically \cite{BrENS}, \cite{Panzer2015}, \cite{Bogner2016}.  This type of analytic  argument (or \cite{terasoma}, \cite{BroedelSchlottererStiebergerTerasoma}) is used in the literature to deduce a  theorem of the form  \ref{thm: opencoeffsareMZV}, but it is not capable of proving the much stronger statement \ref{thm: motivicliftIopen}.
\end{remark}

\begin{example} (`Motivic' beta function).   
We now treat the case of the beta function (Example \ref{exampleOmegaren4}) by using the expansion \eqref{eq:expansion beta function} of the renormalised part. We can remove all logarithms at once and write, for $\omega\in\{\frac{dx}{x},\frac{dx}{1-x}\}$:
$$\int_0^1\frac{\log^m(x)}{m!}\frac{\log^n(1-x)}{n!}\,\omega = (-1)^{m+n}\int_{\Delta^{m+n+1}}\frac{du_1}{1-u_1}\cdots \frac{du_n}{1-u_n} \,\omega\, \frac{dv_1}{v_1}\cdots \frac{dv_m}{v_m} $$
where $\Delta^{m+n+1}=\{0<u_1<\cdots <u_n<x<v_1<\cdots <v_m<1\}$ is the standard simplex. We thus get the following expansion:
\begin{multline*}
\beta(s,t)  =\frac{1}{s}+\frac{1}{t} + \sum_{m\geq 0,n\geq 1} (-s)^m(-t)^n\,\zeta(\{1\}^{n-1},m+2) \\
+ \sum_{m\geq 1,n\geq 0}(-s)^m(-t)^n\, \zeta(\{1\}^{n},m+1)\ ,
\end{multline*}
which can be rewritten as
\begin{equation}\label{eq: beta expansion MZVs}
\beta(s,t)=\left(\frac{1}{s}+\frac{1}{t}\right)\left(1-\sum_{m\geq 1,n\geq 1}(-s)^m(-t)^n\,\zeta(\{1\}^{n-1},m+1)\right)\ .
\end{equation}
The above argument yields  a `motivic' beta function
\begin{equation} \label{motbeta} \beta^{\mm}(s,t)=  \left(\frac{1}{s}+\frac{1}{t}\right)\left(1-\sum_{m\geq 1,n\geq 1}(-s)^m(-t)^n\,\zeta^{\mm}(\{1\}^{n-1},m+1)\right)\ .
\end{equation}
whose period, applied termwise, gives back \eqref{eq: beta expansion MZVs}.

Note that Ohno and Zagier observed in \cite{ohnozagier} that \eqref{eq: beta expansion MZVs} agrees with the more classical expansion of the beta function (\ref{introbetastexpansion}). 
Likewise, one can verify using motivic-Galois theoretic techniques that \eqref{motbeta} indeed coincides with the definition \eqref{introbetamot}.

\end{example}

\subsection{Single valued projection and single-valued periods} We let $\Pe^{\mm,\dR}_{\MT(\ZZ)}$ denote the algebra of motivic de Rham periods of the category $\MT(\ZZ)$ \cite{brownnotesmot} (see also \cite[\S 2.3]{BD1}). It is equipped with a single-valued period map
$$\s: \Pe^{\mm,\dR}_{\MT(\ZZ)} \To \RR$$
defined in \cite{brownSVMZV, brownnotesmot} (see also \cite[\S 2]{BD1}). The de Rham projection 
$$\pi^{\mathfrak{m},\dR}: \Pe^{\mm, +}_{\MT(\ZZ)} \rightarrow \Pe^{\mathfrak{m},\dR}_{\MT(\ZZ)}$$ 
on effective mixed Tate motivic periods was defined in \cite{brownSVMZV} and \cite[4.3]{brownnotesmot} (see also \cite[Definition 4.3]{BD1}).

\begin{definition}
Given a choice of motivic lift (\ref{Amotivic}), define its de Rham projection to be its image after applying $\pi^{\mathfrak{m},\dR}$ term-by-term:
$$I^{\mathfrak{m},\dR}(\omega, \underline{s})= \sum_{\underline{n}=(n_c)_{c \in \chi_S} }          \zeta^{\mathfrak{m},\dR}_{\underline{n}}  \, \underline{s}^{\underline{n}}  \ ,  \quad \hbox{ where }  \qquad  \zeta^{\mathfrak{m},\dR}_{\underline{n}} = \pi^{\mathfrak{m},\dR} \,  \zeta^{\mm}_{\underline{n}} \ .$$
This makes sense since $\zeta^{\mm}_{\underline{n}}$ is effective. Likewise, define its single-valued version 
$$I^{\sv}(\omega, \underline{s})= \sum_{\underline{n}=(n_c)_{c \in \chi_S} }  \zeta^{\sv}_{\underline{n}}  \, \underline{s}^{\underline{n}}  \ , \quad \hbox{ where } 
  \zeta^{\sv} =  \s \,   \zeta^{\mathfrak{m},\dR}_{\underline{n}}$$
  It is a  Laurent series whose coefficients are $\QQ$-linear combinations of single-valued multiple zeta values. 
\end{definition} 
Since $\sv\circ \pi^{\mathfrak{m},\dR} = \s\circ\pi^{\mathfrak{m},\dR}$, we could equivalently  have applied the map $\sv$, which is specific to the mixed Tate situation (see \cite[\S2.6]{BD1}). We now  compute $I^{\sv}(\omega, \underline{s})$.

\begin{lemma}  \label{lemmasvlog} For any $x\in \CC \backslash \{1\}$, 
$$ \log |1-x|^2 = \frac{1}{2\pi i} \int_{\PP^1(\CC)}  \left(-\frac{d y}{y(1-y)}\right) \wedge  d\log(1-\overline{x}\overline{y}) $$
\end{lemma}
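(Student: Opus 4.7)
The plan is to reduce to a direct computation via Stokes' theorem, exactly in the style of Lemma~\ref{lemClosed1/s}. First, observe that $d\log(1-xy) = -\frac{x\,dy}{1-xy}$ is holomorphic in $y$, so its wedge with the holomorphic 1-form $-\frac{dy}{y(1-y)}$ vanishes identically as a $(2,0)$-form on a curve. Hence the integral is unchanged if we replace $d\log(1-\bar{x}\bar{y})$ by
\[
d\log(1-\bar{x}\bar{y}) + d\log(1-xy) \;=\; d\phi\,,\qquad \phi(y) := \log|1-\bar{x}\bar{y}|^{2},
\]
so it suffices to compute $\frac{1}{2\pi i}\int_{\PP^1(\CC)}\bigl(-\frac{dy}{y(1-y)}\bigr)\wedge d\phi$. (This step is really the heart of the matter; it says that the formula is an instance of the single-valued integration formalism of \cite{BD1}, since $-(2\pi i)^{-1}\frac{dy}{y(1-y)} = (2\pi i)^{-1}\nu_{0,4}$ is, by Proposition \ref{cornuSdual}, the image under $c_0^\vee$ of the class of $[0,1]$.)

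Next, since $\frac{dy}{y(1-y)}$ is closed on $\mathcal{M}_{0,4}(\CC)$ and $\phi$ is smooth on $\PP^1(\CC)\setminus\{1/x\}$, one checks that on $\PP^1(\CC)\setminus\{0,1,\infty,1/x\}$
\[
\Bigl(-\frac{dy}{y(1-y)}\Bigr)\wedge d\phi \;=\; d\!\left(\phi\cdot\frac{dy}{y(1-y)}\right).
\]
Apply Stokes on $U_\varepsilon := \PP^1(\CC)$ minus small $\varepsilon$-disks around $0$, $1$, $\infty$, $1/x$, with $\partial U_\varepsilon$ oriented as the boundary of $U_\varepsilon$. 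The claim then reduces to showing that, as $\varepsilon\to 0$, only the small circle around $y=1$ contributes, with limiting value $-2\pi i\log|1-x|^2$ for the CCW orientation, i.e.\ $+2\pi i\log|1-x|^2$ with the correct boundary orientation.

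Each of the four boundary contributions is computed by a short polar-coordinate argument. Near $y=1$, $\phi$ is continuous with $\phi(1)=\log|1-x|^2$ and $\frac{dy}{y(1-y)}\sim -\frac{d\theta}{i\theta}$ (write $y=1+\varepsilon e^{i\theta}$), giving the stated contribution. Near $y=0$, $\phi(0)=0$ and the leading term of $\phi$ is $-xy-\bar{x}\bar{y}+O(|y|^2)$: the first is killed by $\int e^{i\theta}d\theta$, the second by $\int e^{-i\theta}d\theta$. Near $y=\infty$, change variable $w=1/y$: the form $\frac{dy}{y(1-y)}$ becomes regular, while $\phi\sim 2\log|x|+2\log|y|$, and again only oscillating terms survive, contributing $0$ in the limit. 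The trickiest point is near $y=1/x$, where $\phi$ has a logarithmic singularity; in a local coordinate $\eta=y-1/x$ we have $\phi=2\log|x|+2\log|\eta|$ while $\frac{dy}{y(1-y)}$ is regular, so the contribution is a bounded multiple of $\int_{|\eta|=\varepsilon}\log|\eta|\,d\eta = 2\pi i\varepsilon\log\varepsilon \cdot 0$, which vanishes. Summing and dividing by $2\pi i$ gives the lemma.

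The main subtlety is purely bookkeeping: keeping track of orientations of $\partial U_\varepsilon$ (clockwise around finite punctures, counterclockwise around $\infty$) and of the sign in $d\phi\wedge\omega = -\omega\wedge d\phi$. Once those are fixed, every vanishing claim follows from $\int_0^{2\pi} e^{ik\theta}d\theta=0$ for $k\neq 0$.
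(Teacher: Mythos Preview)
Your argument is correct and self-contained: you reduce to $\int_{\PP^1(\CC)}\omega\wedge d\phi$ with $\omega=-\frac{dy}{y(1-y)}$ and $\phi=\log|1-xy|^2$, rewrite this as $d(\phi\,\omega)$ since $\omega$ is closed, apply Stokes on the complement of small disks around $0,1,\infty,1/x$, and check that only the residue at $y=1$ survives. The paper, by contrast, simply invokes the computations of \cite[\S6.3]{BD1} after a change of coordinates; what you have written is essentially a special case of those computations made explicit, in the same style as Lemma~\ref{lemClosed1/s}. So there is no genuine methodological difference, but your version has the advantage of being readable without consulting the prequel.

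A few cosmetic points worth cleaning up: the expression ``$\frac{dy}{y(1-y)}\sim -\frac{d\theta}{i\theta}$'' near $y=1$ is garbled (you mean $-i\,d\theta$ in polar coordinates $y=1+\varepsilon e^{i\theta}$); the phrase ``$2\pi i\varepsilon\log\varepsilon\cdot 0$'' should just say $\int_{|\eta|=\varepsilon}\log|\eta|\,d\eta=\log\varepsilon\cdot\int_{|\eta|=\varepsilon}d\eta=0$; and at $y=\infty$ it is worth stating explicitly that $\phi$ has a logarithmic singularity there as well (since $|1-xy|\to\infty$), but that this is harmless because $\frac{dy}{y(1-y)}$ becomes the regular form $\frac{dw}{1-w}$ in $w=1/y$, so the boundary term is $O(\varepsilon\log\varepsilon)\cdot\int_0^{2\pi}e^{i\theta}d\theta=0$, exactly as in your treatment of $y=1/x$.
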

\begin{proof} This follows from the computations of \cite[\S6.3]{BD1} after a change of coordinates. 
\end{proof}

\begin{theorem}
 Consider an integral of the form 
 $$I = \int_{X^{\delta}}  \prod_{c\in \chi_{S}} (\log^{n_c} (u_c)) \, \omega_0$$
 where the integrand is convergent of the form (\ref{omegaPinlogs}). Let $I^{\mm}$ denote a choice of motivic lift (Corollary \ref{cormotivic}). Its  single-valued period 
 $I^{\sv} = \s \, \pi^{\mathfrak{m},\dR}  (I^{\mm})$ is 
 \begin{equation} \label{svImmformula} I^{\sv}   = (2\pi i)^{3-|S|} \int_{\overline{\mathcal{M}}_{0,S}(\CC)}  \prod_{c\in \chi_{S}} (\log^{n_c}  |u_c|^2) \, \nu_S\wedge\overline{\omega_0} \ ,
 \end{equation} 
and in particular does not depend on the choice of motivic lift. 
\end{theorem}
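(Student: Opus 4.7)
The plan is to invoke Corollary \ref{cormotivic} to represent $I^{\mm}$ as the effective motivic period $[H,[X^{\delta'}],[\omega]]^{\mm}$ of the universal moduli space motive $H=H^{|S'|-3}(\mathcal{M}_{0,S'}^{\delta'},\partial\mathcal{M}_{0,S'}^{\delta'})$, where $|S'|-3=\mathrm{weight}(\Omega)=|S|-3+\sum_c n_c$ and $\omega\in\Gamma(\overline{\mathcal{M}}_{0,S'},\Omega^{|S'|-3}_{\overline{\mathcal{M}}_{0,S'}}(\log\partial\overline{\mathcal{M}}_{0,S'}))$ is the logarithmic form built from $\omega_0$ by the recipe of Lemma \ref{lemremovealllog}. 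The general single-valued period formula of \cite{BD1} for logarithmic motives, combined with Proposition \ref{cornuSdual} which identifies the image of $[\overline{X^{\delta'}}]$ under $c_0^{\vee}$ as $(2\pi i)^{3-|S'|}\nu_{S',\delta'}$, then yields
$$I^{\sv}=\s\,\pi^{\mathfrak{m},\dR}(I^{\mm})=(2\pi i)^{3-|S'|}\int_{\overline{\mathcal{M}}_{0,S'}(\CC)}\nu_{S'}\wedge\overline{\omega}.$$

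Next, I would unwind the inductive construction of $\omega$ from Lemma \ref{lemremovealllog}: each logarithm $\log(u_c)$ was replaced using an $\mathcal{M}_{0,5}$-fibration $\mathcal{M}^{\delta_c}_{0,S_c}\to\mathcal{M}^{\delta}_{0,S}$ by wedging $f_S^*(\text{current form})$ with a pullback $f_{T'}^*(\beta)$ of a $1$-form $\beta=d\log(u_{c_2})$ on $\mathcal{M}_{0,5}$. Applying the compatibility \eqref{eq:fstar nu} between $\nu$ and forgetful maps (so that $\nu_{S'}$ factors as a wedge of pulled-back $\nu$-forms along the iterated fibration) together with the single-valued Fubini theorem of \cite[\S5]{BD1} displayed below Lemma \ref{lemfcnus}, each fiber integral can be evaluated by Lemma \ref{lemmasvlog}. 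Each such evaluation produces exactly one factor $\log|u_c|^2$ on the base and consumes one power of $(2\pi i)^{-1}$.

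After performing all $\sum_c n_c$ fiber integrations, the prefactor becomes $(2\pi i)^{3-|S'|}\cdot(2\pi i)^{\sum_c n_c}=(2\pi i)^{3-|S|}$; the form $\nu_{S'}$ collapses to $\nu_{S}$; and the accumulated fiber contributions assemble into $\prod_{c\in\chi_S}\log^{n_c}|u_c|^2$, leaving $\overline{\omega_0}$ as the algebraic factor. This yields precisely \eqref{svImmformula}. Because the right-hand side depends only on $\omega_0$, $(S,\delta)$ and the exponents $n_c$, independence of the choice of motivic lift is immediate.

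The main obstacle is step two: one must match carefully, at each stage of the inductive fibration, the signs and $(2\pi i)$-powers of the single-valued Fubini theorem with the normalisations in Lemma \ref{lemfcnus} and Lemma \ref{lemmasvlog}, and verify that the real-analytic integrability hypotheses in \cite{BD1} are met fibrewise (this is where the convergence hypothesis on $\Omega$, i.e.\ that $\log(u_c)$ appears whenever $\mathrm{Res}_{D_c}\omega_0\neq 0$, enters). Once the bookkeeping is set up, the computation is essentially a recursion driven by Lemma \ref{lemmasvlog}.
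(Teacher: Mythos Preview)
Your proposal is correct and follows essentially the same approach as the paper's proof: represent $I^{\mm}$ via Corollary \ref{cormotivic}, apply \cite[Theorem 3.16]{BD1} together with Proposition \ref{cornuSdual} to obtain $I^{\sv}=(2\pi i)^{3-|S'|}\int_{\overline{\mathcal{M}}_{0,S'}(\CC)}\nu_{S'}\wedge\overline{\omega}$, then use Lemma \ref{lemfcnus} (your \eqref{eq:fstar nu}) to factor $\nu_{S'}$ through the iterated fibration and evaluate each fibre integral by Lemma \ref{lemmasvlog}. The only cosmetic difference is that the paper packages the fibration as a single map $f:\mathcal{M}_{0,S'}\to\mathcal{M}_{0,S}\times_{\mathbb{A}^k}(\mathcal{M}_{0,5})^k$ and changes variables once rather than invoking single-valued Fubini step by step, but the content is identical.
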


\begin{proof}   Repeated application of Lemma \ref{lemremovealllog}
(which may involve a choice at each stage), gives rise to a dihedral structure $(S', \delta')$, a morphism 
$$ f: \mathcal{M}_{0,S'} \To    \mathcal{M}_{0,S} \times_{\mathbb{A}^k}  \big(\mathcal{M}_{0,5}\big)^k \quad \subset \quad \mathcal{M}_{0,S} \times (\mathbb{P}^1\backslash \{0,1,\infty\})^k  \ ,$$
and  a differential form $\omega' \in \Omega_{S'}$ with no poles along $\partial \mathcal{M}_{0,S'}^{\delta'}$ such that 
$$I= \int_{X^{\delta'}} \omega'  \quad \hbox{ is the period of } \quad I^{\mm} = [H^{|S'|-3}(\mathcal{M}_{0,S'}^{\delta'}, \partial \mathcal{M}_{0,S'}^{\delta'}), X_{\delta'}, \omega']^{\mm}\ .$$
The form $\omega'$ satisfies
$ \omega'= f^*(\omega_0 \wedge \beta)$
where
$$  \beta =     d \log (1-x_1y_1) \wedge \ldots \wedge d \log (1-x_ky_k) $$
and $x_1,\ldots, x_k$ denote the coordinates on $\mathbb{A}^k$ and correspond to the $1-u_c$, with multiplicity $n_c$, taken in some order.  By   \cite[Theorem 3.16]{BD1}
and  Corollary \ref{cornuSdual}, 
\begin{equation}\label{inproofsvIm} \s \, \pi^{\mathfrak{m},\dR}  \, I^{\mm} = (2\pi i)^{3-|S'|} \int_{\overline{\mathcal{M}}_{0,S'}(\CC)} \nu_{S'} \wedge   \overline{\omega'}  \ .
\end{equation} 
 Since $\omega', \nu_{S'}$ are logarithmic with singularities along distinct divisors,  the integral converges. 
By repeated application of Lemma \ref{lemfcnus},  we obtain that $\nu_{S'}$ is, up to a sign, the pullback by $f$ of the form
$$\nu_S\wedge\left(-\frac{dy_1}{y_1(1-y_1)}\right)\wedge\cdots\wedge \left(-\frac{dy_k}{y_k(1-y_k)}\right)\ ,$$
the sign being such that after changing coordinates via $f$ we obtain:
$$  I^{\sv}  =  (2\pi i)^{3-|S'|}  \int_{\overline{\mathcal{M}}_{0,S}(\CC)} 
   \nu_{S}   \wedge
 \overline{\omega_0}   \times \prod_{j=1}^k \int_{\PP^1(\CC)}   
\left(-\frac{d y_j}{y_j(1- y_j)}\right) \wedge d \log (1-  \overline{ x_{j}}\overline{y_j}) 
 \ . $$
Formula (\ref{svImmformula}) follows on applying Lemma \ref{lemmasvlog}.  \end{proof}
 
 \begin{theorem} We have
 \begin{equation} I^{\sv}(\omega, \underline{s}) = I^{\closed} (\omega, \underline{s}) \ .
 \end{equation} 
 In other words, the coefficients in the canonical Laurent expansion of the closed string amplitudes (\ref{renormintOmegaClosed}) are the images of the single-valued projection of the coefficients in any motivic lift of the expansion coefficients of open string amplitudes. 
 \end{theorem}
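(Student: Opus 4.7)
My plan is to match the two canonical Laurent expansions term by term, using the renormalisation formulas from Section 4 as an index, and applying the preceding theorem as the core computation in each summand. Both $I^{\open}(\omega,\underline{s})$ and $I^{\closed}(\omega,\underline{s})$ are indexed by the same non-crossing chord sets $J \subset \chi_S$: by Theorem \ref{thmrenormOpen},
\[
I^{\open}(\omega,\underline{s}) = \sum_{J} \frac{1}{s_J} \int_{X_J}\Omega_J^{\ren},
\]
and by Theorem \ref{thmrenormclosed},
\[
I^{\closed}(\omega,\underline{s}) = \sum_{J}\frac{1}{s_J}\int_{\overline{\mathcal{M}}_{0,S/J}(\CC)} (\Omega_J^{\ren})^{\closed}.
\]
Since $I^{\mm}(\omega,\underline{s})$ is defined by motivically lifting the pieces of the first sum (Theorem \ref{thm: motivicliftIopen}), and since $\s\,\pi^{\mathfrak{m},\dR}$ commutes with formal Laurent expansion in the $s_c$, it suffices to show that for each fixed $J$ the single-valued period of the open piece equals the closed piece.

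Fix $J$, and consider $\Omega_J^{\ren}\in \mathcal{A}_{S/J}(M_S)$. By Proposition \ref{propOmegaRennopoles} it satisfies $R_c\,\Omega_J^{\ren}=0$ for every chord, so by Corollary \ref{corexpansionasconv} both $\int_{X_J}\Omega_J^{\ren}$ and $\int_{\overline{\mathcal{M}}_{0,S/J}(\CC)}(\Omega_J^{\ren})^{\closed}$ admit canonical $\QQ[[M_S]]$-linear expansions into convergent log-monomial integrals indexed by the same tuples $K=(k_c)$ with the same coefficients $a_K\in\QQ[\underline{s}]$. These two expansions differ \emph{only} in the realisation map applied to the underlying formal form: $\rho_\alpha^{\open}$ on one side, $\rho_\alpha^{\closed}$ on the other. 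The second expansion carries the expected power of $(2\pi i)^{-\dim \mathcal{M}_{0,S/J}}$ and replaces $\log(u_c)$ by $\log|u_c|^2$ and $\omega_K$ by $\nu_{S/J}\wedge\overline{\omega_K}$.

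Now I would apply the preceding theorem termwise. It says precisely that, after choosing a motivic lift via Corollary \ref{cormotivic} of any convergent log-monomial integral, the single-valued period $\s\,\pi^{\mathfrak{m},\dR}$ of that lift is the corresponding integral over $\overline{\mathcal{M}}_{0,S/J}(\CC)$ with $\log|u_c|^2$ in place of $\log(u_c)$, the form $\nu_{S/J}\wedge\overline{\omega_K}$ in place of $\omega_K$, and the correct $(2\pi i)$-factor; crucially, the result is independent of the choices made in the motivic lift. Summing over $K$ with the coefficients $a_K$ then reassembles exactly $\int_{\overline{\mathcal{M}}_{0,S/J}(\CC)}(\Omega_J^{\ren})^{\closed}$. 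Summing over $J$ with weights $1/s_J$ reassembles the two Laurent series of Theorems \ref{thmrenormOpen}--\ref{thmrenormclosed}, yielding $\s\,\pi^{\mathfrak{m},\dR}\,I^{\mm}(\omega,\underline{s}) = I^{\closed}(\omega,\underline{s})$, which is the claim.

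The main obstacle is purely bookkeeping: verifying that the coefficients $a_K$ extracted from the two realisations $\rho_\alpha^{\open}$ and $\rho_\alpha^{\closed}$ of the common formal expansion in $\mathcal{A}_{S/J}(M_S)$ really do agree, and that the powers of $2\pi i$ track correctly through the dimension shift $|S/J|-3=n-|J|$ on each component of the product $\mathcal{M}_{0,S/J}$. Both follow by inspection from Definition \ref{defnalphreal} and from the compatibility \eqref{eq:fstar nu} of the forms $\nu_S$ under the trivialisation maps $f_J^\gamma$, which is already the key input used in the previous theorem's proof; this modularity is what makes the termwise comparison legitimate despite the non-canonical choices appearing in each individual motivic lift.
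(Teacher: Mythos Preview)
Your proposal is correct and follows essentially the same approach as the paper: decompose both amplitudes via the renormalisation formulas of Theorems \ref{thmrenormOpen} and \ref{thmrenormclosed}, expand each $\Omega_J^{\ren}$ into convergent log-monomials via Corollary \ref{corexpansionasconv}, apply the preceding theorem (formula \eqref{svImmformula}) termwise, and reassemble. The paper's proof is slightly terser and invokes explicitly that $\s$ is an algebra homomorphism to handle the product structure of $X_J$ and $\overline{\mathcal{M}}_{0,S/J}$, which you address implicitly through the per-component bookkeeping; otherwise the arguments coincide.
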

 
 \begin{proof}
 By (\ref{renormintOmegaopen}), we write the open string amplitude as
 \begin{equation} \label{renormintOmega} 
\int_{X^{\delta}} \Omega    \quad =  \quad \sum_{J \subset \chi_S}  \frac{1}{s_J}  \int_{X_J}   \Omega_J^{\ren} \ , 
\end{equation} 
By Corollary  \ref{corexpansionasconv}, each integrand on the right-hand side admits a Taylor expansion, whose coefficients are products of integrals over moduli spaces, each of which can be lifted to motivic periods by Corollary \ref{cormotivic}. 
Thus 
$$\int_{X_J}   \Omega_J^{\ren}  = \mathrm{per} ( I_J^{\mm}(\underline{s})) $$
for some formal power series $I_J^{\mm}(\underline{s})$ in the $s_c$, $c\in \chi_S$ whose coefficients are effective motivic periods coming from tensor products of universal moduli space motives. 
Since $\s$ is an algebra homomorphism,  (\ref{svImmformula}) yields
$$\s\,  \pi^{\mathfrak{m},\dR} \,  I^{\mm}_J  =  \int_{\overline{\mathcal{M}}_{0,S/J}(\CC)}    (\Omega_J^{\ren})^{\mathrm{closed}}   \ .$$

On the other hand,  by (\ref{renormintOmegaClosed}) these integrals can be repackaged into
$$ \sum_{J \subset \chi_S}  \frac{1}{s_J}  \int_{\overline{\mathcal{M}}_{0,S/J}(\CC)}  (\Omega_J^{\ren})^{\mathrm{closed}} \quad = \quad \int_{\overline{\mathcal{M}}_{0,S}(\CC)} \Omega^{\mathrm{closed}}  \ .$$
 \end{proof}
 By abuse of notation, we may express the previous theorem as the formula
$$\s  \int_{X^{\delta}} \left(\prod_{c\in \chi_{S}} u_c^{s_c} \right) \omega  = (2\pi i)^{3-|S|} \int_{\overline{\mathcal{M}}_{0,S}(\CC)}   \left(\prod_{c\in \chi_{S}} |u_c|^{2 s_c} \right) \nu_S \wedge \overline{\omega} $$
which is equivalent to the form conjectured  in \cite{stiebergersvMZV}.

    \begin{remark}
Define a motivic version of closed string amplitudes by setting
$$I^{\sv, \mm}(\omega, \underline{s}) =  \s^{\mm} I^{\mathfrak{m},\dR}  (\omega, \underline{s})$$
where $\s^{\mm}$ was defined in  \cite[(4.3)]{brownnotesmot} (see also \cite[Remark 2.10]{BD1}). Its period is $\mathrm{per} \, (I^{\sv, \mm}(\omega, \underline{s})) = I^{\sv}(\omega, \underline{s}),  $
which coincides with the closed string amplitude, by the previous theorem. 
This object is of interest because it immediately implies a compatibility between the actions of the motivic Galois group on the open and closed string amplitudes.
    \end{remark}

\section{Background on (co)homology  of  \texorpdfstring{$\mathcal{M}_{0,S}$}{M0,S} with coefficients}  \label{sect: CoeffBackground}

Most, if not all, of the results reviewed below are taken from the literature. A proof that the Parke--Taylor forms are a basis for cohomology with coefficients can be found in the appendix. See
\cite{  kitayoshida, kitayoshida2, ChoMatsumoto,  KMatsumoto, mimachiyoshida}   and references therein for  more details.

\subsection{Koba--Nielsen connection and local system}
Let $S$ be a finite set with $|S|=N=n+3 \geq 3$. 
Let ${\underline{s}}=(s_{ij})$ be a solution to the momentum conservation equations (\ref{MC}).
Let 
\begin{equation} \QQ^{\B}_{\underline{s}} =  \QQ ( e^{2 \pi i s_{kl}}) \qquad \hbox{ and } \qquad \QQ^{\dR}_{\underline{s}} = \QQ(s_{kl})
\end{equation}
be the  subfields of $\CC$ generated by the $\exp( 2\pi i s_{kl})$ and $s_{kl}$ respectively.

\begin{definition} Let $\Or_S$ denote the structure sheaf on $\mathcal{M}_{0,S} \times_{\QQ} \QQ^{\dR}_{\underline{s}}$. 
The \emph{Koba--Nielsen connection} \cite{kobanielsen} is  the logarithmic connection on $\Or_S$ defined by 
$$\nabla_{\underline{s}}: \Or_S \To \Omega^1_S  \qquad \hbox{ where } \qquad \nabla_{\underline{s}} =    d +  \omega_{\underline{s}}$$
and $\omega_{\underline{s}}$ was defined in (\ref{omegasdef}). The \emph{Koba--Nielsen local system} is the $\QQ_s^\B$-local system of rank one on $\mathcal{M}_{0,S}(\CC)$ defined by
$$\mathcal{L}_{\underline{s}} = \QQ^\B_{\underline{s}} \prod_{1\leq i<j\leq N}   (p_j-p_i)^{-s_{ij}}\ .$$
\end{definition}

Since $\omega_{\underline{s}}$ is  a closed one-form,  the connection $\nabla_{\underline{s}}$ is integrable. The horizontal sections of the analytification $(\mathcal{O}_S^{\mathrm{an}},\nabla^{\mathrm{an}}_{\underline{s}})$ define a rank one local system over the complex numbers that is naturally isomorphic to the complexification of $\mathcal{L}_{\underline{s}}$:
\begin{equation}\label{eq: horizontal sections KN}
\left(\mathcal{O}_S^{\mathrm{an}}\right)^{\nabla_{\underline{s}}^{\mathrm{an}}} \simeq \mathcal{L}_{\underline{s}}\otimes_{\QQ_{\underline{s}}^\B}\CC \ .
\end{equation}
We will also consider the dual of the Koba--Nielsen local system
\begin{equation}\label{eq: dual KN local system}
\mathcal{L}_{\underline{s}}^\vee = \QQ_{\underline{s}}^\B\prod_{1\leq i<j\leq N}(p_j-p_i)^{s_{ij}} \simeq \mathcal{L}_{-\underline{s}}\ .
\end{equation}
Let $(S,\delta)$ be a dihedral structure. In dihedral  coordinates, 
$$\nabla_{\underline{s}} = d + \sum_{c \in \chi_{S,\delta}} s_{c}  \frac{d u_c}{u_c}
\qquad \hbox{ and } \qquad  \mathcal{L}_{\underline{s}} = \QQ^\B_{\underline{s}} \prod_{c \in \chi_{S,\delta}}   u_c^{-s_{c}}\ .$$

\begin{definition} A solution to the momentum conservation equations (\ref{MC}) is \emph{generic} if \begin{equation} \label{assumptions} \sum_{i, j \in I} s_{ij} \notin \ZZ  
\end{equation} 
for every subset $I\subset S$ with $|I|\geq 2$, and $|S\backslash I|\geq 2$.

\end{definition}

 \begin{remark}  Write $H= H^1_{\dR}(\mathcal{M}_{0,S}/\QQ)$. By formality (\ref{formality}) and (\ref{H1description}),  and Lemma \ref{lem: VNtoH1isom},   the form $\omega_{\underline{s}}$ is the specialisation of the universal abelian one-form
$$ \omega \quad \in  \quad   H^{\vee}  \otimes \Omega_{S}^1 \cong  H^{\vee} \otimes H  $$
which represents the identity in $H^{\vee} \otimes H \cong \mathrm{End}(H)$.
\end{remark} 

\begin{remark} \label{remark: KZvsKN} The formal one-form $\omega$
 defines a logarithmic connection on the universal enveloping algebra of the braid Lie algebra.  It is the universal connection on   the affine ring of the unipotent de Rham fundamental group $\pi_1^{\dR}(\mathcal{M}_{0,S})$:
$$\nabla_{\mathrm{KZ}} :  \Or(\pi_1^{\dR}(\mathcal{M}_{0,S})) \To       \Omega^1_S \otimes \Or(\pi_1^{\dR}(\mathcal{M}_{0,S}))$$
 The Koba--Nielsen connection (viewed as a connection over the field $\QQ_{\underline{s}}^{\dR}$, i.e., for the universal solution of the momentum-conservation equations) is its abelianisation.  
 Given any particular complex solution to the moment conservation equations, the latter specialises to a connection over $\CC$.
 \end{remark}

\subsection{Singular (co)homology} Denote the (singular)   homology, 
locally finite (Borel-Moore) homology,  cohomology, and cohomology with compact supports  of $\mathcal{M}_{0,S}$ with coefficients in $\mathcal{L}_{\underline{s}}$ by 
$$ \ H_k(\mathcal{M}_{0,S}, \mathcal{L}_{\underline{s}}) \ , H_k^{\lf}(\mathcal{M}_{0,S}, \mathcal{L}_{\underline{s}})  \ ,  \ H^k(\mathcal{M}_{0,S}, \mathcal{L}_{\underline{s}}) \ ,  \ H_\c^k(\mathcal{M}_{0,S}, \mathcal{L}_{\underline{s}})\ .$$   They   are finite-dimensional $\QQ^\B_{\underline{s}}$-vector spaces. The  second    is  the cohomology of the complex of formal infinite sums of cochains with coefficients in $\mathcal{L}_{\underline{s}}$ whose  restriction to any compact subset have only finitely many non-zero terms.

Because of \eqref{eq: dual KN local system}, duality between homology and cohomology gives rises to canonical isomorphisms of $\QQ^\B_{\underline{s}}$-vector spaces for all $k$:
$$  H_k( \mathcal{M}_{0,S}, \mathcal{L}_{-\underline{s}}) \simeq H^k(\mathcal{M}_{0,S}, \mathcal{L}_{\underline{s}})^\vee \quad , \quad 
 H^{\lf}_k( \mathcal{M}_{0,S}, \mathcal{L}_{-\underline{s}})\simeq H_\c^k(\mathcal{M}_{0,S}, \mathcal{L}_{\underline{s}})^\vee\ .$$

\begin{proposition}  \label{propReghomology}  If the $s_{ij}$ are generic in the sense of (\ref{assumptions}),  then the  natural maps  induce the following  isomorphisms:
\begin{eqnarray} H_\c^k(\mathcal{M}_{0,S}, \mathcal{L}_{\underline{s}})   & \overset{\sim}{\To} & H^k(\mathcal{M}_{0,S}, \mathcal{L}_{\underline{s}})  \label{compactsupptousual} \\
 \label{homologytolf}   H_k ( \mathcal{M}_{0,S}, \mathcal{L}_{\underline{s}})& \overset{\sim}{\To} &  H_k^{\lf} ( \mathcal{M}_{0,S}, \mathcal{L}_{\underline{s}}) \ . 
\end{eqnarray} 
\end{proposition}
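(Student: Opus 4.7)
The plan is to use the Deligne--Mumford compactification $\overline{\mathcal{M}}_{0,S}$, whose boundary $\partial \overline{\mathcal{M}}_{0,S}$ is a simple normal crossing divisor whose irreducible components $D_I$ are indexed by unordered partitions $S = I \sqcup I^c$ satisfying $|I|, |I^c| \geq 2$. Before anything, I would reduce the two statements to one: by the duality isomorphisms displayed just before the proposition, $H_k(\mathcal{M}_{0,S}, \mathcal{L}_{\underline{s}}) \simeq H^k(\mathcal{M}_{0,S}, \mathcal{L}_{-\underline{s}})^\vee$ and $H_k^{\lf}(\mathcal{M}_{0,S}, \mathcal{L}_{\underline{s}}) \simeq H_\c^k(\mathcal{M}_{0,S}, \mathcal{L}_{-\underline{s}})^\vee$, and the genericity condition \eqref{assumptions} is invariant under $\underline{s} \mapsto -\underline{s}$. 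Hence \eqref{homologytolf} is the transpose of \eqref{compactsupptousual} applied to $-\underline{s}$, and it suffices to prove the first isomorphism.

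Let $j: \mathcal{M}_{0,S} \hookrightarrow \overline{\mathcal{M}}_{0,S}$ denote the open inclusion. Since $\overline{\mathcal{M}}_{0,S}$ is proper, I would identify
\[
H_\c^k(\mathcal{M}_{0,S}, \mathcal{L}_{\underline{s}}) = \mathbb{H}^k(\overline{\mathcal{M}}_{0,S}, j_! \mathcal{L}_{\underline{s}}), \qquad H^k(\mathcal{M}_{0,S}, \mathcal{L}_{\underline{s}}) = \mathbb{H}^k(\overline{\mathcal{M}}_{0,S}, Rj_* \mathcal{L}_{\underline{s}}),
\]
so that the map \eqref{compactsupptousual} is induced by the adjunction $j_! \mathcal{L}_{\underline{s}} \to Rj_* \mathcal{L}_{\underline{s}}$. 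It is therefore enough to show that this adjunction is a quasi-isomorphism of sheaves on $\overline{\mathcal{M}}_{0,S}$, or equivalently that the stalks $(Rj_* \mathcal{L}_{\underline{s}})_x$ vanish for every $x \in \partial \overline{\mathcal{M}}_{0,S}$.

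The vanishing is a standard local computation. Fixing such an $x$, lying in the intersection of exactly $r$ boundary divisors $D_{I_1}, \ldots, D_{I_r}$, a small polydisc neighborhood of $x$ in $\overline{\mathcal{M}}_{0,S}$ meets $\mathcal{M}_{0,S}$ in a space homotopy equivalent to $(S^1)^r$, and $\mathcal{L}_{\underline{s}}$ restricts there to an external tensor product of rank one local systems, the one on the $\ell$-th factor having monodromy $\exp(-2\pi i \, s_{I_\ell})$ with $s_{I_\ell} = \sum_{i < j,\, i,j \in I_\ell} s_{ij}$. This follows from the description $\mathcal{L}_{\underline{s}} = \QQ^\B_{\underline{s}} \prod (p_j - p_i)^{-s_{ij}}$, since $p_j - p_i$ vanishes to first order along $D_{I_\ell}$ exactly when $i, j \in I_\ell$. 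The genericity hypothesis \eqref{assumptions} states precisely that $s_{I_\ell} \notin \ZZ$ for every $\ell$, so each monodromy differs from $1$; by Künneth, $H^*((S^1)^r, \mathcal{L}_{\underline{s}}|_{(S^1)^r}) = 0$, because the cohomology of $S^1$ with coefficients in a rank one local system of nontrivial monodromy vanishes.

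The step requiring the most care is the monodromy identification at the boundary. The cleanest way I see to carry it out is in dihedral coordinates adapted to a dihedral structure compatible with $x$: the $D_{I_\ell}$ meeting $x$ are locally cut out by coordinates $u_{c_\ell} = 0$ with $c_\ell$ a collection of mutually non-crossing chords, and the explicit expression $\omega_{\underline{s}} = \sum_c s_c\, d\log u_c$ renders the $\ell$-th monodromy visible as $\exp(-2\pi i \, s_{c_\ell})$; unraveling the linear combination expressing $s_{c_\ell}$ in terms of the $s_{ij}$ then recovers the formula for $s_{I_\ell}$ above.
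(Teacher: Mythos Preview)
Your proof is correct and follows essentially the same approach as the paper: both reduce to showing that the adjunction $j_!\mathcal{L}_{\underline{s}}\to Rj_*\mathcal{L}_{\underline{s}}$ is a quasi-isomorphism on $\overline{\mathcal{M}}_{0,S}$, verify this by checking that the stalks of $Rj_*\mathcal{L}_{\underline{s}}$ vanish on the boundary via K\"unneth and the normal crossing structure, compute the monodromy around each boundary divisor $D_I$ to be $\exp(-2\pi i\, s_I)$ (using dihedral coordinates and the symmetric group action), and deduce the homology statement from the cohomology one by duality.
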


\begin{proof} 
Let $j:\mathcal{M}_{0,S} \hookrightarrow \overline{\mathcal{M}}_{0,S}$ denote the open immersion. We claim that the natural map $j_!\mathcal{L}_{\underline{s}}\rightarrow Rj_*\mathcal{L}_{\underline{s}}$ is an isomorphism in the derived category of the category of sheaves on $\overline{\mathcal{M}}_{0,S}$, which amounts to the fact that $Rj_*\mathcal{L}_{\underline{s}}$ has zero stalk at any point of $\partial\overline{\mathcal{M}}_{0,S}$. Since $\partial\overline{\mathcal{M}}_{0,S}$ is a normal crossing divisor we are reduced, by K\"{u}nneth, to proving that $\mathcal{L}_{\underline{s}}$ has non-trivial monodromy around each boundary divisor. 
For a divisor defined by the vanishing of a dihedral coordinate $u_c=0$,  monodromy acts by  multiplication by 
$\exp(-2 \pi i s_c)$, since $\mathcal{L}_{\underline{s}}$ is generated by $\prod_{c\in \chi_{S,\delta}} u_c^{-s_c}$.  Every other boundary divisor on $\mathcal{M}_{0,S}$ is obtained from such a divisor by permuting the elements of $S$.  
From  formula (\ref{sijassc}) and the action of the symmetric group, it follows that the monodromy of $\mathcal{L}_{\underline{s}}$ around a divisor $D$ defined by a partition $S= S_1 \sqcup S_2$ is given by $\exp(-2 \pi i s_D)$ where $s_D=\sum_{i,j \in S_1} s_{ij}=\sum_{i,j\in S_2}s_{ij}$.  
It is non-trivial if and only if $s_D\notin \ZZ$, which is equation (\ref{assumptions}). The first statement follows by applying $R\Gamma_\c\simeq R\Gamma$ to the isomorphism $j_!\mathcal{L}_{\underline{s}} \overset{\sim}{\rightarrow} Rj_*\mathcal{L}_{\underline{s}}$.
 The second statement is dual to the first. 
\end{proof}

 \begin{remark}\label{rem: artin vanishing twisted}
 Under the assumptions \eqref{assumptions}, Artin vanishing and duality imply that all homology and cohomology groups  in Proposition \ref{propReghomology} vanish if $i\neq n$.
 \end{remark}

 The inverse to the isomorphism (\ref{homologytolf}) is sometimes called \emph{regularisation}. For any dihedral structure  $\delta$ on $S$, the function $f_{\underline{s}}=\prod_{c\in \chi_{S,\delta}}u_c^{s_c}$ is well-defined on the domain $X^\delta\subset \mathcal{M}_{0,S}(\RR)$ and defines a class $[X^\delta\otimes f_{\underline{s}}]$ in $H_n^{\lf}(\mathcal{M}_{0,S},\mathcal{L}_{-\underline{s}})$. (Strictly speaking, this class is represented by the infinite sum of the simplices of a fixed locally finite triangulation of $X^\delta$, and does not depend on the choice of triangulation.) Its image  under the regularisation map, assuming \eqref{assumptions}, defines a class we  abusively also  denote by  $[X^\delta\otimes f_{\underline{s}}]\in H_n(\mathcal{M}_{0,S},\mathcal{L}_{-\underline{s}})$. The following result is classical.
 
 \begin{proposition}\label{prop: basis betti coefficients}
 Assume that the $s_{ij}$ are generic in the sense of \eqref{assumptions}. Choose three distinct elements $a,b,c\in S$. A basis of $H_n(\mathcal{M}_{0,S},\mathcal{L}_{-\underline{s}})$ is provided by the classes $[X^\delta\otimes f_{\underline{s}}]$, where $\delta$  ranges over the set of  dihedral structures on $S$  with respect to which  $a,b,c$ appear  consecutively, and in that order (or the reverse order).
 \end{proposition}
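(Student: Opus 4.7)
My plan is to prove the statement in three stages: compute the dimension of $H_n(\mathcal{M}_{0,S},\mathcal{L}_{-\underline{s}})$, count the number of candidate classes, and then establish linear independence via a period pairing.

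First, under the generic hypothesis \eqref{assumptions}, Proposition \ref{propReghomology} together with Remark \ref{rem: artin vanishing twisted} show that the twisted homology is concentrated in middle degree, so $\dim_{\QQ^\B_{\underline{s}}}H_n(\mathcal{M}_{0,S},\mathcal{L}_{-\underline{s}}) = |\chi(\mathcal{M}_{0,S})|$. Iterating the forgetful fibration $\mathcal{M}_{0,k+1}\to \mathcal{M}_{0,k}$, whose fibre is a $k$-punctured $\PP^1$, gives $|\chi(\mathcal{M}_{0,N})| = (N-3)!$. On the other hand, a dihedral structure on $S$ in which $(a,b,c)$ appears consecutively in the prescribed cyclic order is the same data as a linear ordering of the remaining $N-3$ elements placed after the block $abc$ around the polygon; the dihedral involution reverses this to $(c,b,a)$, so it identifies none of these structures. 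The number of candidate classes is therefore exactly $(N-3)!$, matching the dimension. It thus suffices to show that the classes $[X^\delta\otimes f_{\underline{s}}]$ are linearly independent.

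For this, I would pair them with the Parke--Taylor basis of $H^n_{\dR}(\mathcal{M}_{0,S}, \nabla_{\underline{s}})$ established in the appendix (Theorem \ref{thm: Parke Taylor}), consisting of forms $\omega_\sigma$ of the shape \eqref{eq: shape of omega}, indexed by cyclic orderings $\sigma$ of $S$ with the same constraint on $(a,b,c)$. Linear independence then reduces to non-degeneracy of the $(N-3)!\times(N-3)!$ period matrix
$$M_{\delta,\sigma} \;=\; \int_{X^\delta} f_{\underline{s}}\,\omega_\sigma.$$
The main obstacle, where the real work lies, is to prove this non-degeneracy. My approach is to extract leading Laurent behaviour in the Mandelstam variables $s_c$ using Theorem \ref{thmrenormOpen}: by \eqref{renormintOmegaopen}, the most singular term of $M_{\delta,\sigma}$ is of the form $\prod_{c \in J} s_c^{-1}$, where $J \subset \chi_{S,\delta}$ is a maximal collection of non-crossing chords along which $\omega_\sigma$ has a non-vanishing residue. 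When $\sigma$ matches the dihedral structure $\delta$, the polar divisors of $\omega_\sigma$ are precisely the boundary facets of the associahedron $\overline{X^\delta}$, so $J$ can be taken to be a maximal triangulation of $(S,\delta)$, producing the strongest possible singularity; the resulting nonzero coefficient factorises over the corners $D_J \cong \mathcal{M}^{\delta/J}_{0,S/J}$ and reduces inductively to the $N=4$ beta-function calculation of Example \ref{exampleOmegaren4}. When $\sigma$ does not match $\delta$, strictly fewer facets of $\overline{X^\delta}$ support poles of $\omega_\sigma$, so the leading singularity is strictly weaker. Ordering the dihedral structures in a manner compatible with the lattice of chord-subsets, the matrix of leading Laurent coefficients becomes triangular with nonzero diagonal, so $M_{\delta,\sigma}$ is invertible for generic $\underline{s}$ and the classes $[X^\delta \otimes f_{\underline{s}}]$ form a basis.
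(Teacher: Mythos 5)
Your dimension count $\dim H_n = |\chi(\mathcal{M}_{0,N})| = (N-3)!$ and the count of admissible dihedral structures are both correct, and the reduction to non-degeneracy of the period matrix $M_{\delta,\sigma}$ is a legitimate reformulation. But the non-degeneracy argument has a real gap. The claim ``when $\sigma$ does not match $\delta$, strictly fewer facets of $\overline{X^\delta}$ support poles of $\omega_\sigma$, so the leading singularity is strictly weaker'' is false: distinct associahedra in $\overline{\mathcal{M}}_{0,N}$ can share vertices, i.e.\ $\chi_{S,\delta}$ and $\chi_{S,\delta'}$ can contain a common triangulation, so $M_{\delta,\sigma}$ for $\sigma\neq\delta$ can have a pole of full order $n$. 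Already for $N=5$ with $(a,b,c)=(3,4,5)$, the two admissible dihedral orderings $(3,4,5,1,2)$ and $(3,4,5,2,1)$ share the three chords $\{3,4\mid 1,2,5\}$, $\{4,5\mid 1,2,3\}$, $\{1,2\mid 3,4,5\}$ and hence two of the five triangulations, so $M_{\delta_1,\delta_2}$ has the same Laurent order $-2$ as $M_{\delta_1,\delta_1}$. The ``triangularity'' you hope for needs a genuinely new combinatorial input: for each admissible $\delta$ one must exhibit a triangulation $J_\delta\subset\chi_{S,\delta}$ that is \emph{not} a triangulation of any other admissible $\sigma$ (e.g.\ the fan of chords emanating from the vertex between the edges $a$ and $b$), and then extract the coefficient of $1/s_{J_\delta}$, which equals $\mathrm{Res}_{D_{J_\delta}}\omega_\sigma=\pm\delta_{\delta,\sigma}$. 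That step is not in your write-up.

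Even with that fix there is a second gap: the Laurent-coefficient argument shows that $\det M$ is a nonzero element of $\CC((s_c))$, hence $\det M(\underline{s})\neq 0$ for $\underline{s}$ outside a proper analytic subset. The proposition asserts it for \emph{every} $\underline{s}$ satisfying the explicit genericity condition \eqref{assumptions}, and these two notions of genericity do not obviously coincide; you would need an additional argument (e.g.\ an explicit determinant formula, or a different topological argument) to rule out accidental vanishing of $\det M$ for some $\underline{s}$ satisfying \eqref{assumptions}. The paper sidesteps both difficulties entirely: in simplicial coordinates, $\mathcal{M}_{0,S}$ is a real hyperplane arrangement complement, the chambers $X^\delta$ with $(a,b,c)$ consecutive are exactly the \emph{bounded} chambers, and Douai--Terao's general theorem (cited as \cite[Prop.~3.1.4]{douaiterao}) says the bounded chambers form a basis of twisted homology under the genericity hypothesis. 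Your approach, if completed, would prove strictly more (it would reprove the Parke--Taylor basis of Theorem~\ref{thm: Parke Taylor} along the way), but as written it is incomplete.
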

 
 \begin{proof}
 We may assume that $S=\{1,\ldots,n+3\}$, $(a,b,c)=(n+1,n+2,n+3)$, and work in simplicial coordinates $(t_1,\ldots,t_n)$ by fixing $p_{n+1}=1$, $p_{n+2}=\infty$, $p_{n+3}=0$. These coordinates give an isomorphism of $\mathcal{M}_{0,S}$ with the complement of the hyperplane arrangement in $\mathbb{A}^{n}$ consisting of the hyperplanes $\{t_i=0\}$ and $\{t_i=1\}$ for $1\leq i\leq n$, and $\{t_i=t_j\}$ for $1\leq i<j\leq n$. This arrangement is defined over $\mathbb{R}$ and the real points of its complement is the disjoint union of the domains $X^\delta$ for $\delta$ a dihedral structure on $S$. Such a domain is bounded in $\mathbb{R}^n$ if and only if it is of the form $\{0< t_{\sigma(1)}< \cdots < t_{\sigma(n)}< 1\}$ for some permutation $\sigma\in\Sigma_n$, i.e., if and only if  no simplicial coordinate is adjacent to $\infty$ in the dihedral ordering. Equivalently 
 the points $n+1,n+2,n+3$ are consecutive and in that order (or its reverse) in the dihedral ordering $\delta$. The proposition is thus a special case of \cite[Proposition 3.1.4]{douaiterao}.
 \end{proof}

\subsection{Betti pairing}
Under the assumptions \eqref{assumptions}, Poincar\'{e}--Verdier duality combined with \eqref{compactsupptousual}  defines a perfect pairing of $\QQ_{\underline{s}}^\B$-vector spaces in cohomology
$$ \langle \,\, , \, \rangle^{\B}:   H^n(\mathcal{M}_{0,S}, \mathcal{L}_{\underline{s}}) \otimes_{\QQ_{\underline{s}}^\B}  H^n(\mathcal{M}_{0,S}, \mathcal{L}_{-\underline{s}}) \To  \QQ^\B_{\underline{s}} \ , $$ 
which is dual to a perfect pairing in homology
 $$  \langle \,\, , \, \rangle_{\B}:   H_n(\mathcal{M}_{0,S}, \mathcal{L}_{-\underline{s}}) \otimes_{\QQ_{\underline{s}}^\B} H_n(\mathcal{M}_{0,S}, \mathcal{L}_{\underline{s}})  \To  \QQ^\B_{\underline{s}}\ .$$

If $\sigma\otimes f_{-\underline{s}}$ and $\tau\otimes f_{\underline{s}}$ are locally finite representatives for homology classes in $H_n(\mathcal{M}_{0,S},\mathcal{L}_{\underline{s}})$ and $H_n(\mathcal{M}_{0,S},\mathcal{L}_{-\underline{s}})$ respectively, then the corresponding pairing 
$$\langle [\tau\otimes f_{\underline{s}}],[\sigma\otimes f_{-\underline{s}}]\rangle_\B$$
is the number of intersection points (with signs) of $\sigma$ and $\widetilde{\tau}$ where $[\widetilde{\tau}\otimes f_{\underline{s}}]$ is a regularisation of $[\tau\otimes f_{\underline{s}}]$ and $\widetilde{\tau}$ is in general position with respect to $\sigma$ \cite{kitayoshida, kitayoshida2}. The matrix of the cohomological Betti pairing is the inverse  transpose of the matrix of the  homological Betti pairing. 
 
\subsection{Algebraic de Rham cohomology}
Let $H^k(\mathcal{M}_{0,S}, \nabla_{\underline{s}})$ denote  the algebraic de Rham cohomology of $\mathcal{M}_{0,S}$ with coefficients in the algebraic vector bundle  $(\Or_S , \nabla_{\underline{s}})$
 with integrable connection over $\mathcal{M}_{0,S}\times_{\QQ} \QQ^{\dR}_{\underline{s}}$. It is a finite-dimensional $\QQ^\dR_{\underline{s}}$-vector space. Let $(\Or_S^{\mathrm{an}},\nabla_{\underline{s}}^{\mathrm{an}})$ denote  the analytic rank one vector bundle with connection on $\mathcal{M}_{0,S}(\CC)$ obtained from $(\Or_S,\nabla_{\underline{s}})$. We have an isomorphism
  \begin{equation}\label{eq: algebraic analytic de Rham KN}
  H^k(\mathcal{M}_{0,S},\nabla_{\underline{s}})\otimes_{\QQ_{\underline{s}}^\dR}\CC \simeq H^k(\mathcal{M}_{0,S}(\CC),\nabla_{\underline{s}}^{\mathrm{an}})\ ,
  \end{equation}
  where the right-hand side denotes the cohomology of the complex of global smooth differential forms on $\mathcal{M}_{0,S}(\CC)$ with differential $\nabla_{\underline{s}}$. Recall from \cite[\S 3]{BD1} the notation $\A^\bullet_{\overline{\mathcal{M}}_{0,S}}(\log\partial\overline{\mathcal{M}}_{0,S})$ for the complex of sheaves of smooth forms on $\overline{\mathcal{M}}_{0,S}$ with logarithmic singularities along $\partial\overline{\mathcal{M}}_{0,S}$.
  
  \begin{proposition}\label{prop:smooth log forms coefficients}
  Under the assumptions \eqref{assumptions} we have a natural isomorphism
  $$H^k(\mathcal{M}_{0,S}(\CC),\nabla_{\underline{s}}^{\mathrm{an}}) \simeq H^k(\Gamma(\overline{\mathcal{M}}_{0,S},\A^\bullet_{\overline{\mathcal{M}}_{0,S}}(\log\partial\overline{\mathcal{M}}_{0,S})),\nabla_{\underline{s}}^{\mathrm{an}})\ .$$
  \end{proposition}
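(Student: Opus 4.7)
The plan is to reinterpret both sides as hypercohomology groups on $\overline{\mathcal{M}}_{0,S}$ and then reduce to a local statement at each boundary point, where the genericity hypothesis \eqref{assumptions} provides the required non-resonance. Since the sheaves $\A^r_{\overline{\mathcal{M}}_{0,S}}(\log\partial\overline{\mathcal{M}}_{0,S})$ are modules over $\A^0_{\overline{\mathcal{M}}_{0,S}}$, which admits partitions of unity, they are fine; hence the right-hand side is $\mathbb{H}^k(\overline{\mathcal{M}}_{0,S},(\A^\bullet_{\overline{\mathcal{M}}_{0,S}}(\log\partial\overline{\mathcal{M}}_{0,S}),\nabla_{\underline{s}}^{\mathrm{an}}))$. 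Writing $j:\mathcal{M}_{0,S}\hookrightarrow\overline{\mathcal{M}}_{0,S}$, the same argument applied to the smooth de Rham complex on $\mathcal{M}_{0,S}(\CC)$ (a fine resolution of the local system of horizontal sections of $\nabla_{\underline{s}}^{\mathrm{an}}$) identifies the left-hand side with $\mathbb{H}^k(\overline{\mathcal{M}}_{0,S},Rj_*(\A^\bullet_{\mathcal{M}_{0,S}},\nabla_{\underline{s}}^{\mathrm{an}}))$.

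It will therefore suffice to show that the natural inclusion of complexes of sheaves on $\overline{\mathcal{M}}_{0,S}$
$$(\A^\bullet_{\overline{\mathcal{M}}_{0,S}}(\log\partial\overline{\mathcal{M}}_{0,S}),\nabla_{\underline{s}}^{\mathrm{an}})\hookrightarrow Rj_*(\A^\bullet_{\mathcal{M}_{0,S}},\nabla_{\underline{s}}^{\mathrm{an}})$$
is a quasi-isomorphism. This is a purely local question and is tautological at interior points. Near a point of $\partial\overline{\mathcal{M}}_{0,S}$, I would choose local holomorphic coordinates $(z_1,\ldots,z_n)$ in which the boundary divisor is $\{z_1\cdots z_r=0\}$ and in which $\omega_{\underline{s}}=\sum_{i=1}^{r}\sigma_i\,dz_i/z_i+\eta$, with $\eta$ a holomorphic $1$-form and each $\sigma_i$ the residue of $\omega_{\underline{s}}$ along the corresponding component of $\partial\overline{\mathcal{M}}_{0,S}$. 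A K\"unneth argument then reduces the problem to the one-variable model: a disc $\Delta$, its puncture $j:\Delta^{\ast}\hookrightarrow\Delta$, and the connection $d+s\,dz/z$, for $s\in\CC\setminus\ZZ$.

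In this one-variable setting the claim is a standard consequence of Deligne's theory of regular singular connections: the local system $\mathcal{L}$ generated by $z^{-s}$ on $\Delta^{\ast}$ has monodromy $\exp(-2\pi is)\neq 1$, so $Rj_{*}\mathcal{L}$ has vanishing stalk at $0$, while the stalk at $0$ of the cohomology sheaves of the smooth log complex $(\A^\bullet_{\Delta}(\log 0),d+s\,dz/z)$ can be computed directly to vanish (e.g., by expanding in Fourier/Laurent series and using that the operator $z\partial_{z}+s$ is invertible on non-constant modes when $s\notin\ZZ$). Both complexes therefore have vanishing cohomology sheaves at the origin, and coincide away from $0$.

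The main technical point is to guarantee non-resonance at \emph{every} irreducible component $D$ of $\partial\overline{\mathcal{M}}_{0,S}$, not only those compatible with a fixed dihedral structure. This is precisely the computation carried out during the proof of Proposition~\ref{propReghomology}: every such $D$ corresponds to a partition $S=S_1\sqcup S_2$, and the residue of $\omega_{\underline{s}}$ along $D$ equals $s_D=\sum_{i,j\in S_1}s_{ij}$, which is non-integer by \eqref{assumptions}. This ensures that $s\notin\ZZ$ in the one-variable local model at every boundary point, which is the only hypothesis needed to run the quasi-isomorphism above.
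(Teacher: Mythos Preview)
Your proposal is correct and follows essentially the same route as the paper, which simply cites Deligne's Proposition~3.13 in \emph{\'Equations diff\'erentielles \`a points singuliers r\'eguliers} and remarks that the proof transposes to the smooth setting under the genericity hypothesis~\eqref{assumptions}. You have helpfully unpacked that citation into an actual sketch: reduction to hypercohomology via fineness, localisation to a normal-crossing chart, gauge reduction plus K\"unneth to the one-variable model, and the identification of the residue along each boundary component with the quantity $s_D=\sum_{i,j\in S_1}s_{ij}$ computed in Proposition~\ref{propReghomology}, which is non-integral by~\eqref{assumptions}. One small wording issue: the operator $z\partial_z+s$ is in fact invertible on \emph{all} Taylor modes (not just ``non-constant'' ones) precisely because $s\notin\ZZ$; this is what kills both cohomology stalks at the origin in the log complex.
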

  
  \begin{proof}
  This is a smooth version of \cite[Proposition 3.13]{deligneEqDiff}. The assumptions of [\emph{loc. cit.}] are implied by \eqref{assumptions} and one can check that its proof can be copied in the smooth setting.
  \end{proof}
  
  By a classical argument due to Esnault--Schechtman--Viehweg \cite{esnaultschechtmanviehweg}, we can replace global logarithmic smooth forms with global algebraic smooth forms and the cohomology group $H^k(\mathcal{M}_{0,S},\nabla_{\underline{s}})$ is given, under the assumptions \eqref{assumptions}, by the cohomology of the complex $(\Omega^\bullet_S\otimes\QQ_{\underline{s}}^\dR \; , \; \omega_{\underline{s}}\wedge -)$. In particular, $H^n(\mathcal{M}_{0,S},\nabla_{\underline{s}})$ is simply the quotient of $\Omega^n_S\otimes\QQ_{\underline{s}}^\dR$ by the subspace spanned by the elements $\omega_{\underline{s}}\wedge\varphi$ for $\varphi\in \Omega^{n-1}_S$. The following theorem gives a basis of that quotient.
  
  \begin{theorem} \label{thm: AomotoBasis}
  Assume that the $s_{ij}$ are generic in the sense of \eqref{assumptions}. A basis of $H^n(\mathcal{M}_{0,S},\nabla_{\underline{s}})$ is provided by the classes of the differential forms 
  $$\frac{dt_1\wedge\cdots \wedge dt_n}{\prod_{k=1}^n(t_k-t_{i_k})}$$
  for the tuples $(i_1,\ldots,i_n)$ with $0\leq i_k\leq k-1$ and where we set $t_0=0$.
  \end{theorem}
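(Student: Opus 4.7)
My plan is to prove this classical theorem of Aomoto by combining a dimension count with a linear-independence argument via the Betti--de Rham period pairing.

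For the dimension count, Remark \ref{rem: artin vanishing twisted} implies that under the genericity assumption \eqref{assumptions} only $H^n$ is nonzero. Proposition \ref{prop: basis betti coefficients} gives $\dim H_n(\mathcal{M}_{0,S},\mathcal{L}_{-\underline{s}}) = n!$, and by the comparison isomorphism \eqref{eq: algebraic analytic de Rham KN} together with Poincar\'e--Verdier duality the same equality holds for $\dim H^n(\mathcal{M}_{0,S},\nabla_{\underline{s}})$. The number of tuples $(i_1,\ldots,i_n)$ with $0\leq i_k\leq k-1$ is $\prod_{k=1}^n k = n!$, matching the dimension, so it suffices to prove linear independence of the classes $[\omega_I]$ where $\omega_I$ denotes the proposed form associated to $I=(i_1,\ldots,i_n)$.

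For linear independence, I pair the forms $\omega_I$ against the Betti basis $\{[X^\sigma\otimes f_{\underline{s}}]\}_{\sigma\in\Sigma_n}$ of Proposition \ref{prop: basis betti coefficients} furnished by the simplicial chambers $X^\sigma = \{0<t_{\sigma(1)}<\cdots<t_{\sigma(n)}<1\}$. Using the inversion-code bijection $\sigma \leftrightarrow I$ defined by $i_k = \#\{j<k : \sigma(j)>\sigma(k)\}$ between $\Sigma_n$ and the set of admissible tuples, one expects the period matrix
$$M_{\sigma, I} = \int_{X^\sigma} f_{\underline{s}}\,\omega_I$$
to be triangular (for a suitable partial ordering refining the bijection) with nonvanishing meromorphic diagonal. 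The key computation: after iteratively integrating out $t_n, t_{n-1}, \ldots$ via Fubini and elementary partial fractions, each diagonal entry reduces to a product of one-dimensional beta-function integrals in linear combinations of the Mandelstam parameters, which are nonzero as meromorphic functions under \eqref{assumptions}; off-diagonal entries either vanish identically or acquire strictly lower order in a suitable valuation on the parameters.

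The main obstacle is carrying out the triangularity analysis cleanly, in particular controlling the off-diagonal entries and verifying nonvanishing of the diagonal beta products. A potentially cleaner alternative route is an induction along the forgetful fibration $\pi\colon\mathcal{M}_{0,N}\to\mathcal{M}_{0,N-1}$: the fiber is $\PP^1$ minus $n+2$ points, whose $H^1$ with generic twisted coefficients has dimension $n$ with basis $\{\frac{dt_n}{t_n-t_{i_n}}\}_{0\leq i_n\leq n-1}$, and a twisted Leray spectral sequence argument (after verifying that $R^1\pi_*$ applied to the Koba--Nielsen connection yields a local system with `generic' induced connection on the base) should reduce the statement to the inductive hypothesis on $\mathcal{M}_{0,N-1}$, bottoming out at the trivial case $\mathcal{M}_{0,3}=\mathrm{pt}$.
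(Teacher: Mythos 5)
The paper's proof is a one-line citation of Aomoto's Theorem 1 in \cite{aomotogaussmanin}, whose own argument is a Gauss--Manin computation along a forgetful fibration---in spirit your second route, not your first. Your dimension count (using Remark \ref{rem: artin vanishing twisted} and Proposition \ref{prop: basis betti coefficients}) is correct. But both of your linear-independence strategies leave the actual content of the theorem unverified, as you partly acknowledge.

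For the triangularity route: for generic $\underline{s}$ \emph{every} entry of the period matrix $M_{\sigma,I}=\int_{X^\sigma}f_{\underline{s}}\,\omega_I$ is a nonvanishing Selberg-type integral, so the matrix is not triangular in the naive sense. Your fallback---that off-diagonal entries have ``strictly lower order in a suitable valuation''---is precisely the hard part; identifying the valuation, proving the off-diagonal degeneration, and showing the diagonal is nonvanishing does not reduce to ``Fubini and elementary partial fractions'' and is not substantiated. For the inductive route: you correctly obtain $H^n(\mathcal{M}_{0,N},\nabla_{\underline{s}})\cong H^{n-1}(\mathcal{M}_{0,N-1},R^1\pi_*)$ from Leray once $R^0\pi_*=R^2\pi_*=0$, but $R^1\pi_*$ is a rank-$n$ flat bundle whose Gauss--Manin connection mixes the fiberwise Aomoto frame $\{\tfrac{dt_n}{t_n-t_{i_n}}\}$ nontrivially; it is not a rank-one twist of a trivial local system. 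Showing that cohomology with these rank-$n$ coefficients is spanned by the product forms $\omega_{I'}\wedge\tfrac{dt_n}{t_n-t_{i_n}}$ is exactly what Aomoto's theorem asserts and does not follow formally from the inductive hypothesis plus the fiberwise basis; this is where your sketch stops. A route more in keeping with the paper's own methods would be to adapt the filtration argument from Appendix \ref{appendix} (there used for the Parke--Taylor basis, Proposition \ref{prop: filtration appendix}): by the dimension count it suffices to show the Aomoto forms \emph{span} the twisted cohomology, which can be attempted by reduction to normal form modulo the Arnol'd and twisted-coboundary relations.
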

  
  \begin{proof}
  This is a special case of \cite[Theorem 1]{aomotogaussmanin}.
  \end{proof}

 The following more symmetric basis is more prevalent in the string theory literature. Its elements are called \emph{Parke--Taylor factors} \cite{parketaylor}. Therefore  we shall refer to it as  the \emph{Parke--Taylor basis}, as opposed to the \emph{Aomoto basis} of Theorem \ref{thm: AomotoBasis}.
 Although the following theorem is frequently referred to in the literature, we could not find a complete proof for it and therefore provide one in Appendix \ref{appendix}.
  
  \begin{theorem}\label{thm: Parke Taylor}
  Assume that the $s_{ij}$ are generic in the sense of \eqref{assumptions}. A basis of $H^n(\mathcal{M}_{0,S},\nabla_{\underline{s}})$ is provided by the classes of the differential forms 
  $$\frac{dt_1\wedge\cdots \wedge dt_n}{\prod_{k=1}^{n+1} (t_{\sigma(k)}-t_{\sigma(k-1)})}$$
  for permutations $\sigma\in\Sigma_n$, where we set $t_{\sigma(0)}=0$ and $t_{\sigma(n+1)}=1$.
  \end{theorem}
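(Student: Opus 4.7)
The plan is as follows. By Theorem \ref{thm: AomotoBasis}, $\dim H^n(\mathcal{M}_{0,S}, \nabla_{\underline{s}}) = n!$, which coincides with the number of Parke--Taylor forms. Therefore it suffices to prove linear independence of these $n!$ classes, which I would do by pairing against an explicit basis of twisted homology and showing the resulting period matrix is non-singular.

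Comparing the formula in the statement with \eqref{nusimplicial}, the Parke--Taylor form $\omega_\sigma$ is, up to sign, the canonical form $\nu_{S, \delta_\sigma}$ of Lemma \ref{lemnuSpoles}, where $\delta_\sigma$ is the dihedral structure on $S = \{1, \ldots, n+3\}$ in which $n+1, n+2, n+3$ are consecutive in that cyclic order and $1, \ldots, n$ are arranged according to $\sigma$. By Proposition \ref{prop: basis betti coefficients} applied to the triple $(a, b, c) = (n+1, n+2, n+3)$, the regularised cells $[X^{\delta_\tau} \otimes f_{\underline{s}}]$ for $\tau \in \Sigma_n$ form a basis of $H_n(\mathcal{M}_{0,S}, \mathcal{L}_{-\underline{s}})$. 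It therefore suffices to show that the $n! \times n!$ matrix of open string amplitudes
\begin{equation*}
Z_{\sigma, \tau} = \int_{X^{\delta_\tau}} f_{\underline{s}} \, \omega_\sigma
\end{equation*}
is invertible for generic $\underline{s}$.

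To analyse the matrix, I would expand each $Z_{\sigma, \tau}$ about $\underline{s} = 0$ via the canonical renormalisation of Theorem \ref{thmrenormOpen}. The pole structure of $\omega_\sigma = \pm \nu_{S, \delta_\sigma}$ along the boundary of $\mathcal{M}_{0,S}^{\delta_\tau}$ is governed by $\chi_{S, \delta_\sigma} \cap \chi_{S, \delta_\tau}$: $\omega_\sigma$ has a simple pole along $D_c \subset \partial \mathcal{M}_{0,S}^{\delta_\tau}$ precisely when the chord $c$ also belongs to $\chi_{S, \delta_\sigma}$. Consequently, the leading poles of $Z_{\sigma, \tau}$ come from triangulations $J$ of the polygon $(S, \delta_\tau)$ whose chords all lie in $\chi_{S, \delta_\sigma}$. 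In particular, the diagonal entry $\sigma = \tau$ attains the maximal pole order $n$, while the pole set of any off-diagonal entry is strictly smaller.

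The main obstacle will be converting this mismatch of pole supports into an actual non-vanishing determinant; the combinatorial argument is delicate since a purely dominant-diagonal estimate need not apply to the full $n! \times n!$ matrix of leading coefficients. For this reason I would ultimately prefer an induction on $n = |S| - 3$, the base case $n = 1$ being the standard verification on $\PP^1 \backslash \{0, 1, \infty\}$. In the inductive step, Lemma \ref{lemResfunctorial} together with the product decomposition \eqref{Disproduct} identifies the residue of $\omega_\sigma$ along a chosen divisor $D_c$ either with zero (if $c \notin \chi_{S, \delta_\sigma}$) or with a tensor product of Parke--Taylor forms on the smaller moduli spaces $\mathcal{M}_{0,S'}$, $\mathcal{M}_{0,S''}$, where the assumption \eqref{assumptions} is inherited. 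Combining the inductive hypothesis on each factor with the acyclicity provided by Remark \ref{rem: artin vanishing twisted} and a long exact sequence comparing $H^n(\mathcal{M}_{0,S}, \nabla_{\underline{s}})$ with the twisted cohomology of the boundary strata then forces linear independence of the full Parke--Taylor family.
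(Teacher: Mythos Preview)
Your strategy is genuinely different from the paper's, and as written it has real gaps.

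The paper's proof (in the appendix) is purely algebraic and never touches periods or homology. It lifts the problem to the Arnol'd algebra $A^\bullet = H^\bullet_{\dR}(\mathrm{Conf}(n+2,\CC))$, identifies $H^n(\mathcal{M}_{0,S},\nabla_{\underline{s}})$ with the top cohomology $A^{n+1}/(A^n\wedge\omega_{\underline{s}})$ via the derivation $\partial$ induced by the diagonal $\CC^*$-action, and then introduces a filtration $F^k$ on $A^{n+1}$ indexed by the position of $n+1$ in the linear order. The key computation (Proposition~\ref{prop: filtration appendix}) uses Arnol'd relations and a tree-combinatorics lemma to show $F^kH^{n+1}\subset F^{k+1}H^{n+1}$ modulo $\omega_{\underline{s}}$, so everything collapses to $F^{n+1}$, which is exactly the span of the Parke--Taylor forms. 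Spanning plus the known dimension $n!$ finishes it.

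Your first route, showing $\det(Z_{\sigma,\tau})\neq 0$, is in principle valid --- indeed there are Varchenko-type gamma-function evaluations of such determinants --- but you explicitly acknowledge you do not have the argument: the leading-pole analysis does not give a triangular or dominant-diagonal structure on the full $n!\times n!$ matrix, and you stop short of producing any substitute. As it stands this is an observation about pole supports, not a proof of nonsingularity.

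Your second route, the residue induction, is too vague at the decisive step. The claim that $\mathrm{Res}_{D_c}\omega_\sigma$ is either zero or a tensor of Parke--Taylor forms is fine. But a relation $\sum_\sigma a_\sigma[\omega_\sigma]=0$ in $H^n(\mathcal{M}_{0,S},\nabla_{\underline{s}})$ means $\sum_\sigma a_\sigma\omega_\sigma = \omega_{\underline{s}}\wedge\phi$, and since $\omega_{\underline{s}}$ itself has a pole along $D_c$ with residue $s_c$, the ordinary residue does not descend to a map of twisted cohomology groups in the naive way. You invoke an unspecified ``long exact sequence comparing $H^n(\mathcal{M}_{0,S},\nabla_{\underline{s}})$ with the twisted cohomology of the boundary strata''; no such sequence is set up in the paper, and without writing it down and checking how the Parke--Taylor classes sit inside it you cannot conclude linear independence. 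If you want to rescue this approach, the natural tool is the fibration obtained by forgetting one marked point (fiber $\PP^1$ minus $n+2$ points) together with the associated filtration on the Aomoto complex, rather than residues along a single boundary divisor; but that is a different argument from the one you sketched.
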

  
  Other bases can be found in the literature, e.g., the $\beta$nbc bases of Falk--Terao \cite{falkterao}.
 
\subsection{de Rham pairing} 

Under the assumptions \eqref{assumptions} there is an algebraic de Rham version of the intersection pairing, which is a perfect pairing
$$\langle \,\, , \, \rangle^{\dR} : H^n(\mathcal{M}_{0,S}, \nabla_{\underline{s}}) \otimes_{\QQ^{\dR}_{\underline{s}}}  H^n(\mathcal{M}_{0,S}, \nabla_{-\underline{s}}) \To \QQ^{\dR}_{\underline{s}}$$
of $\QQ_{\underline{s}}^\dR$-vector spaces. 
This is easily checked after extending the scalars to $\CC$ by working with smooth de Rham complexes. The only part to check is that this pairing is algebraic, i.e., defined over $\QQ^{\dR}_{\underline{s}}.$  Indeed, it can be defined algebraically (see  \cite{ChoMatsumoto} for the general case of curves), and computed explicitly 
 for hyperplane arrangements  \cite{KMatsumoto}, which contains the present situation as a special case.
 
\medskip

If  $\omega ,\nu \in \QQ^{\dR}_{\underline{s}} \otimes \Omega_S^n$ are logarithmic $n$-forms, let $\widetilde{\nu}$ be a smooth  $\nabla_{\underline{s}}$-closed  $n$-form on $\mathcal{M}_{0,S}(\CC)$ which represents $[\nu]$ and has compact support. Then 
$$\langle [\nu], [\omega] \rangle^{\dR} = (2\pi i)^{-n}\int_{\mathcal{M}_{0,S}(\CC)} \widetilde{\nu} \wedge  \omega\ .$$
Our normalisation differs from the one in the literature by the factor of $(2\pi i)^{-n}$. 

\subsection{Periods}
Since algebraic de Rham cohomology is defined over $\QQ^{\dR}_{\underline{s}}$, we can meaningfully speak of periods.  
Using \eqref{eq: horizontal sections KN} we see that integration induces a perfect pairing of complex vector spaces: 
\begin{eqnarray} 
H_n(\mathcal{M}_{0,S}, \mathcal{L}_{-\underline{s}}\otimes_{\QQ_{\underline{s}}^\B}\CC) \otimes_\CC H^n(\mathcal{M}_{0,S}(\CC), \nabla_{\underline{s}}^{\mathrm{an}}) &\To &  \CC \nonumber \\
{[}\gamma \otimes f_{\underline{s}}] \otimes [\omega]     & \To & \int_{\gamma} f_{\underline{s}}\, \omega \nonumber
\end{eqnarray} 
which is well-defined by Stokes' theorem. By \eqref{eq: algebraic analytic de Rham KN} it induces an
 isomorphism:
\begin{equation}  \label{comparisonCoeffs}
\mathrm{comp}_{\B, \dR} \ :  \   H^n(\mathcal{M}_{0,S}, \nabla_{\underline{s}}) \otimes_{\QQ^{\dR}_{\underline{s}}} \CC \overset{\sim}{\To}   H^n(\mathcal{M}_{0,S}, \mathcal{L}_{\underline{s}})\otimes_{\QQ^{\B}_{\underline{s}}} \CC\ .
\end{equation} 
We will use the notation $\mathrm{comp}_{\B,\dR}(\underline{s})$ when we want to make the dependence on $\underline{s}$ explicit.
If we choose a $\QQ^{\dR}_{\underline{s}}$-basis of the left-hand vector space, and a $\QQ^{\B}_{\underline{s}}$-basis of the right-hand vector space, the  isomorphism $\mathrm{comp}_{\B,\dR}(\underline{s})$ can be expressed as a matrix
$P_{\underline{s}}$, and we will sometimes abusively use the notation $P_{\underline{s}}$ instead of $\mathrm{comp}_{\B,\dR}(\underline{s})$.

\begin{theorem}\label{thm: twisted period relations} (Twisted period relations \cite{kitayoshida, ChoMatsumoto})  Assume that the $s_{ij}$ are generic in the sense of \eqref{assumptions}. Let $\omega,\nu\in\QQ_{\underline{s}}^\dR\otimes \Omega_S^n$ be logarithmic $n$-forms giving rise to classes in $H^n(\mathcal{M}_{0,S},\nabla_{-\underline{s}})$ and $ H^n(\mathcal{M}_{0,S},\nabla_{\underline{s}})$ respectively. We have the equality:
\begin{equation}\label{IdPIbP}
(2\pi i)^n \langle [\nu],[\omega]\rangle^\dR = \langle P_{\underline{s}}[\nu],P_{-\underline{s}}[\omega]\rangle^\B\ ,
\end{equation}
where the cohomological Betti pairing is naturally extended by $\CC$-linearity.
\end{theorem}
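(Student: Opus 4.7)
The plan is to derive the identity by realizing both pairings as instances of Poincar\'{e}--Verdier duality --- analytic with local system coefficients on one side, algebraic de Rham with integrable connection coefficients on the other --- whose compatibility with the comparison isomorphism $\mathrm{comp}_{\B,\dR}$ introduces the $(2\pi i)^n$ factor from the identification $H^{2n}_{\c}(\mathcal{M}_{0,S}(\CC),\CC) \simeq \CC(-n)$.

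First, by Proposition \ref{prop:smooth log forms coefficients}, I extend scalars to $\CC$ and represent $[\nu]$ and $[\omega]$ by global smooth logarithmic forms on $\overline{\mathcal{M}}_{0,S}$. A smooth de Rham analogue of Proposition \ref{propReghomology}, valid under the genericity assumption \eqref{assumptions}, gives an isomorphism $H^n_{\c}(\mathcal{M}_{0,S}(\CC),\nabla^{\mathrm{an}}_{-\underline{s}}) \simeq H^n(\mathcal{M}_{0,S}(\CC),\nabla^{\mathrm{an}}_{-\underline{s}})$, so one can find a compactly supported smooth representative $\widetilde{\nu} = \nu - \nabla^{\mathrm{an}}_{-\underline{s}}\varphi$ of $[\nu]$, with $\varphi$ a smooth $(n-1)$-form tailored via a partition of unity adapted to the normal crossing divisor $\partial\overline{\mathcal{M}}_{0,S}$ so as to kill the singularities of $\nu$ along each boundary component. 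By definition of the de Rham pairing, this yields
$$(2\pi i)^n\,\langle[\nu],[\omega]\rangle^\dR \;=\; \int_{\mathcal{M}_{0,S}(\CC)}\widetilde{\nu}\wedge\omega\ .$$
I then choose bases $\{[\sigma_i\otimes f_{-\underline{s}}]\}$ and $\{[\tau_j\otimes f_{\underline{s}}]\}$ of the Betti homology groups as in Proposition \ref{prop: basis betti coefficients} and expand the comparison isomorphisms in the dual bases of cohomology in terms of the periods $\int_{\sigma_i}f_{-\underline{s}}\,\omega$ and $\int_{\tau_j}f_{\underline{s}}\,\nu$. The right-hand side of the claimed identity becomes a bilinear expression involving the inverse transpose $G^{-t}$ of the homological Betti intersection matrix $G$ (which is the matrix of the cohomological Betti pairing), reducing the twisted period relations to the Riemann-type bilinear identity
$$\int_{\mathcal{M}_{0,S}(\CC)}\widetilde{\nu}\wedge\omega \;=\; \sum_{i,j}\bigg(\int_{\sigma_i}f_{-\underline{s}}\,\omega\bigg)\bigg(\int_{\tau_j}f_{\underline{s}}\,\widetilde{\nu}\bigg)(G^{-t})_{ij}\ ,$$
where I have also used $\int_{\tau_j}f_{\underline{s}}\,\widetilde{\nu} = \int_{\tau_j}f_{\underline{s}}\,\nu$, a consequence of Stokes' theorem since $\widetilde{\nu}-\nu$ is $\nabla^{\mathrm{an}}_{-\underline{s}}$-exact and $\tau_j\otimes f_{\underline{s}}$ is a cycle for $\mathcal{L}_{\underline{s}}$.

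The main obstacle is this Riemann bilinear identity, which is Poincar\'{e}--Verdier duality for the local system $\mathcal{L}_{-\underline{s}}$: both sides compute the cup product $[\widetilde{\nu}]\smile[\omega]$ in $H^{2n}_{\c}(\mathcal{M}_{0,S}(\CC),\CC)\simeq\CC$, one via integration of the wedge product (where the compact support of $\widetilde{\nu}$ is essential for convergence), the other via intersection numbers of regularized cycles weighted by the local system factors $f_{\pm\underline{s}}$. Locally near each stratum of $\partial\overline{\mathcal{M}}_{0,S}$ this reduces to a boundary residue calculation whose vanishing holds precisely because the monodromy of $\mathcal{L}_{\pm\underline{s}}$ around each divisor is non-trivial (by \eqref{assumptions}, exactly as in the proof of Proposition \ref{propReghomology}). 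This computation is carried out in full generality for smooth complex varieties with logarithmic integrable connections in \cite{kitayoshida, ChoMatsumoto}, and applies verbatim to the pair $(\overline{\mathcal{M}}_{0,S},\partial\overline{\mathcal{M}}_{0,S})$ since the boundary is a simple normal crossing divisor.
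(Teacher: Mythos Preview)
Your proposal is correct and follows essentially the same approach as the paper: both arguments rest on the compatibility of Poincar\'e--Verdier duality with the comparison isomorphisms \eqref{comparisonCoeffs}, together with the isomorphisms \eqref{compactsupptousual} and \eqref{homologytolf} furnished by the genericity assumption. The paper's proof is a one-sentence appeal to this compatibility, whereas you have unpacked the mechanism concretely via a compactly supported representative $\widetilde{\nu}$ and an explicit Riemann-type bilinear identity in chosen bases; this is a faithful expansion of the same idea.
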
 

\begin{proof}
This follows from the fact that the (iso)morphisms \eqref{compactsupptousual} and \eqref{homologytolf} and Poincar\'{e}--Verdier duality are compatible with the comparison isomorphisms.
\end{proof}

The reason for the factor $(2 \pi i)^n$ in the formula \eqref{IdPIbP} is because of our insistence that the de Rham intersection pairing $I_{\dR}$ be algebraic and have entries in $\QQ^{\dR}_{\underline{s}}.$ 

\begin{proposition}
Assume that the $s_{ij}$ are generic in the sense of \eqref{assumptions}. Let $\delta$ be a dihedral structure on $S$ and let $\omega\in \QQ_{\underline{s}}^\dR\otimes\Omega^n_S$ be a regular logarithmic form on $\mathcal{M}_{0,S}$ of top degree. If the inequalities of Proposition \ref{lemconv} hold then we have
$$\langle [X^\delta\otimes f_{\underline{s}}] , \mathrm{comp}_{\B, \dR}[\omega]\rangle = \int_{X^\delta}f_{\underline{s}}\,\omega\ .$$
\end{proposition}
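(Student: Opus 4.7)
The plan is to represent $\mathrm{comp}_{\B,\dR}[\omega]$ by a smooth compactly supported form in the twisted de Rham complex, compute the pairing as a convergent integral, and then use Stokes' theorem (justified by a cutoff argument) to identify it with the naive integral of $f_{\underline{s}}\omega$.

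By the inverse of the isomorphism \eqref{compactsupptousual}, evaluating the Betti pairing amounts to choosing a representative of $\mathrm{comp}_{\B,\dR}[\omega]$ in $H^n_c$. Using Proposition \ref{prop:smooth log forms coefficients} together with a partition of unity on $\overline{\mathcal{M}}_{0,S}(\CC)$, one can write
$$\widetilde{\omega}=\omega+\nabla_{\underline{s}}^{\mathrm{an}}\eta$$
for some smooth $(n-1)$-form $\eta$ on $\mathcal{M}_{0,S}(\CC)$ such that $\widetilde{\omega}$ has compact support in $\mathcal{M}_{0,S}(\CC)$. One may moreover arrange that $\eta$ has at worst logarithmic singularities along $\partial\overline{\mathcal{M}}_{0,S}$. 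Unwinding the definitions of the comparison isomorphism and of the Betti pairing between $H_n^{\mathrm{lf}}(\mathcal{M}_{0,S},\mathcal{L}_{-\underline{s}})$ and $H_c^n(\mathcal{M}_{0,S},\mathcal{L}_{\underline{s}})$, one obtains
$$\langle[X^\delta\otimes f_{\underline{s}}],\mathrm{comp}_{\B,\dR}[\omega]\rangle=\int_{X^\delta}f_{\underline{s}}\,\widetilde{\omega}\ ,$$
where $f_{\underline{s}}$ is taken with its canonical positive real branch on $X^\delta$; this integral converges since $\widetilde{\omega}$ has compact support inside $\mathcal{M}_{0,S}(\CC)$.

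It remains to show that $\int_{X^\delta}f_{\underline{s}}(\widetilde{\omega}-\omega)=0$. Using the key identity $\omega_{\underline{s}}=d\log f_{\underline{s}}$ on $X^\delta$, one computes
$$f_{\underline{s}}\,\nabla_{\underline{s}}^{\mathrm{an}}\eta=f_{\underline{s}}(d\eta+\omega_{\underline{s}}\wedge\eta)=d(f_{\underline{s}}\eta)\ ,$$
so the desired vanishing is the Stokes-type statement $\int_{X^\delta}d(f_{\underline{s}}\eta)=0$ on the non-compact open cell $X^\delta$. To justify it, introduce smooth cutoffs $\chi_\varepsilon:\mathcal{M}_{0,S}(\CC)\to[0,1]$ supported inside $X^\delta$ at distance $\geq\varepsilon$ from $\partial\mathcal{M}_{0,S}^\delta$ and converging pointwise to $1$ on $X^\delta$. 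Stokes' theorem applied to the compactly supported form $\chi_\varepsilon f_{\underline{s}}\eta$ gives
$$\int_{X^\delta}\chi_\varepsilon\,d(f_{\underline{s}}\eta)=-\int_{X^\delta}d\chi_\varepsilon\wedge f_{\underline{s}}\eta\ ,$$
and the left-hand side tends to $\int_{X^\delta}d(f_{\underline{s}}\eta)=\int_{X^\delta}f_{\underline{s}}(\widetilde{\omega}-\omega)$ as $\varepsilon\to 0$ by dominated convergence, using both the compact support of $\widetilde{\omega}$ and the convergence hypothesis of Proposition \ref{lemconv}.

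The main obstacle is showing that the boundary integral on the right-hand side vanishes in the limit $\varepsilon\to 0$. In a local coordinate $u_c$ near a boundary divisor $D_c\subset\partial\mathcal{M}_{0,S}^\delta$, the function $f_{\underline{s}}$ behaves as $|u_c|^{\mathrm{Re}(s_c)}$ and $\eta$ has at worst logarithmic singularities in $u_c$, while $d\chi_\varepsilon$ is supported in a tubular shell of width $\varepsilon$ around the boundary. The thresholds $\mathrm{Re}(s_c)>0$ when $\mathrm{Res}_{D_c}\omega\neq 0$ and $\mathrm{Re}(s_c)>-1$ otherwise, which are exactly those of Proposition \ref{lemconv}, are precisely what is needed to force $\int d\chi_\varepsilon\wedge f_{\underline{s}}\eta\to 0$, by the same type of local one-dimensional estimate $\int_0^\varepsilon t^{\mathrm{Re}(s_c)}|\log t|^k\,dt/\varepsilon=o(1)$ that underlies Proposition \ref{lemconv}. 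Since $\partial\mathcal{M}_{0,S}^\delta$ is a normal crossings divisor, an induction on the codimension of the stratum near which the cutoff is performed then yields the vanishing along all boundary strata and completes the proof.
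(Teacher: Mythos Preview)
Your overall strategy---choose a compactly supported representative $\widetilde\omega=\omega+\nabla_{\underline{s}}\eta$, reduce to showing $\int_{X^\delta}d(f_{\underline{s}}\eta)=0$, and attack this via Stokes---matches the paper. The gap is in the boundary estimate. You assert that the thresholds of Proposition~\ref{lemconv} are ``precisely what is needed'' to make $\int d\chi_\varepsilon\wedge f_{\underline{s}}\eta\to 0$, but those thresholds depend on the pole structure of $\omega$, whereas the boundary term depends on $\eta$, whose behaviour along $D_c$ is not controlled by $\mathrm{Res}_{D_c}\omega$. Concretely, near a divisor $D_c$ along which $\omega$ has \emph{no} pole (so the hypothesis only gives $\mathrm{Re}(s_c)>-1$), write $\eta=\frac{du_c}{u_c}\wedge\eta_1+\eta_0$ with $\eta_0,\eta_1$ smooth. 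The part of $d\chi_\varepsilon\wedge f_{\underline{s}}\eta$ coming from $\eta_0$ contributes, in the $u_c$-direction, a term of size $\varepsilon^{\mathrm{Re}(s_c)}\cdot(\eta_0|_{D_c})$, which does \emph{not} tend to zero if $\mathrm{Re}(s_c)\in(-1,0)$ and $\eta_0|_{D_c}\not\equiv 0$. Nothing in your construction forces $\eta_0|_{D_c}=0$; the constraint $\widetilde\omega=0$ near $D_c$ only yields a relation between $\eta_0|_{D_c}$ and $\eta_1|_{D_c}$ in the twisted complex on $D_c$, not separate vanishing.

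The paper sidesteps this by a two-step argument. First it proves the statement under the stronger hypothesis $\mathrm{Re}(s_c)>0$ for \emph{all} $c$, where the above issue disappears: there the paper decomposes $\phi$ according to its polar part $\phi=\sum_J\phi_J\wedge\bigwedge_{c\in J}\frac{du_c}{u_c}$, uses the trivialisation maps $f_J$ to pass to $(0,1)^{|J|}\times X_J$, and applies Stokes on $X_J$, using that $f'_{\underline{s}}=\prod_{c\notin J}u_c^{s_c}$ vanishes on $\partial X_J$. Second, for general $\omega$, it lets $D_{c_1},\ldots,D_{c_r}$ be the divisors where $\omega$ has no pole and applies the first step to $\omega'=(\prod_i u_{c_i}^{-1})\omega$: the identity $f_{\underline{s}}\omega=f_{\underline{s}'}\omega'$ with $s'_{c_i}=s_{c_i}+1$ converts the weak inequality $\mathrm{Re}(s_{c_i})>-1$ into the required $\mathrm{Re}(s'_{c_i})>0$. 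This shift trick is the missing ingredient in your argument; with it (or an equivalent control on $\eta$ along the ``harmless'' divisors), your cutoff approach would go through.
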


\begin{proof}
\begin{enumerate}[1)]
\item We first prove that the formula holds for any algebraic $n$-form $\omega$ on $\mathcal{M}^\delta_{0,S}$ with logarithmic singularities along $\partial\mathcal{M}^\delta_{0,S}$, provided $\mathrm{Re}(s_c)>0$ for every chord $c\in\chi_{S,\delta}$. By definition we have for every $\underline{s}$ the formula:
$$\langle [X^\delta\otimes f_{\underline{s}}] , \mathrm{comp}_{\B, \dR}[\omega]\rangle = \int_{X^\delta}f_{\underline{s}}\,\widetilde{\omega}\ ,$$
where $\widetilde{\omega}$ is a global section of $\mathcal{A}^n_{\overline{\mathcal{M}}_{0,S}}(\log\partial\overline{\mathcal{M}}_{0,S})$ with compact support which is cohomologous to $\omega$, i.e., such that $\omega-\widetilde{\omega}=\nabla_{\underline{s}}\phi$, with $\phi$ a global section of $\mathcal{A}^{n-1}_{\overline{\mathcal{M}}_{0,S}}(\log\partial\overline{\mathcal{M}}_{0,S})$. Thus, we need to prove that the integral of $f_{\underline{s}}\nabla_{\underline{s}}\phi = d(f_{\underline{s}}\phi)$ on $X^\delta$ vanishes if $\mathrm{Re}(s_c)>0$ for all chords $c\in\chi_{S,\delta}$. We note that in general $f_{\underline{s}}\phi$ has singularities along the boundary of $X^\delta$ unless $\mathrm{Re}(s_c)>1$ for all chords $c\in \chi_{S,\delta}$, so that we cannot apply Stokes' theorem directly.
We can write
$$\phi = \sum_{J}  \left(\phi_J   \wedge\bigwedge_{c\in J}\frac{du_c}{u_c}\right)$$
where the sum is over subsets of chords $J \subset \chi_{S,\delta}$ and $\phi_J$ extends to a smooth form on $\mathcal{M}_{0,n}^\delta$ (i.e., has no poles along the boundary of $X^{\delta}$).
By properties of dihedral coordinates, we can furthermore assume that $\phi_J=0$ if $J$ contains two crossing chords. Indeed, a form $\bigwedge_{c\in J}\frac{du_c}{u_c}$ extends to a regular form on $\mathcal{M}_{0,n}^\delta$ if every chord in $J$ is crossed by another chord in $J$ by \eqref{ucprimecrossingc}. It is therefore sufficient to consider a single term given by a set $J\subset \chi_{S,\delta}$ consisting of chords that do not cross. We write $\phi'=\phi_J$ and set $f'_{\underline{s}}=\prod_{c\notin J}u_c^{s_c}$ so that we have 
$$d(f_{\underline{s}}\phi)=d(f'_{\underline{s}}\phi') \wedge \left(\bigwedge_{c\in J}u_c^{s_c}\frac{du_c}{u_c} \right)\ .$$
The forgetful maps \eqref{fDXdelta} give rise to a diffeomorphism $X^\delta\simeq (0,1)^k\times X_J$ with $k=|J|$ and $X_J=X^{\delta_0}\times\cdots\times X^{\delta_k}$. We can thus write
$$\int_{X^\delta}d(f_{\underline{s}}\phi) = \pm \int_{(0,1)^k}\left(\bigwedge_{i=1}^k x_i^{s_{c_i}}\frac{dx_{i}}{x_i} \int_{X_J}d(f_{\underline{s}}'\phi')\right)\ ,$$
where the $x_i$ are the coordinates on $(0,1)^k$, corresponding to the dihedral coordinates $u_{c_i}$ for $J=\{c_1,\ldots,c_k\}$. Now the boundary of $X_J$ has components $\{u_c=0\}$ for $c$ a chord that does not cross any chord in $J$. Thus, if $\mathrm{Re}(s_c)>0$ for every chord $c$ we have that $f_{\underline{s}}'$, and hence $f_{\underline{s}}' \phi'$, vanishes on the boundary of $X_J$, and the inner integral is zero by Stokes' theorem for manifolds with corners.
\item Now, for $\omega$ as in the statement of the proposition, let $D_{c_1},\ldots,D_{c_r}$ be the divisors along which $\omega$ does not have a pole. Applying the first step of the proof to the product $\left(\prod_{i=1}^ru_{c_i}^{-1}\right)\omega$ yields the result.
\end{enumerate}
\end{proof}

\begin{example} \label{example: M04withcoeffs} Let $|S|=4$. Then 
$H_1(\mathcal{M}_{0,S}, \mathcal{L}_{-s,-t}) \cong H^{\lf}_1(\mathcal{M}_{0,S}, \mathcal{L}_{-s,-t})$
is one-dimensional. The locally finite homology  is spanned by the class of 
 $\sigma \otimes x^s(1-x)^t $
where $\sigma $ is the open interval $(0,1)$. 
The algebraic  de Rham cohomology group 
$H^1(\mathcal{M}_{0,S}, \nabla_{\underline{s}}) \cong \QQ^{\dR}_{\underline{s}}[\nu]  $
is one-dimensional spanned by the class of $\nu = - \nu_S$ where
$$\nu =  \frac{dx}{x(1-x)} \quad \in  \quad \Omega^1_{S}\ . $$
The period matrix $P_{\underline{s}}$ is the $1\times 1$ matrix whose  entry is the beta function:
\begin{equation} P_{\underline{s}} \;  =  \; \left(\int_{0}^1 x^s (1-x)^t \frac{dx}{x(1-x)} \right) \; = \; \left(\,\beta(s,t)\,\right) \; = \;  \left(\frac{\Gamma (s )\Gamma(t)}{\Gamma(s+t)} \right) \ .
\end{equation}
For any small $\varepsilon>0$, a representative for the regularisation of $[\sigma \otimes x^s (1-x)^t]$ is
$$  \Big(\frac{S_0(\varepsilon)}{e^{2\pi i s}-1}  + [\varepsilon, 1- \varepsilon] -   \frac{S_1(\varepsilon)}{e^{2\pi i t}-1} \Big) \otimes x^s(1-x)^t$$ 
where $S_i(\varepsilon)$ denotes the small circle of radius $\varepsilon$ winding positively around $i$.
From this one easily deduces the intersection product with the class of  $\sigma \otimes x^{-s}(1-x)^{-t}$. It is 
\begin{equation} \label{rank1Bettipairing}
 \langle [\sigma \otimes x^{s} (1-x)^{t}], [\sigma \otimes x^{-s} (1-x)^{-t}] \rangle_{\B}     =      
\frac{ 1 - e^{ 2\pi i (s+t) }}{ (1- e^{2\pi i s} )(1-e^{2\pi i t})}   =  \frac{i}{2}  \frac{\sin (\pi (s+t))}{\sin (\pi s) \sin (\pi t)}\ \cdot
 \end{equation}
 See, e.g.,  \cite{ChoMatsumoto}, \cite[\S 2]{kitayoshida}, or \cite[\S 2]{mimachiyoshida}. Dually:
 $$\langle[\sigma\otimes x^{s}(1-x)^{t}]^\vee,[\sigma\otimes x^{-s}(1-x)^{-t}]^\vee\rangle^\B = \frac{2}{i} \, \frac{\sin (\pi s) \sin (\pi t)}{\sin \pi (s+t)}\ \cdot$$

 On the other hand, the de Rham intersection pairing \cite{KMatsumoto} is
\begin{equation} \label{dRpairingS=4} \langle [\nu], [\nu] \rangle^{\dR} = \frac{1}{s} + \frac{1}{t}\ \cdot
\end{equation}
In this case, equation (\ref{IdPIbP}) reads
\begin{equation}\label{eq: functional equation beta} 
 2 \pi i\, \Big(\frac{1}{s} + \frac{1}{t}\Big) =\beta(-s,-t)\, \beta(s,t) \,\frac{2}{i}\frac{\sin (\pi s) \sin (\pi t)}{\sin \pi (s+t)}
 \end{equation}
 using (\ref{rank1Bettipairing}) and (\ref{dRpairingS=4}), as observed in  \cite{ChoMatsumoto}, or in terms of the gamma function:
 \begin{equation}\label{eq: functional equation beta gamma} 
 \frac{\Gamma(s)\Gamma(t)}{\Gamma(s+t)} \frac{\Gamma(-s)\Gamma(-t)}{\Gamma(-s-t)}= -\pi\,\frac{(s+t)\sin(\pi(s+t))}{s\sin(\pi s)\, t\sin(\pi t)}\ \cdot
 \end{equation}
  This  can easily be deduced  from the well-known functional equation for the gamma function  $\Gamma(s) \Gamma(-s) = - \frac{\pi}{s \sin (\pi s)}$, and is in fact equivalent to it (set $t=-s/2$).

\end{example}

\subsection{Self-duality} It is convenient to reformulate the above relations as a statement about self-duality. Consider  the object
\begin{eqnarray} M_{\dR}  &=  & H^n( \mathcal{M}_{0,S} , \nabla_{\underline{s}}) \oplus H^n(\mathcal{M}_{0,S},\nabla_{- \underline{s}}) \nonumber \\ 
M_{\B} & = &  H^n( \mathcal{M}_{0,S} , \mathcal{L}_{\underline{s}}) \oplus H^n(\mathcal{M}_{0,S}, \mathcal{L}_{-\underline{s}}) \nonumber 
\end{eqnarray}
and 
denote the comparison
$$P= P_{\underline{s}} \oplus P_{-\underline{s}} :   M_{\dR} \otimes_{\QQ_{\underline{s}}^\dR} \CC \overset{\sim}{\To} M_{\B}\otimes_{\QQ_{\underline{s}}^\B} \CC$$
The results in the previous section  can be summarised by saying that the triple of  objects 
$(M_{\dR}, M_{\B}, P)$
is self-dual. In other words, the Betti and de Rham pairings induce isomorphisms
$$I_{\dR} : M_{\dR} \cong M_{\dR}^{\vee} \qquad \hbox{ and } \qquad I_{\B} : M_{\B} \cong M_{\B}^{\vee}$$
which are compatible with the comparison isomorphism $P$. With these notations, equation (\ref{IdPIbP}) can be written in the simpler form: 
 \begin{equation}(2\pi i)^n I_{\dR}  = P^{\vee} I_{\B} P \ .
 \end{equation}

\section{Single-valued periods for cohomology with coefficients} \label{sect: SectCohomcoeffs}

    We fix a solution $(s_{ij})$ of the momentum conservation equations over the complex numbers.

\subsection{Complex conjugation and the single-valued period map}
We can define and compute a period pairing on de Rham cohomology classes by transporting complex conjugation which is the anti-holomorphic diffeomorphism:
 \begin{equation}\label{eq: complex conjugation local systems}
 \mathrm{conj}:\mathcal{M}_{0,S}(\CC) \To \mathcal{M}_{0,S}(\CC)\ .
 \end{equation}
 
 Since it reverses the orientation of simple closed loops, and since a rank one local system on $\mathcal{M}_{0,S}(\CC)$ is determined by a representation of the abelian group $H_1(\mathcal{M}_{0,S}(\CC))$  we see that we have an isomorphism of local systems: 
 \begin{equation}\label{eq: iso local systems conj}
 \mathrm{conj}^*\mathcal{L}_{\underline{s}}\simeq \mathcal{L}_{-\underline{s}}\ .
 \end{equation} We thus get a morphism of local systems on $  \mathcal{M}_{0,S}(\CC)$: 
 $$\mathcal{L}_{\underline{s}} \To \mathrm{conj}_*\mathrm{conj}^*\mathcal{L}_{\underline{s}} \simeq \mathrm{conj}_*\mathcal{L}_{-\underline{s}}\ ,$$
 which at the level of cohomology induces a morphism of $\QQ_{\underline{s}}^{\mathrm{B}}$-vector spaces
 $$F_\infty: H^n(\mathcal{M}_{0,S}, \mathcal{L}_{\underline{s}}) \To H^n(\mathcal{M}_{0,S},\mathcal{L}_{-\underline{s}})\ .$$
 We call $F_\infty$ the \emph{real Frobenius} or \emph{Frobenius at the infinite prime}. We will use the notation $F_\infty(\underline{s})$ when we want to make dependence on $\underline{s}$ explicit. One checks that the Frobenius is involutive: $F_{\infty}(-\underline{s}) F_{\infty}(\underline{s}) =\id$.
 
 \begin{remark}\label{rem: real frobenius homology} 
 The isomorphism \eqref{eq: iso local systems conj} is induced by the trivialisation of the tensor product $\mathrm{conj}^*\mathcal{L}_{\underline{s}}\otimes \mathcal{L}_{\underline{s}}$ given by the section 
 $$g_{\underline{s}} = \prod_{i<j}|p_j-p_i|^{-2s_{ij}} = \prod_{i<j}(\overline{p_j}-\overline{p_i})^{-s_{ij}}\cdot \prod_{i<j}(p_j-p_i)^{-s_{ij}}  \ .$$
 Thus, the action of real Frobenius on homology
 $$F_\infty:H_n(\mathcal{M}_{0,S},\mathcal{L}_{\underline{s}})\To H_n(\mathcal{M}_{0,S},\mathcal{L}_{-\underline{s}})$$
 is given by the formula
  $$\sigma\otimes \prod_{i<j}(p_j-p_i)^{-s_{ij}} \;\mapsto\;\overline{\sigma}\otimes \prod_{i<j}(\overline{p_j}-\overline{p_i})^{-s_{ij}}\, g_{\underline{s}}^{-1} = \overline{\sigma}\otimes \prod_{i<j}(p_j-p_i)^{s_{ij}}\ .$$
 \end{remark}
 
 \begin{remark}
 A morphism similar to $F_\infty$ was considered in \cite{hanamurayoshida} and leads to similar formulae but has a different definition. Our definition only uses the action of complex conjugation  on the complex points of  $\mathcal{M}_{0,S}$, whereas the definition in [\emph{loc. cit.}] conjugates the field of coefficients of the local systems. Note that our definition does not require the $s_{ij}$ to be real.
 \end{remark}

\begin{definition}\label{definitionsvforcoefficients}
The \emph{single-valued period map} is the $\CC$-linear isomorphism 
$$\s : H^n(\mathcal{M}_{0,S},\nabla_{\underline{s}})\otimes_{\QQ^{\mathrm{dR}}_{\underline{s}}}\CC\To H^n(\mathcal{M}_{0,S},\nabla_{-\underline{s}})\otimes_{\QQ^{\mathrm{dR}}_{\underline{s}}}\CC$$
defined as the composite 
$$\s = \mathrm{comp}_{\mathrm{B},\mathrm{dR}}^{-1}(-\underline{s}) \circ (F_\infty\otimes \mathrm{id}) \circ \mathrm{comp}_{\mathrm{B},\mathrm{dR}}(\underline{s}) \ .$$
In other words, it is defined by the following commutative diagram:
$$\xymatrixcolsep{5pc}\xymatrix{
 H^n(\mathcal{M}_{0,S},\nabla_{\underline{s}})\otimes_{\QQ^{\mathrm{dR}}_{\underline{s}}}\CC \ar[d]_{\s} \ar[r]^-{\mathrm{comp}_{\mathrm{B},\mathrm{dR}}(\underline{s})} & H^n(\mathcal{M}_{0,S},\mathcal{L}_{\underline{s}})\otimes_{\QQ_{\underline{s}}^{\mathrm{B}}}\CC \ar[d]^{F_\infty\otimes \mathrm{id}}\\
H^n(\mathcal{M}_{0,S},\nabla_{-\underline{s}})\otimes_{\QQ^{\mathrm{dR}}_{\underline{s}}}\CC \ar[r]_{\mathrm{comp}_{\mathrm{B},\mathrm{dR}}(-\underline{s})}& H^n( \mathcal{M}_{0,S},\mathcal{L}_{-\underline{s}})\otimes_{\QQ_{\underline{s}}^{\mathrm{B}}}\CC
}$$
\end{definition}

The single-valued period map can be computed explicitly by choosing
a  $\QQ^{\dR}_{\underline{s}}$-bases $\{[\omega]\}$ and $\{[\nu]\}$ for $H^n( \mathcal{M}_{0,S}, \nabla_
{-\underline{s}})$ and $H^n(\mathcal{M}_{0,S},\nabla_{\underline{s}})$ respectively
 and a $\QQ^{\B}_{\underline{s}}$-basis  $\{[\sigma \otimes f_{\underline{s}}]\}$  for     $H_n( \mathcal{M}_{0,S}, \mathcal{L}_{-\underline{s}})$.   
In these bases, the isomorphism (\ref{comparisonCoeffs})  is represented by a  matrix $P_{\underline{s}}$ with  entries 
$$P_{\underline{s}}([\sigma \otimes f_{\underline{s}}],[\nu]) = \int_{\sigma}   f_{\underline{s}}\, \nu \ .$$
By Remark \ref{rem: real frobenius homology} the entries of $F_\infty P_{-\underline{s}}$ are 
$$(F_{\infty} P_{-\underline{s}})([\sigma\otimes f_{\underline{s}}],[\omega]) = \int_{\overline{\sigma}}  f_{-\underline{s}}\,\omega \  .$$
The single-valued period matrix (the matrix of $\s$) is then the product 
$$P_{-\underline{s}}^{-1}(F_\infty P_{\underline{s}})=(F_\infty P_{-\underline{s}})^{-1}P_{\underline{s}}\ .$$
This formula is often impractical because one needs to compute \emph{all} the entries of the period matrix in order to compute any single entry of the single-valued period matrix.

\begin{example}\label{ex: sv beta definition}
With the notation of Example \ref{example: M04withcoeffs} we have $\overline{\sigma}=\sigma$ since $(0,1)$ is real, and the single-valued period matrix is 
$$(F_{\infty} P_{-\underline{s}})^{-1} P_{\underline{s}} = \left(\beta(-s,-t)^{-1}\beta(s,t)\right) = \left(\frac{\Gamma(s)\Gamma(t)\Gamma(-s-t)}{\Gamma(s+t)\Gamma(-s)\Gamma(-t)}\right)\ .$$
\end{example}

\subsection{The single-valued period pairing via the Betti pairing} 
If the $s_{ij}$ are generic in the sense of (\ref{assumptions}), the single-valued period map and the de Rham pairing induce a \emph{single-valued period pairing}
$$H^n_{\dR}(\mathcal{M}_{0,S}, \nabla_{\underline{s}}) \otimes_{\QQ^{\dR}_{\underline{s}}} H^n_{\dR}(\mathcal{M}_{0,S}, \nabla_{\underline{s}}) \To \CC $$
given for $\omega,\nu\in \QQ_{\underline{s}}^\dR\otimes\Omega^n_S$ by the formula
$$[\nu]\otimes[\omega] \,\mapsto\, \langle [\nu],\s[\omega]\rangle^{\dR} = \langle [\nu],P_{-\underline{s}}^{-1}F_\infty P_{\underline{s}}[\omega] \rangle^\dR\ .$$
One can use the compatibility between the de Rham and the Betti pairings to express the single-valued pairing in terms of the latter.

\begin{proposition}\label{lem2ndDC} Assume that the $s_{ij}$ are generic in the sense of \eqref{assumptions}. Let $\omega,\nu\in \QQ_{\underline{s}}^\dR\otimes\Omega_S^n$ and denote by $[\omega]$, $[\nu]$ their classes in $H^n(\mathcal{M}_{0,S},\nabla_{\underline{s}})$. The corresponding single-valued period is given by the formula
$$\langle[\nu],\s[\omega]\rangle^\dR =(2\pi i)^{-n}  \langle  P_{\underline{s}} [\nu] ,  F_{\infty} P_{\underline{s}} [\omega] \rangle^{\B}$$
and can be computed explicitly by a sum
$$(2\pi i)^{-n}\sum_{\substack{[\sigma\otimes f_{-\underline{s}}] \\ [\tau\otimes f_{\underline{s}}]}}   \langle [\tau\otimes f_{\underline{s}}]^{\vee}, [\sigma\otimes f_{-\underline{s}}]^{\vee}\rangle^{\B}  \int_{\tau} f_{\underline{s}}\,\nu \,\int_{\overline{\sigma}} f_{\underline{s}}\,\omega     \ ,$$
where $[\sigma\otimes f_{-\underline{s}}]$ and $[\tau\otimes f_{\underline{s}}]$ range over a basis of $H_n(\mathcal{M}_{0,S}, \mathcal{L}_{\underline{s}})$ and $H_n(\mathcal{M}_{0,S}, \mathcal{L}_{-\underline{s}})$ respectively, and $[\sigma\otimes f_{-\underline{s}}]^{\vee}, [\tau\otimes f_{\underline{s}}]^{\vee}$ are the dual bases.
\end{proposition}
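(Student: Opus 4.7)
The plan is to deduce the first equality directly from the twisted period relations (Theorem \ref{thm: twisted period relations}), and then to expand both sides of the resulting Betti pairing in appropriate dual bases to obtain the explicit sum.

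For the first formula, I would start with the defining diagram of the single-valued period map in Definition \ref{definitionsvforcoefficients}, which yields the identity
$$P_{-\underline{s}}\,\s[\omega] = F_{\infty}\,P_{\underline{s}}[\omega]$$
in $H^n(\mathcal{M}_{0,S},\mathcal{L}_{-\underline{s}})\otimes_{\QQ^\B_{\underline{s}}}\CC$. Applying the twisted period relations \eqref{IdPIbP} to the pair $([\nu],\s[\omega])\in H^n(\mathcal{M}_{0,S},\nabla_{\underline{s}})\times H^n(\mathcal{M}_{0,S},\nabla_{-\underline{s}})$ (after $\CC$-linear extension of all pairings) gives
$$(2\pi i)^n\langle[\nu],\s[\omega]\rangle^\dR = \langle P_{\underline{s}}[\nu],P_{-\underline{s}}\,\s[\omega]\rangle^\B = \langle P_{\underline{s}}[\nu],F_{\infty}P_{\underline{s}}[\omega]\rangle^\B\ ,$$
which is the first displayed equation.

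For the explicit sum, I would use that $\mathcal{L}_{\underline{s}}^\vee \simeq \mathcal{L}_{-\underline{s}}$ identifies $H^n(\mathcal{M}_{0,S},\mathcal{L}_{\underline{s}})$ with the dual of $H_n(\mathcal{M}_{0,S},\mathcal{L}_{-\underline{s}})$, and similarly with signs flipped. Choose bases $\{[\tau\otimes f_{\underline{s}}]\}$ and $\{[\sigma\otimes f_{-\underline{s}}]\}$ of $H_n(\mathcal{M}_{0,S},\mathcal{L}_{-\underline{s}})$ and $H_n(\mathcal{M}_{0,S},\mathcal{L}_{\underline{s}})$ respectively, and expand
$$P_{\underline{s}}[\nu] = \sum_{\tau} \Big(\int_{\tau} f_{\underline{s}}\,\nu\Big)\,[\tau\otimes f_{\underline{s}}]^\vee\ .$$
For the factor $F_\infty P_{\underline{s}}[\omega]$, I would compute its coefficient in the dual basis $\{[\sigma\otimes f_{-\underline{s}}]^\vee\}$ by pairing with $[\sigma\otimes f_{-\underline{s}}]$, using Remark \ref{rem: real frobenius homology}: the homological real Frobenius sends $[\sigma\otimes f_{-\underline{s}}]$ to $[\overline{\sigma}\otimes f_{\underline{s}}]$, and it is adjoint to the cohomological $F_\infty$ (up to the identifications coming from $\mathcal{L}_{\underline{s}}^\vee \simeq \mathcal{L}_{-\underline{s}}$). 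This yields
$$F_\infty P_{\underline{s}}[\omega] = \sum_{\sigma} \Big(\int_{\overline{\sigma}} f_{\underline{s}}\,\omega\Big)\,[\sigma\otimes f_{-\underline{s}}]^\vee\ .$$
Substituting these two expansions into $\langle P_{\underline{s}}[\nu],F_\infty P_{\underline{s}}[\omega]\rangle^\B$ and multiplying by $(2\pi i)^{-n}$ produces the claimed sum.

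The main subtlety will be the second expansion, which requires carefully tracking the identifications $\mathcal{L}_{\underline{s}}^\vee\simeq \mathcal{L}_{-\underline{s}}$ in order to compute $F_\infty P_{\underline{s}}[\omega]$ as a cohomology class via its evaluation on homology classes. In particular one must verify that, under these identifications, the action of $F_\infty$ on cohomology is adjoint to its action on homology, and that $\overline{\sigma}\otimes f_{\underline{s}}$ is the correct representative, so that the $\CC$-linear coefficient really is $\int_{\overline{\sigma}} f_{\underline{s}}\,\omega$ and not its complex conjugate or $\int_{\overline{\sigma}} f_{-\underline{s}}\,\omega$. Everything else is formal bookkeeping.
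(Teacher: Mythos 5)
Your proof follows the paper's own argument exactly: the first displayed equation is obtained by combining the defining diagram of $\s$ (giving $P_{-\underline{s}}\,\s = F_\infty P_{\underline{s}}$) with the twisted period relations of Theorem~\ref{thm: twisted period relations}, and the explicit sum is then read off by expanding $P_{\underline{s}}[\nu]$ and $F_\infty P_{\underline{s}}[\omega]$ in the dual bases using the definition of $P_{\underline{s}}$ and Remark~\ref{rem: real frobenius homology}. You have correctly flagged the one genuinely delicate point (tracking the identifications $\mathcal{L}_{\underline{s}}^\vee\simeq\mathcal{L}_{-\underline{s}}$ and the adjointness of $F_\infty$ so the coefficient comes out as $\int_{\overline{\sigma}}f_{\underline{s}}\,\omega$), which is exactly what the paper leaves implicit in its brief proof.
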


\begin{proof}
We have
$$\langle[\nu],\s[\omega]\rangle^\dR = \langle [\nu],P_{-\underline{s}}^{-1}F_\infty P_{\underline{s}}[\omega]\rangle^\dR = (2\pi i)^{-n} \langle P_{\underline{s}}[\nu],F_\infty P_{\underline{s}}[\omega]\rangle^\B\ ,$$
where the first equality is the definition of the single-valued period map, and the second equality follows from Theorem \ref{thm: twisted period relations}. The second formula follows from the definition of $P_{\underline{s}}$ and Remark \ref{rem: real frobenius homology}.
\end{proof}

\begin{example} \label{ex: sv beta pairings} Following up on Example \ref{ex: sv beta definition} and using \eqref{dRpairingS=4} we see that we have 
\begin{equation}\label{eq: sv beta pairing dR}
\langle[\nu],\s[\nu]\rangle^\dR = \beta(-s,-t)^{-1}\beta(s,t)\left(\frac{1}{s}+\frac{1}{t}\right) = - \frac{\Gamma(s)\Gamma(t)\Gamma(1-s-t)}{\Gamma(s+t)\Gamma(1-s)\Gamma(1-t)}\ , 
\end{equation}
where we have used $\Gamma(-x)=-x\,\Gamma(1-x)$. Now using \eqref{rank1Bettipairing}, Proposition \ref{lem2ndDC} reads
\begin{equation}\label{eq: sv beta pairing B}
\langle[\nu],\s[\nu]\rangle^\dR = \frac{1}{2\pi i}\,\frac{2}{i}\frac{\sin(\pi s)\sin(\pi t)}{\sin(\pi(s+t))}\,\beta(s,t)^2 =-\frac{1}{\pi} \frac{\sin(\pi s)\sin(\pi t)}{\sin(\pi (s+t))}\left(\frac{\Gamma(s)\Gamma(t)}{\Gamma(s+t)}\right)^2.
\end{equation}
One deduces \eqref{eq: sv beta pairing B} from \eqref{eq: sv beta pairing dR}, and vice versa, by applying the functional equation \eqref{eq: functional equation beta}.
\end{example} 

\subsection{An integral formula for single-valued periods}
The single-valued period map is a transcendental comparison isomorphism that is naturally interpreted at the level of analytic de Rham cohomology via the isomorphism \eqref{eq: algebraic analytic de Rham KN}.

\begin{lemma}\label{lem:singlevalued analytic}
In analytic de Rham cohomology, the single-valued period map is induced by the morphism of smooth de Rham complexes
\begin{eqnarray*}
    \s^{\mathrm{an}} : (\mathcal{A}^\bullet_{\mathcal{M}_{0,S}(\CC)},\nabla_{\underline{s}}^{\mathrm{an}}) &\To &
    \mathrm{conj}_*(\mathcal{A}^\bullet_{\mathcal{M}_{0,S}(\CC) }, \nabla_{-\underline{s}}^{\mathrm{an}})
\end{eqnarray*}
given on the level of sections by 
$$\mathcal{A}^\bullet_{\mathcal{M}_{0,S}(\CC)}(U)\,\ni\, \omega \quad  \mapsto \quad  \prod_{i<j}|p_j-p_i|^{2s_{ij}}\; \mathrm{conj}^*(\omega)\,\in\, \mathcal{A}^\bullet_{\mathcal{M}_{0,S}(\CC)}(\overline{U})\ .$$
\end{lemma}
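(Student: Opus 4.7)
My plan is to reduce the statement to a computation at the level of horizontal sections, using that both complexes in question are fine resolutions of rank-one local systems placed in degree zero.

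First, I would check that $\s^{\mathrm{an}}$ is indeed a morphism of complexes. The essential identity is $d\log g_{-\underline{s}} = \omega_{\underline{s}}+\overline{\omega_{\underline{s}}}$, where $g_{-\underline{s}}=\prod|p_j-p_i|^{2s_{ij}}$ and $\overline{\omega_{\underline{s}}}=\sum s_{ij}\,d\log(\overline{p_j}-\overline{p_i})=\mathrm{conj}^*\omega_{\underline{s}}$ (since $\mathrm{conj}$ acts on coordinates by complex conjugation). Expanding $\nabla_{-\underline{s}}^{\mathrm{an}}\bigl(g_{-\underline{s}}\,\mathrm{conj}^*\omega\bigr)$ via Leibniz, the $\omega_{\underline{s}}\wedge(\cdot)$ terms cancel against $-\omega_{\underline{s}}\wedge g_{-\underline{s}}\mathrm{conj}^*\omega$, and what remains is $g_{-\underline{s}}\,\mathrm{conj}^*(d\omega+\omega_{\underline{s}}\wedge\omega)=\s^{\mathrm{an}}(\nabla_{\underline{s}}^{\mathrm{an}}\omega)$.

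Next, I would invoke the twisted Poincar\'{e} lemma to identify $(\mathcal{A}^\bullet_{\mathcal{M}_{0,S}(\CC)},\nabla_{\pm\underline{s}}^{\mathrm{an}})$ with a fine resolution of the rank-one local system $\mathcal{L}_{\pm\underline{s}}\otimes_{\QQ_{\underline{s}}^{\B}}\CC$ of horizontal sections, placed in degree zero. The key verification is that on a local horizontal section, $\s^{\mathrm{an}}$ recovers the canonical morphism of local systems $F_\infty:\mathcal{L}_{\underline{s}}\to\mathrm{conj}_*\mathcal{L}_{-\underline{s}}$ used in Definition~\ref{definitionsvforcoefficients}. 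Applying $\s^{\mathrm{an}}$ to the generator $\prod(p_j-p_i)^{-s_{ij}}$ of $\mathcal{L}_{\underline{s}}$ over an open $U$ yields
\[
\prod|p_j-p_i|^{2s_{ij}}\cdot\prod(\overline{p_j}-\overline{p_i})^{-s_{ij}} = \prod(p_j-p_i)^{s_{ij}},
\]
the canonical generator of $\mathcal{L}_{-\underline{s}}$ on $\mathrm{conj}^{-1}(U)$, in exact agreement with the $g_{\underline{s}}$-trivialisation of $\mathrm{conj}^*\mathcal{L}_{\underline{s}}\otimes\mathcal{L}_{\underline{s}}$ described in Remark~\ref{rem: real frobenius homology}.

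To conclude, I would use the formal fact that any two chain maps between bounded-below fine resolutions that agree on the degree-zero kernel sheaves represent the same morphism in $D^b(\mathcal{M}_{0,S}(\CC))$, and therefore induce the same map on hypercohomology. Hence $\s^{\mathrm{an}}$ induces on $H^n$ the same map as the one induced by $F_\infty$ on sheaf cohomology. Combined with \eqref{eq: algebraic analytic de Rham KN} identifying algebraic and analytic de Rham cohomology, and with the twisted Betti--de Rham comparison arising from the Poincar\'{e} lemma, this exhibits exactly the composition defining $\s$. The main subtlety is bookkeeping: keeping straight the several identifications (horizontal sections on $U$ versus on $\mathrm{conj}^{-1}(U)$, pushforward by $\mathrm{conj}$, and $\QQ_{\underline{s}}^{\B}$-structure versus $\CC$-coefficients) so that the degree-zero match identifies the correct canonical map and not some twist of it.
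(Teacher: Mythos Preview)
Your proposal is correct and follows essentially the same approach as the paper: verify directly that $\s^{\mathrm{an}}$ commutes with the connections, then check on the horizontal section $\prod(p_j-p_i)^{-s_{ij}}$ that it recovers the map $\mathcal{L}_{\underline{s}}\to\mathrm{conj}_*\mathcal{L}_{-\underline{s}}$ from Remark~\ref{rem: real frobenius homology}. The paper's proof simply says ``the result follows'' after this check, whereas you spell out the underlying formal reason (fine resolutions and the uniqueness of the induced map in the derived category), which is a welcome clarification but not a different argument.
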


\begin{proof}
Recall the notation $g_{\underline{s}}=\prod_{i<j}|p_j-p_i|^{2s_{ij}}$. We first check that $\s^{\mathrm{an}}$ is a morphism of complexes:
\begin{eqnarray*}
\nabla^{\mathrm{an}}_{-\underline{s}}(\s^{\mathrm{an}}(\omega)) & = & \nabla^{\mathrm{an}}_{-\underline{s}}(g_{\underline{s}}\,\mathrm{conj}^*(\omega)) \\
& = & g_{\underline{s}}\left(\left(\sum_{i<j}s_{ij}\,d\log(p_j-p_i)  + \sum_{i<j}s_{ij}\,d\log(\overline{p_j}-\overline{p_i})\right)\wedge\mathrm{conj}^*(\omega) + d(\mathrm{conj}^*(\omega))\right) \\
& & \qquad -\sum_{i<j}s_{ij}\, d\log(p_j-p_i)\wedge (g_{\underline{s}} \,\mathrm{conj}^*(\omega)) \\
& = & g_{\underline{s}}\left(\sum_{i<j} s_{ij}\,d\log(\overline{p_j}-\overline{p_i})\wedge\mathrm{conj}^*(\omega)+d(\mathrm{conj}^*(\omega))\right) \\
& = & g_{\underline{s}}\,\mathrm{conj}^*\left(\sum_{i<j} s_{ij}\, d\log(p_j-p_i)\wedge\omega+ d\omega \right)  \\
& = & \s^{\mathrm{an}}(\nabla^{\mathrm{an}}_{\underline{s}}(\omega))\ .
\end{eqnarray*}
On the level of horizontal sections, we compute:
$$\s^{\mathrm{an}}\left(\prod_{i<j}(p_j-p_i)^{-s_{ij}}\right)  =  g_{\underline{s}}\,\prod_{i<j}(\overline{p_j}-\overline{p_i})^{-s_{ij}} 
= \prod_{i<j}(p_j-p_i)^{s_{ij}}\ .$$
Thus, $\s^{\mathrm{an}}$ induces the isomorphism $\mathcal{L}_{\underline{s}}\rightarrow \mathrm{conj}_*\mathcal{L}_{-\underline{s}}$ and the result follows.
\end{proof}

We can now give an explicit formula for single-valued periods in the case of forms with logarithmic singularities.

\begin{theorem} \label{theoremsvforlogwithcoeffs}
Assume the $s_{ij}$ are generic in the sense of \eqref{assumptions}. Let $\omega \in \QQ_{\underline{s}}^\dR\otimes \Omega^n_S$ and let $\nu_S$ be as in Definition \ref{definitionnuS}. Then $\nu_S$, $\omega$ define de Rham cohomology classes
$$[\nu_S], [\omega]  \quad \in \quad H_{\dR}^n(\mathcal{M}_{0,S}, \nabla_{\underline{s}})\ .$$
If the inequalities stated in Proposition \ref{propIclosedconverges} hold, then the single-valued period of $[\nu_S]\otimes[\omega]$ is given by the absolutely convergent integral
$$\langle[\nu_S],\s[\omega]\rangle^{\dR} = (2\pi i)^{-n} \int_{\overline{\mathcal{M}}_{0,S}(\CC)} \left( \prod_{i<j} |p_j-p_i|^{2s_{ij}} \right)  \nu_S \wedge   \overline{\omega} \, \ .$$
\end{theorem}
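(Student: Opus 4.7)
The plan is to compute $\langle[\nu_S],\s[\omega]\rangle^{\dR}$ by combining Lemma \ref{lem:singlevalued analytic}, which represents $\s[\omega]$ at the smooth de Rham level, with the integral formula for the de Rham pairing. By Lemma \ref{lem:singlevalued analytic}, the class $\s[\omega]$ in $H^n(\mathcal{M}_{0,S},\nabla_{-\underline{s}})$ is represented in the smooth de Rham complex by the form $g_{\underline{s}}\,\mathrm{conj}^*(\omega) = g_{\underline{s}}\,\overline{\omega}$, where $g_{\underline{s}}=\prod_{i<j}|p_j-p_i|^{2s_{ij}}$. The de Rham pairing formula then gives
$$\langle[\nu_S],\s[\omega]\rangle^{\dR} = (2\pi i)^{-n}\int_{\mathcal{M}_{0,S}(\CC)} \widetilde{\nu_S}\wedge g_{\underline{s}}\,\overline{\omega},$$
where $\widetilde{\nu_S}$ is any compactly supported smooth representative of $[\nu_S]$ in the $\nabla_{\underline{s}}$-complex.

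The core step is to replace $\widetilde{\nu_S}$ by $\nu_S$ itself. Using Proposition \ref{prop:smooth log forms coefficients}, $[\nu_S]$ can be computed in the complex of global smooth logarithmic forms on $\overline{\mathcal{M}}_{0,S}$, so there exists $\phi\in\Gamma(\overline{\mathcal{M}}_{0,S},\mathcal{A}^{n-1}_{\overline{\mathcal{M}}_{0,S}}(\log\partial\overline{\mathcal{M}}_{0,S}))$ with $\nu_S-\widetilde{\nu_S} = \nabla_{\underline{s}}\phi$. Since $\s^{\mathrm{an}}$ is a morphism of complexes by Lemma \ref{lem:singlevalued analytic}, the form $g_{\underline{s}}\,\overline{\omega}$ is $\nabla_{-\underline{s}}$-closed, and a direct Leibniz computation yields
$$\nabla_{\underline{s}}(\phi)\wedge g_{\underline{s}}\,\overline{\omega} \;=\; d\!\left(\phi\wedge g_{\underline{s}}\,\overline{\omega}\right).$$
Thus we need to show that
$$\int_{\mathcal{M}_{0,S}(\CC)} d\!\left(\phi\wedge g_{\underline{s}}\,\overline{\omega}\right) = 0,$$
which combined with the convergence of the right-hand side of the statement (Proposition \ref{propIclosedconverges}) will conclude the argument.

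The main obstacle is justifying this application of Stokes' theorem on $\overline{\mathcal{M}}_{0,S}(\CC)$, since $\phi\wedge g_{\underline{s}}\,\overline{\omega}$ carries logarithmic singularities along $\partial\overline{\mathcal{M}}_{0,S}$. The strategy is to cut out a small tubular neighborhood $U_{\varepsilon}$ of each boundary divisor, apply Stokes on the complement, and let $\varepsilon\to 0$. Near a boundary component with local coordinate $z=u_c$, the form $g_{\underline{s}}$ carries a factor $|z|^{2s_c}$ (or a positive integer linear combination thereof for divisors at infinite distance), and the inequalities in Proposition \ref{propIclosedconverges} together with the genericity assumption \eqref{assumptions} guarantee that $|z|^{2\mathrm{Re}(s_c)}$ dominates the logarithmic divergences of $\phi$ and $\overline{\omega}$, so that the boundary integrals on $\partial U_{\varepsilon}$ vanish in the limit. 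This is essentially a twisted version of the computation already appearing in the proof that forms with $\mathrm{Re}(s_c)>0$ pair trivially with exact forms (first step of the proof preceding Example \ref{example: M04withcoeffs}), adapted to the closed-string setting with both holomorphic and anti-holomorphic logarithmic poles, and can be extracted directly from the general formalism of polar-smooth forms in \cite[\S3-4]{BD1}.
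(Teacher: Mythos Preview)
Your overall strategy matches the paper's: invoke Lemma~\ref{lem:singlevalued analytic} to represent $\s[\omega]$ by $g_{\underline{s}}\,\overline{\omega}$, reduce to showing that an exact form integrates to zero, and justify Stokes by cutting out tubular neighbourhoods of $\partial\overline{\mathcal{M}}_{0,S}$. The one substantive difference is \emph{which} argument of the de~Rham pairing you choose to represent by a compactly supported form. You regularise $\nu_S$ and keep $\omega$; the paper does the opposite, writing $\omega-\widetilde{\omega}=\nabla_{\underline{s}}\phi$ with $\widetilde{\omega}$ compactly supported, and then showing $\int g_{\underline{s}}\,\nu_S\wedge\overline{\nabla_{\underline{s}}\phi}=0$ by an explicit local computation in three cases (away from the boundary, finite-distance divisors, infinite-distance divisors).

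This swap is not innocuous. The paper's local analysis exploits the special feature of $\nu_S$ (Lemma~\ref{lemnuSpoles}): its only poles lie along the divisors at \emph{finite} distance. In the infinite-distance case (Case~3 of the paper's proof) this is exactly what makes the boundary term $\varepsilon^{2t+1}\int i f e^{i\theta}d\theta$ carry the extra factor of $\varepsilon$, so that the weak bound $2\,\mathrm{Re}(t)>-1$ from Proposition~\ref{propIclosedconverges} suffices. In your version the form living on the ``outside'' is $\overline{\omega}$, which can have a pole $d\overline{z}/\overline{z}$ along an infinite-distance divisor, and $\phi$ (coming from the logarithmic complex) can simultaneously contribute a $dz/z$; the boundary term then scales like $\varepsilon^{2t-1}$, and the hypotheses of Proposition~\ref{propIclosedconverges} no longer kill it. (Your parenthetical that the exponent at infinite distance is a ``positive integer linear combination'' is also incorrect: it is a $\{0,\pm 1\}$-linear combination of the $s_c$ and can be negative.)

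So the gap is concrete: by regularising $\nu_S$ rather than $\omega$, you lose access to the one structural fact that makes the infinite-distance boundary terms controllable under the stated inequalities. Your appeal to the polar-smooth formalism of \cite{BD1} does not obviously supply the missing estimate, and in any case the paper does not defer this step---it carries out the three-case local computation explicitly. The fix is simple: follow the paper and make $\omega$ the compactly supported representative, keeping $\nu_S$ as is.
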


\begin{proof}
Recall the notation $g_{\underline{s}} = \prod_{i<j}|p_j-p_i|^{2s_{ij}}$. By definition and Lemma \ref{lem:singlevalued analytic} we have the formula, valid for every generic $(s_{ij})$:
$$\langle[\nu_S],\s[\omega]\rangle^{\dR} = (2\pi i)^{-n} \int_{\overline{\mathcal{M}}_{0,S}(\CC)} g_{\underline{s}}\, \nu_S \wedge   \overline{\widetilde{\omega}}\ ,$$
where $\widetilde{\omega}$ is a global section of $\mathcal{A}^n_{\overline{\mathcal{M}}_{0,S}}(\log\partial\overline{\mathcal{M}}_{0,S})$ with compact support which is cohomologous to $\omega$, i.e., such that $\omega-\widetilde{\omega}=\nabla_{\underline{s}}\phi$, with $\phi$ a global section of $\mathcal{A}^{n-1}_{\overline{\mathcal{M}}_{0,S}}(\log\partial\overline{\mathcal{M}}_{0,S})$. We need to prove that the integral of $g_{\underline{s}}\nu_S\wedge\overline{\nabla_{\underline{s}}\phi} = \pm d(g_{\underline{s}}\nu_S\wedge\overline{\phi})$ on $\overline{\mathcal{M}}_{0,S}(\CC)$ vanishes under the stated assumptions on $\underline{s}$. By a partition of unity argument, 
we can assume that $\phi$ has support in a local  chart on $\overline{\mathcal{M}}_{0,S}$.
For this, let  $(z_1,\ldots,z_n)$ denote local coordinates
$\overline{\mathcal{M}}_{0,S}$ taking values in a polydisk $\Delta^n=\{|z_i|<1\}$, with respect to which $\partial\overline{\mathcal{M}}_{0,S}$ is a union of coordinate hyperplanes $\{z_i=0\}$.  We can assume the support of $\phi$ is contained within this polydisk.  By renumbering the coordinates if necessary, we let $z_1,\ldots,z_r$ denote the equations of  components of $\partial\overline{\mathcal{M}}_{0,S}$ at finite distance and $z_{r+1},\ldots,z_{r+s}$ denote the coordinates corresponding to components at infinite distance (relative to the fixed dihedral structure on $S$). In these coordinates we have
$$g_{\underline{s}} = a \prod_{i=1}^r |z_i|^{2s_i} \prod_{i=r+1}^{r+s}|z_i|^{2t_i}$$
where $a$ is a smooth function on $\Delta^n$, $s_i$ is one of the Mandelstam variables $s_c$, and $t_i$ is a linear combination of Mandelstam variables $s_c$ with coefficients in $\{0,1,-1\}$. 
Since $\nu_S$ only has simple poles located along the divisors at finite distance, we can  write
$$\nu_S = b \, \frac{dz_1}{z_1}\wedge \cdots \wedge \frac{dz_r}{z_r}\wedge dz_{r+1}\wedge \cdots \wedge dz_n$$
where $b$ is a smooth function on $\Delta^n$. Since $\phi$ has degree $n-1$, by linearity in $\phi$,  we can assume that there is a single coordinate $z_p$ such that $dz_p$ does not appear in $\phi$. There are three cases to consider, depending on whether this coordinate is away from $\partial\overline{\mathcal{M}}_{0,S}$, or corresponds to a component at finite or infinite distance. In each case we use the Leibniz rule to compute $d(g_{\underline{s}}\nu_S\wedge\overline{\phi})$.
\begin{enumerate}[1)]
\item  We have  $p>r+s$. Without loss of generality, assume that $p=n$. Then $\phi$ has the form
$$\phi = c\, \frac{dz_1}{z_1}\wedge \cdots \wedge \frac{dz_{r+s}}{z_{r+s}}\wedge dz_{r+s+1}\wedge \cdots \wedge dz_{n-1}$$
where $c$ is a smooth function on $\CC^n$. Since $c$ has  support in $\Delta^n$, it is enough to show that the following integral vanishes:
$$\int_{\Delta^{n-1}} \bigwedge_{i=1}^r|z_i|^{2s_i}\frac{dz_i\wedge d\overline{z}_i}{z_i\,\overline{z_i}}\wedge \bigwedge_{i=r+1}^{r+s} |z_i|^{2t_i}\frac{dz_i\wedge d\overline{z}_i}{\overline{z_i}} \wedge \bigwedge_{i=r+s+1}^{n-1} dz_i\wedge d\overline{z_i} \left(\int_{|z_n|\leq 1} d(f\, dz_n)\right)\ ,$$
where $f=ab\overline{c}$ is a smooth function on $\CC^n$ with support in $\Delta^n$. The inner integral vanishes by Stokes' theorem  and we are done.
\item We have $p \leq r$. Without loss of generality, let $p=r$. Then $\phi$ has the form
$$\phi = c\, \frac{dz_1}{z_1}\wedge \cdots \wedge \frac{dz_{r-1}}{z_{r-1}}\wedge\frac{dz_{r+1}}{z_{r+1}}\wedge\cdots\wedge  \frac{dz_{r+s}}{z_{r+s}}\wedge dz_{r+s+1}\wedge \cdots \wedge dz_{n}$$
where $c$ is a smooth function on $\CC^n$ with support in $\Delta^n$. It is enough to show that  the following integral vanishes:
$$\int_{\Delta^{n-1}} \bigwedge_{i=1}^{r-1}|z_i|^{2s_i}\frac{dz_i\wedge d\overline{z}_i}{z_i\,\overline{z_i}}\wedge \bigwedge_{i=r+1}^{r+s} |z_i|^{2t_i}\frac{dz_i\wedge d\overline{z}_i}{\overline{z_i}} \wedge \bigwedge_{i=r+s+1}^{n} dz_i\wedge d\overline{z_i} \left(\int_{|z_r|\leq 1} d\left(f\,|z_r|^{2s_r} \frac{dz_r}{z_r}\right)\right)\ ,$$
where $f=ab\overline{c}$ is a smooth function on $\CC^n$ with support in $\Delta^n$. The inner integral is the limit as $\varepsilon$ goes to zero of the same integral over $\{\varepsilon\leq |z_r|\leq 1\}$, which by Stokes and changing  to polar coordinates evaluates to
$$\int_{|z_r|=\varepsilon}f\, |z_r|^{2s_r}\frac{dz_r}{z_r} =  \varepsilon^{2s_r}\int_{0}^{2\pi} i f\,d\theta\ .$$
This tends to zero when $\varepsilon$ goes to zero if $\mathrm{Re}(s_r)>0$. 
\item We have $r<p\leq r+s$. We can assume that $p=r+s$ and so $\phi$ has the form
$$\phi = c\, \frac{dz_1}{z_1}\wedge \cdots \wedge  \frac{dz_{r+s-1}}{z_{r+s-1}}\wedge dz_{r+s+1}\wedge \cdots \wedge dz_{n}$$
where $c$ is a smooth function on $\CC^n$ with support in $\Delta^n$. We want to prove that the following integral vanishes:
$$\int_{\Delta^{n-1}} \bigwedge_{i=1}^{r}|z_i|^{2s_i}\frac{dz_i\wedge d\overline{z}_i}{z_i\,\overline{z_i}}\wedge \bigwedge_{i=r+1}^{r+s-1} |z_i|^{2t_i}\frac{dz_i\wedge d\overline{z}_i}{\overline{z_i}} \wedge \bigwedge_{i=r+s+1}^{n} dz_i\wedge d\overline{z_i} \left(\int_{|z_{r+s}|\leq 1} d\left(f\,|z_{r+s}|^{2t_{r+s}} dz_{r+s}\right)\right)\ ,$$
where $f=ab\overline{c}$ is a smooth function on $\CC^n$ with support in $\Delta^n$. The inner integral is the limit as $\varepsilon$ goes to zero of the same integral over $\{\varepsilon\leq |z_{r+s}|\leq 1\}$, which by Stokes and changing to polar coordinates evaluates to
$$\int_{|z_{r+s}|=\varepsilon}f\, |z_{r+s}|^{2t_{r+s}}dz_{r+s} = \varepsilon^{2t_{r+s}+1}\int_0^{2\pi}i\, f\,e^{i\theta}d\theta \ .$$
This goes to zero when $\varepsilon$ goes to zero if $2\,\mathrm{Re}(t_{r+s})>-1$, which is a consequence of the inequalities stated in Proposition \ref{propIclosedconverges} as in the proof of that Proposition.
\end{enumerate}
Therefore, the integral of  $g_{\underline{s}}\nu_S\wedge\overline{\nabla_{\underline{s}}\phi}$ vanishes if the inequalities stated in Proposition \ref{propIclosedconverges} hold: namely $-\frac{1}{2}<\mathrm{Re}(s_c)<\frac{1}{N^2}$ for all $c$ and $\mathrm{Re}(s_c)>0$ for every divisor $D_c$ along which $\omega$ has a pole.  
\end{proof}

\begin{example} \label{ex: beta double integral} 
In the case of the beta function, with $\nu=-\nu_S=-\frac{dx}{x(1-x)}$, Theorem \ref{theoremsvforlogwithcoeffs} reads:
$$\langle[\nu],\s[\nu]\rangle^\dR= \frac{1}{2\pi i} \int_{\PP^1(\CC)} |z|^{2s} |1-z|^{2t}  \nu \wedge \overline{\nu}  \,   = \frac{1}{2\pi i}\int_{\PP^1(\CC)}  |z|^{2s-2} |1-z|^{2t-2} \, dz \wedge d \overline{z}\ ,$$ which equals    $- \beta_{\CC}(s,t) $.
\end{example}

\subsection{Double copy formula} 
By equating the two expressions for the single-valued period given in Proposition \ref{lem2ndDC} and Theorem \ref{theoremsvforlogwithcoeffs} we obtain an equality that expresses a volume integral as a quadratic expression in ordinary period integrals. 

\begin{corollary}  \label{theorem2ndDC}  Under the assumptions of Theorem \ref{theoremsvforlogwithcoeffs} we have the equality:
$$
\int_{\overline{\mathcal{M}}_{0,S}(\CC)}  \left(\prod_{i<j} |p_j-p_i|^{2s_{ij}}\right) \nu_S \wedge \overline{\omega}     \quad  = \sum_{\substack{[\sigma\otimes f_{-\underline{s}}] \\ [\tau\otimes f_{\underline{s}}]}}   \langle [\tau\otimes f_{\underline{s}}]^{\vee}, [\sigma\otimes f_{-\underline{s}}]^{\vee}\rangle^{\B}  \int_{\tau} f_{\underline{s}}\,\nu_S \,\int_{\overline{\sigma}} f_{\underline{s}}\,\omega     \ ,$$
where $[\sigma\otimes f_{-\underline{s}}]$ and $[\tau\otimes f_{\underline{s}}]$ range over a basis of $H_n(\mathcal{M}_{0,S}, \mathcal{L}_{s})$ and $H_n(\mathcal{M}_{0,S}, \mathcal{L}_{-\underline{s}})$ respectively, and $[\sigma\otimes f_{-\underline{s}}]^{\vee}, [\tau\otimes f_{\underline{s}}]^{\vee}$ are the dual bases.
\end{corollary}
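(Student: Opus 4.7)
The plan is to observe that Corollary \ref{theorem2ndDC} is simply the equality between the two explicit formulas for the single-valued de Rham pairing $\langle[\nu_S],\s[\omega]\rangle^\dR$ provided by Theorem \ref{theoremsvforlogwithcoeffs} and Proposition \ref{lem2ndDC}. Everything needed has already been established in the excerpt; it remains only to fit the pieces together and track normalisations.

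First I would verify that the hypotheses of both ingredients are in force. The genericity assumption \eqref{assumptions} on the $s_{ij}$ is standing throughout \S\ref{sect: SectCohomcoeffs}, and the inequalities of Proposition \ref{propIclosedconverges} are included in ``the assumptions of Theorem \ref{theoremsvforlogwithcoeffs}''. Under these hypotheses both $\nu_S$ and $\omega$ give classes in $H^n(\mathcal{M}_{0,S},\nabla_{\underline{s}})$ (for $\nu_S$ this uses Lemma \ref{lemnuSpoles}, which confirms that its poles lie on $\partial\overline{\mathcal{M}}_{0,S}$ and not on $\mathcal{M}_{0,S}$), so the single-valued pairing $\langle[\nu_S],\s[\omega]\rangle^\dR$ is well-defined and both formulas in Theorem \ref{theoremsvforlogwithcoeffs} and Proposition \ref{lem2ndDC} apply.

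Next I would apply Theorem \ref{theoremsvforlogwithcoeffs} to rewrite the left-hand side of the claimed identity as $(2\pi i)^n\langle[\nu_S],\s[\omega]\rangle^\dR$. I would then expand this pairing with Proposition \ref{lem2ndDC}, which expresses it as $(2\pi i)^{-n}$ times the Betti sum
$$\sum_{[\sigma\otimes f_{-\underline{s}}],\,[\tau\otimes f_{\underline{s}}]}\langle[\tau\otimes f_{\underline{s}}]^\vee,[\sigma\otimes f_{-\underline{s}}]^\vee\rangle^\B\int_\tau f_{\underline{s}}\,\nu_S\int_{\overline{\sigma}}f_{\underline{s}}\,\omega,$$
taken over any choice of dual $\QQ^\B_{\underline{s}}$-bases of $H_n(\mathcal{M}_{0,S},\mathcal{L}_{\pm\underline{s}})$. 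The factor $(2\pi i)^n$ coming from Theorem \ref{theoremsvforlogwithcoeffs} cancels the factor $(2\pi i)^{-n}$ coming from Proposition \ref{lem2ndDC}, leaving precisely the right-hand side of the corollary.

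Since the argument reduces to the composition of two already-proved statements, I do not anticipate any substantive obstacle. The only delicate point is the $(2\pi i)^{\pm n}$ bookkeeping: the de Rham normalisation $(2\pi i)^{-n}$ appears on the integral side of Theorem \ref{theoremsvforlogwithcoeffs} and on the Betti side of Proposition \ref{lem2ndDC}, and one must confirm that they cancel cleanly so that the resulting identity between the volume integral and the Betti quadratic expression carries no residual prefactor. Once this check is made the corollary is immediate.
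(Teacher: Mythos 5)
Your argument is exactly the paper's: it proves the corollary by equating the two formulas for the single-valued pairing $\langle[\nu_S],\s[\omega]\rangle^{\dR}$ given by Theorem~\ref{theoremsvforlogwithcoeffs} and Proposition~\ref{lem2ndDC}, and your $(2\pi i)^{\pm n}$ bookkeeping is correct.
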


This formula bears very close similarity to the  KLT formula \cite{klt}, and makes it apparent that the  `KLT kernel' 
should coincide with the Betti intersection pairing on  twisted 
\emph{cohomology}, which is the inverse transpose of the intersection pairing on  twisted \emph{homology}. 
Indeed, Mizera has shown in \cite{mizera} that the KLT kernel  indeed coincides with the inverse transpose matrix of the intersection pairing.

\begin{example} \label{ex: beta double copy}
Examples \ref{ex: sv beta pairings} and \ref{ex: beta double integral} give rise to the equality $$\beta_\CC(s,t) = -\frac{1}{2\pi i}\frac{2}{i}\frac{\sin(\pi s)\sin(\pi t)}{\sin(\pi (s+t))}\beta(s,t)^2\ ,$$
which is an instance of Corollary \ref{theorem2ndDC}.
\end{example}

\appendix

\section{The Parke--Taylor basis}\label{appendix}

    In this appendix we prove the following theorem, which is Theorem \ref{thm: Parke Taylor} from the main body of the paper.

    \begin{theorem}\label{thm: Parke Taylor appendix}
    Assume that the $s_{i,j}$ are generic in the sense of \eqref{assumptions}. A basis of $H^n(\mathcal{M}_{0,n+3},\nabla_{\underline{s}})$ is provided by the classes of the differential forms 
    \begin{equation}\label{eq: Parke Taylor form appendix}
    \frac{dt_1\wedge\cdots \wedge dt_n}{\prod_{k=1}^{n+1} (t_{\sigma(k)}-t_{\sigma(k-1)})}
    \end{equation}
    for permutations $\sigma\in\Sigma_n$, where we set $t_{\sigma(0)}=0$ and $t_{\sigma(n+1)}=1$.
    \end{theorem}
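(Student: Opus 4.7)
The space $H^n(\mathcal{M}_{0,n+3},\nabla_{\underline{s}})$ has dimension $n!$ by Theorem~\ref{thm: AomotoBasis}, matching the cardinality of $\Sigma_n$, so it suffices to prove linear independence of the Parke--Taylor forms modulo $\nabla_{\underline{s}}$-exact forms. I proceed by induction on $n$; the case $n=1$ is Example~\ref{example: M04withcoeffs}, where the unique Parke--Taylor form $\frac{dt_1}{t_1(1-t_1)}$ generates $H^1(\mathcal{M}_{0,4},\nabla_{\underline{s}})$ because its period is the non-vanishing beta function.

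For the inductive step, I use the forgetful fibration $\pi : \mathcal{M}_{0,n+3}\to\mathcal{M}_{0,n+2}$ removing the point labelled $n$, with fiber $F=\PP^1\setminus\{0,t_1,\ldots,t_{n-1},1,\infty\}$ parameterised by $t_n$. Given $\sigma\in\Sigma_n$, let $j$ be the index with $\sigma(j)=n$ and let $\sigma'\in\Sigma_{n-1}$ be obtained by deleting $n$ from the one-line notation. With $a=t_{\sigma'(j-1)}$, $b=t_{\sigma'(j)}$ and the boundary conventions $t_{\sigma'(0)}=0$, $t_{\sigma'(n)}=1$, the identity $\frac{b-a}{(t_n-a)(b-t_n)}=\frac{1}{t_n-a}+\frac{1}{b-t_n}$ gives the clean factorisation
\[
\omega_\sigma \;=\; \pi^{*}(\omega_{\sigma'})\wedge \eta^{\sigma'}_j, \qquad \eta^{\sigma'}_j \;=\; \Big(\frac{1}{t_n-a}+\frac{1}{b-t_n}\Big)\,dt_n .
\]
Thus the $n!$ Parke--Taylor forms on $\mathcal{M}_{0,n+3}$ are exactly the wedges of the $(n-1)!$ Parke--Taylor forms $\omega_{\sigma'}$ with the $n$ fiber forms $\eta^{\sigma'}_j$ ($j=1,\ldots,n$) indexed by the possible insertion positions of $n$ into $\sigma'$.

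The crux is to verify that for each fixed $\sigma'$, the $n$ fiber forms $\eta^{\sigma'}_1,\ldots,\eta^{\sigma'}_n$ form a basis of the twisted cohomology $H^1(F,\nabla_{\underline{s}}|_F)$. Assumption~\eqref{assumptions} ensures non-trivial monodromy at every puncture of $F$, so Artin vanishing gives $\dim H^1(F,\nabla_{\underline{s}}|_F)=n$. Labelling the affine punctures $p_0=0$, $p_i=t_{\sigma'(i)}$ for $1\leq i\leq n-1$, $p_n=1$, the classes $e_i := [d\!\log(t_n-p_i)]$ for $i=0,\ldots,n$ span this space modulo the single relation $\sum_{i=0}^n s_{n,i}\,e_i\equiv 0$ coming from $\nabla_{\underline{s}}(\mathbf{1})=0$, where $s_{n,i}$ denotes the Mandelstam variable attached to $p_i$. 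Since $\eta^{\sigma'}_j=e_{j-1}-e_j$, any dependence $\sum_j c_j\,\eta^{\sigma'}_j=\lambda\sum_{i=0}^n s_{n,i}\,e_i$ telescopes to force $\lambda\sum_{i=0}^n s_{n,i}=0$; but momentum conservation gives $\sum_{i=0}^n s_{n,i}=-s_{n,n+2}$, which is nonzero by~\eqref{assumptions}, so $\lambda=0$ and hence all $c_j=0$.

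Finally, one assembles a basis of the total cohomology via the twisted Leray spectral sequence for $\pi$ with coefficients in $\mathcal{L}_{\underline{s}}$, realised concretely through the Koszul filtration on the relative algebraic de Rham complex. Fiberwise Artin vanishing makes $R^q\pi_*\mathcal{L}_{\underline{s}}$ vanish for $q\neq 1$, so the spectral sequence degenerates to $H^n(\mathcal{M}_{0,n+3},\mathcal{L}_{\underline{s}})\cong H^{n-1}(\mathcal{M}_{0,n+2},R^1\pi_*\mathcal{L}_{\underline{s}})$, and the factorisation $\omega_\sigma=\pi^*(\omega_{\sigma'})\wedge\eta^{\sigma'}_j$ is precisely the wedge-product realisation of this isomorphism at the level of de Rham classes. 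The main technical obstacle is that $R^1\pi_*\mathcal{L}_{\underline{s}}$ is a rank-$n$ local system rather than a Koba--Nielsen rank-one system, so the inductive hypothesis does not apply to it verbatim; matching it with the Parke--Taylor classes on the base requires a careful analysis of the compatibility between the fiber basis $\{\eta^{\sigma'}_j\}$ and the inductive Koba--Nielsen structure on $\mathcal{M}_{0,n+2}$, which is the delicate step of the argument.
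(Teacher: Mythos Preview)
Your approach via the forgetful fibration and Leray spectral sequence is genuinely different from the paper's, which instead passes to the Arnol'd algebra $A^\bullet=H^\bullet_\dR(\mathrm{Conf}(n+2,\CC))$, reformulates the claim as a statement about $A^{n+1}/(A^n\wedge\omega_{\underline{s}})$, and then runs a filtration argument on the position of the label $n+1$ in a linear word, using Arnol'd relations and the explicit Koba--Nielsen form to push every basis element $\omega_\sigma$ with $\sigma\in\Sigma_{n+1}$ into the span of those with $\sigma\in\Sigma_n$. That argument is entirely algebraic in the Orlik--Solomon algebra and never invokes a fibration or an inductive hypothesis on smaller moduli spaces.

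However, your proposal has a genuine gap, and you flag it yourself in the last paragraph: after establishing the Leray isomorphism $H^n(\mathcal{M}_{0,n+3},\mathcal{L}_{\underline{s}})\cong H^{n-1}(\mathcal{M}_{0,n+2},R^1\pi_*\mathcal{L}_{\underline{s}})$ and the fiber basis, you do not actually show that the Parke--Taylor classes form a basis of the right-hand side. The difficulty is real and not merely cosmetic. The inductive hypothesis gives you a basis of $H^{n-1}(\mathcal{M}_{0,n+2},\nabla_{\underline{s}'})$ for a rank-one Koba--Nielsen connection, not for the rank-$n$ Gauss--Manin local system $R^1\pi_*\mathcal{L}_{\underline{s}}$, which has nontrivial monodromy mixing the fiber classes $e_i$. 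Moreover your factorisation $\omega_\sigma=\pi^*(\omega_{\sigma'})\wedge\eta^{\sigma'}_j$ uses a fiber basis $\{\eta^{\sigma'}_j\}$ that depends on $\sigma'$, so the map $\sigma\mapsto(\sigma',j)$ does not correspond to a tensor decomposition of the cohomology. Concretely, to finish you would need to show that the change-of-basis matrix from $\{\eta^{\sigma'}_j\}_j$ to a fixed flat frame of $R^1\pi_*$ interacts with the base classes $\omega_{\sigma'}$ in such a way that linear independence is preserved---and this is essentially as hard as the original problem. Without that analysis the argument is an outline rather than a proof.
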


  \subsection{Working with the configuration space of points in \texorpdfstring{$\CC$}{C}}\label{par: appendix configuration space}
  
  We consider the configuration space 
  $$\mathrm{Conf}(n+2,\CC)=\{(z_0,z_1,\ldots,z_{n+1})\in \CC^{n+2}\; ,\; z_i\neq z_j\}$$ 
  and its (rational algebraic de Rham) cohomology algebra 
  $$A^\bullet=H^\bullet_\dR(\mathrm{Conf}(n+2,\CC))\ .$$ 
  By a classical result of Arnol'd \cite{arnold}, it is generated by the (classes of the) forms 
  $$\omega_{i,j}=d\log(z_j-z_i)\ .$$ 
  The natural (diagonal) $\CC^*$-action on $\mathrm{Conf}(n+2,\CC)$ induces a linear map in cohomology:
  $$\partial:A^\bullet\rightarrow A^{\bullet-1}\otimes H^1_\dR(\CC^*) \simeq A^{\bullet-1}\ .$$
  Compatibility with the cup-product and the Koszul sign rule implies that it is a graded derivation, i.e., that it satisfies the Leibniz rule
  $$\partial(ab)=\partial(a)b+(-1)^{|b|}a\,\partial(b)$$
  for $b$ homogeneous of degree $|b|$. 
  It is uniquely determined by $\partial(\omega_{i,j})=1$ for all $i,j$ and is given more generally by the formula:
  $$\partial(\omega_{i_1,j_1}\wedge\cdots \wedge\omega_{i_r,j_r}) = \sum_{k=1}^r(-1)^{k-1}\omega_{i_1,j_1}\wedge\cdots\wedge\widehat{\omega_{i_k,j_k}}\wedge\cdots \wedge\omega_{i_r,j_r}\ .$$
  We will make use of the following relations. 
  
  \begin{lemma}\label{lem: OS relations appendix}
  We have
  $$\omega_{i_1,j_1}\wedge\cdots \wedge\omega_{i_r,j_r} = 0 \;\;\;\;\mbox{ and }\;\;\;\; \partial(\omega_{i_1,j_1}\wedge\cdots \wedge\omega_{i_r,j_r})=0$$
  if there is a cycle in the graph with vertices $\{0,\ldots,n+1\}$ and edges $\{i_1,j_1\},\ldots,\{i_r,j_r\}$.
  \end{lemma}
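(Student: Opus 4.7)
The plan is to deduce both vanishings from Arnold's three-term relation combined with an induction on cycle length.

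First I would establish the relation
$$\omega_{i,j}\wedge\omega_{j,k}+\omega_{j,k}\wedge\omega_{k,i}+\omega_{k,i}\wedge\omega_{i,j}=0$$
by direct computation on $\mathrm{Conf}(n+2,\CC)$: each of the three summands factors as the common cyclically symmetric $2$-form $\Omega=dz_i\wedge dz_j+dz_j\wedge dz_k+dz_k\wedge dz_i$ divided by a product of two of the linear factors $z_j-z_i$, $z_k-z_j$, $z_i-z_k$, and the sum of the three resulting scalar coefficients vanishes by the telescoping identity $(z_i-z_k)+(z_j-z_i)+(z_k-z_j)=0$. Since the defining formula for $\partial$ applied to $\omega_{i,j}\wedge\omega_{j,k}\wedge\omega_{k,i}$ is exactly the left-hand side above, this already proves the second statement of the lemma for a three-cycle. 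Wedging the relation with $\omega_{i,j}$ (or any one of the three factors) and using $\omega_{a,b}\wedge\omega_{a,b}=0$ annihilates two of the three resulting terms, leaving $\omega_{i,j}\wedge\omega_{j,k}\wedge\omega_{k,i}=0$, which is the first statement for a three-cycle.

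Next I would prove the first statement for a pure cycle of length $\ell\geq 3$ by induction on $\ell$. Given a cycle $i_1,i_2,\ldots,i_\ell,i_1$, the Arnold relation applied to the triangle on $\{i_1,i_2,i_3\}$ rewrites
$$\omega_{i_1,i_2}\wedge\omega_{i_2,i_3}\;=\;\omega_{i_1,i_3}\wedge\omega_{i_2,i_3}-\omega_{i_1,i_3}\wedge\omega_{i_1,i_2},$$
using that $\omega_{a,b}$ is symmetric in $a,b$. Substituting this into the cycle wedge $\omega_{i_1,i_2}\wedge\omega_{i_2,i_3}\wedge\omega_{i_3,i_4}\wedge\cdots\wedge\omega_{i_\ell,i_1}$ yields two terms, and in each of them the edges $\{i_1,i_3\},\{i_3,i_4\},\ldots,\{i_\ell,i_1\}$ are all present and form a cycle of length $\ell-1$. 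After commuting the extraneous factor ($\omega_{i_2,i_3}$ or $\omega_{i_1,i_2}$) past $\omega_{i_1,i_3}$, each term contains as a sub-wedge the product associated with this $(\ell-1)$-cycle, which vanishes by the inductive hypothesis.

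For the general case where the graph on the edges $\{i_1,j_1\},\ldots,\{i_r,j_r\}$ contains a cycle using a subset of $\ell\leq r$ edges, I would reorder the factors so that the cycle edges come first; the wedge then factors up to a sign as $\omega_{\mathrm{cycle}}\wedge\omega_{\mathrm{rest}}$ and vanishes by the pure-cycle case. The second statement of the lemma then follows formally from the first, since $\partial$ is a well-defined graded derivation on $A^\bullet$ (induced by the diagonal $\CC^*$-action on $\mathrm{Conf}(n+2,\CC)$) and therefore annihilates any class that is already zero. The only genuinely nontrivial step is the inductive reduction of the cycle length in the pure-cycle case; the hurdle there is the combinatorial verification that replacing the edge-path $i_1\to i_2\to i_3$ by the chord $\{i_1,i_3\}$ decomposes the original $\ell$-cycle into a triangle and an $(\ell-1)$-cycle sharing the vertex $i_1$, so that the shorter cycle is always visibly present in each term produced by the Arnold substitution.
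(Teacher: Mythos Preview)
Your proof is correct, but it is more elaborate than the paper's. The paper dispatches the first statement in a single phrase: ``already satisfied at the level of differential forms.'' The intended argument is a direct linear-algebra observation: each $\omega_{i,j}=d\log(z_j-z_i)$ is a scalar function times the $1$-form $dz_j-dz_i$, and if the edges $\{i_1,j_1\},\ldots,\{i_r,j_r\}$ contain a cycle then the corresponding $1$-forms $dz_{j_k}-dz_{i_k}$ are linearly dependent over the function field (the edge-differentials sum telescopically to zero around the cycle). Hence the wedge of the $\omega_{i_k,j_k}$ vanishes identically as a differential form, without any appeal to Arnold's relation or induction; the second statement then follows trivially, as you note. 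Your route via the Arnold relation and induction on cycle length is perfectly valid---and your computation recovers the Arnold relation as a by-product---but it replaces a one-line rank argument by a combinatorial reduction. The paper's argument is shorter and handles all cycle lengths uniformly at once; yours stays within the algebraic structure of $A^\bullet$ and makes the Arnold relations visibly central, which is arguably more in the Orlik--Solomon spirit but unnecessary here.
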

  
  \begin{proof}
  The second relation follows from the first, which is already satisfied at the level of differential forms.
  \end{proof}
  
  A special case is the classical Arnol'd relations $\partial(\omega_{i,j}\wedge\omega_{i,k}\wedge\omega_{j,k})=0$, which generate all the relations among the generators $\omega_{i,j}$ in the algebra $A^\bullet$ \cite{arnold}. 
  
  The (homological) complex $(A^\bullet,\partial)$ is contractible. A contracting homotopy is given, for instance, by multiplication by the generator $\omega_{0,1}$.\medskip
  
  There is a natural quotient morphism (by the affine group $\CC\rtimes \CC^*$):
  $$\mathrm{Conf}(n+2,\CC)\twoheadrightarrow \mathcal{M}_{0,n+3} \;\; , \;\; (z_0,\ldots,z_{n+1})\mapsto (z_1,\ldots,z_{n+1},\infty,z_0)\ ,$$
  defined so that the simplicial coordinates $t_0=0, t_1,\ldots, t_n, t_{n+1}=1$ on $\mathcal{M}_{0,n+3}$ are related to the coordinates $z_i$ by the formula
  $$t_i=\frac{z_i-z_0}{z_{n+1}-z_0}\ \cdot$$
  The pullback by this quotient identifies $\Omega^\bullet = H^\bullet_\dR(\mathcal{M}_{0,n+3})$ with the subalgebra $\ker(\partial)\subset A^\bullet$. Since $(A^\bullet,\partial)$ is contractible we have $\ker(\partial)=\mathrm{Im}(\partial)$ and we get a short exact sequence:
  \begin{equation}\label{eq: short exact sequence appendix}
  0\longrightarrow \Omega^\bullet\longrightarrow A^\bullet \stackrel{\partial}{\longrightarrow} \Omega^{\bullet-1}\longrightarrow 0 \ .
  \end{equation}
  
   We now move to cohomology with coefficients. A solution $\underline{s}$ to the momentum conservation equations is completely determined by the complex numbers $s_{i,j}$ for $0\leq i<j\leq n+1$, where we set $s_{0,j}:=s_{j,n+3}$ as in \S\ref{sect: StringAmpSimplicial},  subject to the single relation:
   \begin{equation}\label{eq: momentum conservation appendix}
   \sum_{0\leq i<j\leq n+1}s_{ij}=0\ .
   \end{equation}
   Denote the pullback of the Koba--Nielsen form to $\mathrm{Conf}(n+2,\CC)$ by the same symbol:
   $$\omega_{\underline{s}}=\sum_{0\leq i<j\leq n+1}s_{i,j}\,\omega_{i,j}\ .$$
   In the remainder of this appendix we extend the scalars from $\QQ$ to the field $\QQ_{\underline{s}}^\dR=\QQ(s_{i,j})$ generated by the $s_{i,j}$ inside $\CC$. In order to keep the notations simple, we continue to write $A^\bullet$ and $\Omega^\bullet$ for $A^\bullet\otimes_\QQ\QQ_{\underline{s}}^\dR$ and $\Omega^\bullet\otimes_\QQ\QQ_{\underline{s}}^\dR$, respectively.
   
   The short exact sequence \eqref{eq: short exact sequence appendix} induces a short exact sequence of complexes :
   $$0\longrightarrow (\Omega^\bullet,\wedge\omega_{\underline{s}})\longrightarrow (A^\bullet,\wedge\omega_{\underline{s}}) \stackrel{\partial}{\longrightarrow} (\Omega^{\bullet-1},\wedge\omega_{\underline{s}})\longrightarrow 0 \ .$$
   This is because $\partial(\omega_{\underline{s}})=0$, which follows from   equation \eqref{eq: momentum conservation appendix}. Assume that $\underline{s}$ is generic \eqref{assumptions}. Then $H^k(\Omega^\bullet,\wedge\omega_{\underline{s}})=0$ for $k\neq n$ (Remark \ref{rem: artin vanishing twisted}) and the long exact sequence in cohomology shows that the morphism induced by $\partial$ in top degree cohomology
   $$H^{n+1}(A^\bullet,\wedge\omega_{\underline{s}}) \stackrel{\partial}{\longrightarrow} H^n(\Omega^\bullet,\wedge\omega_{\underline{s}})$$
   is an isomorphism. An easy computation (see, e.g., \cite[Claim 3.1]{mizera}) shows that the Parke--Taylor form \eqref{eq: Parke Taylor form appendix} from Theorem \ref{thm: Parke Taylor appendix} is, up to the sign $\mathrm{sgn}(\sigma)$, the image by $\partial$ of the form
   $$\omega_\sigma = \omega_{0,\sigma(1)}\wedge\omega_{\sigma(1),\sigma(2)}\wedge\cdots \wedge\omega_{\sigma(n-1),\sigma(n)}\wedge\omega_{\sigma(n),n+1} \;\;\;\in A^{n+1}\ ,$$
   for $\sigma\in\Sigma_n$. Thus, a restatement of Theorem \ref{thm: Parke Taylor appendix} is as follows:
   
   \begin{theorem}\label{thm: Parke Taylor appendix bis}
   Assume that the $s_{i,j}$ are generic in the sense of \eqref{assumptions}. Then a basis of 
   $$H^{n+1}(A^\bullet,\wedge\omega_{\underline{s}}) = A^{n+1} / (A^n\wedge\omega_{\underline{s}})$$
   is provided by the classes of the forms $\omega_\sigma$, for permutations $\sigma\in\Sigma_n$.
   \end{theorem}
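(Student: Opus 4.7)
The strategy is to combine the dimension count already established with a linear independence argument via pairing against an explicit basis of twisted homology. As noted in the paragraph preceding the theorem, the map $\partial$ gives an isomorphism $H^{n+1}(A^\bullet,\wedge\omega_{\underline{s}}) \cong H^n(\Omega^\bullet,\wedge\omega_{\underline{s}})$, and the right-hand side has dimension $n!$ by Theorem \ref{thm: AomotoBasis}. Since $|\Sigma_n|=n!$, it suffices to prove that the classes $\partial[\omega_\sigma]$, which are (up to signs $\mathrm{sgn}(\sigma)$) the Parke--Taylor forms $\omega_\sigma^{\mathrm{PT}}$ of \eqref{eq: Parke Taylor form appendix}, are linearly independent in $H^n(\mathcal{M}_{0,n+3},\nabla_{\underline{s}})$.

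To establish independence, I would pair with the simplicial basis of Proposition \ref{prop: basis betti coefficients}. Fix the triple $(a,b,c)=(n+1,n+2,n+3)$, which in simplicial coordinates sits at $(1,\infty,0)$. The associated basis of $H_n(\mathcal{M}_{0,n+3},\mathcal{L}_{-\underline{s}})$ consists of the $n!$ classes $[X^{\delta_\tau}\otimes f_{\underline{s}}]$, where $X^{\delta_\tau}=\{0<t_{\tau(1)}<\cdots<t_{\tau(n)}<1\}$ for $\tau\in\Sigma_n$. Consider the $n!\times n!$ matrix
\[
M_{\tau,\sigma} \;=\; \int_{X^{\delta_\tau}} f_{\underline{s}}\,\omega_\sigma^{\mathrm{PT}}\ ,
\]
which represents the period pairing between the two bases (after meromorphic continuation in $\underline{s}$). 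Non-degeneracy of $M$ is equivalent to linear independence of the $[\omega_\sigma^{\mathrm{PT}}]$, hence of the $[\omega_\sigma]$.

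To show $M$ is non-degenerate I would analyse its leading pole structure in the Mandelstam variables $s_c$ using the renormalisation of Theorem \ref{thmrenormOpen}. The Parke--Taylor form $\omega_\sigma^{\mathrm{PT}}$ has simple poles precisely along the divisors $t_{\sigma(k)}=t_{\sigma(k-1)}$, and these coincide with the codimension-one boundary of $X^{\delta_\tau}$ exactly when $\sigma=\tau$. For the diagonal entry $M_{\tau,\tau}$, the renormalisation therefore produces a simultaneous simple pole $\prod_c 1/s_c$ indexed by the $n$ chords $c$ adjacent to the boundary of the simplex $X^{\delta_\tau}$; for off-diagonal entries, at least one of the poles of $\omega_\sigma^{\mathrm{PT}}$ falls on a divisor interior to the closure of $X^{\delta_\tau}$, so the integral is less singular along the corresponding subvariety of $\{s_c=0\}$. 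Ordering the permutations so that the sets of pole chords are compatible (for instance, lexicographically on the sequence of adjacent transpositions relating $\sigma$ and $\tau$), one obtains a block-triangular leading-order behaviour with invertible diagonal blocks, whence $\det M\neq 0$ for generic $\underline{s}$.

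\textbf{Main obstacle.} The principal difficulty is the last step: making the diagonal-dominance argument rigorous. A clean implementation is not entirely straightforward because the sets of chords controlling the simultaneous poles depend on both $\sigma$ and $\tau$, and one has to show that the off-diagonal entries of $M_{\tau,\sigma}$ are strictly less singular along the $(\prod_{c}s_c)$-divisor associated with $X^{\delta_\tau}$. A possible workaround, which I would try as a backup, is to proceed by induction on $n$: use the forgetful map $\mathcal{M}_{0,n+3}\to\mathcal{M}_{0,n+2}$ (forgetting the last marked point) and compute the residues of the $\omega_\sigma^{\mathrm{PT}}$ along the boundary divisors; this reduces the statement to the same theorem for $n-1$ together with an auxiliary one-dimensional integration of a beta-function type, which is directly non-vanishing by the computation of Example \ref{example: M04withcoeffs}.
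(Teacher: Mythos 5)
Your strategy shares the paper's first step (the dimension count via $\partial$ and Theorem \ref{thm: AomotoBasis}, reducing to linear independence of $n!$ classes), but then diverges completely. The paper proves \emph{spanning} by a purely algebraic argument inside the Orlik--Solomon algebra: it filters $A^{n+1}$ by the position $\sigma^{-1}(n+1)$ of $n+1$ in a linear monomial, uses the combinatorial Lemma \ref{lem: technical appendix} (monomials containing a path of length $k$ from $0$ to $n+1$ lie in $F^k$) together with Arnol'd relations and momentum conservation \eqref{eq: momentum conservation appendix} to show the induced filtration on $H^{n+1}(A^\bullet,\wedge\omega_{\underline{s}})$ collapses, and invokes genericity only to invert the scalars $S=-\sum_{a<b\le k}s_{i_a,i_b}$. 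Your route instead attempts \emph{linear independence} by pairing against the simplex basis of Proposition \ref{prop: basis betti coefficients} and arguing the period matrix $M$ is non-degenerate from its pole structure.

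The gap you flag is real, and there are two concrete issues beyond what you mention. First, the Parke--Taylor form $\omega_\sigma^{\mathrm{PT}}$ equals $\pm\nu_{S,\delta_\sigma}$ and has simple poles along \emph{every} boundary divisor of $X^{\delta_\sigma}$ (Lemma \ref{lemnuSpoles}), while the maximal simultaneous pole of the open-string integral in Theorem \ref{thmrenormOpen} is indexed by a maximal non-crossing set of $n$ chords, i.e.\ a triangulation; for the off-diagonal entry $M_{\tau,\sigma}$ the poles of $\omega_\sigma^{\mathrm{PT}}$ on $\partial X^{\delta_\tau}$ are governed by $\chi_{S,\delta_\tau}\cap\chi_{S,\delta_\sigma}$, and whether this intersection still contains a triangulation of $\delta_\tau$ --- hence whether the off-diagonal entry is actually ``less singular'' --- is exactly the combinatorial content you would need to establish and do not. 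Second, even if one showed the leading Laurent coefficient of $\det M$ at $\underline{s}=0$ is nonzero, this gives $\det M\neq 0$ only for $\underline{s}$ algebraically independent (or in a punctured neighbourhood of the origin); the theorem is for all $\underline{s}$ satisfying \eqref{assumptions}, and the vanishing locus of the transcendental function $\det M$ need not avoid that set without a further argument. The paper's algebraic proof avoids both issues by working directly over $\QQ^{\dR}_{\underline{s}}$ for any fixed generic $\underline{s}$. Your inductive ``workaround'' via residues along forgetful maps is, as you say, a sketch rather than a proof: it would require computing $\mathrm{Res}_{D_c}\omega_\sigma^{\mathrm{PT}}$ for boundary divisors $c$ and exhibiting a usable recursive structure, none of which is carried out.
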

   
   The rest of this appendix is devoted to the proof of this theorem.
   
   \subsection{A basis of \texorpdfstring{$A^{n+1}$}{An+1}} 
   
    We slightly extend the notation $\omega_\sigma$ to permutations $\sigma\in\Sigma_{n+1}$:
    $$\omega_\sigma := \omega_{0,\sigma(1)}\wedge\omega_{\sigma(1),\sigma(2)}\wedge\cdots \wedge\omega_{\sigma(n-1),\sigma(n)}\wedge\omega_{\sigma(n),\sigma(n+1)} \;\;\;\in A^{n+1}\ .$$
    
    \begin{proposition}
    The cohomology classes $\omega_\sigma$, for $\sigma\in\Sigma_{n+1}$, are a basis of $A^{n+1}$.
    \end{proposition}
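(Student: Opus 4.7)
The plan is to argue by induction on $n$, combining the Leray--Hirsch decomposition for the forgetful fibration $\mathrm{Conf}(n+2,\CC)\to\mathrm{Conf}(n+1,\CC)$ (which drops $z_{n+1}$) with the Arnol'd relations in order to ``straighten'' caterpillar graphs into linear chains.

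The base case $n=0$ is trivial: $\Sigma_1=\{\mathrm{id}\}$ and $A^1=\QQ\,\omega_{0,1}=\QQ\,\omega_{\mathrm{id}}$. For the inductive step, the Poincar\'e polynomial $\prod_{j=1}^{n+1}(1+jt)$ of $\mathrm{Conf}(n+2,\CC)$ gives $\dim A^{n+1}=(n+1)!=|\Sigma_{n+1}|$, so it suffices to show the $\omega_\sigma$ span. The forgetful fibration above has fiber $\CC\setminus\{z_0,\ldots,z_n\}$, whose $H^1$ has basis $\{\omega_{i,n+1}\}_{0\leq i\leq n}$ obtained by restricting globally defined classes on $\mathrm{Conf}(n+2,\CC)$. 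By Leray--Hirsch and the inductive hypothesis applied to the top cohomology of $\mathrm{Conf}(n+1,\CC)$, a basis of $A^{n+1}$ is thus
$$\{\omega_\tau\wedge\omega_{\tau(k),n+1}\,:\,\tau\in\Sigma_n,\ 0\leq k\leq n\},$$
with the convention $\tau(0)=0$.

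It remains to write each such caterpillar as a combination of the $\omega_\sigma$. When $k=n$, one has $\omega_\tau\wedge\omega_{\tau(n),n+1}=\omega_\sigma$ for $\sigma$ obtained from $\tau$ by appending $n+1$. For $k<n$, I would apply the Arnol'd relation $\omega_{ab}\wedge\omega_{ac}=\omega_{ab}\wedge\omega_{bc}-\omega_{ac}\wedge\omega_{bc}$ with $(a,b,c)=(\tau(k),\tau(k+1),n+1)$, after first shuffling the factor $\omega_{\tau(k),n+1}$ next to $\omega_{\tau(k),\tau(k+1)}$ inside the wedge product; the two Koszul signs incurred cancel. This produces the recursion
$$\omega_\tau\wedge\omega_{\tau(k),n+1}=\omega_\tau\wedge\omega_{\tau(k+1),n+1}+(-1)^{n-k}\,\omega_{\sigma_k},$$
where $\sigma_k\in\Sigma_{n+1}$ is obtained from $\tau$ by inserting $n+1$ immediately after position $k$. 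Iterating from $k$ up to $n-1$ expresses $\omega_\tau\wedge\omega_{\tau(k),n+1}$ as a signed sum of $\omega_\sigma$, finishing the induction.

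The main care is the sign bookkeeping when reordering factors; the combinatorial content is transparent once one observes that every $\sigma\in\Sigma_{n+1}$ is uniquely determined by the position $p=\sigma^{-1}(n+1)$ together with the induced linear order $\tau\in\Sigma_n$ on $\{1,\ldots,n\}$, which precisely matches the Leray--Hirsch indexing $(\tau,k)$ via $k=p-1$ (or $k=n$ when $p=n+1$).
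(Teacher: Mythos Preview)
Your proof is correct, but it takes a genuinely different route from the paper's. The paper uses the dimension count $\dim A^{n+1}=(n+1)!$ (citing Arnol'd) and then proves \emph{linear independence} of the $\omega_\sigma$ by induction via the residue maps $\mathrm{Res}_i$ along $\{z_0=z_i\}$: one computes that $\mathrm{Res}_i(\omega_\sigma)$ vanishes unless $\sigma(1)=i$, in which case it equals the corresponding chain form on the smaller configuration space. This separates the $\omega_\sigma$ according to $\sigma(1)$ and reduces to the inductive hypothesis.

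You instead prove \emph{spanning}, using the Leray--Hirsch decomposition for the fibration that forgets $z_{n+1}$ to obtain an explicit ``caterpillar'' basis, and then the Arnol'd relation to straighten each caterpillar into a signed sum of linear chains. The recursion you write down is correct (I checked the sign $(-1)^{n-k}$). Your approach is slightly longer but has the advantage of being constructive: it yields an explicit (upper-triangular) change-of-basis matrix from the Leray--Hirsch basis to the $\omega_\sigma$, and the straightening argument you use here is essentially a warm-up for the more elaborate tree-straightening that appears in the proof of the next lemma in the paper. The paper's residue argument is more economical if one only needs the basis statement itself.
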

   
    \begin{proof}
    The dimension of $A^{n+1}$ is $(n+1)!$ by \cite{arnold}, so it is enough to prove that the $\omega_\sigma$ are linearly independent in $A^{n+1}$. 
    For $1\leq i\leq n+1$ we have a residue morphism along $\{z_0=z_i\}$: 
    $$\mathrm{Res}_i:H^{n+1}_\dR(\mathrm{Conf}(n+2,\CC))\longrightarrow H^n_\dR(\mathrm{Conf}(n+1,\CC))\ ,$$
    where in the target $\mathrm{Conf}(n+1,\CC)$ consists of tuples $(z_0,\ldots,\widehat{z_i},\ldots,z_{n+1})$. It satisfies:
    $$\mathrm{Res}_i(\omega_\sigma) = \begin{cases} 0 & \mbox{if }\; \sigma(1)\neq i\ ; \\ \omega_\sigma & \mbox{if }\; \sigma(1)=i \ ,\end{cases}$$
    where in the second case we implicitly use the natural bijection 
    $$\Sigma_n\simeq \mathrm{Bij}\,(\{2,\ldots,n+1\},\{1,\ldots,\widehat{i},\ldots,n+1\}) \ .$$
     The result follows by induction on $n$, since the case $n=0$ is trivial.
    \end{proof}
    
    We now let $F^kA^{n+1}$ denote the subspace of $A^{n+1}$ spanned by the basis elements $\omega_\sigma$, for $\sigma\in\Sigma_{n+1}$ such that $\sigma^{-1}(n+1)\geq k$. It forms a decreasing filtration, where $F^1A^{n+1}=A^{n+1}$, $F^{n+2}A^{n+1}=0$, and $F^{n+1}A^{n+1}$ is spanned by the $\omega_\sigma$ for $\sigma\in\Sigma_n$. 
    
    \begin{lemma}\label{lem: technical appendix}
    Any element of $A^{n+1}$ which is the exterior product of a form with an  element of the following type
    $$\omega_{0,i_1}\wedge\omega_{i_1,i_2}\wedge \cdots \wedge\omega_{i_{k-2},i_{k-1}}\wedge\omega_{i_{k-1},n+1}$$
    lies in $F^kA^{n+1}$.
    \end{lemma}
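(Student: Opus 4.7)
The plan is to reduce the lemma to a combinatorial statement about spanning trees of $\{0, 1, \ldots, n+1\}$. By multilinearity I may assume that $\eta = \omega_{f_1}\wedge\cdots\wedge\omega_{f_{n+1-k}}$ is a single monomial, so that $\eta\wedge p$ is a monomial $\omega_E$ whose underlying multigraph $E$ consists of the edges of $\eta$ together with the $k$ edges of $p$. By Lemma \ref{lem: OS relations appendix}, $\omega_E$ vanishes unless $E$ is a forest; since $E$ has $n+1$ edges on $n+2$ vertices, this forces $E$ to be a spanning tree $T$. Because $T$ contains every edge of $p$, and paths in trees are unique, the edges of $p$ must form the $0$-to-$(n+1)$ path in $T$, so $d_T(0, n+1) = k$ for every non-vanishing monomial.

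It therefore suffices to prove the following general claim: for every spanning tree $T$ of $\{0, \ldots, n+1\}$, one has $\omega_T \in F^{d_T(0, n+1)} A^{n+1}$. I would prove this by induction on the complexity measure
\[
c(T) \;=\; \max\!\bigl(0,\, \deg_T(0) - 1\bigr) \;+\; \sum_{v\neq 0}\max\!\bigl(0,\, \deg_T(v) - 2\bigr),
\]
refined lexicographically by $N(T) := \sum_{x} d_T(v_x, n+1)$, the sum ranging over leaves $x$ of $T$ that do not lie on the $0$-to-$(n+1)$ path and $v_x$ denoting the unique neighbour of $x$ in $T$. The measure $c$ vanishes precisely when $T$ is a Hamiltonian path emanating from $0$, i.e.\ when $T = 0 - \sigma(1) - \cdots - \sigma(n+1)$ for some $\sigma\in\Sigma_{n+1}$. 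In that base case $\omega_T = \pm\omega_\sigma$ and $\sigma^{-1}(n+1) = d_T(0, n+1)$ by inspection, giving $\omega_T\in F^{d_T(0,n+1)}A^{n+1}$.

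For the inductive step, when $c(T)>0$ I pick a leaf $x$ of $T$ not lying on the main $0$-to-$(n+1)$ path, let $v$ be its unique neighbour, and choose a third vertex $w$ as follows: the successor of $v$ along the main path towards $n+1$ if $v$ lies on the main path, and otherwise the neighbour of $v$ in the direction of the main path. Applying the Arnol'd relation (Lemma \ref{lem: OS relations appendix}) to the triangle $\{v,w,x\}$ rewrites $\omega_T = \pm \omega_{T_1} \mp \omega_{T_2}$, with $T_1 = T - \{v,x\} + \{w,x\}$ moving the branch $x$ closer to the main path (leaving $d$ unchanged), and $T_2 = T - \{v,w\} + \{w,x\}$ rerouting the main path locally through $x$ (increasing $d$ by one). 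A short graph-theoretic check shows that both $T_i$ are spanning trees with $d_{T_i}(0, n+1) \geq d_T(0, n+1)$, and that $(c(T_i), N(T_i)) <_{\mathrm{lex}} (c(T), N(T))$; the inductive hypothesis then gives $\omega_{T_i}\in F^{d_{T_i}(0,n+1)}A^{n+1}\subseteq F^{d_T(0,n+1)}A^{n+1}$, and the claim follows. The main obstacle is that the verification splits into several subcases (depending on whether $v$ lies on or off the main path, whether $v = 0$, and whether $w$ is an interior vertex of the path or the endpoint $n+1$), each yielding a slightly different effect on $c$ and $N$; individually each case is a direct check, but selecting a complexity measure and a reduction triangle that handle all cases uniformly is the crux of the argument.
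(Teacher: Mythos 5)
Your reduction to the claim that $\omega_T\in F^{d_T(0,n+1)}A^{n+1}$ for every spanning tree $T$ is correct and matches the paper, as is the general plan of inducting towards linear trees via Arnol'd rewirings. The gap is that your lexicographic measure $(c(T),N(T))$ does \emph{not} strictly decrease for both $T_1$ and $T_2$ in all cases, so the induction does not close. Take $n=3$ and $T$ with edges $\{0,1\},\{1,4\},\{1,2\},\{2,3\}$: the main path is $0$--$1$--$4$, the unique off-path leaf is $x=3$ with $v=2$, and since $v$ is off the main path your rule forces $w=1$ (its only neighbour other than $x$). One computes $c(T)=1$ and $N(T)=d_T(2,4)=2$. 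The rewiring yields $T_1=T-\{2,3\}+\{1,3\}$, for which $\deg_{T_1}(1)=4$ and hence $c(T_1)=2>c(T)$; and $T_2=T-\{1,2\}+\{1,3\}$, for which $c(T_2)=1$ and the new off-path leaf $2$ has neighbour $3$ with $d_{T_2}(3,4)=2$, so $N(T_2)=2$ as well. Thus $(c(T_2),N(T_2))=(1,2)=(c(T),N(T))$: neither term decreases lexicographically. The root cause is that when $v$ lies off the main path and $\deg_T(v)=2$, the rewiring merely slides a leaf along a branch without reducing branching or the distances your $N$ records.

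The paper's argument avoids this with a simpler and more robust choice: root the tree at $0$, pick $a$ to be a vertex with \emph{two children} of the rooted tree (a genuine branch point, which need not be the parent of a leaf), and induct on the scalar $\delta_\Gamma=\sum_{i=1}^{n+1}\delta_\Gamma(i)$, the sum of all depths from $0$. Under the Arnol'd rewiring at such an $a$, every depth $\delta_\Gamma(i)$ is non-decreasing, $\delta_\Gamma$ strictly increases, and $\delta_\Gamma(n+1)$ cannot drop; decreasing induction on $\delta_\Gamma$ then closes, with linear trees as the base case. Switching to this depth-sum measure, or requiring the rewiring triangle to sit at a branch point of the rooted tree rather than at the parent of an arbitrary off-path leaf, would repair your argument.
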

    
    \begin{proof}
    To a graph $\Gamma$ with set of vertices $\{0,\ldots,n+1\}$ and $(n+1)$ edges we associate a monomial $\omega_\Gamma$ (well-defined up to a sign) obtained by multiplying the generators $\omega_{i,j}$ together for $\{i,j\}$ an edge of $\Gamma$. What we need to prove is that $\omega_\Gamma\in F^kA^{n+1}$ if $\Gamma$ contains a path of length $k$ between the vertices $0$ and $n+1$. If $\Gamma$ contains a cycle then $\omega_\Gamma=0$ by Lemma \ref{lem: OS relations appendix} and there is nothing to prove. Since $\Gamma$ has $n+2$ vertices and $n+1$ edges we can thus assume that it is a tree. If we choose $0$ to be the root of $\Gamma$ then all edges inherit a preferred orientation (from the root to the leaves). We let $\delta_\Gamma(i)$ denote the distance between the vertices $0$ and $i$ in $\Gamma$ and set
    $$\delta_\Gamma=\sum_{i=1}^{n+1}\delta_\Gamma(i)\ .$$
    We have $\delta_\Gamma\leq 1+2+\cdots +(n+1)$ and the equality holds exactly when $\Gamma$ is linear, i.e., has only one leaf. We prove by decreasing induction on $\delta_\Gamma$ the statement: $\omega_\Gamma\in F^kA^{n+1}$ if $\delta_\Gamma(n+1)\geq k$. If $\Gamma$ is linear then $\omega_\Gamma$ is one of the basis elements $\omega_\sigma$ for some $\sigma\in \Sigma_{n+1}$ such that $\sigma^{-1}(n+1)\geq k$ and thus $\omega_\Gamma\in F^kA^{n+1}$ by definition. Now assume that $\Gamma$ is not linear and let $a$ be a vertex of $\Gamma$ with two children $b_1$ and $b_2$ (which means that there is an edge from $a$ to $b_1$ and an edge from $a$ to $b_2$ in $\Gamma$). Then we can use the Arnol'd relation
    $$\omega_{a,b_1}\wedge\omega_{a,b_2} = \omega_{a,b_1}\wedge \omega_{b_1,b_2} - \omega_{a,b_2}\wedge\omega_{b_1,b_2}$$
    and deduce a relation
    $$\omega_\Gamma=\pm\, \omega_{\Gamma'_2}\,\pm\omega_{\Gamma'_1}$$
    where $\Gamma'_s$ is obtained from $\Gamma$ by deleting the edge $\{a,b_s\}$ and adding the edge $\{b_1,b_2\}$, for $s\in\{1,2\}$. One easily sees that we have, for all $i\in\{1,\ldots,n+1\}$, $\delta_{\Gamma'_s}(i)\geq \delta_\Gamma(i)$, and $\delta_{\Gamma'_s}>\delta_\Gamma$, for $s\in\{1,2\}$. One can thus apply the induction hypothesis to $\Gamma'_1$ and $\Gamma'_2$, which completes the induction step and the proof.
    \end{proof}
   
   \subsection{The proof} 
   
   The filtration $F^k$ on $A^{n+1}$ induces a decreasing filtration denoted by the same symbol on the quotient $H^{n+1}(A^\bullet,\wedge\omega_{\underline{s}}) = A^{n+1}/(A^n\wedge\omega_{\underline{s}})$.
   
   \begin{proposition}\label{prop: filtration appendix}
   For every $k\in\{1,\ldots,n\}$ we have $$F^kH^{n+1}(A^\bullet,\wedge\omega_{\underline{s}})\subset F^{k+1}H^{n+1}(A^\bullet,\wedge\omega_{\underline{s}})\ .$$
   \end{proposition}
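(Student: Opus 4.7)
The strategy is to show that every basis element $\omega_\sigma$ with $\sigma^{-1}(n+1)=k$ becomes, modulo $A^n\wedge\omega_{\underline{s}}$, equivalent to an element of $F^{k+1}A^{n+1}$. The idea is to extract a relation from $\omega_{\underline{s}}$ by multiplying it with a carefully chosen $n$-form.

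For each $\sigma\in\Sigma_{n+1}$ with $\sigma^{-1}(n+1)=k\leq n$, I will introduce
$$\eta_\sigma := \omega_{0,\sigma(1)}\wedge\cdots\wedge\omega_{\sigma(k-1),n+1}\wedge\omega_{\sigma(k+1),\sigma(k+2)}\wedge\cdots\wedge\omega_{\sigma(n),\sigma(n+1)}\,\in A^n\ ,$$
that is, $\omega_\sigma$ with the single edge $\omega_{n+1,\sigma(k+1)}$ removed. Its underlying graph decomposes into two components: a path $P_A=0{-}\sigma(1){-}\cdots{-}\sigma(k-1){-}(n+1)$ of length $k$ and a path $P_B=\sigma(k+1){-}\cdots{-}\sigma(n+1)$ of length $n-k$. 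Since $\omega_{\underline{s}}$ vanishes in cohomology when wedged against any $n$-form, we obtain the relation
$$\sum_{0\leq a<b\leq n+1} s_{ab}\,\eta_\sigma\wedge\omega_{ab}\equiv 0 \pmod{A^n\wedge\omega_{\underline{s}}}\ .$$
By Lemma \ref{lem: OS relations appendix} the nonzero contributions come only from pairs $(a,b)$ joining $P_A$ to $P_B$; for each of these, the resulting graph is a tree containing $P_A$, so $\eta_\sigma\wedge\omega_{ab}\in F^kA^{n+1}$ by Lemma \ref{lem: technical appendix}. The single summand with $(a,b)=(n+1,\sigma(k+1))$ recovers $\pm\omega_\sigma$, while any other pair $(a,b)$ whose insertion strictly lengthens the unique path from $0$ to $n+1$ produces an element of $F^{k+1}A^{n+1}$.

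The critical combinatorial observation is that when $\eta_\sigma\wedge\omega_{ab}$ is expanded in the basis $\{\omega_\tau\}_{\tau\in\Sigma_{n+1}}$ via successive applications of Lemma \ref{lem: technical appendix}, any basis element $\omega_\tau$ of level exactly $k$ (i.e.\ $\tau^{-1}(n+1)=k$) that appears must satisfy $\tau(j)=\sigma(j)$ for $1\leq j\leq k-1$; the initial segment of $P_A$ is preserved throughout the Arnol'd reductions pushing $n+1$ to position $k$. Thus, modulo $F^{k+1}A^{n+1}+A^n\wedge\omega_{\underline{s}}$, the relation arising from $\eta_\sigma$ becomes a relation among the $(n-k+1)!$ basis elements $\omega_{\sigma_\pi}$, where $\sigma_\pi$ runs over the permutations obtained from $\sigma$ by reordering the tail $(\sigma(k+1),\ldots,\sigma(n+1))$ by some $\pi\in\mathrm{Sym}(\{\sigma(k+1),\ldots,\sigma(n+1)\})$.

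Running this construction for every such $\sigma_\pi$ produces a square system of $(n-k+1)!$ relations in these $(n-k+1)!$ unknowns, with coefficients that are $\ZZ$-linear combinations of the Mandelstam variables $s_{ab}$. To conclude that every $\omega_{\sigma_\pi}$ (and hence every basis element of level $k$) lies in $F^{k+1}H^{n+1}(A^\bullet,\wedge\omega_{\underline{s}})$, it suffices to verify that this coefficient matrix is invertible over $\QQ^\dR_{\underline{s}}$ under the genericity hypothesis \eqref{assumptions}. The main obstacle is precisely this non-degeneracy step: the base case $n-k+1=1$ (i.e.\ $k=n$) is immediate since the system reduces to a single equation with leading coefficient $s_{\sigma(k-1),n+1}+\cdots\neq 0$, matching the beta-function-type computation at $n=1$. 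For larger tail size, I would argue by induction on $n-k+1$, exploiting the fact that the tail reorderings inherit the combinatorial structure of the original problem restricted to the sub-configuration indexed by the tail vertices together with $n+1$; the block-triangular shape of the matrix, combined with the genericity of the $s_{ab}$, is expected to yield invertibility. Verifying this cleanly is the technical heart of the argument and the step I would expect to require the most care.
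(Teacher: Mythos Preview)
Your approach sets up the right framework but leaves the decisive step unfinished. You reduce the problem to the invertibility of an $(n-k+1)!\times(n-k+1)!$ matrix in the $s_{ij}$, and you explicitly flag this as the part that ``would require the most care''---but you do not prove it. The claim that the matrix is block-triangular is asserted without an ordering or a justification, and genericity \eqref{assumptions} only tells you that certain \emph{specific} linear combinations of the $s_{ij}$ are nonzero, not that an arbitrary determinant built from them is. So as written, the argument has a genuine gap at exactly the point where the work happens.

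The paper's proof avoids this matrix inversion entirely by a different choice of $n$-form. Instead of wedging $\omega_{\underline{s}}$ against your $\eta_\sigma$ (i.e.\ $\omega_\sigma$ with one tail edge removed), it writes $\omega_\sigma=X\wedge Y$ with $X$ the path from $0$ to $n+1$ and $Y$ the remaining tail, and considers the relation $X\wedge\omega_{\underline{s}}\wedge\partial(Y)\equiv 0$. The Leibniz identity $\omega_{ij}\wedge\partial(Y)=Y-\partial(\omega_{ij}\wedge Y)$ then makes \emph{every} surviving $\omega_{ij}$ contribute exactly one copy of $X\wedge Y=\omega_\sigma$, so the coefficient in front of $\omega_\sigma$ is the single scalar
\[
S=\sum_{\{i,j\}\in P}s_{ij}=-\sum_{0\leq a<b\leq k}s_{i_a,i_b},
\]
which is nonzero precisely by \eqref{assumptions}. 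The remaining terms $X\wedge\partial(\omega_{ij}\wedge Y)$ are shown to lie in $F^{k+1}A^{n+1}$ by a short internal induction on the position of $i$ in $X$, using one Arnol'd relation and Lemma~\ref{lem: technical appendix} at each step. No linear system, no determinant.

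The moral: the use of $\partial(Y)$ rather than the naive truncation is what collapses your $(n-k+1)!$-dimensional system to a single equation with a visibly nonzero coefficient. Your route may well be salvageable, but as it stands the hard part is still ahead of you, whereas the paper's trick makes it disappear.
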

   
   \begin{proof}
   Let us fix a permutation $\sigma\in\Sigma_n$ such that $\sigma^{-1}(n+1)=k\in\{1,\ldots,n\}$. We set $i_s=\sigma(s)$ for every $s\in\{1,\ldots,n+1\}$ and $i_0=0$, so that we have
   $$\omega_\sigma=X\wedge Y$$
   where we set:
   $$X=\omega_{i_0,i_1}\wedge\omega_{i_1,i_2}\wedge\cdots\wedge \omega_{i_{k-1},n+1} \;\;\mbox{ and }\;\; Y = \omega_{n+1,i_{k+1}}\wedge\cdots\wedge\omega_{i_{n-1},i_n}\wedge\omega_{i_n,i_{n+1}}\ .$$
   We have the relation, in $H^{n+1}(A^\bullet,\omega_{\underline{s}})$:
   $$X\wedge \omega_{\underline{s}}\wedge\partial(Y)=0\ .$$
   We note that if $\{i,j\}=\{i_a,i_b\}$ for $0\leq a<b\leq k$ then we have $X\wedge \omega_{i,j}=0$ by Lemma \ref{lem: OS relations appendix}. Let $P$ denote the set of remaining pairs of indices. We then get the relation:
   $$\sum_{\{i,j\}\in P}s_{i,j}\,X\wedge\omega_{i,j}\wedge \partial(Y)=0\ .$$
   The Leibniz rule gives    
   $$\omega_{i,j}\wedge\partial(Y) = Y-\partial(\omega_{i,j}\wedge Y)$$
   and we can rewrite the relation as:
   \begin{equation}\label{eq: somewhere proof appendix}
   \left(\sum_{\{i,j\}\in P}s_{i,j}\right)\omega_\sigma = \sum_{\{i,j\}\in P} s_{i,j}\, X\wedge\partial(\omega_{i,j}\wedge Y)\ .
   \end{equation}
   We now claim that the term $X\wedge\partial(\omega_{i,j}\wedge Y)$ is in $F^{k+1}$ for all $\{i,j\}\in P$. There is one easy case: if $\{i,j\}=\{i_a,i_b\}$ for $k\leq a<b\leq n+1$ then we have $\partial(\omega_{ij}\wedge Y)=0$ by Lemma \ref{lem: OS relations appendix}. Thus, we only have to treat the case where $\{i,j\}=\{i_a,i_b\}$ for $0\leq a\leq k$ and $k+1\leq b\leq n+1$. We proceed by decreasing induction on $a$. If $a=k$ then the first case applies and we are done. If $a<k$ then the Leibniz rule implies:
   $$X\wedge\partial(\omega_{i_a,i_b}\wedge Y)-X\wedge \partial(\omega_{i_{a+1},i_b}\wedge Y) = X\wedge \partial(\omega_{i_a,i_b}\wedge \omega_{i_{a+1},i_b})\wedge \partial(Y)\ .$$

   We now set 
   $$X'=\omega_{i_0,i_1}\wedge\cdots\wedge\omega_{i_{a-1},i_a}\wedge\omega_{i_{a+1},i_{a+2}}\wedge\cdots\wedge \omega_{i_{k-1},n+1}$$
   so that we have $X=\pm\omega_{i_a,i_{a+1}}\wedge X'$. We thus get
   $$X\wedge\partial(\omega_{i_a,i_b}\wedge Y)-X\wedge \partial(\omega_{i_{a+1},i_b}\wedge Y) = \pm X'\wedge \omega_{i_a,i_b}\wedge \omega_{i_{a+1},i_b}\wedge\partial(Y)\ ,$$

   where we have used the Arnol'd relation $\partial(\omega_{i_a,i_{a+1}}\wedge\omega_{i_a,i_b}\wedge \omega_{i_{a+1},i_b})=0$ in the form:  
   $ \omega_{i_a,i_{a+1}}\wedge \partial(\omega_{i_a,i_b}\wedge \omega_{i_{a+1},i_b})= \pm \omega_{i_a,i_b}\wedge \omega_{i_{a+1},i_b}$. 
   Now  Lemma \ref{lem: technical appendix} applied to $\left(X'\wedge \omega_{i_a,i_b}\wedge \omega_{i_{a+1},i_b}\right)\wedge\partial(Y)$ and the induction hypothesis respectively imply that
   $$X'\wedge \omega_{i_a,i_b}\wedge \omega_{i_{a+1},i_b}\wedge\partial(Y) \;\;\mbox{ and }\;\; X\wedge\partial(\omega_{i_{a+1},i_b}\wedge Y)$$
   are in $F^{k+1}H^{n+1}(A^\bullet,\omega_{\underline{s}})$, which implies that it is also the case for $X\wedge\partial(\omega_{i_a,i_b}\wedge Y)$. This concludes the induction step and the proof by induction. Returning to \eqref{eq: somewhere proof appendix} we see that we have $S\,\omega_\sigma\in F^{k+1}$ for 
   $$S=\sum_{\{i,j\}\in P}s_{i,j} = -\sum_{0\leq a<b\leq k}s_{i_a,i_b}\ ,$$
   where we have used equation \eqref{eq: momentum conservation appendix}. Thus $S\neq 0$ by the genericity assumption, and $\omega_\sigma\in F^{k+1}$, which finishes the proof of the proposition.
   \end{proof}
   
   We can now conclude with the proof of Theorem \ref{thm: Parke Taylor appendix bis}. By Proposition \ref{prop: filtration appendix} we have 
   $$H^{n+1}(A^\bullet,\omega_{\underline{s}})=F^1H^{n+1}(A^\bullet,\omega_{\underline{s}}) = F^{n+1}H^{n+1}(A^\bullet,\omega_{\underline{s}})$$
   which is spanned by the $\omega_\sigma$ for $\sigma\in\Sigma_n$. Since the dimension of $H^{n+1}(A^\bullet,\omega_{\underline{s}})$ is $n!$ if the $s_{i,j}$ are generic, this implies that the $\omega_\sigma$ are a basis, and Theorem \ref{thm: Parke Taylor appendix bis} is proved. Theorem \ref{thm: Parke Taylor appendix} follows as explained in \S\ref{par: appendix configuration space}.

\bibliographystyle{alpha}

\bibliography{biblio}

\end{document}